\renewcommand{\thefootnote}{\ifcase\value{footnote}\or$\dagger$\or$\ddagger$\or$\dagger\dagger$\or(****)\or(\#)\or(\#\#)\or(\#\#\#)\or(\#\#\#\#)\or($\infty$)\fi}
\newcommand*{\mailto}[1]{\href{mailto:#1}{\nolinkurl{#1}}}
\newcommand{\arxiv}[1]{\href{http://arxiv.org/abs/#1}{arXiv:#1}}
\newcommand{\msc}[1]{\href{http://www.ams.org/msc/msc2020.html?t=&s=#1}{#1}}
\newcommand{\ack}{\section*{Acknowledgments}}
\newif\iflong
\newtheorem{theorem}{Theorem}[section]
\newtheorem{corollary}[theorem]{Corollary}
\newtheorem{proposition}[theorem]{Proposition}
\newtheorem{lemma}[theorem]{Lemma}
\newtheorem{hypothesis}{Hypothesis}[section]
\theoremstyle{definition}
\newtheorem{definition}[theorem]{Definition}
\newtheorem{example}[theorem]{Example}
\newtheorem{remark}[theorem]{Remark}
\newcommand{\nn}{\nonumber}
\newcommand{\be}{\begin{equation}}
\newcommand{\ee}{\end{equation}}
\newcommand{\ti}{\tilde}
\newcommand{\wt}{\widetilde}
\newcommand{\wh}{\widehat}
\newcommand{\id}{{\mathbbm 1}}
\numberwithin{equation}{section}
\DeclareMathOperator{\supp}{supp} 
\DeclareMathOperator{\diam}{diam} 
\DeclareMathOperator{\Span}{span} 
\DeclareMathOperator{\re}{Re} 
\DeclareMathOperator{\dom}{dom}
\DeclareMathOperator{\mul}{mul}
\DeclareMathOperator{\ran}{ran}
  \def\mG{\mathsf{G}} 
  \def\mN{\mathsf{N}}
\newcommand\R{{\mathbb{R}}}
\newcommand\C{{\mathbb{C}}}
\newcommand\Z{{\mathbb{Z}}}
\newcommand\HH{{\mathbb{H}}}
\newcommand{\gt}{\mathfrak{t}}
\newcommand{\gC}{\mathfrak{C}}
\newcommand\cA{{\mathcal{A}}}
\newcommand\cI{{\mathcal{I}}}
\newcommand\cH{{\mathcal{H}}}
\newcommand\cT{{\mathcal{T}}}
\newcommand\cK{{\mathcal{K}}}
\newcommand\cN{{\mathcal{N}}}
\newcommand\cU{{\mathcal{U}}}
\newcommand\cG{{\mathcal{G}}}
\newcommand\cE{{\mathcal{E}}}
\newcommand\cP{{\mathcal{P}}}
\newcommand\cV{{\mathcal{V}}}
\newcommand\cR{{\mathcal{R}}}
\newcommand\OO{{\mathcal{O}}}
\newcommand{\varrhoo}{\varrho}
\newcommand\mR{{\mathfrak{R}}}
\newcommand\bH{{\mathbf{H}}}
\newcommand\rH{{\rm{H}}}
\newcommand\E{{\rm{e}}}
\newcommand\vol{{\rm{vol}}}
\newcommand\Nr{{\rm{n}}}
\newcommand\rD{{\rm{d}}}
\newcommand\I{{\rm{i}}}
\newcommand\f{{\bf{f}}}
\def\wt#1{{{\widetilde #1} }}
\begin{document}

\title[Extensions of Quantum Graphs]{Self-adjoint and Markovian Extensions\\ of Infinite Quantum Graphs}

\author[A. Kostenko]{Aleksey Kostenko}
\address{Faculty of Mathematics and Physics\\ University of Ljubljana\\ Jadranska ul.\ 19\\ 1000 Ljubljana\\ Slovenia\\ and 
%Faculty of Mathematics\\ University of Vienna\\ 
%Oskar-Morgenstern-Platz 1\\ 1090 Vienna\\ Austria}
Institute for Analysis and Scientific Computing\\ Vienna University of Technology\\ Wiedner Hauptstra\ss e 8-10/101\\1040 Vienna\\ Austria}
\curraddr{Faculty of Mathematics and Physics\\ University of Ljubljana\\ Jadranska ul.\ 19\\ 1000 Ljubljana\\ Slovenia\\ and 
Faculty of Mathematics\\ University of Vienna\\ 
Oskar-Morgenstern-Platz 1\\ 1090 Vienna\\ Austria}
\email{\mailto{Aleksey.Kostenko@fmf.uni-lj.si}}
%\urladdr{\url{http://www.mat.univie.ac.at/~kostenko/}}

\author[D. Mugnolo]{Delio Mugnolo}
\address{Lehrgebiet Analysis\\ Fakult\"at Mathematik und Informatik\\ FernUniversit\"at in Hagen\\ Hagen\\ Germany}
\email{\mailto{delio.mugnolo@fernuni-hagen.de}}

\author[N. Nicolussi]{Noema Nicolussi}
\address{Faculty of Mathematics\\ University of Vienna\\
Oskar-Morgenstern-Platz 1\\ 1090 Vienna\\ Austria}
\email{\mailto{noema.nicolussi@univie.ac.at}}

\thanks{{\it Research supported by the Austrian Science Fund (FWF) 
under Grants No.\ P 28807 (A.K. and N.N.) and W 1245 (N.N.), 
 by the German Research Foundation (DFG) under Grant No.\ 397230547 (D.M.), and by the Slovenian Research Agency (ARRS) under Grant No.\ J1-1690 (A.K.). The authors would like to acknowledge that this article is based upon work from COST Action CA18232 MAT-DYN-NET, supported by COST (European Cooperation in Science and Technology).}}
 \thanks{{ \arxiv{1911.04735}}}
 \thanks{J.\ London Math.\ Soc., to appear; \doi{10.1112/jlms.12539}}

\keywords{Quantum graph, graph end, self-adjoint extension, Markovian extension, harmonic function}
\subjclass[2020]{Primary \msc{34B45}; Secondary \msc{47B25}; \msc{81Q10}}

\begin{abstract}
We investigate the relationship between one of the classical notions of boundaries for infinite graphs, \emph{graph ends}, and self-adjoint extensions of the minimal Kirchhoff Laplacian on a metric graph. We introduce the notion of \emph{finite volume} for ends of a metric graph and show that  finite volume graph ends is the proper notion of a boundary for Markovian extensions of the Kirchhoff Laplacian. In contrast to manifolds and weighted graphs, this provides a transparent geometric characterization of the uniqueness of Markovian extensions, as well as of the self-adjointness of the Gaffney Laplacian --- the underlying metric graph does not have finite volume ends. 
  If however finitely many finite volume ends occur (as is the case of edge graphs of normal, locally finite tessellations or Cayley graphs of amenable finitely generated groups), we provide a complete description of Markovian extensions upon introducing a suitable notion of traces of functions and normal derivatives on the set of graph ends.
\end{abstract}

\maketitle

{\scriptsize{\tableofcontents}}

%%%%%%%%%%%%%%%%%%%%%%%%%%%%%%%%%%%%%%%%%%%%%%%%%%%%%%%%%%%%
%%%%%%%%%%%%%%%%%%%%%%%%%%%%%%%%%%%%%%%%%%%%%%%%%%%%%%%%%%%%
\section{Introduction}
%%%%%%%%%%%%%%%%%%%%%%%%%%%%%%%%%%%%%%%%%%%%%%%%%%%%%%%%%%%%
%%%%%%%%%%%%%%%%%%%%%%%%%%%%%%%%%%%%%%%%%%%%%%%%%%%%%%%%%%%%

This paper is concerned with developing extension theory for infinite \emph{quantum graphs}. Quantum graphs are Schr\"{o}dinger operators on \emph{metric graphs}, that is combinatorial graphs where edges are considered as intervals with certain lengths. Motivated by a vast amount of applications in chemistry and physics, they have become a popular subject in the last decades (we refer to~\cite{bcfk06, bk13, ekkst08, post} for an overview and further references). From the perspective of Dirichlet forms, quantum graphs play an important role as an intermediate setting between Laplacians on Riemannian manifolds and difference Laplacians on weighted graphs. On the one hand, being locally one-dimensional, quantum graphs allow to simplify considerations of complicated geometries. On the other hand, there is a close relationship between random walks on graphs and Brownian motion on metric graphs, however, in contrast to the discrete case, the corresponding quadratic form in the metric case is a strongly local Dirichlet form and in this situation more tools are available (see~\cite{baba03, fo14, lu16, lu19} for various manifestations of this point of view). 
Let us also mention that metric graphs can be seen as non-Archimedian analogs of Riemann surfaces, which finds numerous applications in algebraic geometry (see~\cite{amca13, bano07, baru10, shwu19} for further references).

The most studied quantum graph operator is the \emph{Kirchhoff Laplacian}, which provides the analog of the Laplace--Beltrami operator in the setting of metric graphs. Its spectral properties are crucial in connection with the heat equation and the Schr\"odinger equation and any further analysis usually relies on the \emph{self-adjointness of the Laplacian}. Whereas on finite metric graphs the Kirchhoff Laplacian is always self-adjoint, the question is more subtle for \emph{graphs with infinitely many edges}. 

 For instance, a uniform lower bound for the edge lengths guarantees self-adjoint\-ness (see~\cite{bk13, post}), but this commonly used condition is independent of the combinatorial graph structure and clearly excludes a number of interesting cases (the so-called \emph{fractal metric graphs}). Moreover, most of the results on strongly local Dirichlet forms require completeness of a given metric space w.r.t. the ``intrinsic" metric (cf., e.g.,~\cite{stu94}), which coincides with the natural path (geodesic) metric in the case of metric graphs. Geodesic completeness (w.r.t. the natural path metric) guarantees self-adjointness of the (minimal) Kirchhoff Laplacian, however, this result is far from being optimal (see~\cite[\S4]{ekmn} and also Section~\ref{ss:II.04} below). 
The search for self-adjointness criteria for infinite quantum graphs is an open and -- in our opinion -- rather difficult problem. 

If the (minimal) Kirchhoff Laplacian is not self-adjoint, the natural next step is to ask for a description of its self-adjoint extensions, which corresponds to possible descriptions of the system in quantum mechanics or, if we speak about Markovian extensions, possible descriptions of Brownian motions. 
 Naturally, this question is tightly related to finding appropriate boundary notions for infinite graphs. Our goal in this paper is to investigate the connection between extension theory and one particular notion, namely \emph{graph ends}, a concept which goes back to the work of Freudenthal~\cite{f44} and Halin~\cite{hal} and provides a rather refined way of compactifying graphs. 
 However, the definition of graph ends is purely combinatorial and naturally must be modified to capture the additional metric structure of our setting. Based on the correspondence between graph ends and topological ends of metric graphs, we introduce the concept of \emph{ends of finite volume}. 
 First of all, it turns out that finite volume ends play a crucial role in describing the Sobolev spaces $H^1$ and $H^1_0$ on metric graphs. More specifically,  we show that the presence of finite volume ends is the only reason for the strict inclusion $H^1_0\subsetneq H^1$ to hold. This in particular provides a surprisingly transparent geometric characterization of the uniqueness of Markovian extensions of the minimal Kirchhoff Laplacian as well as the self-adjointness of the so-called \emph{Gaffney Laplacian} (we are not aware of its analogs either in the manifold setting or in the context of weighted graph Laplacians, cf.~\cite{ghklw, gm, hkmw, klss, ma99, ma05}). As yet another manifestation of the fact that finite volume graph ends represent the proper boundary for Markovian extensions of the Kirchhoff Laplacian, we provide a complete description of \emph{all finite energy extensions} (i.e., self-adjoint extensions with domains contained in $H^1$, and all Markovian extensions clearly satisfy this condition), however, under the additional assumption that there are only finitely many finite volume ends. Let us stress that this class of graphs includes a wide range of interesting models (Cayley graphs of a large class of finitely generated groups, tessellating graphs, rooted antitrees etc. have exactly one end and in this case there are no finite volume ends exactly when the total volume of the corresponding metric graph is infinite). Moreover, we emphasize that in all those cases the dimension of the space of finite energy extensions is equal to the number of finite volume ends, however, for deficiency indices, i.e., the dimension of the space of self-adjoint extensions, this only gives a lower bound (for example, for Cayley graphs the dimension of the space of finite energy extensions is independent of the choice of a generating set, although deficiency indices do depend on this choice in a rather nontrivial way). On the other hand, it may happen that these dimensions coincide. The latter holds only if the maximal domain is contained in $H^1$, that is, if every self-adjoint extension is a finite energy extension. This is further equivalent to the validity of a certain non-trivial Sobolev-type inequality (see \eqref{eq:IntroEst2} below). The appearance of this condition demonstrates the mixed dimensional behavior of infinite metric graphs since the analogous estimate holds true in the one-dimensional situation, but usually fails in the PDE setting.  

Let us now sketch the structure of the article and describe its content and our results in greater details.
 
In Section~\ref{sec:QG} we collect basic notions and facts about graphs and metric graphs (Section~\ref{ss:II.01}); graph ends (Section~\ref{ss:II.02}); the minimal and maximal Kirchhoff Laplacians (Section~\ref{ss:II.03}); deficiency indices and their connection with the spaces of $L^2$ harmonic and $\lambda$-harmonic functions (Section~\ref{ss:II.04}).
 
The core of the paper is Section~\ref{sec:Ends}, where we discuss the Sobolev spaces $H^1(\cG)$ and $H^1_0(\cG)$ and introduce the set of finite volume ends $\gC_0(\cG)$ (Definition~\ref{def:finvol}). We show that $\gC_0(\cG)$ is the proper boundary for $H^1$ functions, which can also be seen as an ideal boundary by applying $C^\ast$-algebra techniques (see Remark~\ref{rem:IdealBdry}). The central result of this section is Theorem~\ref{th:H10}, which shows that $H^1(\cG)= H^1_0(\cG)$ if and only if there are no finite volume ends. The latter also leads to a surprisingly transparent geometric characterization of the uniqueness of Markovian extensions of the Kirchhoff Laplacian (Corollary~\ref{cor:Markovian1}) as well as the self-adjointness of the Gaffney Laplacian $\bH_G$ (see Remark~\ref{rem:H=H=uniq}(ii) for details and the definition of $\bH_G$). 

Section~\ref{sec:DefInd} contains further applications of the above considerations. Namely, Theorem~\ref{th:indicesends} demonstrates that deficiency indices of the minimal Kirchhoff Laplacian can be estimated from below by the number of finite volume ends. This estimate is sharp (e.g., if there are infinitely many finite volume ends) and we also find necessary and sufficient conditions for the equality to hold. In particular, if there are only finitely many ends of finite volume, $\#\gC_0(\cG)<\infty$, the latter is equivalent to the validity of the following Sobolev-type inequality (see Remark~\ref{rem:sobolev})\be\label{eq:IntroEst2}
	\| f'\|_{L^2(\cG)}\le C (\| f \|_{L^2(\cG)}+ \| f'' \|_{L^2(\cG)})
\ee
for all $f$ in the maximal domain of the Kirchhoff Laplacian. Metric graphs are locally one-dimensional and the corresponding inequality is trivially satisfied in the one-dimensional case, however, globally infinite metric graphs are more complex and hence \eqref{eq:IntroEst2} rather resembles the multi-dimensional setting of PDEs (in particular, \eqref{eq:IntroEst2} does not hold true if $\cG$ has a \emph{non-free} finite volume end, see Proposition~\ref{prop:H1InfEnds}). 

In the next sections, we focus on a particular class of self-adjoint extensions whose domains are contained in $H^1$ (we call them \emph{finite energy extensions}). These extensions have good properties and their importance stems from the fact that they contain the class of Markovian extensions (they also arise as self-adjoint restrictions of the Gaffney Laplacian). In Section~\ref{sec:V} we show that (under some additional mild assumptions) their \emph{resolvents and heat semigroups} are \emph{integral operators} with  continuous, bounded kernels and they belong to the trace class if $\cG$ has finite total volume (Theorems~\ref{prop:resolvprop} and~\ref{prop:semigprop}).

In Section~\ref{sec:VI} we proceed further and show that finite volume ends is the proper boundary for this class of extensions. Namely, under the additional and rather restrictive assumption of \emph{finitely many ends with finite volume}, in Sections~\ref{ss:VI:i}--\ref{sec:VI.02}, we introduce a suitable notion of a \emph{normal derivative} at graph ends (as a by-product, this also gives an explicit description of the domain of the Neumann extension, see Corollary~\ref{prop:domNM}).  Section~\ref{sec:VI.03} contains a complete description of finite energy extensions and also of Markovian extensions (Theorem~\ref{th:ThetaCD}). Let us stress that the case of infinitely many ends is incomparably more complicated and will be the subject of future work. 

In general, the inequality in \eqref{eq:IntroEst2} is difficult to verify/contradict and even simple examples can exhibit rather complicated behavior (see Appendix~\ref{sec:rope}). The only reason for which \eqref{eq:IntroEst2} fails to hold is the presence of $L^2$ harmonic functions having infinite energy, that is, not belonging to $H^1$. Moreover, in order to compute deficiency indices of the Kirchhoff Laplacian one, roughly speaking, needs to find the dimension of the space of $L^2$ harmonic functions and description of self-adjoint extensions requires a thorough understanding of the behavior of $L^2$ harmonic functions at ``infinity". Dictated by a distinguished role of harmonic functions in analysis, there is an enormous amount of literature dedicated to various classes of harmonic functions (positive, bounded etc.), which is further related to different notions of boundaries (metric completion, Poisson and Martin boundaries, Royden and Kuramochi boundaries etc.) and search for a suitable notion in this context (namely, $L^2$ harmonic functions) is a highly nontrivial problem, which seems not to be very well studied either in the context of \emph{incomplete} manifolds (cf.~\cite{ma99, ma05}) or in the case of weighted graphs (see~\cite{hklw, hkmw}). We further illustrate this by considering the case of rooted \emph{antitrees}, a special class of infinite graphs with a particularly high degree of symmetry (see Section~\ref{sec:antitrees}).  Infinite rooted antitrees have exactly one graph end, which makes them a good toy model for our purposes. The above considerations show that the space of finite energy $L^2$ harmonic functions is nontrivial only if a given metric antitree has finite total volume and in this case the only such functions are constants. However, adjusting lengths in a suitable way for a concrete polynomially growing antitree (Figure~\ref{fig:antitree}) we can make the space of $L^2$ harmonic functions as large as we please (even infinite dimensional!).

%%%%%%%%%%%%%%%%%%%%%%%%%%%%%%%%%%%%%%%%%%%%%%%%%%%%%%%%%%%%
\subsection*{Notation}
%%%%%%%%%%%%%%%%%%%%%%%%%%%%%%%%%%%%%%%%%%%%%%%%%%%%%%%%%%%%
$\Z$, $\R$, $\C$ have their usual meaning; $\Z_{\ge a} := \Z\cap [a,\infty)$.\\
$z^\ast$ denotes the complex conjugate of $z\in\C$.\\  
For a given set $S$, $\#S$ denotes its cardinality if $S$ is finite; otherwise we set $\#S=\infty$.\\
If it is not explicitly stated otherwise, we shall denote by $(x_n)$ a sequence $(x_n)_{n=0}^\infty$.\\[1mm]
$C_b(X)$ is the space of bounded, continuous functions on a locally compact space $X$. \\
$C_0(X)$ is the space of continuous functions vanishing at infinity.\\% on a locally compact Hausdorff space $X$; \\
%i.e., each $f \in C_0(X)$ is continuous and $X_\varepsilon = \{x \in X \colon |f(x)| \ge \varepsilon \}$ is compact for all $\varepsilon > 0$. \\%there is a compact set $K \subseteq X$ such that $|f(x)|< \varepsilon$ for all $x \in X\setminus K$. \\
For a finite or countable set $X$, $C(X)$ is the set of complex-valued functions on $X$.\\[1mm]
$\cG_d = (\cV,\cE)$ is a discrete graph (satisfying Hypothesis~\ref{hyp:locfin}).\\
$\cG = (\cG_d,|\cdot|)$ is a  metric graph (see p.~\pageref{not:metricgraph}).\\
$\varrho$ is the natural (geodesic) path metric on $\cG$ (see p.~\pageref{not:natpathmet}).\\
$\varrho_m$ is the star metric on $\cV$ corresponding to the star weight $m$ (see \eqref{def:rho_m}).\\
$\Omega(\cG_d)$ denotes the graph ends of $\cG_d$ (see Definition~\ref{def:rays}).\\
$\gC(\cG)$ denotes the topological ends of a metric graph $\cG$ (see Definition~\ref{def:TopEnds}).\\
$\gC_0(\cG)$ stays for the finite volume topological ends of $\cG$ (see Definition~\ref{def:finvol}).\\
$\wh{\cG}$ is the end (Freudenthal) compactification of $\cG$ (see p.~\pageref{p:FreudenthalCompactification}). \\[1mm] 
 ${\bH}^0_0$ is the pre-minimal Kirchhoff Laplacian on $\cG$ (see \eqref{eq:H00}).\\
 ${\bH}_0$ is the minimal Kirchhoff Laplacian, the closure of ${\bH}^0_0$ in $L^2(\cG)$ (see \eqref{eq:H00}).\\
 $\Nr_\pm({\bH}_0)$ are the deficiency indices of $\bH_0$ (see \eqref{def:n_pm}).\\ 
${\bf H}_F$ and ${\bf H}_N$ are the Friedrichs and Neumann extensions of $\bH_0$ (see p.~\pageref{p:Friedrichsextension} and, respectively,  p.~\pageref{p:NeumannExt}).\\
$\bH$ is the maximal Kirchhoff Laplacian on $\cG$ (see \eqref{eq:H}).

%%%%%%%%%%%%%%%%%%%%%%%%%%%%%%%%%%%%%%%%%%%%%%%%%%%%%%%%%%%%
%%%%%%%%%%%%%%%%%%%%%%%%%%%%%%%%%%%%%%%%%%%%%%%%%%%%%%%%%%%%
\section{Quantum graphs} \label{sec:QG}
%%%%%%%%%%%%%%%%%%%%%%%%%%%%%%%%%%%%%%%%%%%%%%%%%%%%%%%%%%%%
%%%%%%%%%%%%%%%%%%%%%%%%%%%%%%%%%%%%%%%%%%%%%%%%%%%%%%%%%%%%

%%%%%%%%%%%%%%%%%%%%%%%%%%%%%%%%%%%%%%%%%%%%%%%%%%%%%%%%%%%%
\subsection{Combinatorial and metric graphs}\label{ss:II.01}
%%%%%%%%%%%%%%%%%%%%%%%%%%%%%%%%%%%%%%%%%%%%%%%%%%%%%%%%%%%%
In what follows, $\cG_d = (\cV, \cE)$ will be an unoriented graph with countably infinite sets of vertices $\cV$ and edges $\cE$. For two vertices $u$, $v\in \mathcal{V}$ we shall write $u\sim v$ if there is an edge $e_{u,v}\in \mathcal{E}$ connecting $u$ with $v$. 
For every $v\in \mathcal{V}$, we denote  the set of edges incident to the vertex $v$ by $\cE_v$ and 
\be\label{eq:combdeg}
\deg_{\cG}(v):= \#\{e|\, e\in\cE_v\} 
\ee
is called \emph{the degree} (\emph{valency} or \emph{combinatorial degree}) of a vertex $v\in\cV$. When there is no risk of confusion about which graph is involved, we shall simplify and write $\deg$ instead of $\deg_{\cG}$.  \emph{A path} $\cP$ of length $n\in\Z_{\ge 0}\cup\{\infty\}$ is a sequence of vertices $(v_0,v_1,\dots, v_n)$ such that $v_{k-1}\sim v_k$ for all $k\in \{1,\dots,n\}$. 

The following assumption is imposed throughout the paper.

\begin{hypothesis}\label{hyp:locfin}
$\cG_d$ is \emph{infinite}, \emph{locally finite} ($\deg(v) < \infty$ for every $v \in \cV$), \emph{connected} (for any two vertices $u,v\in\cV$  there is a path connecting $u$ and $v$), and \emph{simple} (there are no loops or multiple edges).
\end{hypothesis}

Next, let us assign each edge $e\in\cE$ a finite length $|e| \in (0,\infty)$. We can then naturally associate with $(\cG_d,|\cdot|) = (\cV,\cE,|\cdot|)$ a metric space $\cG$: first, we identify each edge $e \in \cE$ with a copy of the interval $\cI_e := [0, |e|]$. The topological space $\cG$ is then obtained by ``gluing together" the ends of edges corresponding to the same vertex $v$ (in the sense of a topological quotient, see, e.g.,~\cite[Chapter~3.2.2]{bbi}). 
The topology on $\cG$ is metrizable by the {\em natural path metric}\label{not:natpathmet} $\varrho$ --- the distance between two points $x,y \in\cG$ is defined as the arc length of the ``shortest path" connecting them (if $x$ or $y$ are not vertices, then we need to allow also paths which start or end in the middle of edges; the length of such paths is naturally defined by taking the corresponding portion of the interval). The metric space $\cG$ arising from the above construction is called a \emph{metric graph} (associated to $(\cG_d, |\cdot|) =(\cV, \cE, |\cdot|)$). \label{not:metricgraph}

Notice that, by definition, $(\cG, \varrho)$ is a {\em length space} (see~\cite[Chapter~2.1]{bbi} for definitions and further details). Moreover (see, e.g.,~\cite[Chapter~1.1]{hae}), a metric graph $\cG$ is a Hausdorff topological space with countable base and each $x \in \cG$ has a neighborhood isometric to a star-shaped set $\mathcal{S} (\deg(x),r_x)$ of degree $\deg(x)\in \Z_{\ge 1}$,
\begin{align} \label{eq:star}
\mathcal{S}  (\deg(x),r_x) := \big\{z= r\E^{2\pi \I k/\deg(x)}|\, r\in [0,r_x),\ k=1,\dots,\deg(x) \big\}\subset \C.
\end{align}
Notice that $\deg(x)$ in \eqref{eq:star} coincides with the combinatorial degree if $x$ belongs to the vertex set, and $\deg(x) =2$ for every non-vertex point $x$ of $\cG$.

Sometimes, we will consider $\cG_d$ as a rooted graph with a fixed root $o \in \cV$. In this case we denote by $S_n$, $n \in \Z_{\ge 0}$ the $n$-th combinatorial sphere with respect to the order induced by $o$ (notice that $S_0=\{o\}$). 

%%%%%%%%%%%%%%%%%%%%%%%%
\subsection{Graph ends}\label{ss:II.02}  
%%%%%%%%%%%%%%%%%%%%%%%%
 One possible definition of a boundary for an infinite graph is the notion of the so-called \emph{graph ends} (see~\cite{f44,hal} and~\cite[\S~21]{woe}). 
 
 \begin{definition}\label{def:rays}
 A sequence of distinct vertices $(v_n)_{n\in\Z_{\ge 0}}$ (resp., $(v_n)_{n\in\Z}$) such that $v_{n}\sim v_{n+1}$ for all $n\in\Z_{\ge 0}$  (resp., for all $n\in\Z$) is called a \emph{ray} (resp., \emph{double ray}). A subsequence of such a sequence is called a \emph{tail}.

Two rays $\cR_1,\cR_2$ are called \emph{equivalent} -- and we write $\cR_1 \sim \cR_2$ -- if there is a third ray containing infinitely many vertices of both $\cR_1$ and $\cR_2$.\footnote{Equivalently, $\cR_1 \sim \cR_2$ if and only if $\cR_1$ and $\cR_2$ cannot be separated by a finite vertex set, i.e., for every finite subset  $X \subset \cV$ the remaining tails of $\cR_1$ and $\cR_2$ in $\cV \setminus X$ belong to the same connected component of $\cV \setminus X$.}
 An equivalence class of rays is called a \emph{graph end of  $\cG_d$} and the set of graph ends will be denoted by $\Omega(\cG_d)$. Moreover, we will write $\cR \in \omega$ whenever $\cR$ is a ray belonging to the end $\omega \in \Omega(\cG_d)$. 
\end{definition}

An important feature of graph ends is their relation to  topological ends of a metric graph $\cG$.  

\begin{definition} \label{def:TopEnds}
Consider sequences 
$\cU = (U_n)_{n=0}^\infty$ of non-empty open connected subsets of $\cG$ with compact boundaries and such that $U_{n+1} \subseteq U_{n}$ for all $n\ge 0$ and $\bigcap_{n\ge 0} \overline{U_n} = \emptyset$. Two such sequences $\cU$ and $\cU'$ are called \emph{equivalent} if for all $n\ge0$ there exist $j$ and $k$ such that $U_n \supseteq U_j'$ and $U_n' \supseteq U_k$. An equivalence class $\gamma$
 of sequences is called a \emph{topological end} of $\cG$ and $\gC(\cG)$ denotes the set of topological ends of $\cG$. 
\end{definition}

For locally finite graphs, there is a bijection between topological ends of a metric graph $\gC(\cG)$ and graph ends $\Omega(\cG_d)$ of the underlying combinatorial graph $\cG_d$ (see~\cite[\S~21]{woe}, \cite[\S~8.6 and also~p.277--278]{die}; for the case of graphs which are not locally finite see~\cite{csw, dikue}).

\begin{theorem}\label{th:top-comb}
For every topological end $\gamma\in\gC(\cG)$ of a locally finite metric graph $\cG = (\cG_d, |\cdot|)$ there exists a unique graph end $\omega_{\gamma} \in \Omega(\cG_d)$ such that for every sequence $\cU$ representing $\gamma$, each $U_n$ contains a ray from $\omega_{\gamma}$. 
Moreover, the map $\gamma \mapsto \omega_{\gamma}$ is a bijection between $\gC(\cG)$ and $\Omega(\cG_d)$. 
\end{theorem}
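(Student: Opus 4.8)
The plan is to realise both end spaces as coherent choices of complementary components along a fixed finite exhaustion of $\cG$, and then to match these choices level by level. First I would fix a basepoint $o\in\cV$ and let $\cV_n$ be the ball of combinatorial radius $n$ around $o$, so that $(\cV_n)_{n\ge0}$ is an increasing sequence of finite vertex sets with $\bigcup_n\cV_n=\cV$; let $K_n\subset\cG$ be the compact metric subgraph spanned by $\cV_n$ together with all edges joining two vertices of $\cV_n$. By Hypothesis~\ref{hyp:locfin} each $K_n$ is a finite union of compact intervals, and after passing to a subsequence we may assume $K_n\subseteq\operatorname{int} K_{n+1}$ and $\bigcup_nK_n=\cG$. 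The two consequences of local finiteness used throughout are: every compact subset of $\cG$ lies in some $K_n$ (so any subset of $\cG$ with compact boundary has its boundary inside a finite subgraph), and $\cG\setminus K_n$ has only finitely many connected components, each open with boundary contained in $K_n$.

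Second, I would establish the component dictionary at a single level: for every $n$ the assignment $U\mapsto U\cap\cV$ is a monotone bijection between the components of $\cG\setminus K_n$ that are not relatively compact and the infinite components of $\cG_d\setminus\cV_n$. The crux is that the vertex set of such a $U$ induces a \emph{connected} subgraph of $\cG_d$ contained in $\cV\setminus\cV_n$: two vertices of $U$ are joined by a simple path inside the open connected set $U$, and such a path traverses only whole edges, hence visits only vertices lying in $U$ — here I would carefully exclude the degenerate case that $U$ meets some edge only in its interior, since that edge would then be an isolated component of $U$, which is impossible once $\overline U$ fails to be compact. Conversely, the metric realisation of any component $C$ of $\cG_d\setminus\cV_n$ is a connected subset of $\cG\setminus K_n$, hence contained in a unique component $U$ with $U\cap\cV=C$, and this $U$ fails to be relatively compact precisely when $C$ is infinite. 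Monotonicity, $U'\subseteq U$ if and only if $U'\cap\cV\subseteq U\cap\cV$, is immediate.

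Third, I would unwind both notions of end into coherent sequences of components. On the combinatorial side this is classical (see \cite[\S 21]{woe}, \cite[\S 8.6]{die}): $\Omega(\cG_d)$ is in bijection with the coherent sequences $(C_n)$, where $C_n$ is an infinite component of $\cG_d\setminus\cV_n$ with $C_{n+1}\subseteq C_n$, the bijection sending $\omega\in\Omega(\cG_d)$ to the unique component $C_n(\omega)$ of $\cG_d\setminus\cV_n$ that contains a tail of every ray of $\omega$ (this is well defined because rays in one end cannot be separated by a finite vertex set), with the inverse recovering a ray from $(C_n)$ via K\"onig's infinity lemma. On the topological side I would show that any representative $(V_m)$ of $\gamma\in\gC(\cG)$ satisfies $V_m\subseteq\cG\setminus K_n$ for all sufficiently large $m$ (because $\bigcap_m\overline{V_m}=\emptyset$ forces the $V_m$ to miss every compact set eventually), so that $V_m$ lies in a single component $U_n(\gamma)$ of $\cG\setminus K_n$; one then checks that $(U_n(\gamma))_n$ is coherent, never relatively compact, satisfies $\bigcap_n\overline{U_n(\gamma)}=\emptyset$, is equivalent to $(V_m)$, and does not depend on the chosen representative, which gives a bijection of $\gC(\cG)$ with the coherent sequences of not-relatively-compact components of the $\cG\setminus K_n$. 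Composing these bijections with the level-by-level dictionary from the second step yields the bijection $\gamma\mapsto\omega_\gamma$, characterised by $C_n(\omega_\gamma)=U_n(\gamma)\cap\cV$ for all $n$.

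It then remains to read off the stated property. For any representative $\cU=(U_n)$ of $\gamma$ and any index $n$, equivalence with the canonical representative furnishes some $k$ with $U_k(\gamma)\subseteq U_n$; since $U_k(\gamma)\cap\cV=C_k(\omega_\gamma)$ contains a tail of some ray of $\omega_\gamma$ and such a tail is again a ray of $\omega_\gamma$, we conclude that $U_n$ contains a ray from $\omega_\gamma$. Uniqueness of a graph end with this property follows because any end $\omega'$ whose rays have tails in every $U_n(\gamma)\cap\cV$ must satisfy $C_n(\omega')=C_n(\omega_\gamma)$ for all $n$, and hence $\omega'=\omega_\gamma$. The step I expect to be the main obstacle is the second one — the topological bookkeeping relating components of $\cG\setminus K_n$ to components of $\cG_d\setminus\cV_n$, in particular controlling edges that straddle the boundary of a component — together with the verification in the third step that reducing an arbitrary topological-end representative to a coherent sequence of complementary components is well defined and respects the equivalence of representatives.
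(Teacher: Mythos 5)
The paper does not prove Theorem \ref{th:top-comb} at all: it is stated as a known result with references to Woess and to Diestel (and to Diestel--K\"uhn for the non-locally-finite case), so there is no in-paper argument to compare against. Your proposal is a correct reconstruction of the standard proof from exactly those sources: realise $\Omega(\cG_d)$ and $\gC(\cG)$ as coherent sequences of infinite (resp.\ non-relatively-compact) complementary components along a compact exhaustion, match them via $U\mapsto U\cap\cV$, and read off the ray property. The step you single out as delicate does go through: with $K_n$ defined as the subgraph induced on the combinatorial ball $\cV_n$, every component of $\cG\setminus K_n$ contains the far endpoint of some boundary edge (an edge lies in $K_n$ only if \emph{both} endpoints are in $\cV_n$), so the degenerate ``interior-only'' components you worry about cannot occur, and local finiteness gives finitely many components at each level; likewise $K_n\subseteq\operatorname{int}K_{n+1}$ holds automatically, with no need to pass to a subsequence. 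The only point I would insist you write out fully is the verification that for an arbitrary representative $(V_m)$ of $\gamma$ and each $m$ there is an $n$ with $U_n(\gamma)\subseteq V_m$: this follows because $\partial V_m$ is compact, hence contained in some $K_n$, and the connected set $U_n(\gamma)$ meets $V_m$ but avoids $\partial V_m$, so it lies in $V_m$; this is what makes the canonical sequence equivalent to $(V_m)$ and the map well defined and injective. With that spelled out, the argument is complete and agrees with the cited literature.
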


Therefore, we may identify topological ends of a metric graph $\cG$ and graph ends of the underlying graph $\cG_d$. We will simply speak of the \emph{ends} of $\cG$. 
One obvious advantage of this identification is the fact that the definition of $\Omega(\cG_d)$ is purely combinatorial and does not depend on edge lengths. 

\begin{definition}
An end $\omega$ of a graph $\cG_d$ is called  \emph{free} if there is a finite set $X$ of vertices which separates $\omega$ from all other ends of the graph (i.e. the rays of all ends $\omega' \neq \omega$ end up in different connected components of $\cV \setminus X$ than the rays of $\omega$).
\end{definition}

\begin{remark}\label{rem:ends}
Let us mention several examples. 
\begin{itemize}
\item[(i)] $\Z$ has two ends both of which are free.
\item[(ii)] $\Z^N$ has one end for all $N\ge 2$.
\item[(iii)] A $k$-regular tree, $k\ge 3$, has uncountably many ends, none of which is free.
\item[(iv)] If $\cG_d$ is a Cayley graph of a finitely generated infinite group $\mG$, then the number of ends of $\cG_d$ is independent of the generating set and $\cG_d$ has either one, two, or infinitely many ends. Moreover, $\cG_d$ has exactly two ends only if $\mG$ is virtually infinite cyclic (it has a finite normal subgroup $\mN$ such that the quotient group $\mG/\mN$ is isomorphic either to $\Z$ or $\Z_2\ast\Z_2$).
These results are due to Freudenthal~\cite{f44} and Hopf~\cite{hop} (see also~\cite{wall}). The classification of finitely generated groups with infinitely many ends is due to Stallings~\cite{s71}. Let us mention that if $\mG$ has infinitely many ends, then the result of Stallings implies that it contains a non-Abelian free subgroup and hence is non-amenable. For further details we refer to, e.g.,~\cite[Chapter~13]{geog}.
\item[(v)] Let us also mention that by Halin's theorem~\cite{hal} every locally finite graph $\cG_d$ with infinitely many ends contains at least one end which is not free.
\end{itemize}
\end{remark}

One of the main features of graph ends is that they provide a rather refined way of compactifying graphs (see~\cite{f31} and~\cite[\S~8.6]{die}, \cite{woe}).
Namely, we introduce a topology on $\wh{\cG} := \cG \cup \gC (\cG)$\label{p:FreudenthalCompactification} as follows.
For an open subset $U \subseteq \cG$, denote its extension $\wh U$ to $\wh{\cG}$ by 
\begin{align} \label{eq:top1}
	\wh{U}:= U \cup \{\gamma \in \gC (\cG)\, |  \; \exists\, \cU = (U_n)\in\gamma\ \text{such that}\ U_0 \subset U \}.
\end{align}
Now we can introduce a neighborhood basis of $\gamma\in \gC(\cG)$ as follows
\begin{align} \label{eq:top2}
\{\wh{U}\,|\, U \subseteq \cG\ \text{is open}, \gamma\in \wh{U} \}.
\end{align}
This turns $\wh{\cG}$  into a compact topological space, called the \emph{end (or Freudenthal) compactification} of $\cG$.

\begin{remark} \label{rem:freetop} 
Notice that an end $\gamma \in \gC(\cG)$ is free exactly when  $\{\gamma \}$ is open as a subset of $\gC(\cG)$ (here $\gC(\cG)$ carries the induced topology from $\wh{\cG}$). This is further equivalent to the existence of a connected subgraph $\cG^\gamma$ with compact boundary\footnote{Notice that for a subgraph $\wt\cG$ of $\cG$ its boundary is  $\partial\wt\cG = \{v\in \cV(\wt\cG)| \deg_{\wt\cG}(v) < \deg_\cG (v)\}$ and hence  $\partial \wt \cG$ is compact only if $\# \partial \wt \cG < \infty$.} $\partial \cG^\gamma$ such that 
$U_n \subseteq \cG^\gamma$ eventually for any sequence $ \cU = (U_n)$ representing $ \gamma$ and $U_n' \cap \cG^\gamma = \varnothing$ eventually for all sequences $\cU' = (U_n') $ representing an end $\gamma' \neq \gamma$.  
\end{remark}

Let us mention that topological ends can be obtained in a constructive way by means of compact exhaustions. Namely, 
a sequence of connected subgraphs $(\mathcal{F}_n)$ of $\cG$ such that each $\mathcal{F}_n$ has finitely many vertices and edges, $\mathcal{F}_n \subseteq \mathcal{F}_{n+1}$ for all $n\ge 0$ and $\bigcup_n \mathcal{F}_n = \cG$ is called a \emph{compact exhaustion} of $\cG$. Clearly, each $\mathcal{F}_n$ may be identified with a compact subset of $\cG$. Now iteratively construct a sequence $(U_n)$ by choosing in each step a non-compact, connected component $U_n$ of $\cG \setminus \mathcal{F}_n$ satisfying $U_{n}\subseteq U_{n-1}$. It is easy to check that each such sequence $(U_n)$ defines a topological end $\gamma \in \gC(\cG)$ and in fact all ends $\gamma \in \gC(\cG)$ are obtained by this construction. Notice also that the open subsets $U_n$ of such representations $\gamma \sim (U_n)$ (actually, their topological closures, since we need to add endpoints of edges which also belong to $\cV(\mathcal{F}_n)$) can again be identified with connected subgraphs $\mathcal{G}_n (\gamma) := \overline{ U_n}$ and we will frequently use this fact.

Let us finish this section with a few more notations. Suppose $\cR$ is a ray or a finite path without self-intersections in $\cG_d$. We may identify $\cR$ with a subgraph of $\cG_d$ and hence with a subset of $\cG$, i.e., we can consider it as the union of all edges of $\cR$. The latter can further be identified with the interval $I_\cR := [0,|\cR|)$ of length $|\cR|$, where
\[
	|\cR| := \sum_{e\in\cR} |e|.
\]
Also, we need to consider paths -- and in particular rays -- in $\cG$ starting or ending at a non-vertex point. In particular, given a path $ (v_0,v_1,\dots,v_N)$ and a point $x$ in the interior of some edge $e$ attached to $v_0$, $e \neq e_{v_0, v_1}$, we add the interval $[x, v_0] \subseteq e$ to $ (v_0,v_1,\dots,v_N)$.
 For the resulting set, we shall write $(x, v_0, v_1,\dots,v_N)$ and call it a \emph{non-vertex path}; and likewise for rays. The set of all non-vertex rays will be denoted by $\mR(\cG)$.

%%%%%%%%%%%%%%%%%%%%%%%%%%%%%%%%%%%%%%%%%%%%%%%%%%%%%%%%%%%%
\subsection{Kirchhoff Laplacian}\label{ss:II.03}
%%%%%%%%%%%%%%%%%%%%%%%%%%%%%%%%%%%%%%%%%%%%%%%%%%%%%%%%%%%%

Let $\cG$ be a metric graph satisfying Hypothesis~\ref{hyp:locfin}. Upon identifying every $e\in\cE$ with a copy of the interval $\cI_e = [0,|e|]$, we denote by
\[
L^2(e) := L^2(\cI_e; d{x_e})
\] 
the $L^2$-space for the (unweighted) Lebesgue measure $dx_e$ on ${\cI_e}$ 
and introduce the Hilbert space $L^2(\cG)$ of functions $f\colon \cG\to \C$ such that 
\[
L^2(\cG) := \bigoplus_{e\in\cE} L^2(e) = \Big\{f=\{f_e\}_{e\in\cE}\big|\, f_e\in L^2(e),\ \sum_{e\in\cE}\|f_e\|^2_{L^2(e)} <\infty\Big\}.
\]
The subspace of compactly supported $L^2(\cG)$ functions will be denoted by
\begin{equation*}
	L^2_c(\cG) := \big\{f \in L^2(\cG)| \; f \neq 0 \text{ only on finitely many edges } e \in \cE\big\}.
\end{equation*}
For every $e\in\cE$ consider the maximal operator $\rH_{e,\max}$ acting on functions $f\in H^2(e)$ as a negative second derivative. Here and below $H^s(e)$ for $s \ge 0$ denotes the usual Sobolev space on $e$ (see, e.g.,~\cite[Chapter~8]{bre}). In particular, $H^0(e)= L^2(e)$ and 
\[
H^1(e) = \{f\in AC(e)|\, f'\in L^2(e)\},\quad H^2(e) = \{f\in H^1(e)|\,  f'\in H^1(e)\}.
\]
This defines the maximal operator on $L^2(\cG)$ by 
\be\label{eq:Hmax}
\bH_{\max} := \bigoplus_{e\in \cE} \rH_{e,\max},\qquad \rH_{e,\max} = -\frac{\rD^2}{\rD x_e^2},\quad \dom(\rH_{e,\max}) = H^2(e).
\ee
If $v$ is a vertex of the edge $e \in \cE$, then for every $f\in H^2(e)$ the following quantities 
%(the trace and inner normal derivative at $v$) 
\begin{align}\label{eq:tr_fe}
f_e(v) & := \lim_{x_e\to v} f(x_e), &  f_e'(v) & := \lim_{x_e\to v} \frac{f(x_e) - f(v)}{|x_e - v|},
\end{align}
are well defined. Considering $\cG$ as the union of all edges glued together at certain endpoints, let us equip a metric graph with the Laplace operator.  The Kirchhoff (also called \emph{standard} or \emph{Kirchhoff--Neumann}) boundary conditions at every vertex $v\in\cV$ are then given by
\begin{align}\label{eq:kirchhoff}
\begin{cases} f\ \text{is continuous at}\ v,\\[1mm] 
\sum\limits_{e\in \cE_v}f_e'(v) =0. \end{cases}
\end{align}
Imposing these boundary conditions on the maximal domain $\dom(\bH_{\max})$ yields the \emph{maximal Kirchhoff Laplacian}
\begin{align}\label{eq:H}
\begin{split}
	\bH & :=  \bH_{\max}\upharpoonright {\dom(\bH)},\\ 
	 \dom(\bH ) &= \{f\in \dom(\bH_{\max})\,|\, f\ \text{satisfies}\ \eqref{eq:kirchhoff} \text{ for any } v\in\cV\}.
\end{split}
\end{align}
Restricting further to compactly supported functions we end up with the pre-minimal operator
\be\label{eq:H00}
\begin{split}
	\bH_{0}^0 & :=  \bH_{\max}\upharpoonright {\dom(\bH_{0}^0)},\\ 
	 \dom(\bH_{0}^0) & = \{f\in \dom(\bH_{\max})\cap L^2_{c}(\cG)|\, f\ \text{satisfies}\ \eqref{eq:kirchhoff} \text{ for any } v\in\cV\}.
\end{split}
\ee
Integrating by parts one obtains
\be \label{eq:integrationbp}
	\langle\bH_0^0 f, f\rangle_{L^2(\cG)} = \int_\cG |f'(x)|^2 \; dx , \qquad f \in \dom(\bH_0^0),
\ee
and hence $\bH_0^0$ is a non-negative symmetric operator. We call its closure $\bH_0 := \overline{ \bH_{0}^{0}}$ in $L^2(\cG)$ \emph{the minimal Kirchhoff Laplacian}. 
The following result is well-known (see, e.g.,~\cite[Lemma~3.9]{car08}).

\begin{lemma}\label{lem:H0*=H}
	Let $\cG$ be a metric graph. Then
\be\label{eq:H0*=H}
		\bH_0^\ast = \bH.
\ee
\end{lemma}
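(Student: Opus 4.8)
The plan is to establish the two inclusions $\bH\subseteq \bH_0^\ast$ and $\bH_0^\ast\subseteq\bH$ separately, using integration by parts on each edge together with a careful summation over all edges.

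First I would prove $\bH\subseteq\bH_0^\ast$, which is the easier direction. Take $f\in\dom(\bH)$ and $g\in\dom(\bH_0^0)$; since $g$ is compactly supported, only finitely many edges are involved, so there is no convergence issue. Integrating by parts twice on each edge $e$ and summing, the edgewise boundary terms at each vertex $v$ regroup into $\sum_{e\in\cE_v}\big(f_e'(v)\overline{g_e(v)}-f_e(v)\overline{g_e'(v)}\big)$; continuity of $f$ and $g$ at $v$ together with the Kirchhoff condition $\sum_{e\in\cE_v}g_e'(v)=0$ (and the same for $f$) makes all these terms vanish. Hence $\langle \bH f,g\rangle_{L^2(\cG)}=\langle f,\bH_0^0 g\rangle_{L^2(\cG)}$ for all $g\in\dom(\bH_0^0)$, so $f\in\dom((\bH_0^0)^\ast)=\dom(\bH_0^\ast)$ and $\bH_0^\ast f=\bH f$; here I use that $(\bH_0^0)^\ast=(\overline{\bH_0^0})^\ast=\bH_0^\ast$ since adjoints are insensitive to closure.

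Next I would prove the reverse inclusion $\bH_0^\ast\subseteq\bH$. Let $f\in\dom(\bH_0^\ast)$, so there is $h=\bH_0^\ast f\in L^2(\cG)$ with $\langle f,\bH_0^0 g\rangle=\langle h,g\rangle$ for all $g\in\dom(\bH_0^0)$. Restricting to $g$ supported in the interior of a single edge $e$ and recognizing $\bH_0^0$ acting edgewise as $-d^2/dx_e^2$, one deduces in the distributional sense that $-f_e''=h_e$ on each edge; since $h_e\in L^2(e)$ and $|e|<\infty$, elliptic regularity (really just integrating twice) gives $f_e\in H^2(e)$, so $f\in\dom(\bH_{\max})$ and $\bH_{\max}f=h$. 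It remains to recover the vertex conditions \eqref{eq:kirchhoff}. Fixing a vertex $v$ and testing against functions $g\in\dom(\bH_0^0)$ supported on a small star neighborhood of $v$, the identity $\langle f,\bH_0^0 g\rangle=\langle \bH_{\max}f,g\rangle$ reduces (after integrating by parts on each edge of $\cE_v$, the contributions from the other endpoints of those edges cancelling by choosing $g$ to vanish there) to $\sum_{e\in\cE_v}\big(f_e(v)\overline{g_e'(v)}-f_e'(v)\overline{g_e(v)}\big)=0$. Exploiting the freedom in the boundary data of $g$ — first choosing $g$ with $g_e'(v)$ arbitrary among Kirchhoff-admissible values and $g(v)=0$, then choosing $g(v)\neq0$ with $\sum g_e'(v)=0$ — forces first that all $f_e(v)$ agree (continuity) and then that $\sum_{e\in\cE_v}f_e'(v)=0$. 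Therefore $f$ satisfies \eqref{eq:kirchhoff} at every vertex, i.e.\ $f\in\dom(\bH)$.

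The main obstacle is the bookkeeping in the second direction: one must argue that membership in the domain of $\bH_0^\ast$, which is phrased globally via all compactly supported test functions, yields the local $H^2$-regularity and the pointwise vertex conditions, and one must be careful that enough test functions $g\in\dom(\bH_0^0)$ with prescribed local boundary behavior at a single vertex actually exist (this is where local finiteness of $\cG_d$ and $|e|<\infty$ enter, guaranteeing one can build compactly supported Kirchhoff functions with arbitrary admissible jet data at $v$ and zero data elsewhere). The first direction and the distributional identification $-f_e''=h_e$ are routine. Since the statement is classical, I would also simply cite \cite[Lemma 3.9]{car08} as the paper does, but the above is the self-contained argument.
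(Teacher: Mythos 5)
Your proposal is correct and follows essentially the same route as the paper's proof: integration by parts for the inclusion $\bH\subseteq\bH_0^\ast$, then edgewise distributional identification to get $H^2$-regularity, followed by testing against compactly supported Kirchhoff functions with prescribed vertex data (first $g(v)=0$ with admissible derivative data to force continuity, then $g(v)\neq 0$ with vanishing derivative sum to force the Kirchhoff condition). The only difference is cosmetic: the paper uses the specific choices $f_{e_1}'(v)=-f_{e_2}'(v)=1$ and a plateau function equal to $1$ near $v$, which are particular instances of the test-function families you describe.
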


\iflong{}
\begin{proof}
We give the proof for the sake of completeness. Integration by parts shows that
	\[
		\langle\bH_0^0 f, g\rangle_{L^2(\cG)} = \langle f, \bH g \rangle_{L^2(\cG)}
	\] 
for all $f\in \dom(\bH_0^0)$ and $g\in \dom(\bH)$. Therefore, $g \in \dom(\bH_0^\ast)$ and hence $\bH \subseteq \bH_0^\ast$. 
	
	To prove the converse  inclusion, suppose that $g \in \dom (\bH_0^\ast)$. Fix an edge $e \in \cE$ and consider a test function $f\in \dom(\bH_0^0)$ such that $f$ equals zero everywhere except $e$. Then clearly $f \in H^2_0(e)$ and, moreover,
	\[
		 - \int_e f''(x) g(x)^\ast dx %= (\bH_0 f, g) =  (f, \bH_0^\ast g) 
		 = \int_e f(x)  g''(x)^\ast  dx.
	\]
	Thus, $g \in H^2(e)$ and the restriction of $\bH_0^\ast g$ on $e$ is simply given by $- g_e''$. Since $e\in\cE$ is arbitrary, this implies that $g \in \dom(\bH_{\max})$ and $\bH_0^\ast g = \bH_{\max} g$.
	
It remains to show that $g$ satisfies the Kirchhoff conditions \eqref{eq:kirchhoff}. Pick a vertex $v \in \cV$. If $\deg(v)=1$, then the claim is trivial. So, suppose $\deg(v)>1$. Let $e_1$ and $e_2$ be two distinct edges attached to $v$, $e_1,e_2\in \cE_v$. Then choose $f\in \dom(\bH_0^0)$ such that $f\equiv 0$ on $\cE\setminus\{e_1,e_2\}$, $f (v) = 0$ and $f_{e_1}' (v) = - f_{e_2}' (v) = 1$. Hence
	\[
		0 = \langle\bH_0 f, g\rangle - \langle f, \bH_0^\ast g\rangle = \sum_{n=1}^2  \left(f_{e_n}'(v) g_{e_n}(v)^\ast - f_{e_n}(v)  g_{e_n}'(v)^\ast\right) =  (g_{e_1} (v) -  g_{e_2}(v))^\ast,
	\]
which implies that $g_e(v)$ is independent of $e \in \cE_v$. Thus, $g $ is continuous.  

Next, choose $f\in \dom(\bH_0^0)$ such that $f \equiv 0$ on $\cE\setminus\cE_v$. Moreover, for every $e\in\cE_v$ we assume that $f(x_e) = 1$ if $x_e \in e$ and $|x_e - v| < |e|/4$ and $f(x_e) = 0$ if $|x_e - v| > |e|/2$. Thus we get
	\begin{align*}
		0 = \langle\bH_0 f, g\rangle - \langle f, \bH_0^\ast g\rangle = \sum_{e \in \cE_v} \left( f_e'(v) g_e(v)^\ast  - f_e(v)  g_e'(v)^\ast \right)=-\sum_{e \in \cE_v} g_e'(v)^\ast.
	\end{align*}
Hence $g$ satisfies \eqref{eq:kirchhoff} at every $v \in \cV$ and the proof is complete.
\end{proof}\fi

%%%%%%%%%%%%%%%%%%%%%%%%%%
\subsection{Deficiency indices}\label{ss:II.04}
%%%%%%%%%%%%%%%%%%%%%%%%%%
In the following we are interested in the question whether $\bH_0$ is self-adjoint, or equivalently whether the equality $\bH_0 = \bH$ holds true. Let us recall one sufficient condition. 
Define the \emph{star weight} $m(v)$ of a vertex $v \in \cV$ by
\be\label{def:m}
		m(v) := \sum_{e \in \cE_v}|e| = \vol(\cE_v),
\ee
and also introduce the \emph{star path metric} on $\cV$ by 
\be\label{def:rho_m}
\varrho_m(u,v) := \inf_{\substack{\cP=(v_0,\dots,v_n)\\ u=v_0,\ v=v_n}}\sum_{v_k\in\cP} m(v_k).
\ee

\begin{theorem}[\cite{ekmn}]\label{th:ekmn}
If $(\cV,\varrho_m)$ is complete as a metric space, 
then $\bH_0^0$ is essentially self-adjoint and $\overline{\bH_0^0} = \bH_0 = \bH$.
\end{theorem}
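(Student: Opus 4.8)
The plan is to prove essential self-adjointness by the standard strategy for symmetric operators: show that the defuct equation $\bH f = \pm\I f$ (or more conveniently $\bH f = -f$, using non-negativity, or $\bH f = \I f$) has no nontrivial $L^2$-solution, by exploiting the completeness of $(\cV,\varrho_m)$ via a suitable cutoff argument. Concretely, suppose $f \in \dom(\bH) = \dom(\bH_0^\ast)$ satisfies $\bH f = z f$ for some $z \in \C\setminus\R$ (say $z = \I$). I want to conclude $f \equiv 0$, which forces $\Nr_\pm(\bH_0)=0$ and hence $\bH_0 = \bH_0^\ast = \bH$ by Lemma \ref{lem:H0*=H}.

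The key device is a family of Lipschitz cutoff functions built from the metric $\varrho_m$. Fix a root $o \in \cV$. Because $(\cV,\varrho_m)$ is complete and the graph is locally finite, the closed balls $B_R := \{v \in \cV : \varrho_m(o,v) \le R\}$ are finite for every $R < \infty$ (otherwise completeness would fail along a sequence escaping to infinity within a bounded ball, since $m(v) \ge |e| > 0$ bounds the step sizes — this finiteness is the structural consequence of Theorem \ref{th:ekmn}'s hypothesis that makes the argument work). Using $\varrho_m$ one constructs vertex functions $\chi_R$ that are $1$ on $B_R$, $0$ outside $B_{2R}$, and interpolate linearly in $\varrho_m$ in between; these extend to edgewise-linear (hence edgewise $H^1$) functions on $\cG$ with controlled energy. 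The crucial estimate is that $\|\chi_R'\|_{L^2(e)}^2$ on an edge $e = e_{u,v}$ is of order $|e| \cdot |\chi_R(u)-\chi_R(v)|^2/|e|^2 = |\chi_R(u)-\chi_R(v)|^2/|e|$, and since $|\chi_R(u)-\chi_R(v)| \lesssim (m(u)+m(v))/R$ while $m(u), m(v) \ge |e|$, one gets a bound on $\int_\cG |\chi_R'|^2$ that tends to $0$ as $R \to \infty$ (this is precisely where the definition \eqref{def:m}--\eqref{def:rho_m} of the star metric is tailored to absorb the edge length in the denominator). Then $\chi_R f \in \dom(\bH_0)$ (it is compactly supported, continuous, edgewise $H^2$, and satisfies Kirchhoff conditions since $\chi_R$ is continuous and $f$ already does), so one may integrate by parts against $\bH f = \I f$:
\[
\I \int_\cG \chi_R |f|^2 \, dx = \langle \bH f, \chi_R f\rangle_{L^2(\cG)} = \int_\cG \chi_R |f'|^2 \, dx + \int_\cG \chi_R' \, f' \, \overline{f}\, dx,
\]
where the last term comes from moving one derivative. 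Taking imaginary parts kills the real term $\int \chi_R |f'|^2$, giving $\int_\cG \chi_R |f|^2 \, dx = \im \int_\cG \chi_R' f' \overline{f}\, dx \le \|\chi_R'\|_{L^2}\, \|f'\|_{L^2(\supp \chi_R')}\, \|f\|_{L^\infty(\cdots)}$, which I must show vanishes.

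To close the argument I also need an a priori control $\|f'\|_{L^2(\cG)} < \infty$ and a pointwise bound on $f$ on the annular cutoff regions; these come from a separate \emph{local} estimate. Testing the integration-by-parts identity with $\chi_R^2$ (rather than $\chi_R$) and using $ab \le \varepsilon a^2 + \varepsilon^{-1}b^2$ on the cross term $\int \chi_R \chi_R' f' \overline{f}$, one derives $\int \chi_R^2 |f'|^2 \lesssim \int (\chi_R')^2 |f|^2 + \|f\|_{L^2}^2$, which is uniformly bounded; letting $R\to\infty$ gives $f \in H^1(\cG)$. A one-dimensional Sobolev embedding on edges, combined with the fact that the annulus $B_{2R}\setminus B_R$ has energy going to zero, then shows the boundary term in the first identity vanishes as $R\to\infty$, forcing $\int_\cG |f|^2 = 0$, i.e. $f = 0$.

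The main obstacle, and the point requiring the most care, is verifying that the cutoff energy $\int_\cG |\chi_R'|^2\,dx$ genuinely tends to $0$ (not merely stays bounded) on the annular region — this is a ``long-range'' property of the metric $\varrho_m$ and relies on the specific normalization in \eqref{def:m} that pairs each edge length with the star weights of its endpoints, so that the Lipschitz-in-$\varrho_m$ bound on $\chi_R$ converts into a summable-and-vanishing edge-energy bound. A secondary technical point is the passage from the crude bound to the conclusion: one typically iterates (a Cauchy sequence / telescoping argument over dyadic scales $R = 2^k$) or uses a slightly sharper choice of cutoff profile to guarantee the relevant tail sums vanish. Everything else — that $\chi_R f$ lies in the minimal domain, the integration by parts, the local $H^1$ bound — is routine once the geometric estimate on $\chi_R$ is in place.
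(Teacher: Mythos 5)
First, a point of reference: the paper does not prove this statement at all --- it is imported from \cite{ekmn}, where the argument runs through a boundary-triplet reduction identifying self-adjointness of $\bH_0$ with that of the discrete Laplacian \eqref{eq:laplacediscr} in $\ell^2(\cV;m)$, followed by the discrete Gaffney-type theorem for intrinsic metrics. Your proposal instead attempts a direct Gaffney cutoff argument on the metric graph, and it breaks down at exactly the point you yourself single out as ``the main obstacle.''

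The key estimate is false. For an edge $e=e_{u,v}$, the edgewise-linear interpolation of a $\varrho_m$-Lipschitz vertex cutoff has energy $\int_e|\chi_R'|^2\,dx=|\chi_R(u)-\chi_R(v)|^2/|e|\le (m(u)+m(v))^2/(R^2|e|)$, and the inequality $m(u),m(v)\ge|e|$ bounds the \emph{numerator} from below relative to $|e|$; it does not ``absorb the edge length in the denominator.'' The ratio $(m(u)+m(v))^2/|e|$ is unbounded whenever $|e|$ is small compared with the star weights of its endpoints, and this is not a removable technicality: it fails on precisely the graphs for which the theorem is interesting. Take a half-line spine $v_0\sim v_1\sim\cdots$ with $|e_{v_n,v_{n+1}}|=2^{-n}$ and attach to each $v_n$ a pendant edge of length $1$. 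Then $m(v_n)\ge 1$, so $(\cV,\varrho_m)$ is complete and the theorem asserts self-adjointness, while $(\cG,\varrho)$ is incomplete; your annular cutoff energy is of order $\sum_{n\asymp R}2^{n}/R^2\to\infty$, and the same defect destroys the Caccioppoli step, since $\int(\chi_R')^2|f|^2\,dx$ is equally uncontrolled. The reason the analogous discrete arguments succeed is that there the cross terms are summed against the measure $m$ and one needs an \emph{intrinsic} metric, i.e.\ a weight with $\sum_{e\in\cE_v}\sigma(e)^2/|e|\le m(v)$; this is satisfied by $\sigma(e)=|e|$ (the ordinary path metric $\varrho$), not by the jumps of $\varrho_m$, which is why the full strength of $\varrho_m$-completeness is only recovered through the discrete reduction and not by interpolating the cutoff back onto $\cG$. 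Two secondary points: (a) $\chi_Rf$ does \emph{not} satisfy the Kirchhoff conditions \eqref{eq:kirchhoff} (an edgewise-linear interpolation of a vertex function has $\sum_{e\in\cE_v}(\chi_R)_e'(v)\neq 0$ in general), so it does not lie in $\dom(\bH_0^0)$; the integration-by-parts identity you want still holds because the vertex boundary terms vanish using the Kirchhoff conditions on $f$ alone, but the claim as written is incorrect. (b) The implication ``completeness of $(\cV,\varrho_m)$ implies finite balls'' is true but requires a K\"onig's-lemma extraction of an infinite ray of finite $\varrho_m$-length, not the one-line remark you give.
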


If a symmetric operator is not (essentially) self-adjoint, then the degree of its non-self-adjointness is determined by its \emph{deficiency indices}. 
Recall that the \emph{deficiency subspace} $\cN_z(\bH_0)$ of $\bH_0$ is defined by
\be\label{def:Nz}
\cN_z(\bH_0) := \ker(\bH_0^\ast - z)= \ker(\bH - z),\quad z\in\C.
\ee 
The numbers 
\be\label{def:n_pm}
\Nr_\pm(\bH_0) := \dim\cN_{\pm\I}(\bH_0) = \dim\ker(\bH \mp \I)
\ee
are called the deficiency indices of $\bH_0$. Notice that $\Nr_+(\bH_0)=\Nr_-(\bH_0)$ since $\bH_0$ is non-negative.
Moreover, $\bH_0$ is self-adjoint exactly when $\Nr_+ (\bH_0) =\Nr_- (\bH_0) = 0$.

\begin{lemma}\label{lem:npm=ker}
If $0$ is a point of regular type for $\bH_0$, then\footnote{For an operator $T$ with dense domain in a Hilbert space $\cH$,  $\lambda\in\C$ is called a \textit{point of regular type} of $T$ if there exists $c=c_\lambda>0$ such that $\|(T-\lambda)f\|\ge c\|f\|$ for all $f\in \dom(T)$.} 
\be\label{eq:npm=ker}
\Nr_\pm(\bH_0) = \dim\ker(\bH).
\ee
\end{lemma}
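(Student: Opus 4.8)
The plan is to invoke the general principle that for a closed symmetric operator, the deficiency indices are constant across each connected component of the set of points of regular type, together with the fact that $\bH_0$ is non-negative. First I would recall that the set $\Lambda$ of points of regular type of $\bH_0$ is open in $\C$ and that $z \mapsto \dim\ker(\bH_0^\ast - z)$ is locally constant on $\Lambda$; this is a standard fact (see, e.g., Akhiezer--Glazman or the appendix on symmetric operators). Since $\bH_0 \ge 0$, the whole open half-line $(-\infty, 0)$ consists of points of regular type: indeed for $\lambda < 0$ one has $\|(\bH_0 - \lambda)f\| \ge |\lambda|\,\|f\|$ by non-negativity, so $\lambda \in \Lambda$. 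Moreover every non-real point belongs to $\Lambda$, and the set $\Lambda \supseteq \C \setminus [0,\infty)$ is connected. Hence $\dim\ker(\bH - z)$ is the same for all $z \in \C\setminus[0,\infty)$; in particular $\Nr_\pm(\bH_0) = \dim\ker(\bH - (\pm\I)) = \dim\ker(\bH - \lambda)$ for any $\lambda < 0$.

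The remaining point is to pass from a negative $\lambda$ to $\lambda = 0$. By hypothesis $0 \in \Lambda$, so $0$ lies in the same connected component of $\Lambda$ as the negative half-line (the component containing $\C\setminus[0,\infty)$, to which $0$ is adjoined as soon as it is a point of regular type, since a single boundary point added to a connected open set together with a punctured neighbourhood still yields a connected set — more cleanly: $\{0\}\cup(\C\setminus[0,\infty))$ is connected, and it is contained in $\Lambda$ by assumption). Therefore $\dim\ker(\bH - z)$ takes the same value on this connected subset of $\Lambda$, and evaluating at $z = 0$ gives $\dim\ker(\bH - 0) = \dim\ker(\bH)$. Combining with the previous paragraph yields \eqref{eq:npm=ker}.

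I would phrase the argument so as to avoid any subtlety about whether $\dim\ker(\bH_0^\ast - z)$ is constant on $\Lambda$ itself (it is locally constant, and $\Lambda$ here need not be connected in full generality): it suffices to work on the explicitly connected subset $\{0\}\cup(\C\setminus[0,\infty)) \subseteq \Lambda$, on which local constancy forces global constancy. Thus the key inputs are (i) non-negativity of $\bH_0$, giving $(-\infty,0)\subseteq\Lambda$ and $\Nr_+ = \Nr_-$; (ii) the local constancy of the deficiency function on the set of regular-type points; and (iii) the assumption $0\in\Lambda$ to bridge to the kernel of $\bH$.

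The main obstacle — really the only thing to be careful about — is the topological bookkeeping: making sure the chosen subset of $\Lambda$ containing both a negative real point and $0$ is connected, and correctly quoting local constancy of $\dim\ker(\bH_0^\ast - z)$ on points of regular type (as opposed to merely on the resolvent set). Everything else is immediate from non-negativity and the identity $\bH_0^\ast = \bH$ from Lemma \ref{lem:H0*=H}.
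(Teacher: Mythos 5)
Your argument is correct and is essentially the paper's own proof written out in full: the paper simply invokes $\bH_0^\ast=\bH$ together with the Akhiezer--Glazman result (\S 78) on the constancy of the deficiency number $\dim\ker(\bH_0^\ast-z^\ast)$ on connected components of the field of regularity, which is exactly the mechanism you describe. Your care about working on the explicitly connected set $\{0\}\cup(\C\setminus[0,\infty))$ and about local versus global constancy is sound and matches what the cited reference provides.
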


\begin{proof}
The claim immediately follows from~\cite[\S~78]{ag} or~\cite[Prop.~3.3]{schm}. Indeed, the set of regular points of $\bH_0$ is an open subset of $\C$. Moreover, by the Krasnoselskii--Krein theorem (see, e.g.,~\cite[\S~78]{ag} or~\cite[Prop.~2.4]{schm}), $\dim\cN_z(\bH_0)$ is constant on each connected component of the set of regular type points of $\bH_0$. Since $\bH_0$ is symmetric, each $z\in\C\setminus\R$ is a point of regular type for $\bH_0$. Therefore, if $0$ is a point of regular type for $\bH_0$, we immediately get 
\[
\dim\ker(\bH) = \dim\cN_{0}(\bH_0) = \Nr_+(\bH_0) = \Nr_- (\bH_0).\qedhere
\]
\end{proof}

Using the Rayleigh quotient, define
\be\label{eq:lowerbound}
\lambda_0(\cG):= \mathop{\inf_{f\in \dom(\bH_0)}}_{\|f\|=1} \big\langle\bH_0f,f\big\rangle_{L^2(\cG)} = \mathop{\inf_{f\in \dom(\bH_0)}}_{\|f\|=1}\int_{\cG}|f'|^2 dx.
\ee
Noting that the operator $\bH_0$ is non-negative, $0$ is a point of regular type for $\bH_0$ if $\lambda_0(\cG)>0$. Thus, we arrive at the following result. 

\begin{corollary}\label{cor:npm=ker}
If $\lambda_0(\cG)>0$, 
then \eqref{eq:npm=ker} holds true.
\end{corollary}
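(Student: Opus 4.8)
The statement to prove is Corollary \ref{cor:npm=ker}: if $\lambda_0(\cG) > 0$, then $\Nr_\pm(\bH_0) = \dim\ker(\bH)$.

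\medskip

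The plan is to deduce this directly from Lemma \ref{lem:npm=ker} by verifying that the hypothesis $\lambda_0(\cG) > 0$ implies that $0$ is a point of regular type for $\bH_0$. First I would recall from \eqref{eq:lowerbound} that, since $\bH_0$ is non-negative and symmetric, for every $f \in \dom(\bH_0)$ with $\|f\| = 1$ we have $\langle \bH_0 f, f\rangle_{L^2(\cG)} \ge \lambda_0(\cG)$, and more generally $\langle \bH_0 f, f\rangle \ge \lambda_0(\cG)\|f\|^2$ for all $f \in \dom(\bH_0)$ by homogeneity. Then, using the Cauchy--Schwarz inequality, $\lambda_0(\cG)\|f\|^2 \le \langle \bH_0 f, f\rangle \le \|\bH_0 f\|\,\|f\|$, so that $\|\bH_0 f\| \ge \lambda_0(\cG)\|f\|$ for all $f \in \dom(\bH_0)$. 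If $\lambda_0(\cG) > 0$, this is precisely the statement (with $\lambda = 0$ and $c = \lambda_0(\cG)$) that $0$ is a point of regular type for $\bH_0$.

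\medskip

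With this in hand, Lemma \ref{lem:npm=ker} applies verbatim and yields $\Nr_\pm(\bH_0) = \dim\ker(\bH)$, which is the claim. Since all the ingredients — the Rayleigh-quotient bound, Cauchy--Schwarz, and Lemma \ref{lem:npm=ker} — are already available, there is no real obstacle here; the only point requiring a line of care is the passage from the normalized infimum in \eqref{eq:lowerbound} to the quadratic estimate for arbitrary $f \in \dom(\bH_0)$ and then to the operator lower bound $\|\bH_0 f\| \ge \lambda_0(\cG)\|f\|$, but this is a routine homogeneity-plus-Cauchy--Schwarz argument.
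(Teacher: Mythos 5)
Your proposal is correct and follows the same route as the paper: the paper simply observes that for the non-negative operator $\bH_0$, the condition $\lambda_0(\cG)>0$ is equivalent to $0$ being a point of regular type, and then invokes Lemma \ref{lem:npm=ker}. Your Cauchy--Schwarz argument just makes explicit the standard one-line justification the paper leaves implicit.
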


The positivity of $\lambda_0(\cG)$ is known in the following simple situation.

\begin{corollary}\label{cor:finvol=ker}
If $\cG$ has \textit{finite total volume}, 
\be\label{hyp:finitevol}
		\vol(\cG) := \sum_{e \in \cE} |e| < \infty,
\ee
then $\bH_0$ is not self-adjoint and \eqref{eq:npm=ker} holds true.
\end{corollary}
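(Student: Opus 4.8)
The plan is to reduce everything to Corollary~\ref{cor:npm=ker}: it suffices to show that $\lambda_0(\cG)>0$ and, separately, that $\bH_0\neq\bH$. Both facts will follow from a single crude Poincar\'e-type inequality, which is where the finiteness of $\vol(\cG)$ enters.

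First I would prove that
\be\label{eq:finvolpoincareplan}
	\int_\cG|f(x)|^2\,dx\le \vol(\cG)^2\int_\cG|f'(x)|^2\,dx,\qquad f\in\dom(\bH_0^0).
\ee
The key observation is that $f$ is compactly supported while $\cG$ has infinitely many vertices, so there is a vertex $v$ with $f\equiv0$ near $v$; in particular $f(v)=0$. Since $\cG$ is connected, every $x\in\cG$ can be joined to $v$ by a self-avoiding path $\gamma_x$ in $\cG$; being self-avoiding, $\gamma_x$ traverses each edge at most once and hence has length $|\gamma_x|\le\vol(\cG)$. Because $f$ is continuous on $\cG$ and edgewise absolutely continuous, the fundamental theorem of calculus along $\gamma_x$ (gluing the edgewise identities by continuity at the vertices) gives $f(x)=f(x)-f(v)=\int_{\gamma_x}f'$, whence by the Cauchy--Schwarz inequality
\[
	|f(x)|\le\int_{\gamma_x}|f'|\le|\gamma_x|^{1/2}\|f'\|_{L^2(\gamma_x)}\le\vol(\cG)^{1/2}\|f'\|_{L^2(\cG)}.
\]
Taking the supremum over $x\in\cG$ and integrating over $\cG$ (using $\vol(\cG)<\infty$) yields \eqref{eq:finvolpoincareplan}. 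Combining \eqref{eq:finvolpoincareplan} with \eqref{eq:integrationbp} gives $\langle\bH_0^0f,f\rangle_{L^2(\cG)}\ge\vol(\cG)^{-2}\|f\|^2$ on $\dom(\bH_0^0)$, and since $\dom(\bH_0^0)$ is a core for $\bH_0=\overline{\bH_0^0}$ the same bound passes to all $f\in\dom(\bH_0)$. By \eqref{eq:lowerbound} this means $\lambda_0(\cG)\ge\vol(\cG)^{-2}>0$, so Corollary~\ref{cor:npm=ker} applies and \eqref{eq:npm=ker} follows.

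It remains to observe that $\bH_0$ is not self-adjoint. Since $\vol(\cG)<\infty$, the constant function $\id$ belongs to $L^2(\cG)$, and evidently $\id\in\dom(\bH)$ with $\bH\id=0$, so $\ker(\bH)\neq\{0\}$; on the other hand $\lambda_0(\cG)>0$ forces $\ker(\bH_0)=\{0\}$, whence $\bH_0\subsetneq\bH=\bH_0^\ast$ (equivalently, the right-hand side of \eqref{eq:npm=ker} is at least $1$). The only steps needing more than a line of care — and thus the main, if modest, obstacle — are the global fundamental theorem of calculus along $\gamma_x$ and the passage of \eqref{eq:finvolpoincareplan} from the core $\dom(\bH_0^0)$ to $\dom(\bH_0)$; neither requires a new idea, and the smallness forced by $\vol(\cG)<\infty$ is exactly what makes such a rough estimate enough.
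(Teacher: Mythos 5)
Your argument is correct, and its overall skeleton is the same as the paper's: establish $\lambda_0(\cG)>0$ so that Corollary~\ref{cor:npm=ker} yields \eqref{eq:npm=ker}, and then observe that $\id_\cG\in\ker(\bH)\cap L^2(\cG)$ forces $\dim\ker(\bH)\ge 1$, hence non-self-adjointness. The one place where you diverge is the key inequality: the paper simply invokes the Cheeger-type estimate \eqref{eq:cheeg} from the cited reference, whereas you prove the spectral gap from scratch via the Poincar\'e-type bound $\|f\|_{L^2(\cG)}^2\le \vol(\cG)^2\|f'\|_{L^2(\cG)}^2$ on $\dom(\bH_0^0)$, exploiting that a compactly supported $f$ vanishes at some vertex and that a self-avoiding path has length at most $\vol(\cG)$. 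This is a legitimate and fully self-contained substitute; it even yields the slightly better constant $\lambda_0(\cG)\ge \vol(\cG)^{-2}$ in place of $\tfrac{1}{4}\vol(\cG)^{-2}$. The two auxiliary points you flag — the global fundamental theorem of calculus along a finite self-avoiding (non-vertex) path, which holds because $f$ is edgewise absolutely continuous and continuous at the vertices, and the passage of the form lower bound from the core $\dom(\bH_0^0)$ to $\dom(\bH_0)$ by taking graph-norm limits — are both routine and close as you indicate. What the paper's route buys is brevity via an external reference; what yours buys is independence from that reference at the cost of a few extra lines.
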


\begin{proof}
Indeed, by the Cheeger-type estimate~\cite[Corollary~3.5(iv)]{kn17}, we have
\begin{equation} \label{eq:cheeg}
	\lambda_0(\cG) \ge \frac{1}{4\, \vol(\cG)^2} >0,
\end{equation}
and hence \eqref{eq:npm=ker} holds true by Corollary~\ref{cor:npm=ker}. 
Moreover,  $\id_\cG \in \ker(\bH)$, where $\id_{\cG}$ denotes the constant function on $\cG$, and hence
\[
	\Nr_\pm (\bH_0) = \dim (\ker  \bH) \ge 1.
\]
It remains to notice that $\bH_0$ is self-adjoint exactly when $\Nr_\pm (\bH_0) = 0$.
\end{proof}

\begin{remark}
By~\cite[Theorem~3.4]{kn17}, $\lambda_0(\cG)>0$ holds true if the isoperimetric constant $\alpha(\cG)$ of the metric graph $\cG$ is positive. For antitrees, the isoperimetric constant is tightly related to the structure of its combinatorial spheres (see~\cite[Theorem~7.1]{kn19}). If $\cG_d$ is the edge graph of a tessellation of $\R^2$, then positivity of $\alpha(\cG)$ can be deduced from certain curvature-type quantities~\cite{noe20}.

On the other hand, by~\cite[Corollary~4.5(i)]{kn17}, $\lambda_0(\cG)>0$ holds true if the combinatorial isoperimetric constant of $\cG_d$ is positive and $\ell^\ast(\cG) := \sup_{e\in\cE}|e|<\infty$. 
For example, this holds true if $\cG_d$ is an infinite tree without leaves~\cite[Lemma~8.1]{kn17} or if $\cG_d$ is a Cayley graph of a non-amenable finitely generated group~\cite[Lemma~8.12(i)]{kn17}. 
\end{remark}

Finally, let us remark that $\ker(\bH) = \HH(\cG) \cap L^2( \cG)$, where $\HH(\cG)$ denotes the space of \emph{harmonic functions} on $ \cG$, that is, the set of all ``edgewise" affine functions satisfying Kirchhoff conditions \eqref{eq:kirchhoff} at each vertex $v \in \cV$. Notice that every function $f \in \HH(\cG)$ is uniquely determined by its vertex values $\f := f|_\cV =(f(v))_{v \in \cV}$. Recall also the following result (see, e.g.,~\cite[eq.~(2.32)]{kn17}).

\begin{lemma}\label{lem:Harmcont=Harmdiscr}
Let $\cG$ be a metric graph satisfying the assumptions in Hypothesis~\ref{hyp:locfin}. If $f\in \HH(\cG)$, then  $f \in L^2(\cG)$ if and only if $\f \in \ell^2(\cV;m)$, that is,
	\be \label{eq:g02}
			\sum_{v \in \cV} |f(v)|^2 m(v) < \infty.
	\ee
\end{lemma}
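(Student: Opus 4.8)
The plan is to compute $\|f\|_{L^2(\cG)}^2$ edgewise and compare it directly, with absolute constants, to $\|\f\|_{\ell^2(\cV;m)}^2 = \sum_{v\in\cV}|f(v)|^2 m(v)$; the equivalence then drops out immediately.

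First I would fix an edge $e = e_{u,v}\in\cE$ and identify $e$ with $[0,|e|]$ so that $0$ corresponds to $u$ and $|e|$ to $v$. Since $f\in\HH(\cG)$ is edgewise affine, on $e$ it is given by $f(x) = f(u) + \frac{x}{|e|}\big(f(v)-f(u)\big)$. Substituting $t = x/|e|$ and using the elementary identity $\int_0^1 |a+t(b-a)|^2\,dt = \frac{1}{3}\big(|a|^2 + \re(a\overline{b}) + |b|^2\big)$ for $a,b\in\C$, one obtains
\[
	\|f_e\|_{L^2(e)}^2 = \frac{|e|}{3}\Big(|f(u)|^2 + \re\big(f(u)\overline{f(v)}\big) + |f(v)|^2\Big).
\]
Since $\big|\re(f(u)\overline{f(v)})\big| \le |f(u)||f(v)| \le \tfrac12\big(|f(u)|^2 + |f(v)|^2\big)$, this yields the two-sided bound
\[
	\frac{|e|}{6}\big(|f(u)|^2 + |f(v)|^2\big) \;\le\; \|f_e\|_{L^2(e)}^2 \;\le\; \frac{|e|}{2}\big(|f(u)|^2 + |f(v)|^2\big).
\]

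Next I would sum over all $e\in\cE$. As every summand is non-negative, Tonelli's theorem justifies rearranging the sum over edges into a sum over vertices: each vertex $w\in\cV$ collects a contribution $|e|\,|f(w)|^2$ from every edge $e\in\cE_w$ incident to it, so by the definition \eqref{def:m} of the star weight,
\[
	\sum_{e = e_{u,v}\in\cE} |e|\big(|f(u)|^2 + |f(v)|^2\big) \;=\; \sum_{w\in\cV} |f(w)|^2 \sum_{e\in\cE_w}|e| \;=\; \sum_{w\in\cV} |f(w)|^2 m(w).
\]
Combining this with the previous display and $\|f\|_{L^2(\cG)}^2 = \sum_{e\in\cE}\|f_e\|_{L^2(e)}^2$ gives
\[
	\frac{1}{6}\sum_{w\in\cV}|f(w)|^2 m(w) \;\le\; \|f\|_{L^2(\cG)}^2 \;\le\; \frac{1}{2}\sum_{w\in\cV}|f(w)|^2 m(w),
\]
and the claimed equivalence $f\in L^2(\cG)\Leftrightarrow\f\in\ell^2(\cV;m)$ follows at once. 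There is no genuine obstacle in this argument: it is essentially a one-line computation, and the only point requiring (minor) care is the rearrangement of the double sum, which is legitimate by non-negativity of all terms. The constants $\tfrac16$ and $\tfrac12$ are not sharp, but their precise values are irrelevant.
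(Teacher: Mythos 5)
Your argument is correct and complete: the edgewise computation of $\int_0^1|a+t(b-a)|^2\,dt=\tfrac13\bigl(|a|^2+\re(a\overline b)+|b|^2\bigr)$, the two-sided bound via $|\re(a\overline b)|\le\tfrac12(|a|^2+|b|^2)$, and the rearrangement of the non-negative double sum into $\sum_{w}|f(w)|^2 m(w)$ are all valid. The paper itself gives no proof of this lemma but only cites an identity from an earlier reference, and that identity is precisely the edgewise formula you derive, so your approach is essentially the standard one.
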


\begin{remark}\label{remSA=Harm}
The above considerations indicate that in order to understand the deficiency indices of the Kirchhoff Laplacian one needs to find the dimension of the space of $L^2$ harmonic (or, more carefully, $\lambda$-harmonic) functions. Moreover, in order to describe self-adjoint extensions one has to understand the behavior of $L^2$ harmonic functions at ``infinity", that is, near a ``boundary" of a given metric graph. However, graphs admit a lot of different notions of boundary (ends, Poisson and Martin boundaries, Royden and Kuramochi boundary etc.) and search for a suitable notion in this context (namely, $L^2$ harmonic functions) is a highly nontrivial problem, which seems to be not very well studied neither in the context of incomplete manifolds nor in the case of weighted graphs.

Let us also mention that recently there has been a tremendous amount of work devoted to the study of harmonic functions and self-adjoint extensions of Laplacians on weighted graph (we only refer to a brief selection of articles~\cite{cdvtht, ghklw, hklw, hu19, huke14, hkmw, jp, kl12}).
\end{remark}

%%%%%%%%%%%%%%%%%%%%%%%%
%%%%%%%%%%%%%%%%%%%%%%%%
\section{Graph ends and $H^1(\cG)$} \label{sec:Ends}
%%%%%%%%%%%%%%%%%%%%%%%%
%%%%%%%%%%%%%%%%%%%%%%%%

This section deals with the Sobolev space $H^1$ on metric graphs. Its importance stems, in particular, from the fact that it serves as a form domain for a large class of self-adjoint extensions of $\bH_0$.

\subsection{$H^1(\cG)$ and boundary values} \label{ss:III:01}
First recall that
\begin{equation}
	H^1(\cG) = \big\{f \in L^2(\cG)\cap C(\cG) | \;  f_e \in H^1(e) \ \text{for all}\ e \in \cE, \  \| f'\|^2_{L^2(\cG)} < \infty \big\},
\end{equation}
where $C(\cG)$ is the space of continuous complex-valued functions on $\cG$ and 
\[
\| f'\|^2_{L^2(\cG)} := \sum_{e \in \cE} \|f_e' \|^2_{L^2(e)}.
\]
Notice that $(H^1(\cG),  \| \cdot \|_{H^1})$ is a Hilbert space when equipped with the standard norm
\[
\| f \|^2_{H^1(\cG)} := \| f \|^2_{L^2(\cG)} +  \| f '\|^2_{L^2(\cG)} = \sum_{e\in\cE}\|f_e \|^2_{H^1(e)},\quad f \in H^1(\cG).
\]
Moreover, $\dom(\bH_0^0) \subset H^1(\cG)$ and we define $H^1_0( \cG)$ as the closure of $\dom(\bH_0^0)$ with respect to $\| \cdot \|_{H^1(\cG)}$. 

\begin{remark}\label{rem:Friedrichs}
If $\bH_0^0$ is essentially self-adjoint, then $H^1(\cG) = H^1_0(\cG)$. However, the converse is not true in general. In fact this equality is tightly connected to the uniqueness of Markovian extensions of $\bH_0$ and, as we shall see, it is possible to characterize it in terms of topological ends of $\cG$ (see Corollary~\ref{cor:Markovian1} below).

Notice also that $H^1_0( \cG)$ is the form domain of the Friedrichs extension $\bH_F$\label{p:Friedrichsextension} of $\bH_0^0$ and $\lambda_0(\cG)$ defined by \eqref{eq:lowerbound} is the bottom of the spectrum of $\bH_F$.
\end{remark}

By definition, $H^1(\cG)$ is densely and continuously embedded in $L^2(\cG)$. 

\begin{lemma} \label{lem:inftyh1}
$H^1 (\cG)$ is continuously embedded in $C_b(\cG) = C(\cG)\cap L^\infty(\cG)$ and 
\begin{equation} \label{eq:inftyh1}
	\| f \|_{\infty} := \sup_{x \in \cG} |f(x)| \le C_\cG \| f \|_{H^1 (\cG)} 
\end{equation}
holds for all $ f \in H^1(\cG)$ with $C_\cG := \sqrt{\coth\big(\frac{1}{2}\sup_\cR |\cR|\big)}$, where the supremum is taken over all non-vertex paths without self-intersections.
\end{lemma}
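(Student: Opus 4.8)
The plan is to reduce the estimate \eqref{eq:inftyh1} to a one-dimensional (interval) Sobolev inequality along a path, and to optimize the length of that path using the diameter bound. The key point is that for a continuous $H^1$-function $f$ on $\cG$, the supremum $|f(x_0)|$ at any point $x_0$ can be compared to the $H^1$-norm of $f$ restricted to any path emanating from $x_0$, and the worst case is governed by the longest such path, i.e.\ by $\diam(\cG)$.

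First I would fix $x_0\in\cG$ and recall the standard one-dimensional fact: if $g\in H^1((0,\ell))$ then
\[
|g(0)|^2 \le \coth\!\Big(\tfrac{1}{2}\,\ell\Big)\,\big(\|g\|_{L^2(0,\ell)}^2 + \|g'\|_{L^2(0,\ell)}^2\big),
\]
where the constant $\coth(\ell/2)$ is sharp (attained by $g(x)=\cosh(\ell-x)$, which solves $-g''+g=0$ with $g'(\ell)=0$; one computes $|g(0)|^2 = \cosh(\ell)\sinh(\ell)\big/\big(\tfrac{1}{2}\sinh(2\ell)+\ell\big)$ -- actually the cleanest derivation is via $\frac{d}{dx}|g(x)|^2 \ge -|g(x)|^2-|g'(x)|^2$ combined with a careful integrating-factor argument, or directly by expanding in the eigenbasis of $-d^2/dx^2+1$ on $(0,\ell)$ with a Neumann condition at $\ell$ and a Dirichlet condition at $0$). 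The function $\ell\mapsto \coth(\ell/2)$ is decreasing, so replacing $\ell$ by any larger value only weakens the inequality.

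Next I would transfer this to the graph. Given $x_0$, consider any self-avoiding path (ray or finite path) $\cR$ in $\cG$ starting at $x_0$; by the definition of $\diam(\cG)$ in the statement, $|\cR|\le\diam(\cG)$ -- note this requires $\diam(\cG)<\infty$, which is the implicit standing hypothesis here (otherwise the claimed constant $C_\cG$ is $1$ and a separate, easier limiting argument applies, since $\coth(t)\to 1$ as $t\to\infty$). Identifying $\cR$ with the interval $I_\cR=[0,|\cR|)$ and writing $g := f|_{\cR}$, continuity of $f$ across vertices gives $g\in H^1(I_\cR)$ with $|g(0)| = |f(x_0)|$ and
\[
\|g\|_{L^2(I_\cR)}^2 + \|g'\|_{L^2(I_\cR)}^2 \le \|f\|_{L^2(\cG)}^2 + \|f'\|_{L^2(\cG)}^2 = \|f\|_{H^1(\cG)}^2,
\]
since the edges traversed by $\cR$ are pairwise distinct (self-avoiding path). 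Applying the interval inequality with $\ell = |\cR|$ and using monotonicity of $\coth(\cdot/2)$ to replace $|\cR|$ by $\diam(\cG)$ yields $|f(x_0)|^2 \le \coth\!\big(\tfrac12\diam(\cG)\big)\,\|f\|_{H^1(\cG)}^2$. Taking the supremum over $x_0\in\cG$ gives \eqref{eq:inftyh1}; continuity of the embedding and $f\in L^\infty(\cG)$ follow immediately, and continuity of $f$ is part of the definition of $H^1(\cG)$.

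The main obstacle -- and the only genuinely non-routine point -- is establishing the one-dimensional inequality with the \emph{sharp} constant $\coth(\ell/2)$, rather than some non-optimal constant: a crude bound like $|g(0)|\le \|g\|_\infty$ together with $\|g\|_\infty \le \|g\|_{L^2}+\|g'\|_{L^2}$ on a finite interval is easy but gives a worse constant. The sharp version is obtained either by a direct ODE/variational computation (minimize $\|g\|_{L^2(0,\ell)}^2+\|g'\|_{L^2(0,\ell)}^2$ subject to $g(0)=1$, whose Euler--Lagrange equation is $-g''+g=0$ with natural boundary condition $g'(\ell)=0$, giving the minimizer $g(x)=\cosh(\ell-x)/\cosh\ell$ and minimal value $\tanh(\ell)$, hence $|g(0)|^2\le\coth(\ell)\cdot(\cdots)$ -- so in fact one should double-check whether the constant is $\coth(\ell)$ or $\coth(\ell/2)$, the discrepancy coming from whether one uses one ray or a double ray / two-sided extension), or by a clean differential-inequality argument tracking $\varphi(x):=|g(x)|^2$ against the solution of the corresponding Riccati-type comparison equation. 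One should also handle the degenerate case $\deg(x_0)\ge 2$ or $x_0$ a vertex: there one still just picks a single self-avoiding path out of $x_0$, so no extra care is needed beyond noting such a path exists and has length at most $\diam(\cG)$.
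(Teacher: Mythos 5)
Your overall strategy (reduce to a one-dimensional Sobolev inequality along a self-avoiding path) is the same as the paper's, but the crucial step is carried out in the wrong direction and this leaves a genuine gap. You take an \emph{arbitrary} self-avoiding path $\cR$ starting at $x_0$, note $|\cR|\le\diam(\cG)$, and then invoke ``monotonicity of $\coth(\cdot/2)$ to replace $|\cR|$ by $\diam(\cG)$''. Since $\coth$ is decreasing, $|\cR|\le\diam(\cG)$ gives $\coth(|\cR|/2)\ge\coth(\tfrac12\diam(\cG))$, so the bound $|f(x_0)|^2\le\coth(|\cR|/2)\,\|f\|_{H^1}^2$ does \emph{not} imply the claimed bound with the smaller constant $\coth(\tfrac12\diam(\cG))$; replacing $\ell$ by a larger value \emph{strengthens} the interval inequality rather than weakening it. To get a good constant you need a \emph{long} path emanating from $x_0$, not a short one. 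The missing ingredient — which is precisely the nontrivial part of the paper's proof — is the geometric fact that for any fixed self-avoiding path $\cR$ and any $x\in\cG$ there is a self-avoiding path $\cR_x$ with $x$ as an endpoint (or interior point at distance $\ge|\cR|/2$ from one end) and $|\cR_x|\ge|\cR|/2$: one connects $x$ to $\cR$ by a path, stops at the first meeting point, and continues along the longer arc of $\cR$. Taking the supremum over $\cR$ then yields the constant $\sqrt{\coth(\tfrac12\diam(\cG))}$.

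Your hesitation about whether the one-dimensional constant is $\coth(\ell)$ or $\coth(\ell/2)$ points to the same confusion. The sharp endpoint inequality on $(0,\ell)$ is $|g(0)|^2\le\coth(\ell)\,\|g\|^2_{H^1(0,\ell)}$ (your own variational computation with minimizer $\cosh(\ell-x)/\cosh\ell$ gives exactly this), and the paper simply cites this from the literature rather than treating it as the main obstacle. The factor $\tfrac12$ in $C_\cG=\sqrt{\coth(\tfrac12\diam(\cG))}$ does not come from the interval inequality at all: it comes from the graph-geometric step above, namely that a point on a near-diametral path can only be guaranteed to be at distance $|\cR|/2$ from one of its endpoints. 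Once you supply the longer-arc argument and use the correct endpoint constant $\coth(\ell)$ with $\ell\ge|\cR|/2$, the proof closes and coincides with the paper's.
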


\begin{proof} 
For every interval $\cI\subseteq \R$ the embedding of $H^1(\cI)$ into $L^\infty(\cI)$ is bounded and 
\be \label{eq:linftyinterval}
	\sup_{x \in \cI} | f(x)| \le C_{|\cI|} \| f \|_{H^1(\cI)} 
\ee
holds for all $ f \in H^1(\cI)$ with $C_{|\cI|}= \sqrt{\coth(|\cI|)}$ (see~\cite{mar83}). Notice that we may identify the restriction $f{|_{\cR}}$ of $f \in H^1(\cG)$ to a (non-vertex) path without self-intersections $\cR$ with a function on $\cI_\cR = [0, |\cR|)$. It is easy to check that upon this identification $f|_\cR \in H^1(\cI_\cR)$ and $(f{|_{\cR}})' = f'{|_{\cR}}$.

Suppose now that $\cR$ is a fixed non-vertex path without self-intersections in $\cG$. Then for every $x \in \cG$, connecting $x$ and $\cR$  by some finite non-vertex path $\cR_0$, we see that there exists a non-vertex path without self-intersections $\cR_x$ such that $x\in\cR_x$ and $|\cR_x| \ge |\cR|/2$ (if $x$ lies on $\cR$ already, then the connecting argument is superfluous and we can simply take a portion of $\cR$). Applying \eqref{eq:linftyinterval} to $\cR_x$, we easily deduce the estimate \eqref{eq:inftyh1}.
%If $\cR$ is a non-vertex path without self-intersections,  then considering it as an interval $I_\cR = [0,|\cR|)$ of length $|\cR|$, we immediately get 
%\be\label{eq:Rx}
%|f(x)| \le C_{|\cR| /2 }\|f\|_{H^1(\cR)} \le C_{|\cR| /2 }\|f\|_{H^1(\cG)}
%\ee
%for all $f\in H^1(\cG)$. 
%If $x \not \in \cR$, then connecting $x$ and $\cR$  by some finite non-vertex path $\cR_0$, we conclude that there is a non-vertex path without self-intersections $\cR_x$ such that $x\in\cR_x$ and $|\cR_x| \ge |\cR|/2$. Applying the same argument, we conclude that \eqref{eq:Rx} holds for all $x\in\cG$. 
\end{proof}

\begin{remark}
The diameter of $\cG$ (as a metric space $(\cG,\varrho)$) is defined by  
\begin{align} \label{eq:diameter}
\diam(\cG) := \sup_{x,y\in \cG}\varrho(x,y).  % = \sup_{u,v\in \cV}\varrho(u,v) 
\end{align}
Therefore, $\diam(\cG) \le \sup_\cR |\cR|$ and hence $C_\cG \le  \sqrt{\coth\big(\frac{1}{2}\diam(\cG)\big)}$.
\end{remark}

The above considerations, in particular, imply the following crucial property of $H^1$-functions: if $ \cR = (v_n)$ is a ray, then 
\[
	f(\gamma_\cR) := \lim_{ n \to \infty} f(v_n)
\]
exists. Indeed, upon the identification of $\cR$ with the interval $\cI_\cR = [0,|\cR|)$, the latter is an immediate corollary of the description of a Sobolev space $H^1$ in one dimension --- for a bounded interval this follows from~\cite[Theorem~8.2]{bre} and in the unbounded case see~\cite[Corollary~8.9]{bre}. Moreover, this limit only depends on the equivalence class of $\cR$  (indeed, for any two equivalent rays $\cR$ and $\cR'$  there exists a third ray $\cR''$ containing infinitely many vertices of both $\cR$ and $\cR'$, which immediately implies that $f(\gamma_\cR) = f(\gamma_{\cR''}) = f(\gamma_{\cR'})$). This enables us to introduce the following notion.

\begin{definition}\label{def:context}
For every $f \in H^1(\cG)$ and a (topological) end $\gamma \in \gC(\cG)$, we  define
\begin{equation} \label{eq:defvalueend}
	f( \gamma)  := f(\gamma_\cR),
\end{equation}
where $\cR \in \omega_\gamma$ is any ray belonging to the corresponding graph end $\omega_\gamma$ (see Theorem~\ref{th:top-comb}). Sometimes we shall also write $ f(\omega_\gamma) := f( \gamma)$.
\end{definition}

It turns out that \eqref{eq:defvalueend} enables us to obtain an extension  by continuity of every function $f\in\ H^1(\cG)$ to the end compactification $\wh{\cG}$ of $\cG$ (see Section~\ref{ss:II.02}). 

\begin{lemma} \label{lem:uniformends}
	Let $\cG $ be a metric graph and $\gamma \in \gC(\cG)$. If $f \in H^1(\cG)$, then
	\be\label{eq:continuity}
		\lim_{n \to \infty} \sup_{x \in U_n} |f(x) - f(\gamma) | =0
	\ee
	for every sequence $\cU= (U_n)$ representing $\gamma$. 
\end{lemma}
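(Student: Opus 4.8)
The plan is to reduce the statement to a quantitative estimate along a single representing subgraph and then exploit the uniform bound \eqref{eq:linftyinterval} on intervals. Fix $f\in H^1(\cG)$ and a sequence $\cU=(U_n)$ representing $\gamma$. As noted after Lemma \ref{lem:inftyh1}, we may (and should) assume that each $U_n$ is identified with a connected subgraph $\cG_n(\gamma)=\overline{U_n}$ and that $\cG_n(\gamma)$ contains a tail $\cR_n$ of some fixed ray $\cR=(v_k)\in\omega_\gamma$; moreover, all but finitely many vertices of $\cR$ lie in $U_n$, so $f(v_k)\to f(\gamma)$ by Definition \ref{def:context}. The key observation is that any point $x\in U_n$ can be joined, \emph{within} $U_n$ (which is connected), to a far-out vertex $v_k\in\cR_n$ by a path; and from such a point one can extend to a path without self-intersections $\cR_x\subseteq\cG$ of total length at least $\frac12\diam(\cG)$ through $x$, exactly as in the proof of Lemma \ref{lem:inftyh1}.

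The main step is then the following. First I would show that $\sup_{x\in U_n}|f(x)|$ is controlled by the $H^1$-mass of $f$ on $U_n$ together with $|f(v_k)|$ for a suitable $v_k\in U_n$. Concretely, pick for each $x\in U_n$ a path without self-intersections $\cR_x$ starting at $x$, running first through $U_n$ to a vertex $v_{k(x)}$ on the tail $\cR_n$ and then along $\cR$; identifying $\cR_x$ with an interval $[0,|\cR_x|)$ and applying \eqref{eq:linftyinterval} to the segment between $x$ and $v_{k(x)}$ (whose length we may take $\le\diam(\cG)$ but bounded below by a fixed positive constant after possibly prolonging $\cR_x$ backwards from $x$) gives
\[
	|f(x)-f(v_{k(x)})| \le |f(x)| + |f(v_{k(x)})| \le 2\,C_\cG\,\|f\|_{H^1(\cR_x)}.
\]
This crude bound is not yet enough because $\|f\|_{H^1(\cR_x)}$ need not be small. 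The fix is to split the path at the first vertex $v_{k_0(n)}\in\cR$ lying in $U_n$: write $f(x)-f(\gamma)=(f(x)-f(v_{k(x)}))+(f(v_{k(x)})-f(\gamma))$, estimate the second term by $\sup_{k\ge k_0(n)}|f(v_k)-f(\gamma)|$ (which $\to 0$), and for the first term use that the relevant path segment lies in $U_n$, so $\|f'\|_{L^2(\text{segment})}^2\le \|f'\|_{L^2(U_n)}^2$, while the $L^2$-part is handled by the telescoping inequality $|f(x)-f(y)|\le\int|f'|$ along a path in $U_n$ of length $\le\diam(\cG)$, giving $|f(x)-f(y)|\le\diam(\cG)^{1/2}\|f'\|_{L^2(U_n)}$ whenever $x,y$ are joined by a self-avoiding path inside $U_n$. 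Choosing $y=v_{k(x)}$ on the tail yields
\[
	\sup_{x\in U_n}|f(x)-f(\gamma)| \le \diam(\cG)^{1/2}\,\|f'\|_{L^2(U_n)} + \sup_{k\ge k_0(n)}|f(v_k)-f(\gamma)|.
\]
Finally, since $\bigcap_n\overline{U_n}=\emptyset$ and the $U_n$ are nested, $\|f'\|_{L^2(U_n)}\to 0$ by dominated convergence (as $f'\in L^2(\cG)$ and $\id_{U_n}\to 0$ pointwise a.e.), and $k_0(n)\to\infty$ because any fixed vertex eventually leaves $U_n$; hence both terms on the right tend to $0$, proving \eqref{eq:continuity}.

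The point that requires care — and which I expect to be the main obstacle — is the purely topological/combinatorial claim that every $x\in U_n$ can be connected \emph{inside} $U_n$ to a vertex lying on a fixed ray $\cR\in\omega_\gamma$ by a self-avoiding path of length at most $\diam(\cG)$, so that the energy estimate only sees $\|f'\|_{L^2(U_n)}$. Connectedness of $U_n$ gives a path inside $U_n$; that $\cR_n\subseteq U_n$ for large $n$ is exactly the content of Theorem \ref{th:top-comb}; reducing a path to a self-avoiding one is standard; and the length bound is automatic from the definition of $\diam(\cG)$. If $\diam(\cG)=\infty$ one argues instead directly: $|f(x)-f(y)|\le\int_{\text{path}}|f'|\le\|f'\|_{L^2(\text{path})}\,(\text{length})^{1/2}$ is useless, so in that case one uses \eqref{eq:linftyinterval} on a self-avoiding ray through $x$ and $y$ lying in $U_n$ together with $\|f\|_{H^1(U_n)}\to 0$ in both $L^2$ and energy (again dominated convergence, using $f\in L^2(\cG)$). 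Either way the conclusion follows, and it is worth recording that this simultaneously shows $f$ extends continuously to $\wh{\cG}$, since \eqref{eq:continuity} says precisely that the extension $f(\gamma)$ of Definition \ref{def:context} is continuous in the topology \eqref{eq:top2}.
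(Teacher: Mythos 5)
Your argument in the case $\diam(\cG)<\infty$ is correct and is essentially a variant of the paper's first case (connect $x$ inside $U_n$ to a far-out vertex of a fixed ray by a self-avoiding path, telescope, and use $\|f'\|_{L^2(U_n)}\to 0$). The problem is the fallback for $\diam(\cG)=\infty$: there you propose to apply the Sobolev bound \eqref{eq:linftyinterval} along a self-avoiding path through $x$ lying in $U_n$ and to conclude via $\|f\|_{H^1(U_n)}\to 0$. This requires that every point of $U_n$ lie on a self-avoiding path \emph{inside $U_n$} whose length is bounded \emph{below} by a fixed positive constant, since the constant $C_{|\cI|}=\sqrt{\coth(|\cI|)}$ blows up like $|\cI|^{-1/2}$ as $|\cI|\to 0$. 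But $\diam(\cG)=\infty$ is a global condition and may be caused by parts of the graph far away from $\gamma$: for instance, attach an infinite half-line to a finite-volume graph near whose end $\gamma$ one has $\vol(U_n)\to 0$. Then every self-avoiding path in $U_n$ has length at most $\vol(U_n)\to 0$, so $C_{|\cI|}\sim\vol(U_n)^{-1/2}$, while $\|f\|_{L^2(U_n)}\lesssim\|f\|_\infty\vol(U_n)^{1/2}$; the product does not tend to zero (it is comparable to $\|f\|_\infty$), and your estimate collapses. Ironically, in exactly this situation your \emph{first} argument works (the path from $x$ to $v_{k(x)}$ is short), so the dichotomy is simply drawn along the wrong line.

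The fix is the paper's case distinction, which is local at the end rather than global: either, for all large $n$, every (non-vertex) ray contained in $U_n$ has length at most $1$ — then the telescoping estimate $|f(x)-f(\gamma)|\le\|f'\|_{L^2(U_n)}$ applies directly along such a ray; or, for every $n$, $U_n$ contains a ray of length greater than $1$ — then every $x\in U_n$ lies on a self-avoiding path in $U_n$ of length exactly $1/2$, the Sobolev bound applies with the \emph{uniform} constant $C_{1/2}$, and one gets $\sup_{U_n}|f|\le C_{1/2}\|f\|_{H^1(U_n)}\to 0$ together with $f(\gamma)=0$. If you replace your $\diam(\cG)<\infty$ / $\diam(\cG)=\infty$ split by this dichotomy on the lengths of rays inside $U_n$, your proof goes through; as written, the second branch has a genuine gap.
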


\begin{proof}
Let $\gamma \in \gC(\cG)$ and let $\cU = (U_n)$ be a sequence representing $\gamma$. Let also 
	\[
	\mR_n(\gamma) := \{ \cR  \in \mR(\cG) | \;  \cR \subseteq U_n \}
	\]
be the set of all non-vertex rays contained in $U_n$, $n\ge 0$. 

We proceed by case distinction. 	
First, assume that for $n$ sufficiently large, all rays in $\mR_n(\gamma)$ have length at most one. If $x \in U_n$, then there exists a (non-vertex) ray $\cR_x\in \mR_n(\gamma)$ such that $\cR_x = (x,v_0,\dots)$ and its tail $\cR:=(v_0,v_1,\dots)$ (see Definition~\ref{def:rays}) belong to 
$\omega_\gamma$.

	By our assumption, $|\cR_x| \le 1$ and hence
\begin{align*}
		|f(\gamma) - f(x)| = |f(\gamma_{\cR_x}) - f(x)| =\Big | \int_{\cR_x} f'(y) \; dy \Big | \le  \| f' \|_{L^2(\cR_x)} \le  \| f' \|_{L^2(U_n)}.
	\end{align*}
Since $x \in U_n$ is arbitrary, this implies
\[
	\sup_{x \in U_n} |f(\gamma) - f(x)| \le \| f' \|_{L^2(U_n)}.
\]
Since $\cU=(U_n)$ represents $\gamma$,  $\bigcap_n \overline{U_n} = \varnothing$ and hence $\lim_{n \to \infty}   \| f' \|_{L^2(U_n)} =0$. This implies \eqref{eq:continuity}.

Assume now that for every $n \in \Z_{\ge 0}$ there is a ray $\cR \in \mR_n(\gamma)$ with $|\cR| > 1$. Take $n \ge 0$ and choose an  $x \in U_n$. We can find a finite (non-vertex) path without self-intersections $\cR_x\subseteq U_n$ such that $x \in \cR_x$ and 
  $|\cR_x| = 1/2$ (take into account that $U_n$ contains at least one ray of length greater than $1$). 
Hence we get  
 \[
 	|f(x)| \le \sup_{y \in \cR_x} |f(y)| \le C_{1/2} \| f\|_{H^1( \cR_x)} \le C_{1/2}  \| f\|_{H^1(U_n)},
 \]
 where $C_{1/2} = \sqrt{\coth(1/2)}$ is the constant from \eqref{eq:linftyinterval}.  
Since $x \in U_n$ is arbitrary, 
\begin{align*}
\sup_{x \in U_n} |f(x)| \le C_{1/2}   \| f\|_{H^1(U_n)}.
\end{align*}
However, $\bigcap_n \overline{U_n} = \varnothing$ and hence $\sup_{x \in U_n} |f(x)| =o(1)$ as $n \to \infty$. It remains to notice that $f(\gamma)= 0$. Indeed, by Theorem~\ref{th:top-comb}, for every $n \ge 0$ there is a ray $\wt \cR_n\in \omega_\gamma$ such that $\wt \cR_n \subseteq U_n$ and hence
 \[
 	|f(\gamma)| = |f(\gamma_{\wt \cR_n}) | \le \sup_{x \in U_n} |f(x)|  = o(1)
 \]
as $n \to \infty$. This finishes the proof.
 \end{proof}

Taking into account the topology on $\wh{\cG} = \cG \cup \gC (\cG)$, the next result  is a direct consequence of Lemma~\ref{lem:inftyh1} and Lemma~\ref{lem:uniformends}.

\begin{proposition} \label{prop:h1trace}
	Each $f \in H^1(\cG)$ has a unique continuous extension to the end compactification $\wh{\cG}$ of $\cG$ and this extension is given by \eqref{eq:defvalueend}.  Moreover, 
	\[
		\|  f \|_{\infty} = \sup_{x \in \wh{\cG}} |f(x)| \le C_\cG \| f \|_{H^1(\cG)}.
	\]
\end{proposition}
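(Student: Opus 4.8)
The plan is to show that every $f\in H^1(\cG)$ extends continuously to $\wh{\cG}$ by setting $f(\gamma)$ as in \eqref{eq:defvalueend}, and that this extension is unique and satisfies the stated sup-norm bound. First I would recall that $\wh{\cG}$ is the end compactification with the neighbourhood basis \eqref{eq:top1}--\eqref{eq:top2}, so that it suffices to check continuity of the extended function at every point of $\cG$ and at every end $\gamma\in\gC(\cG)$. Continuity at interior points of $\cG$ is immediate since $f$ is continuous on $\cG$ and $\cG$ is open in $\wh{\cG}$; moreover $f\in C_b(\cG)$ by Lemma \ref{lem:inftyh1}, so the only real issue is continuity at the ends.

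For continuity at a fixed end $\gamma$, I would argue as follows. Fix $\varepsilon>0$. By Lemma \ref{lem:uniformends}, choosing any sequence $\cU=(U_n)$ representing $\gamma$, there is $N$ with $\sup_{x\in U_N}|f(x)-f(\gamma)|<\varepsilon$. Now $\wh{U_N}$ is, by definition \eqref{eq:top1}, an open neighbourhood of $\gamma$ in $\wh{\cG}$: it contains $\gamma$ (take the tail $(U_n)_{n\ge N}$, which still represents $\gamma$ and whose first set is $U_N\subset U_N$), and it is of the form $\wh{U}$ with $U=U_N$ open in $\cG$. On $U_N$ itself the bound $|f(x)-f(\gamma)|<\varepsilon$ holds by the choice of $N$. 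It remains to bound $|f(\gamma')-f(\gamma)|$ for an end $\gamma'\in\wh{U_N}$, i.e. an end having a representative $(U'_n)$ with $U'_0\subseteq U_N$. For such $\gamma'$, pick a ray $\cR'\in\omega_{\gamma'}$ with $\cR'\subseteq U'_0\subseteq U_N$ (possible by Theorem \ref{th:top-comb}); then $f(\gamma')=\lim_k f(v'_k)$ along the vertices of $\cR'$, all lying in $U_N$, so $|f(\gamma')-f(\gamma)|\le\sup_{x\in U_N}|f(x)-f(\gamma)|<\varepsilon$. Hence $|f(y)-f(\gamma)|\le\varepsilon$ for all $y\in\wh{U_N}$, proving continuity of the extension at $\gamma$.

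Uniqueness of the continuous extension is then automatic: $\cG$ is dense in $\wh{\cG}$ (each neighbourhood $\wh{U}$ of an end contains points of $\cG$, namely points of $U$), and a continuous function on a compact — in particular Hausdorff — space is determined by its values on a dense set. Thus the extension given by \eqref{eq:defvalueend} is the only continuous one. Finally, the norm bound follows by combining the two pieces: on $\cG$ we have $|f(x)|\le C_\cG\|f\|_{H^1(\cG)}$ by Lemma \ref{lem:inftyh1}, and for an end $\gamma$ we have $|f(\gamma)|=\lim_n|f(v_n)|\le C_\cG\|f\|_{H^1(\cG)}$ along any ray $\cR=(v_n)\in\omega_\gamma$; taking the supremum over $\wh{\cG}$ gives $\|f\|_\infty\le C_\cG\|f\|_{H^1(\cG)}$.

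The main obstacle is the bookkeeping with the end topology — specifically, verifying that $\wh{U_N}$ really is an open neighbourhood of $\gamma$ and correctly handling the \emph{other} ends $\gamma'$ that may lie inside it, since a priori $\wh{U_N}$ can contain many ends besides $\gamma$. The key observation making this manageable is that any end in $\wh{U_N}$ has a representative whose sets are contained in $U_N$, hence a ray inside $U_N$, so its boundary value is controlled by $\sup_{U_N}|f-f(\gamma)|$ exactly as for $\gamma$ itself; everything else is a routine application of Lemmas \ref{lem:inftyh1} and \ref{lem:uniformends} together with density of $\cG$ in $\wh{\cG}$.
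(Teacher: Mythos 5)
Your proof is correct and follows essentially the same route as the paper, which simply states the proposition as a direct consequence of Lemma \ref{lem:inftyh1} and Lemma \ref{lem:uniformends}; you have merely filled in the topological bookkeeping (openness of $\wh{U_N}$, control of the other ends $\gamma'\in\wh{U_N}$ via rays contained in $U_N$, and density of $\cG$ in $\wh{\cG}$) that the authors leave implicit.
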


%%%%%%%%%%%%%%%%%%%%%%%
\subsection{Nontrivial and finite volume ends}
%%%%%%%%%%%%%%%%%%%%%%%

Observe that some ends lead to trivial boundary values for $H^1$ functions. For example,  $f(\gamma)  = 0$ for all $f  \in H^1(\cG)$ if $\omega_\gamma \in \Omega(\cG_d)$ contains a ray $\cR$ with infinite length $|\cR| =  \infty$. 
On the other hand, it might happen that all rays have finite length, however,  $f(\gamma) = 0$ for all $f  \in H^1(\cG)$ (see, e.g., the second step in the proof of Lemma~\ref{lem:uniformends}). 

\begin{definition}\label{def:endnontr}
A topological end $\gamma \in \gC(\cG)$ is called \emph{nontrivial} if $f(\gamma)\neq 0$ for some $f\in H^1(\cG)$.
\end{definition}

We also need the following notion.

\begin{definition} \label{def:finvol}
A topological end $\gamma \in \gC(\cG)$ has \emph{finite volume} (or, more precisely, \emph{finite volume neighborhood)} if there is a sequence $\cU = (U_n)$  representing $\gamma$ such that $\vol(U_n) < \infty$ for some $n$. Otherwise $\gamma$ has \emph{infinite volume}. The set of all finite volume ends is denoted by $\gC_0(\cG)$. 
Here and below, $\vol(A)$ is the Lebesgue measure of a measurable set $A \subseteq \cG$.  
\end{definition}

\begin{remark}\label{rem:finvolends}
If $\gC(\cG)$ contains only one end, then this end has finite volume exactly when $\vol(\cG)<\infty$. Analogously, if $\gamma \in \gC(\cG)$ is a free end, then there is a finite set of vertices $X$ separating $\omega_\gamma$ from all other ends and hence this end has finite volume exactly when the corresponding connected component $\cG_\gamma$ has finite total volume. 

If $\gamma$ is not free, then the situation is more complicated. For example, for a rooted tree $\cG=\cT_o$ the ends are in one-to-one correspondence with the rays from the root $o$ and hence one may possibly confuse the notion of a finite/infinite volume of an end with the finite/infinite length of the corresponding ray. More specifically, let $\gamma$ be an end of $\cT_o$ and let $\cR_\gamma = (o,v_1,v_2,\dots)$ be the corresponding ray. For each $n\ge 1$, let $\cT_n$ be the subtree of $\cT_o$ having its root at $v_n$ and containing all the ``descendant" vertices of $v_n$. Then  by definition $\gamma$ has finite volume (neighborhood) if and only if there is $n\ge 1$ such that the corresponding subtree $\cT_n$ has finite total volume. In particular, this implies that $\cG$ would have uncountably many finite volume ends in this case (here we assume for simplicity that all vertices are essential, that is, $\deg(v)>2$ for all $v \in \cV$). In particular,  $|\cR_\gamma|<\infty$ is a necessary but not sufficient condition for $\gamma$ to have finite volume.
\end{remark}

It turns out that nontrivial and finite volume ends are closely connected.

\begin{theorem} \label{thm:nontr=finitevol}
	Let $\cG $ be a metric graph. Then $\gamma \in \gC(\cG)$ is nontrivial if and only if $\gamma$ has finite volume.
	Moreover, for any finite collection of distinct nontrivial ends $\{\gamma_j\}_{j=1}^N$ there exists $f \in H^1(\cG) \cap \dom(\bH)$ such that $f(\gamma_1) = 1$ and $f(\gamma_2) = \dots = f(\gamma_N) = 0$.
\end{theorem}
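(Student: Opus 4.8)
The plan is to prove both directions by reducing statements about $H^1(\cG)$ to statements about rays and compact exhaustions, and then to upgrade the single-end construction to a multi-end one by a separation argument.

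First, the easy direction: if $\gamma$ is \emph{not} of finite volume, I would show $f(\gamma)=0$ for all $f\in H^1(\cG)$. Fix a sequence $\cU=(U_n)$ representing $\gamma$; by hypothesis $\vol(U_n)=\infty$ for every $n$. Here I distinguish two cases exactly as in the proof of Lemma~\ref{lem:uniformends}. If infinitely often $U_n$ contains a ray of length $>1$, then the second step of that proof already shows $f(\gamma)=0$. If, on the other hand, eventually all rays in $U_n$ have length at most $1$, I claim $\vol(U_n)<\infty$ eventually, contradicting the assumption. To see this, note that (since $\cG$ is locally finite and $U_n$ is connected with compact boundary) $U_n$ can be identified with a connected subgraph $\cG_n(\gamma)=\overline{U_n}$, which is a locally finite tree-like union of edges; every point lies on a non-vertex ray in $U_n$, so a bound of $1$ on all ray lengths forces the subgraph to be contained in a ball of radius $1$ about $\partial U_n$, hence — again by local finiteness — finite volume. (This is really the contrapositive of the statement ``infinite volume end'' and is the combinatorial heart of the easy half.) So in all cases $f(\gamma)=0$, i.e.\ $\gamma$ is trivial.

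Second, the harder direction together with the ``moreover'': given $N$ distinct finite volume ends $\gamma_1,\dots,\gamma_N$, I must build $f\in H^1(\cG)\cap\dom(\bH)$ with $f(\gamma_1)=1$ and $f(\gamma_j)=0$ for $j\ge 2$. By Definition~\ref{def:finvol} pick for $\gamma_1$ a representative $\cU=(U_n)$ with $\vol(U_{n_0})<\infty$. Since the $\gamma_j$ are distinct topological ends, after passing to tails and enlarging $n_0$ I can arrange that the subgraph $\cG_{n_0}(\gamma_1)=\overline{U_{n_0}}$ has compact boundary $\partial\cG_{n_0}(\gamma_1)=\{w_1,\dots,w_m\}$ and contains a tail of a ray in $\omega_{\gamma_1}$ but no tail of any ray in $\omega_{\gamma_j}$, $j\ge2$ (two inequivalent ends are separated by a finite vertex set; take the finite set large enough, then pass to the component of the complement containing $\gamma_1$). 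Now set $f$ as follows: let $f\equiv 1$ on the interior of $U_{n_0}$; interpolate $f$ from $1$ down to $0$ linearly on each edge incident to the finitely many boundary vertices $w_i$ (chopping those edges), leaving a small Kirchhoff-compatible transition region; and $f\equiv 0$ elsewhere. More carefully, to land in $\dom(\bH)$ I would instead take $f$ to be the $\bH_N$-harmonic (or just piecewise-quadratic $H^2$) function which equals $1$ on $U_{n_0}$, equals $0$ outside a slightly larger finite subgraph $\cG'$ with $\cG_{n_0}(\gamma_1)\subseteq\cG'$, $\partial\cG'$ finite, and satisfies the Kirchhoff conditions at every vertex — such a function exists because $\cG'\setminus U_{n_0}$ is a finite metric graph and solving $-f''=0$ edgewise with prescribed continuous boundary data $1$ on the inner boundary and $0$ on the outer boundary, Kirchhoff at interior vertices, is a finite linear system with a solution (standard for finite quantum graphs). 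Then $f$ has finitely many edges on which $f'\neq 0$, so $f\in H^1(\cG)$; it satisfies Kirchhoff at every vertex by construction and is edgewise $H^2$, so $f\in\dom(\bH)$; and finally, since $U_{n_0}$ contains a tail of a ray in $\omega_{\gamma_1}$, $f(\gamma_1)=1$, while for $j\ge2$ every ray in $\omega_{\gamma_j}$ eventually leaves $\cG'$ (it has no tail inside $\cG_{n_0}(\gamma_1)$, hence by enlarging $\cG'$ only finitely we keep it outside $\cG'$), so $f(\gamma_j)=0$. In particular this produces a witnessing $f$, so each finite volume end is nontrivial; combined with the first part this gives the equivalence, and the explicit $f$ gives the ``moreover''.

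The main obstacle is the separation step: ensuring that a \emph{single} finite-volume neighborhood $U_{n_0}$ of $\gamma_1$ can be chosen small enough to avoid the other ends $\gamma_2,\dots,\gamma_N$ while still having compact (finite) boundary and finite volume. This is where I rely on the topology of $\wh\cG$ and Theorem~\ref{th:top-comb}: distinct ends have disjoint neighborhoods in the compact Hausdorff space $\wh\cG$, and neighborhoods of the form $\wh U$ with $U$ a component of the complement of a finite subgraph are cofinal; intersecting the finite-volume neighborhood of $\gamma_1$ with such a separating neighborhood (and passing to the relevant connected component) yields the desired $U_{n_0}$ with all three properties simultaneously. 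A secondary technical point is verifying that the interpolating function can be taken in $\dom(\bH)$ rather than merely in $H^1(\cG)$; this is routine finite-graph quantum-graph theory (solve a finite Kirchhoff boundary value problem), but it must be stated carefully so that the Kirchhoff condition holds at the finitely many boundary vertices where $f$ transitions.
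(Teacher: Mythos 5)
Your argument for the ``infinite volume $\Rightarrow$ trivial'' direction has a genuine gap in Case B. You claim that if eventually all rays in $U_n$ have length at most $1$, then local finiteness and compactness of $\partial U_n$ force $\vol(U_n)<\infty$. This is false: being contained in a metric ball of radius $1$ about a finite set does not bound the volume of a locally finite graph, because infinitely many short edges can accumulate. Concretely, take a rooted binary tree in which every edge between combinatorial spheres $S_{n-1}$ and $S_n$ has length $2^{-n}$. Every ray has length at most $2$, the graph is locally finite and simple, yet the subtree hanging off any vertex has volume $\sum_{k>n} 2^{k-n}\cdot 2^{-k}=\infty$; so every end has infinite volume while all rays are uniformly bounded. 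For such ends your case distinction produces no contradiction and no conclusion. The correct (and shorter) route, which is the one the paper takes, does not pass through ray lengths at all: if $f(\gamma)\neq 0$, then by the uniform convergence statement of Lemma \ref{lem:uniformends} there is a neighborhood $U_n$ of $\gamma$ on which $|f|\ge |f(\gamma)|/2>0$, and since $\gamma$ has infinite volume this contradicts $f\in L^2(\cG)$ directly.

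There is also a flaw in your ``more careful'' construction for the converse direction. If you solve the edgewise harmonic Dirichlet problem on the finite transition region $\cG'\setminus U_{n_0}$ with data $1$ on the inner boundary and $0$ on the outer boundary, the glued function fails the Kirchhoff condition precisely at the gluing vertices: at an inner boundary vertex $w$ the edges inside $U_{n_0}$ contribute zero to $\sum_{e\in\cE_w}f_e'(w)$ (there $f\equiv 1$), while the affine transition edges contribute the normal derivative of the Dirichlet solution, which is nonzero (the solution genuinely decreases into the transition region). The same problem occurs for plain linear interpolation. The standard repair, used in the paper, is to transition along each boundary edge with a fixed profile $\phi\in H^2(0,1)$ satisfying $\phi(0)=0$, $\phi(1)=1$ \emph{and} $\phi'(0)=\phi'(1)=0$, so that every edge derivative at every vertex vanishes and Kirchhoff holds trivially; your ``piecewise-quadratic $H^2$'' remark could be made to work this way, but the vanishing-derivative conditions are the whole point and must be stated. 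Your separation step for the ``moreover'' part (intersecting a finite-volume neighborhood of $\gamma_1$ with a neighborhood separating it from $\gamma_2,\dots,\gamma_N$) is fine and matches the paper's use of the Freudenthal separation lemma.
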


\begin{proof}
It is not difficult to see that $f(\gamma) = 0$ for all $f \in H^1(\cG)$ if $\gamma$ has infinite volume. Indeed, assuming that there is $f\in H^1(\cG)$ such that $f(\gamma)\neq 0$, Lemma~\ref{lem:uniformends} would imply that there exists $\cU = (U_n)$ representing $\gamma$ such that 
\[
|f(x)| \ge |f(\gamma)|/2>0
\]
for all $x\in U_N$ and some $N \in \Z_{\ge 0}$. However, since $\vol(U_N) = \infty$, we conclude that $f$ is not in $L^2(\cG)$ and this gives a contradiction.

Suppose now that $\gamma\in \gC(\cG)$ has finite volume. Take a sequence $\cU = (U_n)$ representing $\gamma$ with $\vol(U_0)<\infty$.  Pick a function $\phi \in H^2(0,1)$ such that $\phi(0) = \phi'(0)= \phi'(1) = 0$ and $\phi(1) = 1$ and then define $f\colon \cG\to \C$ by
\begin{align*}
	f(x_e) = \begin{cases} 1, & x_e\in e\ \text{and both vertices of } e \text{ are in } U_0, \\
					  0, &  x_e\in e\ \text{and both vertices of } e \text{ are not in }  U_0, \\
					\phi \Big ( \frac{|x_e- u|}{|e|} \Big ), & x_e\in e = e_{u,v} \text{ and } u \in \cV \setminus U_0, v \in U_0.
		    \end{cases}
\end{align*}
 Clearly, $f \in H^2(e)$ for every $e\in \cE$. Moreover, it is straightforward to check that $f$ satisfies Kirchhoff conditions \eqref{eq:kirchhoff} at every $v \in \cV$. By assumption, $\partial  U_0$ is compact and hence it is contained in finitely many edges. Thus there are only finitely many edges $e\in\cE$ such that one of its vertices belongs to $U_0$ and the other one does not belong to $U_0$. This implies that $f\in L^2(\cG)$ and, moreover, $f' \not\equiv 0$ only on finitely many edges, which proves the inclusion $ f \in \dom(\bH)\cap H^1(\cG)$. Taking into account that $f\equiv 1$ on $U_n$ for large enough $n$, we conclude that $f(\gamma)=1$ and hence $\gamma$ is nontrivial. 

It remains to prove the second claim. Suppose that $\gamma_1, \dots, \gamma_N \in \gC(\cG)$ are distinct nontrivial ends. Then we can find $\cU^j = (U_n^j)$, sequences representing $\gamma_j$, $j\in \{1,\dots,N\}$, such that $\vol(U_0^1) < \infty$ and $U_0^1 \cap U_0^j = \emptyset $ for all $j=2,\dots,N$ (see~\cite[Satz~3]{f31} or~\cite[Lemma~3.1]{dikue}).  Using the above procedure, we can construct a function $f\in \dom(\bH)\cap H^1(\cG)$ such that $\supp(f)\subseteq U_0$ and $f(\gamma)=1$. The latter also implies that $f(\gamma_2) = \dots = f(\gamma_N) = 0$.
\end{proof}

\begin{remark}\label{rem:finvol}
 If $\vol(\cG) = \sum_{e \in \cE} |e| < \infty$, then all ends have finite volume and the end compactification $\widehat \cG$ of $\cG$ coincides with several other spaces, among them the \emph{metric completion} of $\cG$ and the \emph{Royden compactification} of a related discrete graph (see~\cite[Corollary~4.22]{ghklw} and also~\cite[p.~1526]{g11}). 
 Notice that the natural path metric $\varrhoo$ can be extended to $\wh{\cG} = \cG \cup \gC(\cG)$ (see~\cite{g11}). That is, the distance $\varrhoo(x, \gamma)$ between a point $x \in \cG$ and an end $\gamma \in \gC(\cG)$  is the infimum over all lengths of rays starting at $x$ and belonging to $\gamma$. Similarly, the distance $\varrhoo(\gamma, \gamma')$ between two ends is the infimum over the lengths of all double rays with one tail part in $\gamma$ and the other one in $\gamma'$. Then $(\wh{\cG}, \varrhoo)$ is a metric completion of $\cG$ and $\wh{\cG}$ is compact and homeomorphic to the end compactification of $\cG$ (see~\cite{g11} for further details). 

 The metric completion was considered in connection with quantum graphs in~\cite{car08, car12}; however, it can have a rather complicated structure if $\vol(\cG)=\infty$ and a further analysis usually requires additional assumptions. Moreover, there are clear indications that metric completion is not a good candidate for these purposes. 
\end{remark}

%%%%%%%%%%%%%%%%%%%%%%%%%%%%%%%%
\subsection{Description of $H^1_0 (\mathcal{G})$} \label{ss:H10}
%%%%%%%%%%%%%%%%%%%%%%%%%%%%%%%%

Recall that the space $H^1_0( \cG)$ is defined as the closure of $\dom(\bH_0^0) \subset H^1(\cG)$ with respect to $\| \cdot \|_{H^1(\cG)}$. 
One can naturally conjecture that $H^1_0(\cG)$ consists of those $H^1$-functions which vanish on $\gC(\cG)$. In fact, the results of the previous two sections enable us to show that this is indeed the case.
  
\begin{theorem}\label{th:H10}
Let $\cG$ be a metric graph and $\gC(\cG)$ be its ends. Then
\begin{equation} \label{eq:h10} 
	H^1_0 (\cG) =\{ f \in H^1(\cG) \,| \; f(\gamma) = 0 \text{ for all } \gamma \in \gC(\cG) \}.
\end{equation}
\end{theorem}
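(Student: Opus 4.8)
The plan is to prove the two inclusions in \eqref{eq:h10} separately, with the nontrivial direction being the density statement.

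\textbf{The easy inclusion ($\subseteq$).} First I would show that every $f \in H^1_0(\cG)$ vanishes at all ends. Since $\dom(\bH_0^0) \subset H^1(\cG)$ consists of compactly supported functions, each $g \in \dom(\bH_0^0)$ clearly satisfies $g(\gamma)=0$ for all $\gamma \in \gC(\cG)$ (any ray eventually leaves $\supp(g)$). For general $f \in H^1_0(\cG)$, take $g_k \in \dom(\bH_0^0)$ with $g_k \to f$ in $\| \cdot\|_{H^1(\cG)}$. By Proposition \ref{prop:h1trace}, the map $H^1(\cG) \ni h \mapsto h(\gamma)$ is bounded (it is dominated by $\|h\|_\infty \le C_\cG \|h\|_{H^1(\cG)}$), so $f(\gamma) = \lim_k g_k(\gamma) = 0$. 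This gives the inclusion of $H^1_0(\cG)$ into the right-hand side.

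\textbf{The hard inclusion ($\supseteq$).} Let $f \in H^1(\cG)$ with $f(\gamma)=0$ for every $\gamma \in \gC(\cG)$; I must approximate $f$ in $H^1$-norm by compactly supported functions in $\dom(\bH_0^0)$ (or, since $H^1_0$ is closed, at least by functions in $H^1_0(\cG)$). The natural strategy is a cutoff-and-mollify argument along a compact exhaustion $(\cG_n)$ of $\cG$. Writing $\cG \setminus \cG_n$ as a finite union of connected components (finitely many because $\cG_d$ is locally finite and $\cG_n$ has finitely many edges), the key point is an energy-smallness estimate: because $f$ vanishes at every end, the ``boundary values'' of $f$ on the finitely many components of $\cG\setminus\cG_n$ that touch infinity must tend to $0$, and combined with $f' \in L^2(\cG)$ one should get $\sup_{x \in \cG \setminus \cG_n}|f(x)| \to 0$ together with $\|f'\|_{L^2(\cG\setminus\cG_n)} \to 0$. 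Here I would invoke Lemma \ref{lem:uniformends}: on each end's representing sequence $f \to f(\gamma) = 0$ uniformly, and there are only finitely many ends ``reachable'' past $\cG_n$ in a controlled sense — more precisely one uses that the components of $\cG\setminus\cG_n$ shrink onto unions of ends. Then define $f_n$ by multiplying $f$ by a Lipschitz cutoff $\chi_n$ equal to $1$ on $\cG_{n}$ and $0$ outside $\cG_{n+1}$ (or outside a slightly enlarged neighborhood), choosing the transition to occur on edges so that $\|\chi_n'\|_\infty$ is controlled; the product rule gives $\|(f\chi_n)' - f'\|_{L^2} \lesssim \|f'\|_{L^2(\cG\setminus\cG_n)} + \|\chi_n'\|_\infty \sup_{\cG\setminus\cG_n}|f| \cdot (\text{measure of transition region})^{1/2}$, all of which tends to $0$. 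One subtlety: $f\chi_n$ need not satisfy Kirchhoff conditions and need not be edgewise $H^2$, so strictly it lies in $H^1_c(\cG)$ rather than $\dom(\bH_0^0)$; I would then separately note (this is standard, cf. the construction in the proof of Theorem \ref{thm:nontr=finitevol}) that compactly supported $H^1$-functions which are continuous at vertices lie in the $H^1$-closure of $\dom(\bH_0^0)$, e.g. by a further edgewise mollification that preserves vertex values and Kirchhoff conditions near the (finitely many) vertices in the support.

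\textbf{Main obstacle.} The delicate step is making the cutoff quantitatively: near a non-free end the components of $\cG\setminus\cG_n$ may have complicated topology and small ``necks'' where forcing $\chi_n$ from $1$ to $0$ could a priori cost a lot of energy. The resolution is to exploit that the boundary $\partial \cG_n$ lies in finitely many edges, so the transition can always be carried out on those finitely many edges of definite positive length, making $\|\chi_n'\|_{L^2}$ bounded uniformly in $n$; combined with $\sup_{\cG\setminus\cG_n}|f|\to 0$ (which is exactly where the hypothesis $f(\gamma)=0$ for all $\gamma$, via Lemma \ref{lem:uniformends} and finiteness of the number of components meeting $\cG\setminus\cG_n$ at each level, enters) the extra term vanishes. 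I expect the write-up to hinge on carefully choosing the exhaustion so that $\partial\cG_n$ avoids vertices and on the uniform-decay statement $\sup_{\cG\setminus\cG_n}|f|\to 0$, which I would isolate as the crux of the argument.
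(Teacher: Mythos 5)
Your easy inclusion and your overall reduction (approximate $f$ by compactly supported $H^1$-functions and then use that such functions lie in $H^1_0(\cG)$) are fine, and your observation that $\sup_{\cG\setminus\cG_n}|f|\to 0$ follows from $f(\gamma)=0$ for all $\gamma$ via compactness of $\wh{\cG}$ is correct --- the paper uses exactly this. The gap is in the cutoff estimate. You claim the transition of $\chi_n$ can be carried out ``on finitely many edges of definite positive length, making $\|\chi_n'\|_{L^2}$ bounded uniformly in $n$.'' This is false: for each fixed $n$ the boundary edges have positive length, but as $n\to\infty$ their lengths must shrink (necessarily so whenever $\vol(\cG)<\infty$, which is precisely the interesting case) and their number may grow, so a linear transition gives $\|\chi_n'\|_{L^2}^2=\sum_e |e|^{-1}\to\infty$. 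No cleverer choice of cutoff helps: any $\chi_n$ equal to $1$ on $\partial\cG_n$ and tending to $0$ at a finite volume end has $\|\chi_n'\|_{L^2}^2$ at least the capacity of $\partial\cG_n$ in $\cG\setminus\cG_n$, which is bounded below by $1/\vol(\cG\setminus\cG_n)\to\infty$. The remaining hope, that the product $\|\chi_n' f\|_{L^2}$ still vanishes, also fails for cutoffs chosen independently of $f$: on a binary metric tree with suitably chosen edge lengths one can construct $f\in H^1(\cG)$ vanishing at all ends and consisting of bumps concentrated near spheres $S_{n_k}$ for which $\int|\chi_{n_k}'f|^2$ stays bounded away from $0$ while the energy of each bump is summable. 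So the central quantitative step of your argument breaks down.

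The repair is to couple the cutoff to $f$ by truncating in the \emph{range} rather than in the domain, which is what the paper does (following \cite{ghklw}): reduce to $f\ge 0$ and set $f_n:=(f-\tfrac1n)_+$. Then $|f_n|\le|f|$ and $|f_n'|\le|f'|$ pointwise, $f_n$ is compactly supported (if it were nonzero on infinitely many edges, points where $f>\tfrac1n$ would accumulate at an end of $\wh{\cG}$, contradicting $f(\gamma)=0$ --- this is your compactness observation in disguise), and $f_n\to f$, $f_n'\to f'$ almost everywhere, so dominated convergence yields $f_n\to f$ in $H^1(\cG)$. This sidesteps the capacity obstruction entirely, because the transition region $\{0<f<\tfrac1n\}$ is automatically located where $f$ is small and the derivative of the truncated function never exceeds $|f'|$.
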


\begin{proof}
First of all, it immediately follows from Proposition~\ref{prop:h1trace} that $f \in H^1_0(\cG)$ vanishes at every  end $\gamma \in \gC (\cG)$ (since this holds for each $f \in \dom(\bH_0^0)$). 

To prove the converse inclusion, we will follow the arguments of the proof of~\cite[Theorem~4.14]{ghklw}. Namely, suppose that $f \in H^1(\cG)$ and $f(\gamma) = 0$ for all $\gamma \in \gC(\cG)$. Without loss of generality, we may assume that $f$ is real-valued and $f \ge 0$. To prove that $f \in H^1_0(\cG)$,  it suffices to construct a sequence of compactly supported functions $f_n\in H^1(\cG)$ which converges to $f$ in $H^1(\cG)$. 
 Define $\phi_n \colon [0,\infty) \to  [0,\infty)$ by
\be\label{eq:phin}
	\phi_n (s) := \begin{cases} s - \frac{1}{n}, & \text{ if } s \ge \frac{1}{n}, \\ 0, &  \text{ if } s < \frac{1}{n}, \end{cases}
\ee
and then let $f_n\colon \cG \to  [0,\infty)$ be the composition $f_n := \phi_n \circ f$, $n\ge0$. Since $\phi_n (s) \le s$ for all $s\ge0$ and $|\phi_n(s) - \phi_n(t)| \le |s-t|$ for all $s, t \ge 0$, $|f_n(x)| \le |f(x)|$ and $|f_n' (x)| \le |f'(x)|$ for almost every $x \in \cG$. Hence $f_n \in H^1(\cG)$ and 
\be
\|f_n\|_{H^1(\cG)} \le \|f\|_{H^1(\cG)}
\ee
for all $n$. Let us now show that $f_n$ has compact support. Indeed, assuming the converse, there exist infinitely many distinct edges $e_k$ in $\cE$ such that $f_{n}$ is  non-zero on each $e_k$. Taking into account \eqref{eq:phin}, for each $k$ we can find a non-vertex point $x_k$ on $e_k$ such that $f_n(x_k) > \frac{1}{n}$. Since $ \wh\cG$ is compact, the sequence $(x_k)$ has an accumulation point  $x \in \wh\cG$. By construction each edge $e \in \cE$ contains at most one of the $x_k$'s. It follows that $x \notin \cG$ and hence $x \in \wh\cG$ is an end. On the other hand, $f$ is continuous on $ \wh\cG $ by Proposition~\ref{prop:h1trace} and thus $f(x) \ge \frac{1}{n}$, which contradicts our assumptions on $f$. 

It remains to show that $f_n$ converges to $f$ in ${H^1(\cG)}$ as $n \to \infty$. Taking into account the above properties of $f_n$, we get
\[
	\|f - f_{n}\|^2_{L^2} + \|f' - f_{n}'\|^2_{L^2} \le 2 ( \|f\|_{L^2}^2 +  \|f_n\|_{L^2}^2 +  \|f'\|_{L^2}^2 +  \|f_n'\|_{L^2}^2) \le 4\|f\|^2_{H^1},
\]
and hence by dominated convergence it is enough to show that $f_{n} \to f$ and $f_{n}' \to f'$ pointwise a.e.\ on $\cG$. The first claim is clearly true since $\lim_{n \to \infty} \phi_n(s) = s$ for all $s \in  [0,\infty)$. To prove the second claim, suppose that $f$ is differentiable at a non-vertex point $x \in \cG$. If  $f(x) > 0 $, then by continuity of $f$, there is a neighborhood $U$ of $x$ such that $f_{n} = f - \frac{1}{n}$ holds on $U$ for all sufficiently large $n>0$. Hence $f_{n}$ is differentiable at $x$ with $f_{n}'(x) = f'(x)$ for all large enough $n$. Finally, if $f(x) = 0$, then for each $n$ there is a neighborhood $U_n$ of $x$ such that $f \le\frac{1}{n}$ on $U_n$. Hence $f_n \equiv 0$ on $U_n$ and, in particular, $f_n$ is differentiable at $x$ with $f_{n}'(x) = 0$. However, since $f \ge 0$ on $\cG$ and $f$ is differentiable at $x$, it follows that $f'(x) = 0$ as well. This finishes the proof.
\end{proof}

Combining Theorem~\ref{th:H10} with Theorem~\ref{thm:nontr=finitevol}, we arrive at the following fact.

\begin{corollary}\label{cor:H=H}
The equality $H^1(\cG) = H^1_0(\cG)$ holds true if and only if all topological ends of $\cG$ have infinite volume. 
\end{corollary}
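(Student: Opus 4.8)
The plan is to deduce Corollary \ref{cor:H=H} directly by chaining the two preceding results, so the ``proof'' is essentially a one-line observation followed by a check that nothing has been lost in the translation. First I would recall that by Theorem \ref{th:H10} we have the identity
\[
	H^1_0(\cG) = \{ f \in H^1(\cG) \mid f(\gamma) = 0 \text{ for all } \gamma \in \gC(\cG)\},
\]
so the equality $H^1(\cG) = H^1_0(\cG)$ is equivalent to the assertion that $f(\gamma) = 0$ for every $f \in H^1(\cG)$ and every $\gamma \in \gC(\cG)$; in other words, it holds if and only if there are no nontrivial ends in the sense of Definition \ref{def:endnontr}. Then I would invoke Theorem \ref{thm:nontr=finitevol}, which identifies the nontrivial ends with the finite volume ends: $\gamma \in \gC(\cG)$ is nontrivial precisely when $\gamma$ has finite volume. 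Combining the two, the absence of nontrivial ends is the same as the absence of finite volume ends, i.e.\ $\gC_0(\cG) = \varnothing$, which is exactly the statement that all topological ends of $\cG$ have infinite volume.

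For the reverse direction I would argue identically in the contrapositive: if some end $\gamma$ has finite volume, then by Theorem \ref{thm:nontr=finitevol} it is nontrivial, so there exists $f \in H^1(\cG)$ (indeed, by the same theorem, one may take $f \in H^1(\cG) \cap \dom(\bH)$) with $f(\gamma) \neq 0$; by Theorem \ref{th:H10} this $f$ does not belong to $H^1_0(\cG)$, so the inclusion $H^1_0(\cG) \subseteq H^1(\cG)$ is strict. This gives both implications and closes the argument.

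I do not anticipate a genuine obstacle here: the corollary is a formal consequence of Theorem \ref{th:H10} and Theorem \ref{thm:nontr=finitevol}, both of which are already available in the excerpt. The only point requiring a moment's care is purely logical bookkeeping — matching the quantifier ``$f(\gamma) = 0$ for all $\gamma$ and all $f$'' against ``no end is nontrivial'' against ``no end has finite volume'' — and making sure the empty-index-set case (no ends at all, or $\gC_0(\cG) = \varnothing$) is handled correctly, which it is, since then the condition in \eqref{eq:h10} is vacuous and $H^1(\cG) = H^1_0(\cG)$. Thus the proof is essentially: apply Theorem \ref{th:H10}, then apply Theorem \ref{thm:nontr=finitevol}, and read off the equivalence.
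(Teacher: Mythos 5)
Your proposal is correct and follows exactly the paper's route: the corollary is stated there as an immediate combination of Theorem \ref{th:H10} (which identifies $H^1_0(\cG)$ with the $H^1$-functions vanishing at all ends) and Theorem \ref{thm:nontr=finitevol} (which identifies nontrivial ends with finite volume ends). Your extra care with the quantifiers and the vacuous case is fine but adds nothing beyond the paper's one-line argument.
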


\begin{remark}\label{rem:IdealBdry}
In the related setting of (weighted) discrete graphs, an important concept is the construction of boundaries by employing $C^\ast$-algebra techniques (this includes both {\em Royden} and {\em Kuramochi boundaries}, see~\cite{ghklw, kasue10, klmw, my, soardi} for further details and references). 
Finite volume graph ends can also be constructed by using this method. Indeed, $\mathcal{A} := H^1(\cG) \subset C_b (\cG)$ is a subalgebra by Lemma~\ref{lem:inftyh1} and hence its $\|\cdot\|_\infty$-closure $\widetilde{\cA} := \overline{\mathcal A}^{\|\cdot\|_\infty}$ is isomorphic to $C_0(\wt{X})$, where $\wt{X}$ is the space of characters equipped with the weak$^\ast$-topology with respect to $\widetilde{\cA}$. In general, finding $\wt{X}$ for some concrete $C^\ast$-algebra is a rather complicated task. However, it turns out that in our situation $\wt{X}$ coincides with $\wt \cG := \cG \cup \gC_0(\cG)$. Indeed, $\wt \cG = \cG \cup \gC_0(\cG)$ equipped with the induced topology of the end compactification $\widehat \cG$ is a locally compact Hausdorff space. Proposition~\ref{prop:h1trace} together with Theorem~\ref{thm:nontr=finitevol} shows that each function $f \in H^1(\cG)$ has a unique continuous extension to $\wt \cG$ and this extension belongs to $C_0(\wt \cG)$. Moreover, by Theorem~\ref{thm:nontr=finitevol}, $H^1(\cG)$ is point-separating and nowhere vanishing on $\wt \cG$ and hence $\widetilde{\cA} = C_0(\wt \cG)$ by the Stone--Weierstrass theorem. Thus the resulting boundary notion is precisely the space of finite volume graph ends. 

Let us also mention that $\wt\cG$ is compact only if $\vol(\cG)<\infty$ and in this case one can show that the Royden compactification of $\cG$ as well as its Kuramochi compactification coincide with the end compactification $\wh{\cG}$ (see~\cite{ghklw}, \cite[Theorem~7.11]{kasue10}, \cite[p.215]{kasue17} and also~\cite[p.2]{hs} for the discrete case).
\end{remark}

%%%%%%%%%%%%%%%%%%%%%%%%%%
%%%%%%%%%%%%%%%%%%%%%%%%%%
\section{Deficiency indices}\label{sec:DefInd}
%%%%%%%%%%%%%%%%%%%%%%%%%%
%%%%%%%%%%%%%%%%%%%%%%%%%%

Intuitively, deficiency indices should be linked to \emph{boundary notions} for underlying combinatorial graphs. However, spectral properties of the operator $\bH_0$ also depend on the edge lengths and this suggests that it is difficult to expect a purely combinatorial formula for the deficiency indices $\Nr_\pm (\bH_0)$ of $\bH_0$. Recall that throughout the paper we always assume that $\cG$ satisfies Hypothesis~\ref{hyp:locfin}.

%%%%%%%%%%%%%%%%%%%%%%%%%%
\subsection{Deficiency indices and graph ends}
%%%%%%%%%%%%%%%%%%%%%%%%%%

The main result of this section provides criteria which allow to connect $\Nr_\pm (\bH_0)$ with the number of graph ends.

\begin{theorem} \label{th:indicesends}
	Let $\cG$ be a metric graph and let $\bH_0$ be the corresponding minimal Kirchhoff Laplacian. Then
\be\label{eq:npm>ends}
		\Nr_\pm (\bH_0) \ge \# \gC_0(\cG).
\ee
Moreover, the equality 	
	\be\label{eq:npm=ends}
		\Nr_\pm (\bH_0) = \# \gC_0(\cG)
	\ee
holds true if and only if either $\# \gC_0(\cG) = \infty$ or $\dom(\bH) \subset H^1(\cG)$.
\end{theorem}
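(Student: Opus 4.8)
The strategy is to realize every finite volume end as a source of a genuine deficiency vector, and then to analyze exactly when these are the only deficiency vectors. For the lower bound \eqref{eq:npm>ends}, I would first reduce to the case $0\in\rho_{\mathrm{reg}}(\bH_0)$, i.e.\ $\lambda_0(\cG)>0$: if $\lambda_0(\cG)=0$ one can add a bounded nonnegative potential (or rescale lengths on a finite subgraph) to make $\lambda_0>0$ without changing $\Nr_\pm(\bH_0)$ or $\#\gC_0(\cG)$, since deficiency indices are stable under bounded symmetric perturbations and $\gC_0(\cG)$ is a purely geometric quantity. Alternatively, and more cleanly, one works directly with the harmonic functions: by Lemma~\ref{lem:npm=ker} and Corollary~\ref{cor:npm=ker}, when $\lambda_0(\cG)>0$ we have $\Nr_\pm(\bH_0)=\dim\ker(\bH)=\dim\big(\HH(\cG)\cap L^2(\cG)\big)$. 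So it suffices to produce, for any finite collection $\gamma_1,\dots,\gamma_N$ of distinct finite volume ends, $N$ linearly independent $L^2$ harmonic functions. Here I would invoke Theorem~\ref{thm:nontr=finitevol}: a finite volume end is nontrivial, so there are functions in $H^1(\cG)$ separating the $\gamma_j$; one then uses these boundary values $(f(\gamma_1),\dots,f(\gamma_N))$ as prescribed Dirichlet data and solves the corresponding variational problem (minimize the Dirichlet energy $\gt_N$ over $\{f\in H^1(\cG): f(\gamma_j)=c_j\}$, which is a nonempty closed affine subspace by Theorem~\ref{thm:nontr=finitevol}), obtaining an energy-minimizing, hence harmonic, $L^2$ function with the given end values. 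Varying $(c_1,\dots,c_N)$ over a basis of $\C^N$ yields $N$ linearly independent harmonic $L^2$ functions, so $\dim\ker(\bH)\ge N$; letting $N\to\#\gC_0(\cG)$ gives \eqref{eq:npm>ends}.

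For the equality characterization, one direction is easy: if $\dom(\bH)\subset H^1(\cG)$, then every harmonic $L^2$ function lies in $H^1(\cG)$, hence extends continuously to $\wh\cG$ (Proposition~\ref{prop:h1trace}), and is determined by its end values on $\gC_0(\cG)$ together with the requirement of vanishing at infinite volume ends — modulo $H^1_0(\cG)$, on which the Dirichlet form is strictly positive (when $\lambda_0>0$), a harmonic function is the unique energy minimizer for its boundary data, so the map $\ker(\bH)\to \C^{\gC_0(\cG)}$, $f\mapsto (f(\gamma))_{\gamma}$, is injective. Combined with \eqref{eq:npm>ends} and surjectivity from the construction above, this forces $\Nr_\pm(\bH_0)=\#\gC_0(\cG)$. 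The case $\#\gC_0(\cG)=\infty$ is immediate from \eqref{eq:npm>ends}. For the converse — assuming $\#\gC_0(\cG)<\infty$ and $\dom(\bH)\not\subset H^1(\cG)$, show strict inequality — I would take $g\in\dom(\bH)$ with $\|g'\|_{L^2(\cG)}=\infty$ and extract from it a harmonic $L^2$ function of infinite energy: decompose $g=g_{\mathrm{harm}}+g_0$ where $g_{\mathrm{harm}}\in\HH(\cG)$ interpolates the vertex values of $g$ and $g_0$ is edgewise the solution of $-u''=(\bH g)|_e$ with zero boundary data (so $g_0$ is compactly-supported-energy-controlled on each edge and, summing, lies in $H^1(\cG)$ because $\bH g\in L^2$ controls $\|g_0'\|_{L^2}$). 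Then $g_{\mathrm{harm}}=g-g_0\in L^2(\cG)$, it is harmonic, and $\|g_{\mathrm{harm}}'\|_{L^2}=\infty$ since $\|g'\|_{L^2}=\infty$ while $\|g_0'\|_{L^2}<\infty$. Such a $g_{\mathrm{harm}}$ cannot be a finite linear combination of the finitely many $H^1$ harmonic functions constructed before, so $\dim\ker(\bH)>\#\gC_0(\cG)$, i.e.\ strict inequality in \eqref{eq:npm>ends}.

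\textbf{Main obstacles.} The delicate point is making the variational construction rigorous: one must check that the energy minimizer over the affine set $\{f\in H^1(\cG): f(\gamma_j)=c_j\}$ actually exists (coercivity of $\gt_N$ modulo constants along finite volume ends — this is where $\lambda_0>0$, i.e.\ the Cheeger-type bound \eqref{eq:cheeg} in the finite volume regime, or a Poincaré-type inequality, is needed, and in the infinite total volume case one must argue more carefully, possibly localizing near the relevant ends) and that the minimizer is genuinely in $L^2(\cG)$ and edgewise affine, i.e.\ harmonic with Kirchhoff conditions — this follows from the Euler–Lagrange equations and the fact that $H^1(\cG)$ functions with finite energy that are stationary for $\gt_N$ are edgewise affine. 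The second subtle point is the decomposition $g=g_{\mathrm{harm}}+g_0$: one needs $\sum_{e}\|g_0'\|_{L^2(e)}^2<\infty$, which requires a Sobolev/Poincaré estimate on each edge $\|u'\|_{L^2(e)}\le C(|e|)\|u''\|_{L^2(e)}$ for $u$ with Dirichlet data — harmless on a single bounded interval, but one must ensure the constants do not blow up (they are $O(|e|)$), and combine with $\sum_e\|(\bH g)|_e\|^2<\infty$; then separately check $g_0\in L^2(\cG)$. The interplay between "$\dom(\bH)\subset H^1(\cG)$" and the Sobolev-type inequality \eqref{eq:IntroEst2} (noted in Remark~\ref{rem:sobolev}) is exactly what makes this clean, since a closed-graph argument turns the inclusion into a bound. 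I expect the harmonic-decomposition step and the existence/regularity of the energy minimizer in the infinite volume setting to be where the real work lies.
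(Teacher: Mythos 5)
Your overall architecture (realize each finite volume end as a deficiency vector via the separating functions of Theorem~\ref{thm:nontr=finitevol}, then characterize equality through the inclusion $\dom(\bH)\subset H^1(\cG)$) matches the paper's, but two of your key steps do not go through as written. The paper's engine is the single von Neumann--Friedrichs identity $\dom(\bH)=\dom(\bH_F)\dotplus\cN_\lambda(\bH_0)$ for any $\lambda<0$ (which lies in $\rho(\bH_F)$ unconditionally, since $\bH_F\ge 0$), combined with $\dom(\bH_F)\subset H^1_0(\cG)$. This replaces your entire variational construction: given the function $f\in\dom(\bH)\cap H^1(\cG)$ from Theorem~\ref{thm:nontr=finitevol}, one writes $f=f_F+f_\lambda$ and observes that $f_\lambda\in\cN_\lambda\cap H^1(\cG)$ has the same end values as $f$ (Theorem~\ref{th:H10}). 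No energy minimizer, no coercivity, and no reduction to $\lambda_0(\cG)>0$ is needed -- which matters, because the coercivity of $\gt_N$ on the affine constraint set is exactly the point you flag as unresolved, and your proposed reductions do not rescue it: rescaling finitely many edges does not make $\lambda_0>0$, and adding a constant potential turns ``harmonic'' into ``$\lambda$-harmonic'', i.e.\ it lands you in $\cN_\lambda$ with $\lambda<0$ anyway, where the edgewise-affine picture you use later no longer applies. Injectivity of the end-value map on $\cN_\lambda\cap H^1(\cG)$ is then supplied by a maximum principle for $\lambda$-harmonic functions (Lemma~\ref{lem:maxprinciple}), which again works for every graph, not only when $\lambda_0>0$.

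The genuinely broken step is your converse direction. The decomposition $g=g_{\mathrm{harm}}+g_0$, with $g_{\mathrm{harm}}$ the edgewise affine interpolant of the vertex values of $g$ and $g_0$ the edgewise Dirichlet solution of $-u''=(\bH g)|_e$, does \emph{not} produce a harmonic function: writing $g_{0,e}'(v)=\frac{1}{|e|}\int_e(|e|-t)\,(\bH g)_e(t)\,dt$, one sees that $\sum_{e\in\cE_v}(g-g_0)_e'(v)=-\sum_{e\in\cE_v}g_{0,e}'(v)$, which is generically nonzero; the vertex values of a function in $\dom(\bH)$ are not discretely harmonic, so $g_{\mathrm{harm}}\notin\HH(\cG)$ and the conclusion $\dim\ker(\bH)>\#\gC_0(\cG)$ does not follow. (Your energy bound $\|g_0'\|_{L^2}\lesssim\|\bH g\|_{L^2}$ also silently requires $\sup_e|e|<\infty$.) The repair is again the Friedrichs decomposition: if $g\in\dom(\bH)\setminus H^1(\cG)$, then $g=g_F+g_\lambda$ with $g_F\in H^1_0(\cG)$ forces $g_\lambda\in\cN_\lambda\setminus H^1(\cG)$, whence $\Nr_\pm(\bH_0)=\dim\cN_\lambda>\dim(\cN_\lambda\cap H^1(\cG))=\#\gC_0(\cG)$.
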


\begin{remark} \label{rem:sobolev}
 Since the map
\[
\begin{array}{cccc}
D \colon & H^1(\cG) & \to & L^2(\cG) \\
& f &\mapsto &f'
\end{array}
\]
 is bounded, the inclusion $\dom(\bH) \subset H^1(\cG)$ holds true if and only if there is a positive constant $C>0$ such that
\be \label{eq:sobolev}
	\| f' \|_{L^2(\cG)}^2 \le C ( \| f \|_{L^2(\cG)}^2 + \| f'' \|_{L^2(\cG)}^2 )
\ee
holds for all $f \in \dom(\bH)$. It can be shown by examples that \eqref{eq:sobolev} may fail. 
\end{remark}

Before proving Theorem~\ref{th:indicesends}, let us first comment on some of its immediate consequences. 

\begin{corollary}\label{cor:indicesends}
	If $\cG$ is a metric graph with finite total volume $\vol(\cG) < \infty$, then 
	\be \label{eq:npm>graphends}
		\Nr_\pm (\bH_0) \ge \# \Omega(\cG_d).
	\ee
Moreover, 
\be \label{eq:npm=graphends}
		\Nr_\pm (\bH_0) = \# \Omega(\cG_d)
	\ee
	 if and only  if either $\cG$ contains a non-free end (and hence $\# \Omega(\cG_d) = \infty$ in this case) or 
	 $\ker( \bH) \subset H^1(\cG)$.
\end{corollary}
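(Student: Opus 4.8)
The plan is to derive Corollary~\ref{cor:indicesends} as a direct specialization of Theorem~\ref{th:indicesends} using the structural facts about finite-volume ends established in Section~\ref{sec:III} and Section~\ref{sec:QG}. First I would observe that when $\vol(\cG) < \infty$, every topological end has finite volume (this is exactly the remark following Theorem~\ref{thm:nontr=finitevol}), so $\gC_0(\cG) = \gC(\cG)$, and via the bijection of Theorem~\ref{th:top-comb} we have $\#\gC_0(\cG) = \#\gC(\cG) = \#\Omega(\cG_d)$. Plugging this into \eqref{eq:npm>ends} immediately yields the lower bound \eqref{eq:npm>graphends}.

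For the equality case, I would again invoke Theorem~\ref{th:indicesends}: equality \eqref{eq:npm=graphends} holds if and only if either $\#\gC_0(\cG) = \#\Omega(\cG_d) = \infty$ or $\dom(\bH) \subset H^1(\cG)$. So the two things to check are (a) that $\#\Omega(\cG_d) = \infty$ is equivalent to $\cG$ containing a non-free end, under the finite total volume assumption; and (b) that the condition $\dom(\bH) \subset H^1(\cG)$ reduces to $\ker(\bH) \subset H^1(\cG)$ in this setting. For (a), note that by Remark~\ref{rem:ends}(v) (Halin's theorem) any locally finite graph with infinitely many ends has a non-free end, so $\#\Omega(\cG_d) = \infty$ implies a non-free end exists; conversely, if all ends were free, then each free end is separated from the others by a finite vertex set, and by a standard compactness/finiteness argument (each free end is an isolated point of $\gC(\cG)$ by Remark~\ref{rem:freetop}, and $\gC(\cG)$ is compact as a closed subset of $\wh\cG$) there can only be finitely many of them, so $\#\Omega(\cG_d) < \infty$. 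For (b), recall from Corollary~\ref{cor:finvol=ker} that $\vol(\cG) < \infty$ forces $\lambda_0(\cG) > 0$, so $0$ is a point of regular type for $\bH_0$; by Corollary~\ref{cor:npm=ker} the deficiency subspaces are then governed by $\ker(\bH)$, and more to the point the resolvent $(\bH - z)^{-1}$ for $z$ near $0$ (or any comparison of $\dom(\bH)$ with a reference extension having domain in $H^1$, e.g.\ $\bH_F$) shows that $\dom(\bH) \subset H^1(\cG)$ holds precisely when the deficiency elements lie in $H^1(\cG)$, i.e.\ when $\cN_0(\bH_0) = \ker(\bH) \subset H^1(\cG)$ --- one uses that $\dom(\bH) = \dom(\bH_F) \dotplus \ker(\bH)$ (a von Neumann-type decomposition at the regular point $0$) together with $\dom(\bH_F) \subset H^1_0(\cG) \subset H^1(\cG)$.

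The main obstacle I anticipate is making step (b) fully rigorous: one needs the algebraic direct sum decomposition $\dom(\bH) = \dom(\bH_0) \dotplus \cN_\I \dotplus \cN_{-\I}$ (or the analogous decomposition at the regular point $0$, $\dom(\bH) = \dom(\bH_0) \dotplus \ker(\bH)$ when $\ker(\bH_0^\ast - z)$ is replaced appropriately) and then to argue that $\dom(\bH_0) \subset H^1_0(\cG)$ together with boundedness of $D$ gives $\dom(\bH) \subset H^1(\cG) \iff \ker(\bH) \subset H^1(\cG)$. Care is needed because $\dom(\bH_0) = \overline{\dom(\bH_0^0)}$ is closed in the graph norm but not obviously contained in $H^1_0(\cG)$ a priori --- however this follows since the graph norm of $\bH_0$ dominates the $H^1$-norm on $\dom(\bH_0^0)$ by \eqref{eq:integrationbp} (indeed $\|f'\|_{L^2}^2 = \langle \bH_0^0 f, f\rangle \le \tfrac12(\|\bH_0^0 f\|^2 + \|f\|^2)$), so closing up in the graph norm keeps us inside $H^1_0(\cG)$. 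The rest is bookkeeping.

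Finally I would record that combining (a) and (b) with Theorem~\ref{th:indicesends} gives exactly the stated dichotomy in \eqref{eq:npm=graphends}, completing the proof. I would also remark in passing that the ``non-free end $\Rightarrow \#\Omega(\cG_d) = \infty$'' implication is immediate: a non-free end cannot be separated from all other ends by a finite vertex set, so there must be infinitely many ends clustering near it.
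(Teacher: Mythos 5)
Your proposal is correct and follows essentially the same route as the paper: all ends have finite volume when $\vol(\cG)<\infty$, Halin's theorem (together with the trivial converse) identifies $\#\gC_0(\cG)=\infty$ with the existence of a non-free end, and the decomposition $\dom(\bH)=\dom(\bH_F)\dotplus\ker(\bH)$ at the regular point $0$ (valid since $\lambda_0(\cG)>0$ by the Cheeger bound) reduces $\dom(\bH)\subset H^1(\cG)$ to $\ker(\bH)\subset H^1(\cG)$. One small caution: the parenthetical alternative $\dom(\bH)=\dom(\bH_0)\dotplus\ker(\bH)$ you float in the ``obstacle'' paragraph is false in general (it accounts for only half of the quotient $\dom(\bH)/\dom(\bH_0)$), but this does not affect your argument since you ultimately use the correct decomposition through $\dom(\bH_F)$.
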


In fact, we only need to mention that by Halin's theorem~\cite{hal} (see Remark~\ref{rem:ends}(v)) and the finite total volume of $\cG$, $\# \gC_0(\cG) = \infty$ only if $\cG$ contains a non-free end.

Recall that for a finitely generated group $\mG$, the number of graph ends of a Cayley graph is independent of the generating set (see, e.g.,~\cite{geog}). Combining this fact with the above statement, we obtain the following result.

\begin{corollary}\label{cor:stallings}
Let $\cG_d$ be a Cayley graph of a finitely generated group $\mG$ with infinitely many ends.\footnote{A classification of groups having infinitely many ends is given in Stallings's ends theorem~\cite{s71} (see also~\cite[Theorem~13.5.10]{geog} and Remark~\ref{rem:ends}(iv)).} If $\vol(\cG)<\infty$, then
$\Nr_\pm(\bH_0) = \infty$.
\end{corollary}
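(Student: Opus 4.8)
The plan is to deduce Corollary \ref{cor:stallings} directly from Corollary \ref{cor:indicesends} by ruling out the possibility of equality in \eqref{eq:npm>graphends} and arguing that $\#\Omega(\cG_d)=\infty$ forces the deficiency indices to be infinite. First I would invoke the classification of ends of Cayley graphs recalled in Remark \ref{rem:ends}(iv): the number of ends of $\cG_d$ is independent of the chosen (finite, symmetric) generating set, and by the Freudenthal--Hopf theorem it is either $0$, $1$, $2$, or $\infty$. Since $\mG$ is infinite, it is not $0$; by hypothesis it is neither $1$ nor $2$, hence $\#\Omega(\cG_d)=\infty$. Combining this with the bijection between $\gC(\cG)$ and $\Omega(\cG_d)$ (Theorem \ref{th:top-comb}) and with the assumption $\vol(\cG)<\infty$ — under which \emph{all} topological ends have finite volume by the Remark following Theorem \ref{thm:nontr=finitevol} — we get $\#\gC_0(\cG)=\#\gC(\cG)=\#\Omega(\cG_d)=\infty$.

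Next I would apply Corollary \ref{cor:indicesends}, whose hypothesis $\vol(\cG)<\infty$ is exactly our assumption: it yields $\Nr_\pm(\bH_0)\ge \#\Omega(\cG_d)$, and since the right-hand side is $\infty$ we conclude $\Nr_\pm(\bH_0)=\infty$, which is the claim. (Alternatively, one may read off the same conclusion from \eqref{eq:npm=graphends}: by Halin's theorem \cite{hal} — see Remark \ref{rem:ends}(v) — any locally finite graph with infinitely many ends has a non-free end, so the equality case of Corollary \ref{cor:indicesends} holds and $\Nr_\pm(\bH_0)=\#\Omega(\cG_d)=\infty$; but the mere lower bound \eqref{eq:npm>graphends} already suffices.)

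There is essentially no analytic obstacle here — the corollary is a purely bookkeeping consequence of the two preceding corollaries together with geometric group theory. The only point that deserves care is the logical chain showing $\#\Omega(\cG_d)=\infty$: one must note that "infinitely many ends" is a genuine trichotomy alternative for Cayley graphs of finitely generated infinite groups (rather than, say, "at least three ends"), so that excluding the one-ended and two-ended cases leaves only the infinite case. This is precisely the content of the Freudenthal--Hopf result cited in Remark \ref{rem:ends}(iv), and the hypothesis of Corollary \ref{cor:stallings} is phrased so as to land us directly in that case. Everything else — the passage from graph ends to finite-volume topological ends, and the bound $\Nr_\pm(\bH_0)\ge\#\gC_0(\cG)$ — is already packaged in Theorem \ref{th:indicesends} and Corollary \ref{cor:indicesends}.
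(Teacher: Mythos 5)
Your proposal is correct and follows essentially the same route as the paper: the statement is an immediate consequence of the lower bound \eqref{eq:npm>graphends} in Corollary \ref{cor:indicesends} together with the fact that the number of ends of a Cayley graph is independent of the generating set, so that the hypothesis of infinitely many ends gives $\#\Omega(\cG_d)=\infty$ directly. The excursion through the Freudenthal--Hopf trichotomy and the alternative equality-case argument via Halin's theorem are both fine but unnecessary, since the hypothesis already hands you $\#\Omega(\cG_d)=\infty$ and the lower bound alone suffices.
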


%%%%%%%%%%%%%%%%%%%%%%%%%%
\subsection{Proof of Theorem~\ref{th:indicesends}}
%%%%%%%%%%%%%%%%%%%%%%%%%%

The proof of Theorem~\ref{th:indicesends} is based on the following observation. Let $\bH_F$ be the Friedrichs extension of $\bH_0$.  Then $\dom(\bH)$ admits the following decomposition
\be \label{eq:dombHsum01}
	\dom(\bH) = \dom(\bH_F) \dotplus \ker(\bH - z) = \dom(\bH_F) \dotplus \cN_z(\bH_0),
\ee
for every $z$ in the resolvent set $\rho(\bH_F)$ of $\bH_F$ (see, e.g., \cite[Proposition~14.11]{schm}).
In particular, \eqref{eq:dombHsum01} holds for all $z\in (-\infty,\lambda_0(\cG))$, where $\lambda_0(\cG)\ge 0$ is defined by \eqref{eq:lowerbound}. 
Moreover, $\dom(\bH_F)\subset H^1_0(\cG)$ and hence the inclusion $\dom(\bH) \subset H^1(\cG)$ depends only on the inclusion $\ker(\bH - z) \subset H^1(\cG)$ for some (and hence for all) $z\in \rho(\bH_F)$.  
Let us stress that $\cN_0(\bH_0) = \ker(\bH) = \HH(\cG)\cap L^2(\cG)$ and hence in the case $\lambda_0(\cG)>0$, one is interested in whether all $L^2$ harmonic functions belong to $H^1(\cG)$ or not, which is known to depend on the geometry of the underlying metric graph. 

We also need the fact that functions in $\cN_{\lambda}(\bH_0)$ with $\lambda \in (-\infty,0)$ can be considered as subharmonic functions and hence they should satisfy a maximum principle.

\begin{lemma} \label{lem:maxprinciple}
	Suppose $\cG $ is a metric graph and let $\lambda \in(-\infty,0)$. 
	\begin{itemize} 
	\item[(i)] If $f \in \cN_{ \lambda}(\bH_0)= \ker(\bH-\lambda)$ is real-valued and $f(x_0) > 0$ for some $x_0 \in \cG$, then
	\be\label{eq:sup=sup}
\sup_{x \in \cG} f(x) = \sup_{v \in \cV} f(v).
\ee
	\item[(ii)] If additionally  $f\in H^1(\cG)$, then
	\be \label{eq:maxprinc01}
		\sup_{x \in \cG} f(x) = \sup_{\gamma \in \gC(\cG)} f(\gamma).
	\ee
	\item[(iii)] If  (not necessarily real-valued) $f \in \cN_{ \lambda}(\bH_0)  \cap H^1(\cG) $ satisfies
	\be \label{eq:maxprinc02}
		f(\gamma) = 0
	\ee
for all $\gamma \in \gC(\cG)$,	then $f \equiv 0$.
	\end{itemize}
\end{lemma}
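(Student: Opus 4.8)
The three assertions are really one statement — a maximum principle for "subharmonic" functions (solutions of $-f'' = \lambda f$ with $\lambda < 0$, which on each edge are convex when positive) — applied successively with better information about the boundary. I would prove (i) first by a local convexity/maximum argument on edges, then deduce (ii) by pushing the vertex supremum to the end compactification using the continuity of $H^1$-functions on $\wh{\cG}$ (Proposition~\ref{prop:h1trace}), and finally (iii) by applying (ii) to both $\re f$ and $-\re f$ (and similarly $\pm\im f$) to force $f \equiv 0$.

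For (i): fix $\lambda = -\mu^2$ with $\mu > 0$. On each edge $e \simeq [0,|e|]$, the equation $-f_e'' = \lambda f_e = -\mu^2 f_e$ gives $f_e'' = \mu^2 f_e$, so $f_e(x) = A\cosh(\mu x) + B\sinh(\mu x)$; wherever $f_e > 0$ we have $f_e'' > 0$, i.e. $f_e$ is strictly convex there. Consequently $f_e$ cannot attain a positive interior local maximum on $e$: a positive interior maximum would force $f_e'' \le 0$ at that point, contradicting strict convexity. Hence on the (open) set $\{f > 0\}$, the supremum of $f$ over any edge is attained only at its endpoints, i.e. at vertices. Since $f(x_0) > 0$, the set $\{f > 0\}$ is nonempty, and $\sup_{\cG} f = \sup_{\{f>0\}} f$; combining these gives $\sup_{x \in \cG} f(x) = \sup_{v \in \cV} f(v)$, which is \eqref{eq:sup=sup}. (One should note the edge case where $\sup_{\cG} f$ is not attained; but taking a sequence $x_k$ with $f(x_k) \to \sup_\cG f$, each $x_k$ can be replaced by a vertex of its edge with value $\ge f(x_k)$ by the convexity just established, so the vertex supremum still dominates.)

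For (ii): assume additionally $f \in H^1(\cG)$. By Proposition~\ref{prop:h1trace}, $f$ extends continuously to the compact space $\wh{\cG} = \cG \cup \gC(\cG)$. The vertex set $\cV$ is not compact, but its closure in $\wh\cG$ is contained in $\cG \cup \gC(\cG)$, and every vertex is a limit (along a ray) of a sequence whose accumulation points in $\wh\cG$ are ends. More directly: take a sequence $(v_k) \subset \cV$ with $f(v_k) \to \sup_{\cV} f$; by compactness of $\wh\cG$ it has a subsequence converging to some $\zeta \in \wh\cG$. If $\zeta \in \cG$ is an interior point of an edge or a vertex, then $f(\zeta) = \sup_\cV f$ and by part (i) together with edgewise convexity one checks $\zeta$ may be taken to be an end anyway when the sup is "at infinity"; if $\zeta = \gamma \in \gC(\cG)$ then $f(\gamma) = \sup_\cV f = \sup_\cG f$ by (i) and continuity. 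Combining with the trivial inequality $\sup_{\gamma} f(\gamma) \le \sup_{x \in \wh\cG} f(x) = \sup_{x \in \cG} f(x)$ (the last equality by density of $\cG$ in $\wh\cG$ and continuity) yields \eqref{eq:maxprinc01}. The one subtlety here is the case where $\sup_\cV f$ is attained at an actual vertex $v_0$ with $f(v_0) > 0$: then edgewise strict convexity near $v_0$ forces $f$ to increase away from $v_0$ along some edge unless $f$ is locally constant, and iterating/propagating this (using the Kirchhoff condition to rule out a strict interior maximum at a vertex, exactly as for discrete subharmonic functions) shows the supremum propagates out to an end; I would spell this out carefully.

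For (iii): suppose $f \in \cN_\lambda(\bH_0) \cap H^1(\cG)$ with $f(\gamma) = 0$ for all $\gamma \in \gC(\cG)$. Since $\bH$ is real (the coefficients are real), $\re f$ and $\im f$ separately lie in $\cN_\lambda(\bH_0) \cap H^1(\cG)$ and vanish at every end. If $\re f$ were positive somewhere, part (ii) would give $\sup_\cG \re f = \sup_{\gamma} (\re f)(\gamma) = 0$, a contradiction; so $\re f \le 0$ everywhere. Applying the same to $-\re f$ gives $\re f \ge 0$, hence $\re f \equiv 0$; likewise $\im f \equiv 0$, so $f \equiv 0$. \textbf{The main obstacle} I anticipate is the rigorous handling of the supremum in part (ii) when it is attained at a vertex rather than "at infinity": ruling out an interior maximum at a vertex requires combining edgewise convexity with the Kirchhoff matching condition $\sum_{e \in \cE_v} f_e'(v) = 0$ to derive a contradiction (a strict local max at $v$ would force all $f_e'(v) \le 0$ with at least one strict, violating the sum being zero — unless $f$ is locally constant, which then propagates), and then arguing that this propagation eventually reaches an end of $\gC(\cG)$; the bookkeeping to make "eventually reaches an end" precise, given that $\cG$ may have complicated end structure, is where the care is needed.
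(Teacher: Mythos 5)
Your proposal is correct and rests on the same two local ingredients as the paper's proof: the explicit form of solutions of $-f''=\lambda f$ on an edge (equivalently, strict convexity of $f$ wherever $f>0$), and the Kirchhoff condition \eqref{eq:kirchhoff}, which at every vertex $v$ with $f(v)>0$ produces an edge $e\in\cE_v$ with $f_e'(v)\ge 0$, along which $f$ then strictly increases up to the opposite vertex. Parts (i) and (iii) match the paper's argument. The only real divergence is how (ii) is finished, which is precisely the point you flag as your ``main obstacle'': you take a maximizing sequence of vertices, invoke compactness of $\wh\cG$ together with local finiteness, and must then treat separately the case where the supremum is attained at a vertex (excluded by the local-maximum argument at a vertex) and the case where the maximizing sequence accumulates at an end (handled by Proposition \ref{prop:h1trace}). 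That route does close, and your hedge about a ``locally constant'' branch is vacuous, since a nonzero constant never solves $f''=-\lambda f$ with $\lambda<0$. The paper sidesteps the dichotomy entirely: from any vertex $v$ with $f(v)>0$ it iterates the Kirchhoff step to build a ray $(v_n)$ with $f(v_{n+1})>f(v_n)$ (the vertices are automatically distinct because the values strictly increase), and then $0<f(v)<\lim_n f(v_n)=f(\gamma_\cR)\le\sup_{\gamma}f(\gamma)$ by the $H^1$-trace. The ray itself is the ``propagation to an end'' whose bookkeeping worries you, so no case analysis on whether the supremum is attained is needed; you may want to adopt this device when writing up (ii).
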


\begin{proof}
(i) Let $f \in \cN_{ \lambda}(\bH_0)$ be real-valued. If $x\in \cG$ is such that $f(x)>0$ and $e\in\cE$ is an edge with $x\in e$, then upon identifying $e$ with the interval $\cI_e=[0,|e|]$ and taking into account that 
$- f'' = \lambda  f$ on $e$,
we get
\be\label{eq:finN}
	f(y) = f (x) \cosh\big( \sqrt{-\lambda}(y -x)\big) +  \frac{f'(x)}{\sqrt{-\lambda}}    \sinh\big( \sqrt{-\lambda}(y -x)\big)
\ee
for all $y \in e$. If $f'(x) \ge 0$, then obviously $f(e_i) \ge f(x)$, where $e_i$ is the vertex of $e$ identified with the right endpoint of $\cI_e$. Similarly, $f(e_o) \ge f(x)$ for the other vertex $e_o$ of $e$ if $f'(x) <0$. Hence  $f$ attains its maximum on $e$ at the vertices of $e$, which clearly implies \eqref{eq:sup=sup}.

(ii) Now let $v \in \cV$ be a vertex with $f(v) > 0$. By  \eqref{eq:kirchhoff}, there is an edge $e \in \cE_{v}$ such that $f_{e}'(v) \geq 0$. If $u \in \cV$ is the other vertex of $e$, then by \eqref{eq:finN} we get
\[
	f (u) = f(v) \cosh\big( \sqrt{-\lambda} |e| \big) + \frac{f_e'(v)}{\sqrt{-\lambda}} \sinh\big(\sqrt{-\lambda} |e|\big) > f(v).
\]
Observe that $f_e'(u)< 0$. Hence, setting $v_0=v$ and $v_1=u$ and using induction, we can construct a ray $\cR = (v_n) $ such that $f(v_{n+1} ) > f(v_n)$ for all $n\ge0$. Since $f\in H^1(\cG)$, we get
\[
	0<f(v) < \lim_{n \to \infty} f(v_n)  = f(\gamma_\cR) \le \sup_{\gamma \in \gC(\cG)} f(\gamma),
\]
which proves \eqref{eq:maxprinc01}.

(iii) By considering $\pm f$ (and splitting into real and imaginary part, if necessary), \eqref{eq:maxprinc02} clearly follows from \eqref{eq:maxprinc01}. 
\end{proof}

\begin{remark}\label{rem:4.7}
Notice that the arguments used in the proof of Lemma~\ref{lem:maxprinciple}(ii) in fact show that functions in $\cN_{\lambda}(\bH_0)$ with $\lambda\in(-\infty,0)$ admitting positive values on $\cG$ cannot attain global maxima in $\cG$, that is, if $f$ attains a positive value at some $x\in \cG$, then for every compact subgraph $\wt\cG\subset \cG$ the following holds
\[
\sup_{x \in \cG} f(x) = \sup_{x \in \cG\setminus\wt\cG} f(x).
\]
 Clearly, analogous statements hold true for functions admitting negative values, however, then $\sup$ must be replaced with $\inf$.
\end{remark}

\begin{lemma}\label{lem:NcapH1}
Suppose $\cG $ is a metric graph and let $\lambda \in(-\infty,0)$. Then 
\be\label{eq:NcapH1}
\dim(\cN_\lambda(\bH_0) \cap H^1(\cG)) =  \# \gC_0(\cG).
\ee
\end{lemma}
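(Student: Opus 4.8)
The plan is to prove the two inequalities $\dim(\cN_\lambda \cap H^1(\cG)) \le \# \gC_0(\cG)$ and $\dim(\cN_\lambda \cap H^1(\cG)) \ge \# \gC_0(\cG)$ separately. For the lower bound, I would exploit the second claim of Theorem \ref{thm:nontr=finitevol} together with the decomposition \eqref{eq:dombHsum01}: given distinct finite volume (equivalently, nontrivial) ends $\gamma_1,\dots,\gamma_N$, choose compactly supported functions $g_j \in \dom(\bH) \cap H^1(\cG)$ with $g_j(\gamma_i) = \delta_{ij}$. Since each $g_j$ has compact support, $\bH g_j \in L^2(\cG)$, and writing $g_j = h_j + f_j$ with $h_j \in \dom(\bH_F) \subset H^1_0(\cG)$ and $f_j \in \cN_\lambda(\bH_0)$ according to \eqref{eq:dombHsum01}, one has $f_j = g_j - h_j \in H^1(\cG)$ (since $\dom(\bH_F) \subset H^1_0(\cG) \subset H^1(\cG)$) and $f_j(\gamma_i) = g_j(\gamma_i) - h_j(\gamma_i) = \delta_{ij}$ by Theorem \ref{th:H10}. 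So $f_1,\dots,f_N \in \cN_\lambda \cap H^1(\cG)$ have linearly independent "boundary value vectors" $(f_j(\gamma_i))_i$, hence are themselves linearly independent. Letting $N \to \# \gC_0(\cG)$ yields $\dim(\cN_\lambda \cap H^1(\cG)) \ge \# \gC_0(\cG)$.

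For the upper bound, I would consider the linear evaluation map
\[
	\Phi \colon \cN_\lambda \cap H^1(\cG) \to C(\gC_0(\cG)), \qquad \Phi(f) = (f(\gamma))_{\gamma \in \gC_0(\cG)},
\]
which is well defined because $f(\gamma) = 0$ for all ends $\gamma$ of infinite volume (they are trivial by Theorem \ref{thm:nontr=finitevol}). The key point is that $\Phi$ is injective: if $f \in \cN_\lambda \cap H^1(\cG)$ satisfies $f(\gamma) = 0$ for all $\gamma \in \gC_0(\cG)$, then in fact $f(\gamma) = 0$ for all $\gamma \in \gC(\cG)$, and Lemma \ref{lem:maxprinciple}(iii) forces $f \equiv 0$. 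If $\# \gC_0(\cG) = \infty$ there is nothing more to prove; if $\gC_0(\cG)$ is finite, then $C(\gC_0(\cG))$ is a vector space of dimension $\# \gC_0(\cG)$, and injectivity of $\Phi$ gives $\dim(\cN_\lambda \cap H^1(\cG)) \le \# \gC_0(\cG)$.

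Combining the two bounds gives \eqref{eq:NcapH1}. The main obstacle — or rather the place where care is needed — is the lower bound when $\# \gC_0(\cG) = \infty$: there one must produce arbitrarily large finite linearly independent families, which is exactly what the finite-collection statement in Theorem \ref{thm:nontr=finitevol} provides, so the argument goes through uniformly in $N$. A secondary point to handle cleanly is that the functions $g_j$ from Theorem \ref{thm:nontr=finitevol} lie in $\dom(\bH) \cap H^1(\cG)$ but are not a priori in $\dom(\bH - \lambda)$-related decompositions; however, since $\lambda \in \rho(\bH_F)$ (as $\lambda < 0 \le \lambda_0(\cG)$), the decomposition \eqref{eq:dombHsum01} applies to every element of $\dom(\bH)$, which is all that is used. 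No genuinely hard estimate appears here; the content is entirely in correctly invoking the trace theory of $H^1(\cG)$ (Proposition \ref{prop:h1trace}, Theorem \ref{th:H10}, Theorem \ref{thm:nontr=finitevol}) and the maximum principle of Lemma \ref{lem:maxprinciple}.
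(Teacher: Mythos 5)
Your proof is correct and follows essentially the same route as the paper: the lower bound via the decomposition $\dom(\bH)=\dom(\bH_F)\dotplus\cN_\lambda$ combined with Theorem \ref{thm:nontr=finitevol} and Theorem \ref{th:H10}, and the upper bound via injectivity of the trace map $f\mapsto (f(\gamma))_{\gamma\in\gC_0(\cG)}$ supplied by Lemma \ref{lem:maxprinciple}(iii). One small slip: the functions $g_j$ produced by Theorem \ref{thm:nontr=finitevol} are \emph{not} compactly supported (a compactly supported $H^1$-function vanishes at every end, so $g_j(\gamma_j)=1$ would then be impossible); they are merely non-constant on only finitely many edges, which is all your argument actually needs, since $g_j\in\dom(\bH)$ already guarantees $\bH g_j\in L^2(\cG)$.
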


\begin{proof}
Using \eqref{eq:dombHsum01} with $z = \lambda \in(-\infty,0)$ and noting that $\dom(\bH_F)\subset H^1_0(\cG)$, Theorem~\ref{thm:nontr=finitevol} and Theorem~\ref{th:H10} imply that $\dim(\cN_\lambda(\bH_0) \cap H^1(\cG)) \ge  \# \gC_0(\cG) $. The converse inequality follows from Lemma~\ref{lem:maxprinciple}(iii), which shows that the mapping $f \mapsto (f(\gamma))_{\gamma \in \gC_0(\cG)}$ is injective on $\cN_\lambda(\bH_0) \cap H^1(\cG)$.
\end{proof}

After all these preparations, we are now in position to complete the proof of Theorem~\ref{th:indicesends}.

\begin{proof}[Proof of Theorem~\ref{th:indicesends}]
Observe that the inequality \eqref{eq:npm>ends} immediately follows from \eqref{eq:dombHsum01} and \eqref{eq:NcapH1} since $\Nr_\pm(\bH) = \dim(\cN_\lambda(\bH_0))$.

Clearly,  the second claim is trivial if $\# \gC_0(\cG) = \infty$. Hence it remains to show that in the case $\# \gC_0(\cG) < \infty$ equality \eqref{eq:npm=ends} holds exactly when $\dom(\bH)\subset H^1(\cG)$. Applying \eqref{eq:dombHsum01} once again, the inclusion $\dom(\bH)\subset H^1(\cG)$ holds true exactly when $\cN_\lambda(\bH_0) \subset H^1(\cG)$. Taking into account once again that $\Nr_\pm(\bH) = \dim \cN_\lambda(\bH_0)$ and using \eqref{eq:NcapH1}, we arrive at the conclusion.
\end{proof}

\begin{remark}
Let us mention that one can prove the second claim of Theorem~\ref{th:indicesends} in a different way. Namely, if $\# \gC_0(\cG) < \infty$, then it is possible to reduce the problem to the study of a finite volume graph with a single end. 
\end{remark}

Let us stress that in the proof of Theorem~\ref{th:indicesends} the equivalence of equality \eqref{eq:npm=ends} and the inclusion $\dom(\bH)\subset H^1(\cG)$ was proved in the case when all finite volume ends are free. The next result shows that the inclusion never holds if there is a finite volume end which is not free.

\begin{proposition} \label{prop:H1InfEnds}
Let $\cG$ be  a metric graph having a finite volume end which is not free. Then there exists a function $f \in \dom(\bH)$ which does not belong to $H^1(\cG)$. 
\end{proposition}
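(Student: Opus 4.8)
The plan is to construct an explicit function $f \in \dom(\bH)$ whose restriction to a suitable ray has infinite energy, exploiting the fact that a non-free finite volume end $\gamma$ has infinitely many ``sibling'' ends (or at least cannot be separated from the rest of the graph by a finite vertex set) that are themselves accessible along short paths. Concretely, let $\gamma \in \gC_0(\cG)$ be a finite volume end which is not free, and pick a sequence $\cU = (U_n)$ representing $\gamma$ with $\vol(U_0) < \infty$; by identifying the closures with connected subgraphs $\cG_n(\gamma) = \overline{U_n}$ we obtain a nested exhaustion of a finite-volume neighborhood of $\gamma$. Since $\gamma$ is not free, for every $n$ the boundary $\partial U_n$ is finite but the complement $U_n \setminus \overline{U_{n+1}}$ (a ``shell'') must, infinitely often, either branch toward another end or contain vertices that cannot be cut off by a bounded set — the point being that we can select, in infinitely many shells, a vertex $w_k \in U_{n_k}$ together with a short path connecting $w_k$ back toward $\partial U_0$ and a ``dangling'' piece so that assigning a large value at $w_k$ is compatible with Kirchhoff conditions while costing little in $L^2$.

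The key construction: I would build $f$ as a sum $f = \sum_k c_k f_k$ of bump-type functions, where each $f_k$ is supported on a small subgraph near $w_k$ (a path of bounded combinatorial length decorated with a pendant edge so that the Kirchhoff conditions \eqref{eq:kirchhoff} can be satisfied with $f_k$ locally affine away from where we bend it, exactly as in the proof of Theorem \ref{thm:nontr=finitevol}), with $f_k \equiv 0$ outside and $-f_k'' = 0$ edgewise so that $f_k \in \dom(\bH)$ with $\bH f_k = 0$ on its support. Choosing the supports pairwise disjoint and the coefficients $c_k$ decaying slowly — say $c_k \to 0$ but $\sum_k c_k^2 = \infty$ while $\sum_k c_k^2 \vol(\supp f_k) < \infty$ (possible because the total volume near $\gamma$ is finite, so $\vol(\supp f_k) \to 0$) — ensures $f \in L^2(\cG)$ and $\bH f \in L^2(\cG)$ (indeed $\bH f$ is supported on finitely many edges per bump and the corresponding norms are summable by the same coefficient bookkeeping), so $f \in \dom(\bH)$, but $\|f'\|_{L^2(\cG)}^2 = \sum_k c_k^2 \|f_k'\|_{L^2}^2 = \infty$ because each $f_k$ has energy bounded below by a constant times $c_k^{-2}\cdot c_k^2 = $ a uniform positive constant (the energy of a fixed-shape bump of height $1$ on a path of bounded length is bounded below independently of $k$). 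Hence $f \notin H^1(\cG)$.

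The main obstacle — and the place where non-freeness must genuinely be used — is verifying that infinitely many such ``cheap bump slots'' actually exist inside a single finite volume neighborhood of $\gamma$. If $\gamma$ were free, one could separate it by a finite vertex set $X$, past which the neighborhood $\cG_\gamma$ is a fixed finite-volume graph, and any compactly supported $\dom(\bH)$-function there automatically lies in $H^1$ (as in the proof of Theorem \ref{th:indicesends}); so the construction must fail in that case, which tells us the combinatorial input is essential. The correct combinatorial statement to extract is: because $\gamma$ is not free, for every finite vertex set $X$ the end $\gamma$ is ``adjacent'' to at least one other end within $\cV \setminus X$, equivalently there is no bounded separator, so the exhaustion $(\cG_n(\gamma))$ has $\partial(\cG_n(\gamma))$ unbounded in some sense that forces infinitely many genuinely distinct branching loci; I would make this precise using the characterization of free ends in Remark \ref{rem:freetop} (a free end is an isolated point of $\gC(\cG)$ with a compactly-bounded neighborhood subgraph eventually containing it), together with König's-lemma / Halin-type arguments already available via Theorem \ref{th:top-comb}. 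Once that combinatorial skeleton is fixed, the analytic estimates are routine and parallel the bump construction in the proof of Theorem \ref{thm:nontr=finitevol}.
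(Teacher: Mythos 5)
Your construction cannot work, and the obstruction is precisely the integration-by-parts identity \eqref{eq:integrationbp}. Each bump $c_kf_k$ is a compactly supported, edgewise $H^2$ function satisfying the Kirchhoff conditions, i.e.\ an element of $\dom(\bH_0^0)$, so $\|c_kf_k'\|_{L^2}^2=\langle \bH(c_kf_k),c_kf_k\rangle\le \|c_k\bH f_k\|_{L^2}\,\|c_kf_k\|_{L^2}$. Since the supports are disjoint, summing and applying Cauchy--Schwarz to the resulting series gives
\[
\|f'\|_{L^2}^2=\sum_k c_k^2\|f_k'\|_{L^2}^2\le\Big(\sum_k c_k^2\|\bH f_k\|_{L^2}^2\Big)^{1/2}\Big(\sum_k c_k^2\|f_k\|_{L^2}^2\Big)^{1/2}=\|\bH f\|_{L^2}\,\|f\|_{L^2},
\]
so your three requirements ($\|f\|_{L^2}<\infty$, $\|\bH f\|_{L^2}<\infty$, $\|f'\|_{L^2}=\infty$) are mutually inconsistent. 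Equivalently, your $f$ is the graph-norm limit of its compactly supported partial sums, hence lies in $\dom(\bH_0)\subset H^1_0(\cG)$, and no element of $\dom(\bH_0)$ can witness $\dom(\bH)\not\subset H^1(\cG)$. The quantitative bookkeeping also fails at the level of a single bump: a transition of height $1$ over length $\ell$ has $\|\phi'\|_{L^2}^2\gtrsim\ell^{-1}$ but $\|\phi''\|_{L^2}^2\gtrsim\ell^{-3}$, so as the supports shrink (forced by finite volume), $\|\bH f_k\|$ blows up strictly faster than the energy, not slower.

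The witness must therefore come from the ``non-minimal'' part of $\dom(\bH)$, and this is how the paper proceeds. Using $\dom(\bH)=\dom(\bH_F)\dotplus\cN_\lambda$ with $\dom(\bH_F)\subset H^1_0(\cG)$, the question reduces to whether $\cN_\lambda\subset H^1(\cG)$, and the paper argues by contradiction rather than by explicit construction: if $\dom(\bH)\subset H^1(\cG)$, then the Sobolev inequality \eqref{eq:sobolev} holds, and on $\cN_\lambda$ (where $\|\bH g\|=|\lambda|\,\|g\|$) it makes $\|\cdot\|_{H^1}$ and $\|\cdot\|_\infty$ equivalent norms. Non-freeness is then used not to supply ``cheap bump slots'' but to supply end-separating functions of bounded sup-norm and arbitrarily large energy: past every compact subgraph there is a connected component $U$ of arbitrarily small volume containing two distinct ends $\gamma,\gamma'$, and the function $f_U\in\cN_\lambda\cap H^1(\cG)$ with $f_U(\gamma)=0$, $f_U(\gamma')=1$ satisfies $\|f_U\|_\infty=1$ by the maximum principle (Lemma \ref{lem:maxprinciple}), while joining two rays to $\gamma$ and $\gamma'$ inside $U$ forces $\|f_U'\|_{L^2}\ge (2\sqrt{\vol(U)})^{-1}$, contradicting the norm equivalence as $\vol(U)\to0$. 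Your closing paragraph gestures at the right combinatorial input (no finite separator, hence another end accessible inside every small neighborhood of $\gamma$), but the analytic mechanism you attach to it is the one that provably cannot succeed.
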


\begin{proof}
First observe that we can restrict our considerations to the case of a metric graph $\cG$ having finite total volume. Indeed, if $\gamma $ is a non-free finite volume end of $\cG$, then there exists a sequence $\cU = (U_n)$  representing $\gamma$ such that $\vol(U_n) < \infty$ for all $n$. By definition, each $U_n$ is open and has compact boundary. Choosing $\cG_0\subset \cG$ as the subgraph with vertex set $\cV(\cG_0) = \cV\cap U_0$ and edge set $\cE(\cG_0) = \{e\in \cE\,|\, e\subset U_0 \}$, it is easy to see that $\cG_0$ is a connected finite volume subgraph and $\gamma$ is a non-free end of $\cG_0$ (see also the notion of graph representation of an end in Section~\ref{ss:VI:i}). Moreover, by construction the set $\partial \cG_0$ of boundary points (here, $\cG_0$ is seen as a closed subset of $\cG$) is finite.

Let $\wt \cG \subset \cG$ be a connected, compact subgraph and consider the finitely many connected components of $\cG \setminus \wt \cG$. Since $\cG$ has infinitely many ends, there is a connected component $U$ which contains at least two distinct graph ends $\gamma, \gamma' \in \gC(\cG)$. 
Following the proof of Theorem~\ref{thm:nontr=finitevol}, we readily construct a real-valued function $f=f_U \in \dom(\bH) \cap H^1(\cG)$ with $f(\gamma) = 0$, $f(\gamma') = 1$ and $0 \le f \le 1$ on $\gC(\cG)$ (in fact, it suffices to choose the corresponding function $\phi$ with $0 \le \phi \le 1$). Taking into account Theorem~\ref{th:H10} and decomposition \eqref{eq:dombHsum01}, we can assume that $f$ belongs to $H^1(\cG) \cap \cN_\lambda(\bH_0)$ for some (fixed) $\lambda \in (- \infty, 0)$. However, Lemma~\ref{lem:maxprinciple} (iii) implies that
\[
	\| f \|_\infty = \sup_{x \in \cG} |f(x)| = \sup_{x \in \cG} f(x) = 1.
\]
On the other hand, there exist two rays $\cR$, $\cR' \in \mR(\cG_d)$ representing the ends $\gamma$ and, respectively, $\gamma'$ such that both $\cR$, $\cR'$ are contained in $U$ and have the same initial vertex $v_0$. This leads to another estimate  
\begin{align*}
1 &= \big |f(\gamma) - f(\gamma') \big |= \big |f(\gamma) - f(v_0) + f(v_0)  - f(\gamma') \big | \\
&= \Big |  \int_\cR f'(x)dx - \int_{\cR'} f'(x)dx \Big  | \le 2 \sqrt{\vol(U)}\, \|f'\|_{L^2(U)} \le 2 \sqrt{\vol(U)}\, \|f'\|_{L^2(\cG)}.
\end{align*}

Assume now that \eqref{eq:sobolev} holds for all functions $g \in \cN_\lambda(\bH_0)$. Then $\| \cdot \|_\infty$ and $\| \cdot \|_{H^1}$ are in fact equivalent norms on $\cN_\lambda(\bH_0)$. Indeed, combining \eqref{eq:sobolev} and the finite volume property, we get
\[
	\| g \|_{H^1}^2 \le C (\| g\|^2_{L^2} + \| \bH g \|_{L^2}^2) = C (1 + \lambda^2) \| g \|_{L^2}^2 \le C (1 + \lambda^2) \vol(\cG) \| g \|_{\infty}^2
\]
for all $g \in \cN_\lambda(\bH_0)$, whereas $\| g \|_{\infty} \le C_\cG \|g\|_{H^1}$ by Lemma~\ref{lem:inftyh1}.
Choosing compact subgraphs $\wt \cG_\varepsilon$ with $\vol(\cG \setminus \wt \cG_\varepsilon) \le \varepsilon^2$ (which is possible since $\cG$ has finite volume), we clearly get $\vol(U_\varepsilon) \le \varepsilon^2$ and hence the above constructed function $f_\varepsilon := f_{U_\varepsilon}\in H^1(\cG) \cap \cN_\lambda(\bH_0)$ satisfies
\[
\|f_\varepsilon'\|_{L^2(\cG)} \ge \|f_\varepsilon'\|_{L^2(U_\varepsilon)} \ge \frac{1}{2 \sqrt{\vol(U_\varepsilon)}} \ge \frac{1}{2 \varepsilon}.
\]
However, by construction, $\|f_\varepsilon\|_\infty = 1$, which obviously contradicts to the equivalence of norms $\|\cdot\|_\infty$ and $\|\cdot\|_{H^1}$ on $\cN_\lambda(\bH_0)$ since $\varepsilon >0$ is arbitrary. 
\end{proof}

We conclude this section by mentioning some explicit examples. 

\begin{example}[Radially symmetric trees] \label{ex:rst}
Let $\cG= \cT$ be a \emph{radially symmetric (metric) tree}: that is, a rooted tree $\cT$ such that for each $n \ge 0$, all vertices in the combinatorial sphere $S_n$ have the same number of descendants $d_n \ge 2$ and all edges between the combinatorial spheres $S_n$ and $S_{n+1}$ have the same length.
It is well-known that in this case $\bH$ is self-adjoint if and only if $\vol(\cT) = \infty$ and deficiency indices are infinite, $\Nr_\pm(\bH_0) = \infty$, otherwise (see, e.g.,~\cite{car00, sol04}). Moreover, due to the symmetry assumptions, all graph ends are of finite volume simultaneously. Hence we arrive at the equality
\[
	\Nr_\pm(\bH_0) = \# \gC_0(\cG) = \begin{cases}\infty, &\text{if } \vol(\cT) < \infty, \\ 0, & \text{if } \vol(\cT) = \infty. \end{cases}
\]
Moreover, by Theorem~\ref{th:indicesends} and Proposition~\ref{prop:H1InfEnds}, the inclusion $\dom(\bH) \subset H^1(\cG)$ holds true if and only if $\vol(\cT) = \infty$.
\end{example}

\begin{example}[Radially symmetric antitrees] \label{ex:rsat}
Consider a metric antitree $\cG = \cA$ (see Section~\ref{ss:AT01} for definitions) and additionally suppose that $\cA$ is \emph{radially symmetric}, that is, for each $n\ge 0$, all edges between the combinatorial spheres $S_n$ and $S_{n+1}$ have the same length. Combining~\cite[Theorem~4.1]{kn19} (see also Corollary~\ref{lem:ATrad} below) with the fact that antitrees have exactly one graph end, $\# \gC(\cA) = 1$, we conclude that
\[
	\Nr_\pm(\bH_0) = \# \gC_0(\cG) = \begin{cases}1, &\text{if } \vol(\cA) < \infty, \\ 0, & \text{if } \vol(\cA) = \infty. \end{cases}
\]
In particular, $\bH$ is self-adjoint if and only if $\vol(\cA) = \infty$. Moreover, the inclusion $\dom(\bH) \subset H^1(\cG)$ holds true for all radially symmetric antitrees by Theorem~\ref{th:indicesends}.
\end{example}

\begin{remark}\label{rem:fampreserv}
Both radially symmetric trees and antitrees are particular examples of the so-called \emph{family preserving metric graphs} (see~\cite{bl19} and also~\cite{brke13}) . 
 Employing the results from~\cite{bl19}, it is in fact possible to extend the conclusions in Example~\ref{ex:rst} and Example~\ref{ex:rsat} to this general setting. 
More precisely, for each \emph{family preserving metric graph $\cG$ without horizontal edges}, the Kirchhoff Laplacian $\bH$ is self-adjoint if and only if $\vol(\cG) = \infty$ and moreover
\[
	\Nr_\pm(\bH_0) =  \# \gC_0(\cG) = \begin{cases}\# \gC (\cG), &\text{if } \vol(\cG) < \infty, \\ 0, & \text{if }  \vol(\cG) = \infty. \end{cases}
\]
If in addition $\cG$ has finitely many ends, then the inclusion $\dom(\bH) \subset H^1(\cG)$ holds true. On the other hand, if $\cG$ has infinitely many ends, then $\dom(\bH) \subset H^1(\cG)$ holds true if and only if $\vol(\cG) = \infty$. The last two statements are again immediate consequences of Theorem~\ref{th:indicesends} and Proposition~\ref{prop:H1InfEnds}.

In conclusion, let us also emphasize that the example of the rope ladder graph in Appendix~\ref{sec:rope} shows that the assumption on horizontal edges cannot be omitted. More precisely, the rope ladder graph is a \emph{family preserving graph} in the sense of~\cite{brke13} with exactly one graph end. However, it possesses infinitely many horizontal edges (i.e., edges connecting vertices in the same combinatorial sphere) and Example~\ref{ex:polynomialrl} shows that in general $\Nr_\pm(\bH_0) > \#\gC_0(\cG)$, even if the edge lengths are chosen symmetrically to the root, $|e_n^+| = |e_n^-|$ for all $n \in \Z_{\ge 0}$.
\end{remark}

%%%%%%%%%%%%%%%%%%%%%%%%%%%%%%%%%%%%%%%%%%%%%%%%%%%%%%%%%%%%
%%%%%%%%%%%%%%%%%%%%%%%%%%%%%%%%%%%%%%%%%%%%%%%%%%%%%%%%%%%%
\section{Properties of self-adjoint extensions} \label{sec:V}
%%%%%%%%%%%%%%%%%%%%%%%%%%%%%%%%%%%%%%%%%%%%%%%%%%%%%%%%%%%%
%%%%%%%%%%%%%%%%%%%%%%%%%%%%%%%%%%%%%%%%%%%%%%%%%%%%%%%%%%%%

The Sobolev space $H^1(\cG)$ plays a distinctive role in the study of self-adjoint extensions of the minimal operator $\bH_0$. A self-adjoint extension $\wt\bH$ of $\bH_0$ is called a \emph{finite energy extension} if its domain is contained in $H^1(\cG)$, that is, every function $f\in \dom(\wt\bH)$ has finite energy, $\|f'\|_{L^2(\cG)} <\infty$. 
The main result of this section already indicates that finite energy self-adjoint extensions of the minimal operator (notice that among those are the Friedrichs extension and, as we will see later in this section, all Markovian extensions) possess a number of important properties. 

\begin{theorem}\label{prop:resolvprop}
 Let $\widetilde{\bH}$ be a self-adjoint lower semi-bounded extension of ${\bH}_0$. Assume that $z$ belongs to its resolvent set $\rho(\widetilde{\bH})$. Then the following assertions hold.
\begin{enumerate}[(i)]
\item If the form domain of $\widetilde{\bH}$ is contained in $H^1(\cG)$, then the resolvent $\cR(z,\widetilde{\bH})$ of $\widetilde{\bH}$ is an integral operator whose kernel $\cK_z$ is both of class $L^\infty(\cG\times \cG)$ and jointly H\"older continuous of exponent $\beta = 1/2$.
\item If additionally $\cG$ has finite total volume, then 
$\cR(z,\widetilde{\bH})$ is of trace class.
\end{enumerate}
\end{theorem}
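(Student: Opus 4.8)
The plan is to reduce everything to a single real spectral parameter and then exploit the square-root factorisation of the resolvent together with the Sobolev embedding from Lemma~\ref{lem:inftyh1}. Since $\widetilde{\bH}$ is self-adjoint and lower semibounded, fix a real $z_0<\inf\sigma(\widetilde{\bH})$, so that $\widetilde{\bH}-z_0>0$. Under the hypothesis of (i), the form domain $\mathcal{Q}$ of $\widetilde{\bH}$ satisfies $\mathcal{Q}\subseteq H^1(\cG)$; since $\mathcal{Q}$ (with the form norm of $\widetilde{\bH}-z_0$) and $H^1(\cG)$ are Hilbert spaces whose norms both dominate $\|\cdot\|_{L^2(\cG)}$, the inclusion $\mathcal{Q}\hookrightarrow H^1(\cG)$ has closed graph, hence is continuous; together with Lemma~\ref{lem:inftyh1} this gives continuous inclusions $\mathcal{Q}\hookrightarrow H^1(\cG)\hookrightarrow C_b(\cG)$. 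Consequently $B:=(\widetilde{\bH}-z_0)^{-1/2}$, which maps $L^2(\cG)$ isometrically onto $\mathcal{Q}$, is bounded both as an operator $L^2(\cG)\to H^1(\cG)$ and as an operator $L^2(\cG)\to C_b(\cG)$, and $\cR(z_0,\widetilde{\bH})=B^2$.

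Next I treat (i) for $z=z_0$. Boundedness of $B\colon L^2(\cG)\to C_b(\cG)$ means that for each $x\in\cG$ the map $g\mapsto(Bg)(x)$ is a bounded functional on $L^2(\cG)$, so there is $\ell_x\in L^2(\cG)$ with $(Bg)(x)=\langle g,\ell_x\rangle$ and $\|\ell_x\|_{L^2(\cG)}\le\|B\|_{L^2\to C_b}$ (here $\langle\cdot,\cdot\rangle$ is the $L^2(\cG)$-inner product, linear in the first slot). Using $B=B^\ast$,
\[
 (\cR(z_0,\widetilde{\bH})g)(x)=\bigl(B(Bg)\bigr)(x)=\langle Bg,\ell_x\rangle=\langle g,B\ell_x\rangle ,\qquad g\in L^2(\cG),
\]
so $\cR(z_0,\widetilde{\bH})$ is the integral operator with kernel $\cK_{z_0}(x,y)=\overline{(B\ell_x)(y)}=\langle\ell_y,\ell_x\rangle$. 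By Cauchy--Schwarz $\|\cK_{z_0}\|_{L^\infty(\cG\times\cG)}\le\|B\|_{L^2\to C_b}^2$. For joint continuity it suffices that $x\mapsto\ell_x$ be continuous from $\cG$ into $L^2(\cG)$: for $u=Bg$ and any $\varepsilon>0$ choose a self-avoiding path $\cR$ from $x$ to $x'$ with $|\cR|<\varrhoo(x,x')+\varepsilon$, along which $u$ restricts to an $H^1$-function, whence $|u(x)-u(x')|\le|\cR|^{1/2}\|u'\|_{L^2(\cG)}\le(\varrhoo(x,x')+\varepsilon)^{1/2}\|B\|_{L^2\to H^1}\|g\|_{L^2(\cG)}$ (the argument of Lemma~\ref{lem:inftyh1}); taking the supremum over $\|g\|_{L^2(\cG)}\le1$ and letting $\varepsilon\downarrow 0$ gives $\|\ell_x-\ell_{x'}\|_{L^2(\cG)}\le\varrhoo(x,x')^{1/2}\|B\|_{L^2\to H^1}$, so $(x,y)\mapsto\langle\ell_y,\ell_x\rangle$ is continuous on $\cG\times\cG$.

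For general $z\in\rho(\widetilde{\bH})$, iterating the resolvent identity yields $\cR(z,\widetilde{\bH})=\cR(z_0,\widetilde{\bH})+(z-z_0)\cR(z_0,\widetilde{\bH})M_z\cR(z_0,\widetilde{\bH})$ with $M_z:=\id+(z-z_0)\cR(z,\widetilde{\bH})$ bounded on $L^2(\cG)$. For any bounded $M$ on $L^2(\cG)$, using $\cR(z_0,\widetilde{\bH})^\ast=\cR(z_0,\widetilde{\bH})$ and $\cK_{z_0}(x,\cdot)=\overline{B\ell_x}\in L^2(\cG)$ one finds that $\cR(z_0,\widetilde{\bH})M\cR(z_0,\widetilde{\bH})$ is the integral operator with kernel $(x,y)\mapsto\overline{\bigl(\cR(z_0,\widetilde{\bH})M^\ast\overline{\cK_{z_0}(x,\cdot)}\bigr)(y)}$; since $\cR(z_0,\widetilde{\bH})\colon L^2(\cG)\to C_b(\cG)$ is bounded and $x\mapsto\cK_{z_0}(x,\cdot)$ is continuous into $L^2(\cG)$, this kernel is bounded uniformly in $x$ by $\|\cR(z_0,\widetilde{\bH})\|_{L^2\to C_b}\,\|M\|\,\sup_x\|\cK_{z_0}(x,\cdot)\|_{L^2(\cG)}$ and jointly continuous. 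Taking $M=\id$ and $M=\cR(z,\widetilde{\bH})$ and adding $\cK_{z_0}$ proves (i) for all $z\in\rho(\widetilde{\bH})$. For (ii), assume in addition $\vol(\cG)<\infty$. Then $B$ has the $L^2$-kernel $b(x,\cdot)=\overline{\ell_x}$ with $\|b(x,\cdot)\|_{L^2(\cG)}\le\|B\|_{L^2\to C_b}$ for every $x$, so $\int_\cG\|b(x,\cdot)\|_{L^2(\cG)}^2\,dx\le\|B\|_{L^2\to C_b}^2\,\vol(\cG)<\infty$; hence $b\in L^2(\cG\times\cG)$ and $B$ is Hilbert--Schmidt. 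Therefore $\cR(z_0,\widetilde{\bH})=B\cdot B$ is of trace class, and since $\cR(z,\widetilde{\bH})=\cR(z_0,\widetilde{\bH})M_z$ with $M_z$ bounded, $\cR(z,\widetilde{\bH})$ is of trace class for every $z\in\rho(\widetilde{\bH})$, the trace class being a two-sided operator ideal.

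I expect the main obstacle to be keeping \emph{both} ends of the resolvent anchored in $C_b(\cG)$. The one-sided factorisation $\cR(z,\widetilde{\bH})=\cR(z_0,\widetilde{\bH})M_z$ only controls the $L^2$-norm of the slices $\cK_z(x,\cdot)$, not their sup-norm, and so by itself does not deliver a kernel that is pointwise bounded and jointly continuous in both variables; what fixes this is the symmetric sandwich $\cR(z_0,\widetilde{\bH})M\cR(z_0,\widetilde{\bH})$ coming from the iterated resolvent identity, together with the fact that $\cK_{z_0}$ is already square-integrable in its second variable. The single genuinely non-formal ingredient is the $H^1$-to-$C^{1/2}$ estimate along paths, which is exactly where the hypothesis $\mathcal{Q}\subseteq H^1(\cG)$ is used; everything else — the square-root factorisation, the closed-graph and Cauchy--Schwarz steps, and the ideal property of the trace class — is routine.
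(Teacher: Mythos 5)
Your proof is correct and follows essentially the same route as the paper: factor the resolvent through the square root of the (shifted) operator, use the form-domain inclusion in $H^1(\cG)$ together with Lemma \ref{lem:inftyh1} to make the half-power resolvent bounded from $L^2(\cG)$ to $C_b(\cG)$ with H\"older-continuous representing vectors, and deduce the trace-class property from the Hilbert--Schmidt bound $\int_\cG\|\ell_x\|_{L^2}^2\,dx\le \|B\|^2\vol(\cG)$. The only real deviations are cosmetic: you anchor at a fixed real $z_0$ and reach general $z$ via the iterated resolvent identity (which in fact covers all of $\rho(\widetilde{\bH})$, slightly more than the paper's complex square-root factorization $\cR(z,\widetilde{\bH})=\cR(\lambda,\widetilde{\bH}^{1/2})\cR(-\lambda,\widetilde{\bH}^{1/2})$ handles as written), and you build the kernel directly from Riesz representation of point evaluations rather than citing the Kantorovich--Vulikh theorem.
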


\begin{proof}
(i) Let $\widetilde{\bH}$ be a self-adjoint lower semi-bounded extension of ${\bH}_0$, $\wt\bH \ge c$ for some $c \in \R$. Without loss of generality we may assume $c=0$. Then we can consider its positive semi-definite square root $\widetilde{\bH}^{1/2}$, which is again self-adjoint and whose domain agrees with the form domain of $\widetilde{\bH}$. Accordingly, for all $z\in \C\setminus [0,\infty)$ and $\lambda = \sqrt{z}$ we get
\[
\big(\widetilde{\bH}^{1/2}  - \lambda\big)\big(\widetilde{\bH}^{1/2} + \lambda\big)=\widetilde{\bH} - z,
\]
and hence
\begin{eqnarray}\label{eq:factorz}
\cR(z,\widetilde{\bH})=\cR(\lambda,\widetilde{\bH}^{1/2})\cR(-\lambda,\widetilde{\bH}^{1/2}).
\end{eqnarray}
If the form domain of $\widetilde{\bH}$ is contained in $H^1(\cG)$, and hence by Lemma~\ref{lem:inftyh1} in $C_b(\cG)$, then $\cR(\pm \lambda,\widetilde{\bH}^{1/2})$ maps $L^2(\cG)$ into $L^\infty(\cG)$, and  hence by duality also maps $L^1(\cG)$ into $L^2(\cG)$. Thus \eqref{eq:factorz} implies that $\cR(z,\widetilde{\bH})$ maps $L^1(\cG)$ into $L^\infty(\cG)$ and hence, by the Kantorovich--Vulikh theorem (see, e.g.,~\cite[Theorem~1.3]{Arebuh94} or~\cite[Theorem~1.1]{MugNit11}), $\cR(z,\widetilde{\bH})$ is an integral operator with the $L^\infty$-kernel  $\cK(z;\cdot,\cdot)$. 

In order to prove the assertion about joint H\"older continuity, we need to take a closer look at the kernel $\cK$ by adapting the proof of~\cite[Prop.~2.1]{AreEls19}:
as noticed before, the resolvent $\cR(\lambda,\widetilde{\bH}^{1/2})$ is bounded from $L^2(\cG)$ to $L^\infty(\cG)$ by Lemma~\ref{lem:inftyh1} for any $\lambda$ in the resolvent set of $\widetilde{\bH}^{1/2}$. Applying the Kantorovich--Vulikh theorem (see, e.g.,~\cite[page 113]{Arebuh94}) once again, we see that
\[
\cR( \lambda,\widetilde{\bH}^{1/2})u(x)=\int_\cG u(y) \kappa(\lambda;x,y) dy = \langle u ,  \kappa(\lambda;x,\cdot)^\ast \rangle_{L^2(\cG)}
\]
for all $x\in \cG$ and some $\kappa(\lambda;x,\cdot) \in L^2(\cG)$ such that $\sup_{x \in \cG} \| \kappa(\lambda;x, \cdot) \|_{L^2(\cG)} < \infty$. Moreover, observe that there exists $C = C(\lambda) > 0$ such that
\begin{align} \label{eq:jointcont}
 \| \kappa(\lambda;x,\cdot) - \kappa(\lambda;x',\cdot) \|_{L^2(\cG)} \le C \sqrt {\varrhoo (x, x') }
\end{align}
for all $x, x' \in \cG$, where $\varrhoo (x,x')$ denotes the distance in the natural path metric on $\cG$. Indeed, for any function $u \in L^2(\cG)$,
\begin{align}\label{eq:bddh1l2}
\begin{split}
	\Big| \int_\cG u(y)   (\kappa(\lambda;x,y) - \kappa(\lambda;x',y))    dy \Big|  & =  
	\big| \cR(\lambda, \widetilde{\bH}^{1/2}  )u(x) -  \cR(\lambda, \widetilde{\bH}^{1/2} ) u(x')\big| \\
	&\le \sqrt{\varrhoo (x, x')} \| \cR(\lambda,\widetilde{\bH}^{1/2} ) u \|_{H^1}\\
	& \le C \sqrt{\varrhoo (x, x')} \| u \|_{L^2},
	\end{split}
\end{align}
where we have used the Cauchy--Schwarz inequality and the fact that the resolvent $\cR(\lambda, \wt{\bH}^{1/2})$ is a bounded operator from $L^2$ to the domain of $\wt{\bH}^{1/2}$ equipped with the graph norm, and \eqref{eq:jointcont} immediately follows. Now, taking into account the equalities \eqref{eq:factorz} and $\cR( \lambda,\widetilde{\bH}^{1/2})^\ast = \cR( \lambda^\ast,\widetilde{\bH}^{1/2})$, we conclude that
\begin{align*}
\cR(z,\widetilde{\bH})u(x)&=\cR( \lambda,\widetilde{\bH}^{1/2})\big(\cR( -\lambda,\widetilde{\bH}^{1/2})u\big)(x) \\
&= \big\langle \cR( -\lambda,\widetilde{\bH}^{1/2})u,  \kappa(\lambda;x,\cdot)^\ast\big\rangle_{L^2(\cG)} \\
&= \big\langle u,  \cR( -\lambda^\ast,\widetilde{\bH}^{1/2})\kappa(\lambda;x,\cdot)^\ast\big\rangle_{L^2(\cG)} \\
&=\int_\cG u(y)\int_\cG\kappa(\lambda;x,s)\kappa(-\lambda^\ast;y,s)^\ast ds\, dy\\
&=:\int_\cG u(y)\cK(z;x,y)\, dy,
\end{align*}
 for all $u\in L^2(\cG)$.
 It remains to prove that the mapping
\[
	\cK:\cG\times \cG\ni (x,y)\mapsto \int_\cG\kappa(\lambda;x,s)\kappa(-\lambda^\ast;y,s)^\ast ds \in \C
\]
is jointly H\"older continuous. However, recalling that $\sup_{x \in \cG} \| \kappa(\lambda, x; \cdot) \|_{L^2(\cG)} < \infty$, this immediately follows from \eqref{eq:jointcont}, since
\begin{align*}
	|\cK(x, y) - \cK(x', y')| & \le \|\kappa(\lambda;x,\cdot) (\kappa(-\lambda^\ast;y,\cdot)^\ast - \kappa(-\lambda^\ast;y',\cdot)^\ast) \|_{L^1} \\
	&\quad + \| \kappa(-\lambda^\ast;y',\cdot)^\ast (\kappa(\lambda;x,\cdot) - \kappa(\lambda;x',\cdot) ) \|_{L^1}.
\end{align*}
for all pairs $(x, y), (x',  y') \in \cG \times \cG$.
 
(ii) If $\cG$ has finite total volume, then $L^\infty(\cG\times \cG)\hookrightarrow L^2(\cG\times \cG)$ and hence the resolvents $\cR(\pm \lambda,\widetilde{\bH}^{1/2})$ are Hilbert--Schmidt operators. Thus,  by~\eqref{eq:factorz} we conclude that $\cR(z,\widetilde{\bH})$ is of trace class.
\end{proof}

Observe that the first step in the proof of Theorem~\ref{prop:resolvprop} is the factorization~\eqref{eq:factorz}, which has the natural counterpart for semigroups
\[
\E^{-z\widetilde{\bH}}\,\E^{-z\widetilde{\bH}}=\E^{-2z\widetilde{\bH}},\qquad \re z>0.
\]
Because the semigroup generated by a self-adjoint semi-bounded extension $\widetilde{\bH}$ is analytic, it is a bounded operator from the Hilbert space into its generator's form domain whenever $\re z>0$. A careful look at the proof of Theorem~\ref{prop:resolvprop} shows that this is sufficient to establish that $\E^{-z\widetilde{\bH}}$ is an integral operator;  all further steps in the proof of Theorem~\ref{prop:resolvprop}  carry over almost verbatim to the study of semigroups. We can hence easily deduce the following result.
\begin{theorem}\label{prop:semigprop}
 Let $\widetilde{\bH}$ be a self-adjoint lower semi-bounded extension of ${\bH}_0$ and let $z\in \C$ with $\re z>0$. Then the following assertions hold.
\begin{enumerate}[(i)]
\item If the domain of $\widetilde{\bH}$ is contained in $H^1(\cG)$, then the semigroup $\E^{-z\widetilde{\bH}}$ generated by $\widetilde{\bH}$ is an integral operator whose kernel is both of class $L^\infty(\cG\times \cG)$ and jointly H\"older continuous of exponent $\beta = 1/2$. 
\item If additionally $\cG$ has finite total volume, then 
$\E^{-z\widetilde{\bH}}$ is of trace class.
\end{enumerate}
\end{theorem}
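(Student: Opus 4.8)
The plan is to run the proof of Theorem~\ref{prop:resolvprop} almost verbatim, with the resolvent factorization~\eqref{eq:factorz} replaced by the semigroup identity $\E^{-z\widetilde{\bH}}=\E^{-(z/2)\widetilde{\bH}}\,\E^{-(z/2)\widetilde{\bH}}$ (valid for $\re z>0$) and the square root $\widetilde{\bH}^{1/2}$ replaced by the ``half-time'' operator $\E^{-(z/2)\widetilde{\bH}}$. The only ingredient not already present in that proof is the smoothing effect of an analytic semigroup; once this is recorded, the hypothesis $\dom(\widetilde{\bH})\subseteq H^1(\cG)$ takes over the role played in Theorem~\ref{prop:resolvprop} by the hypothesis on the form domain.

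So first I would record the single new ingredient: since $\widetilde{\bH}$ is self-adjoint and bounded below, the spectral calculus shows that the analytic semigroup $\E^{-w\widetilde{\bH}}$ maps $L^2(\cG)$ boundedly into $\dom(\widetilde{\bH})$, equipped with the graph norm, for every $w$ with $\re w>0$. Since by hypothesis $\dom(\widetilde{\bH})\subseteq H^1(\cG)$ and this inclusion is continuous by the closed graph theorem (convergence in the graph norm and convergence in $H^1(\cG)$ both force $L^2$-convergence, so the graph of the inclusion is closed), we obtain $\|\E^{-w\widetilde{\bH}}u\|_{H^1(\cG)}\le C_w\|u\|_{L^2(\cG)}$ and hence, by Lemma~\ref{lem:inftyh1}, $\|\E^{-w\widetilde{\bH}}u\|_\infty\le C_\cG C_w\|u\|_{L^2(\cG)}$. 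In particular $\E^{-w\widetilde{\bH}}\colon L^2(\cG)\to L^\infty(\cG)$ is bounded; by duality (using $(\E^{-w\widetilde{\bH}})^\ast=\E^{-w^\ast\widetilde{\bH}}$) the operator $\E^{-w^\ast\widetilde{\bH}}\colon L^1(\cG)\to L^2(\cG)$ is then bounded as well, and since $w\mapsto w^\ast$ maps the open right half-plane onto itself, both mapping properties hold for every parameter with positive real part.

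For part~(i), fix $z$ with $\re z>0$, put $w=z/2$, and write $\E^{-z\widetilde{\bH}}=\E^{-w\widetilde{\bH}}\E^{-w\widetilde{\bH}}$. The rightmost factor sends $L^1(\cG)$ into $L^2(\cG)$ and the leftmost sends $L^2(\cG)$ into $L^\infty(\cG)$, so $\E^{-z\widetilde{\bH}}$ maps $L^1(\cG)$ into $L^\infty(\cG)$ and, by the Kantorovich--Vulikh theorem, is an integral operator with kernel in $L^\infty(\cG\times\cG)$. Joint continuity is obtained by transcribing the corresponding part of the proof of Theorem~\ref{prop:resolvprop}: writing $(\E^{-w\widetilde{\bH}}u)(x)=\langle u,\kappa(w,x;\cdot)^\ast\rangle_{L^2(\cG)}$ with $\sup_{x\in\cG}\|\kappa(w,x;\cdot)\|_{L^2(\cG)}<\infty$, the $H^1$-bound above yields the H\"older-type estimate $\|\kappa(w,x;\cdot)-\kappa(w,x';\cdot)\|_{L^2(\cG)}\le C\sqrt{\varrhoo(x,x')}$ exactly as in~\eqref{eq:jointcont}; then the kernel is $\cK(z;x,y)=\int_\cG\kappa(w,x;s)\,\kappa(w^\ast,y;s)^\ast\,ds$, and the same two-term splitting as in Theorem~\ref{prop:resolvprop} gives its joint continuity.

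For part~(ii), by part~(i) the operator $\E^{-w\widetilde{\bH}}$ with $\re w>0$ has kernel in $L^\infty(\cG\times\cG)$, and if $\vol(\cG)<\infty$ then $L^\infty(\cG\times\cG)\hookrightarrow L^2(\cG\times\cG)$, so $\E^{-w\widetilde{\bH}}$ is Hilbert--Schmidt; consequently $\E^{-z\widetilde{\bH}}=\E^{-(z/2)\widetilde{\bH}}\E^{-(z/2)\widetilde{\bH}}$ is a product of two Hilbert--Schmidt operators and hence of trace class. I do not expect a genuine obstacle here: the only point that is not literally the resolvent argument is the regularization $\E^{-w\widetilde{\bH}}L^2(\cG)\subseteq\dom(\widetilde{\bH})$ for $\re w>0$, which is a standard consequence of the self-adjointness of $\widetilde{\bH}$, and everything downstream of it is the same book-keeping as in Theorem~\ref{prop:resolvprop}.
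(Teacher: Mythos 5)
Your proposal is correct and follows essentially the same route as the paper: the published proof consists precisely of replacing the factorization \eqref{eq:factorz} by the semigroup identity $\E^{-z\widetilde{\bH}}\,\E^{-z\widetilde{\bH}}=\E^{-2z\widetilde{\bH}}$ and invoking the smoothing property of the analytic semigroup (boundedness from $L^2(\cG)$ into the generator's domain, hence into $H^1(\cG)$ and $C_b(\cG)$), after which all remaining steps of the proof of Theorem~\ref{prop:resolvprop} carry over almost verbatim. Your closed-graph argument for the continuity of the embedding $\dom(\widetilde{\bH})\hookrightarrow H^1(\cG)$, the duality step, and the Hilbert--Schmidt factorization for the trace-class claim are exactly the intended book-keeping.
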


Estimating as in~\eqref{eq:bddh1l2} and using analyticity of $\E^{-z\widetilde{\bH}}$ yields the inequality 
\begin{align} \label{eq:jointcont-2}
 |p_t(x,y) - p_t(x',y) | \le \frac{C}{\sqrt{t}} \sqrt {\varrhoo (x,x')},\qquad t> 0,\ x,y,x'\in \cG,
\end{align}
for the heat kernel $p_t(x,y)$ of a nonnegative extension $\widetilde{\bH}$, where in contrast to~\eqref{eq:bddh1l2} the constant $C>0$ is independent of $t>0$. 
 Such H\"older estimates are known to be related to Sobolev-type inequalities and also important for upper and lower Gaussian bounds~(cf., e.g.,~\cite{Cou03}, \cite[Chapter~6]{ouh}). However, we do not pursue this line of study here and this will be done elsewhere.

\begin{remark}
A few remarks are in order.
\begin{itemize}
\item[(i)] If $\sup_\cR |\cR| <\infty$, where the supremum is taken over all non-vertex paths without self-intersections, then the path metric $\varrho$ has a natural extension $\widehat \varrho$ to the end compactification $\widehat \cG$. Moreover, in this case $(\widehat \cG, \widehat \varrho)$ coincides with the metric completion of $(\cG, \varrho)$. Indeed, the metric completion of $(\cG, \varrho)$ is obtained by adding to $\cG$ equivalence classes of rays of finite length (see~\cite[Section~2.3]{g11} for details) and
 the distance of $x \in \cG$ to a boundary point is defined as the ``shortest length" of a path in the corresponding equivalence class starting at $x$. 
 
Therefore, Theorem~\ref{prop:resolvprop} and Theorem~\ref{prop:semigprop} imply that in this case the corresponding resolvent and semigroup kernels have a bounded and uniformly continuous extension to $(\widehat \cG, \widehat \varrho)$. However, we stress that in contrast to the case $\vol(\cG) < \infty$ (see Remark~\ref{rem:finvol}), the topology generated by $\widehat \varrho$ on $\widehat \cG$ can differ from the end compactification topology if $\vol(\cG) =\infty$. 
\item[(ii)]
Discreteness of the spectrum of the Friedrichs extension $\bH_F$ is a standard fact in the case of finite total volume (see, e.g.,~\cite[Prop.~3.11]{car08} or~\cite[Corollary~3.5(iv)]{kn19}). However, Theorem~\ref{prop:resolvprop}(ii) implies  the stronger assertion that the resolvent of $\bH_F$ belongs to the trace class if $\vol(\cG)<\infty$. Let us also stress that it is not true in general that every self-adjoint extension of $\bH$ will have a discrete spectrum if $\vol(\cG)<\infty$, since in case of infinite deficiency indices such a self-adjoint extension could have a domain large enough to make  compactness of the embedding of $H^1(\cG)$ into $L^2(\cG)$ irrelevant.
\end{itemize}
\end{remark}

Recall that a self-adjoint extension $\wt\bH$ of $\bH_0$ is called \emph{Markovian} if $\wt\bH$ is a non-negative self-adjoint extension and the corresponding quadratic form is a \emph{Dirichlet form} (for definitions and further details we refer to~\cite[Chapter~1]{fuk10}). 
 Hence the associated semigroup $\E^{-t\wt\bH}$, $t>0$, as well as resolvents $\cR(-\lambda,\wt\bH)$, $\lambda>0$, are Markovian: i.e., are both \emph{positivity preserving} (map non-negative functions to non-negative functions) and \emph{$L^\infty$-contractive} (map the unit ball of $L^\infty(\mathcal G)$, and then by duality of $L^p(\mathcal G)$ for all $p\in[1,\infty]$, into itself). Let us stress that the Friedrichs extension $\bH_F$ of $\bH_0$ is a Markovian extension. Consider also the following quadratic form in $L^2(\cG)$
\begin{align} \label{eq:NMform}
 	\gt_N[f] = \int_{\cG} |f'(x)|^2 dx, \qquad \dom(\gt_N) = H^1(\cG).
\end{align}
This form is non-negative and closed, hence we can associate in $L^2(\cG)$ a self-adjoint operator with it, let us denote it by $\bH_N$. We will refer to it as the \emph{Neumann extension}\label{p:NeumannExt}. It is straightforward to check that $\gt_N$ is a Dirichlet form and $\bH_N$ is also a Markovian extension of $\bH_0$. 

It turns out that Theorems~\ref{prop:resolvprop} and~\ref{prop:semigprop} apply to all Markovian extensions of $\bH_0$. More specifically, the analog of the results for discrete Laplacians~\cite[Theorem~5.2]{hklw} and Laplacians in Euclidean domains~\cite[Chapter~3]{fuk10} and Riemannian manifolds~\cite[Theorem~1.7]{gm} holds true for quantum graphs as well. 

\begin{theorem}\label{th:Markovian1}
If $\wt\bH$ is a Markovian extension of $\bH_0$, then $\dom(\wt\bH)\subset H^1(\cG)$ and, moreover, 
\be
\bH_N \le \wt\bH \le \bH_F,
\ee
where the inequalities are understood in the sense of forms.\footnote{We shall write $A\le B$ for two non-negative self-adjoint operators $A$ and $B$ if their quadratic forms $\gt_A$ and $\gt_B$ satisfy $\dom(\gt_B)\subseteq\dom(\gt_A)$ and $\gt_A[f]\le \gt_B[f]$ for every $f\in \dom(\gt_B)$.}
\end{theorem}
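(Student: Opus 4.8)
The plan is to show that an arbitrary Markovian extension $\wt\bH$ is sandwiched between $\bH_N$ and $\bH_F$ via their quadratic forms, which will force $\dom(\gt_{\wt\bH})\subseteq H^1(\cG)$ and hence $\dom(\wt\bH)\subseteq H^1(\cG)$. The key observation is that a Markovian extension, being non-negative and symmetric as a form, already lies between its own Friedrichs and Krein--von Neumann extensions among \emph{all} non-negative self-adjoint extensions of $\bH_0$; but the sharper statement with $\bH_N$ and $\bH_F$ as the actual extremal points uses the additional structure that $\wt\bH$ is Markovian together with the fact (established in Section~\ref{sec:III}) that $H^1_0(\cG)$ and $H^1(\cG)$ are precisely the form domains of $\bH_F$ and $\bH_N$.

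\textbf{Step 1: The lower bound $\bH_N\le\wt\bH$.} First I would recall that any non-negative self-adjoint extension $\wt\bH$ of $\bH_0^0$ whose form domain $\cV:=\dom(\gt_{\wt\bH})$ contains $\dom(\bH_0^0)$ satisfies $\gt_{\wt\bH}[f]=\langle\bH_0^0 f,f\rangle=\|f'\|_{L^2(\cG)}^2$ for $f\in\dom(\bH_0^0)$, by the integration-by-parts identity~\eqref{eq:integrationbp}. The point is that for a Markovian extension the form is closed and, because it is a Dirichlet form extending the Kirchhoff form, it must actually \emph{coincide} with the Dirichlet integral on its whole domain: the crucial input is that the minimal form $\gt_N\upharpoonright\dom(\bH_0^0)$ admits $\gt_N$ (on $H^1(\cG)$) as its \emph{maximal} Markovian extension. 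Concretely, one checks that any element of $\dom(\gt_{\wt\bH})$ lies in $H^1_{\loc}$ with $\gt_{\wt\bH}[f]\ge\|f'\|_{L^2(\cG)}^2$ by localizing, testing against the generators $\dom(\bH_0^0)$, and using the Markovian (contraction) property to pass to the limit; this is the standard argument of \cite[Theorem 3.1.1]{fuk10} adapted to metric graphs, also carried out in \cite[Theorem 5.2]{hklw} for discrete Laplacians and in \cite[Theorem 1.7]{gm} for manifolds. This yields $\dom(\gt_{\wt\bH})\subseteq H^1(\cG)=\dom(\gt_N)$ and $\gt_N[f]=\|f'\|_{L^2(\cG)}^2\le\gt_{\wt\bH}[f]$ for $f\in\dom(\gt_{\wt\bH})$, i.e.\ $\bH_N\le\wt\bH$ in the form sense. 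In particular $\dom(\wt\bH)\subseteq\dom(\gt_{\wt\bH})\subseteq H^1(\cG)$, which is the first assertion.

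\textbf{Step 2: The upper bound $\wt\bH\le\bH_F$.} Here I would use that $\bH_F$ is itself Markovian (already observed in the text) and that its form domain $H^1_0(\cG)$ is the \emph{smallest} among form domains of non-negative extensions of $\bH_0^0$ --- indeed $H^1_0(\cG)=\overline{\dom(\bH_0^0)}^{\|\cdot\|_{H^1}}$ and any closed non-negative form extending $\gt_N\upharpoonright\dom(\bH_0^0)$ contains the closure of $\dom(\bH_0^0)$ in its own form norm, which dominates the $H^1$-norm on $\dom(\bH_0^0)$ by Step~1. Hence $H^1_0(\cG)=\dom(\gt_F)\subseteq\dom(\gt_{\wt\bH})$ and for $f\in H^1_0(\cG)$ both forms equal $\|f'\|_{L^2(\cG)}^2$ (by Step~1 applied to $\wt\bH$ and the definition of $\bH_F$), so $\gt_{\wt\bH}[f]\le\gt_F[f]$ trivially, giving $\wt\bH\le\bH_F$.

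\textbf{Main obstacle.} The delicate point is Step~1, specifically the claim that a \emph{general} Dirichlet-form extension of the Kirchhoff form is automatically a restriction of the Dirichlet integral $\gt_N$ on $H^1(\cG)$ --- i.e.\ that $\gt_N$ is the maximal Markovian extension. On a metric graph one must verify that the contraction property of $\gt_{\wt\bH}$ together with its closedness forces edgewise regularity and the identity $\gt_{\wt\bH}[f]=\int_\cG|f'|^2\,dx$; this requires a careful localization argument exploiting the local (one-dimensional) structure of $\cG$ and the density of $\dom(\bH_0^0)$, essentially reducing to the interval case where $\gt_N$ is classically known to be the maximal Dirichlet form. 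I expect to model this on \cite[Section~3.3]{fuk10} and \cite[Theorem 5.2]{hklw}, noting that the metric-graph case introduces no genuinely new difficulty beyond bookkeeping at the vertices, where the Kirchhoff continuity condition is precisely what makes the Dirichlet integral on $H^1(\cG)$ a well-defined regular Dirichlet form.
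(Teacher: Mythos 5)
Your proposal is correct and follows essentially the same route as the paper, which in fact omits the proof entirely and states that the arguments of \cite[Theorem 5.2]{hklw}, \cite[Lemma 3.6]{gm} and \cite[Theorem 3.3.1]{fuk10} carry over verbatim --- precisely the references and the two-step sandwich structure (form domain contained in $H^1(\cG)$ with $\gt_{\wt\bH}$ a restriction of $\gt_N$, then $H^1_0(\cG)\subseteq\dom(\gt_{\wt\bH})$ with equality of forms there) that you outline. The one delicate point you correctly flag, namely that the Dirichlet form of any Markovian extension is a restriction of the maximal form $\gt_N$ on $H^1(\cG)$, is exactly the content of those cited results.
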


We omit the proof of Theorem~\ref{th:Markovian1} since the proofs of either~\cite[Theorem~5.2]{hklw} or~\cite[Lemma~3.6]{gm} carry over verbatim to our setting (see also the proof of~\cite[Theorem~3.3.1]{fuk10}).

Let us finish this section with the following observation. 

\begin{corollary}\label{cor:Markovian1}
The following are equivalent:
\begin{itemize}
\item[(i)] $\bH_0$ has a unique Markovian extension,
\item[(ii)] $H^1_0(\cG) = H^1(\cG)$,
\item[(iii)] all topological ends of $\cG$ have infinite volume, $\gC_0(\cG) = \emptyset$.
\end{itemize}
\end{corollary}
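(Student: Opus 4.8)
The plan is to prove the equivalences in the cycle $(i)\Leftrightarrow(ii)\Leftrightarrow(iii)$, exploiting the fact that all the hard work has already been done. The equivalence $(ii)\Leftrightarrow(iii)$ is precisely Corollary~\ref{cor:H=H}, so nothing remains to be shown there. The core of the argument is therefore $(i)\Leftrightarrow(ii)$, and the key structural input is Theorem~\ref{th:Markovian1}, which sandwiches every Markovian extension $\wt\bH$ between the Neumann extension $\bH_N$ (with form domain $H^1(\cG)$) and the Friedrichs extension $\bH_F$ (with form domain $H^1_0(\cG)$).

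For the implication $(ii)\Rightarrow(i)$: if $H^1_0(\cG)=H^1(\cG)$, then $\bH_N$ and $\bH_F$ have the same form domain. Since both are associated with the form $f\mapsto\int_\cG|f'|^2\,dx$ restricted to that common domain, they coincide as self-adjoint operators, $\bH_N=\bH_F$. By Theorem~\ref{th:Markovian1}, any Markovian extension $\wt\bH$ satisfies $\bH_N\le\wt\bH\le\bH_F$ in the form sense, and since $\bH_N=\bH_F$ this forces $\wt\bH=\bH_F$. Hence the Markovian extension is unique. For the reverse implication $(i)\Rightarrow(ii)$, I would argue by contraposition: if $H^1_0(\cG)\subsetneq H^1(\cG)$, then $\bH_N$ and $\bH_F$ have different form domains and are therefore distinct self-adjoint operators; since both are Markovian extensions of $\bH_0$ (as recalled in the text preceding Theorem~\ref{th:Markovian1}, $\gt_N$ is a Dirichlet form and $\bH_F$ is Markovian), $\bH_0$ admits at least two Markovian extensions, contradicting $(i)$.

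I expect no genuine obstacle here, since the substantive content lives in Theorem~\ref{th:H10}, Theorem~\ref{thm:nontr=finitevol}, and Theorem~\ref{th:Markovian1}, all available. The only point requiring a word of care is the step ``same form domain $\Rightarrow$ same operator'': two non-negative closed forms that agree on a common domain define the same self-adjoint operator by the first representation theorem, and since $\gt_N$ and $\gt_F$ are literally the same expression $\int_\cG|f'|^2$ differing only in their domains $H^1(\cG)$ versus $H^1_0(\cG)$, equality of domains gives equality of forms and hence of operators. Conversely, distinct form domains (with the form expression being a genuine restriction, not merely a reparametrisation) yield distinct operators because the form domain is recovered from the operator as $\dom(\wt\bH^{1/2})$. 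With these observations the proof is a short assembly of already-established facts.

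\begin{proof}
The equivalence (ii)$\Leftrightarrow$(iii) is exactly the content of Corollary~\ref{cor:H=H}. It remains to prove (i)$\Leftrightarrow$(ii).

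Assume (ii), that is, $H^1_0(\cG)=H^1(\cG)$. The Friedrichs extension $\bH_F$ is the self-adjoint operator associated with the closed non-negative form $f\mapsto\int_\cG|f'|^2\,dx$ on $H^1_0(\cG)$, while the Neumann extension $\bH_N$ is associated with the same form expression on $H^1(\cG)=\dom(\gt_N)$. Since the two form domains coincide, the forms coincide, and hence $\bH_N=\bH_F$. By Theorem~\ref{th:Markovian1}, every Markovian extension $\wt\bH$ of $\bH_0$ satisfies $\bH_N\le\wt\bH\le\bH_F$ in the sense of forms; as $\bH_N=\bH_F$, this forces $\wt\bH=\bH_F$. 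Therefore $\bH_0$ has a unique Markovian extension, which is (i).

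Conversely, assume (i) and suppose, for contradiction, that $H^1_0(\cG)\subsetneq H^1(\cG)$. Both $\bH_F$ (the Friedrichs extension) and $\bH_N$ (associated with the Dirichlet form $\gt_N$) are Markovian extensions of $\bH_0$. Their form domains are $H^1_0(\cG)$ and $H^1(\cG)$, respectively. Since the form domain of a non-negative self-adjoint operator $A$ is recovered as $\dom(A^{1/2})$, and since $H^1_0(\cG)\neq H^1(\cG)$, the operators $\bH_F$ and $\bH_N$ are distinct. Thus $\bH_0$ admits at least two Markovian extensions, contradicting (i). Hence $H^1_0(\cG)=H^1(\cG)$, which is (ii).
\end{proof}
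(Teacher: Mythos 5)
Your proof is correct and follows the same route as the paper, which simply states that the equivalences follow from Theorem~\ref{th:Markovian1} and Corollary~\ref{cor:H=H}; you have merely spelled out the details (the identification $\bH_N=\bH_F$ when the form domains coincide, and the distinctness of the two Markovian extensions $\bH_F$, $\bH_N$ when they do not) that the paper leaves implicit.
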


\begin{proof}
The claimed equivalences follow from Theorem~\ref{th:Markovian1} and Corollary~\ref{cor:H=H}.
\end{proof}

\begin{remark}\label{rem:H=H=uniq}
Let us finish this section with a few comments.
\begin{itemize}
\item[(i)]
The equivalence $(i)\Leftrightarrow (ii)$ in Corollary~\ref{cor:Markovian1} is known for Riemannian manifolds~\cite[Theorem~1.7]{gm} (see also~\cite[Chapter~3]{fuk10},~\cite[Theorem~1]{ma05}) as well as for weighted Laplacians on graphs~\cite[Corollary~5.6]{hklw}. However, to the best of our knowledge these settings do not admit any further geometric characterization.
\item[(ii)] 
 The list of equivalences in Corollary~\ref{cor:Markovian1} can be extended by adding a claim on the self-adjointness of the so-called \emph{Gaffney Laplacian}. Namely, since $H^1_0(\cG)$ and $H^1(\cG)$ are Hilbert spaces, the operators denoted by $\nabla_D$ and $\nabla_N$ and defined in $L^2(\cG)$ on the domains, respectively, $H^1_0(\cG)$ and $H^1(\cG)$ by $f\mapsto f'$ are closed. Notice that with this notation at hand we have $\bH_F = \nabla_D^\ast \nabla_D$ and $\bH_N = \nabla_N^\ast \nabla_N$. Now we can introduce the Gaffney Laplacian $\bH_G:= \nabla_D^\ast\nabla_N$ as the restriction of the maximal operator $\bH$ onto the domain (compare with~\cite[p. 610]{gm} for the definition in the manifolds case)
\begin{align}
\dom(\bH_G) := \{f\in H^1(\cG)|\, \nabla_N f\in \dom(\nabla_D^\ast)\}.
\end{align}
Clearly, $\dom(\bH_F) \subseteq \dom(\bH_G)$, $\dom( \bH_N) \subseteq \dom(\bH_G)$, and $\bH_G$ is not necessarily symmetric. It turns out that $\bH_G$ is symmetric (and hence self-adjoint) if and only if the Kirchhoff Laplacian $\bH_0$ has a unique Markovian extension. Moreover, in this case $\bH_F = \bH_N = \bH_G$ (cf.~\cite[Theorem~1.7(ii)]{gm} in the manifold setting). Let us mention that the Markovian/finite energy extensions of $\bH_0$ are exactly the Markovian/self-adjoint restrictions of $\bH_G$ and hence the deficiency indices of $\bH_G^\ast = \nabla_N^\ast\nabla_D$ are equal to $\#\gC_0(\cG)$. \end{itemize}
\end{remark}

%%%%%%%%%%%%%%%%%%%%%%%%%%%%%%%%%%%%%%%%%%%%%%%%%%%%%%%%%%%%
%%%%%%%%%%%%%%%%%%%%%%%%%%%%%%%%%%%%%%%%%%%%%%%%%%%%%%%%%%%%
\section{Finite energy self-adjoint extensions} \label{sec:VI}
%%%%%%%%%%%%%%%%%%%%%%%%%%%%%%%%%%%%%%%%%%%%%%%%%%%%%%%%%%%%
%%%%%%%%%%%%%%%%%%%%%%%%%%%%%%%%%%%%%%%%%%%%%%%%%%%%%%%%%%%%
It turns out that finite volume (topological) ends  provide the right notion of the boundary for metric graphs to deal with finite energy and also with Markovian extensions of the minimal Kirchhoff Laplacian $\bH_0$. In particular, we are going to show that this end space is well-behaved as concerns the introduction of both traces and normal derivatives.
More specifically, the goal of this section is to give a description of finite energy self-adjoint extensions of $\bH_0$ in the case when the number of finite volume ends of $\cG$ is finite, that is, $\#\gC_0(\cG) < \infty$.   
Notice that  in this case all finite volume ends are free. 

\subsection{Normal derivatives at graph ends} \label{ss:VI:i}
%%%%%%%%%%%%%%%%%%%%%%%%%%%%%%%%%%%%%%%%%%%%%%%%%%%%%%%%%%%%

Let $\wt \cG = (\wt\cV,\wt\cE)$ be a (possibly infinite) connected subgraph of $\cG$.
 Recall that 
  its boundary $\partial\wt \cG$ (w.r.t.\ the natural topology on $\cG$, see Section~\ref{ss:II.01}) is given by 
\begin{align} \label{eq:bdsubgraph}
\partial \wt\cG = \big\{ v \in \wt \cV | \; \deg_{\wt \cG} (v) < \deg_\cG (v) \big\}.
\end{align}
For a function $f \in \dom (\bH)$, we define its (inward) \emph{normal derivative} at  $v \in \partial \wt \cG$ by 
\begin{align} \label{eq:Dervsubgraph}
	\frac{\partial f}{\partial n_{\wt \cG}} (v) :=  \sum_{e \in \cE_v \cap \wt \cE} f_e'(v).
\end{align}
With this definition at hand, we end up with the following useful integration by parts formula.

\begin{lemma} \label{lem:ibp}
Let $\wt \cG$ be a compact (not necessarily connected) subgraph of the metric graph $\cG$. Then
\be\label{eq:IntByPart}
- \int_{\wt \cG} f''(x) g(x) dx = \int_{\wt \cG}  f'(x) g'(x) dx + \sum_{v \in \partial \wt \cG} g(v) \frac{\partial f}{\partial n_{\wt \cG}} (v)  
\ee
for all $f \in \dom(\bH)$ and $g \in H^1(\wt \cG)$. In particular, 
\be\label{eq:IntByPart2}
- \int_{\wt \cG} f''(x)  dx =  \sum_{v \in \partial \wt \cG} \frac{\partial f}{\partial n_{\wt \cG}} (v). 
\ee
\end{lemma}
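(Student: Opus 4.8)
The plan is to reduce the statement to the classical edgewise integration by parts formula on intervals and then carefully bookkeep the boundary terms, using the Kirchhoff conditions to cancel all contributions at interior vertices of $\wt\cG$. Since $\wt\cG$ is compact, it has only finitely many edges and vertices, so all the sums below are finite and no convergence issues arise.

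First I would fix $f\in\dom(\bH)$ and $g\in H^1(\wt\cG)$, and write $\int_{\wt\cG}f''(x)g(x)\,dx=\sum_{e\in\wt\cE}\int_e f_e''(x)g_e(x)\,dx$. On each edge $e$, which we identify with $\cI_e=[0,|e|]$ with endpoints the two vertices $e_o,e_i\in\wt\cV$ of $e$, the one-dimensional integration by parts gives
\[
-\int_e f_e''(x)g_e(x)\,dx=\int_e f_e'(x)g_e'(x)\,dx+f_e'(e_o)g_e(e_o)+f_e'(e_i)g_e(e_i),
\]
where the boundary terms use the inward normal derivatives $f_e'(v)$ from \eqref{eq:tr_fe} (the sign is chosen so that at each endpoint the derivative points into the edge; this matches the convention in \eqref{eq:Dervsubgraph}). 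Here I use that $f_e\in H^2(e)$ and $g_e\in H^1(e)$, so all the traces are well defined and the formula is valid.

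Next I would sum over $e\in\wt\cE$ and regroup the boundary terms by vertex: for each $v\in\wt\cV$ the total contribution is $\sum_{e\in\cE_v\cap\wt\cE}f_e'(v)g_e(v)$. Since $g\in H^1(\wt\cG)\subseteq C(\wt\cG)$, the value $g(v)$ is independent of the incident edge, so this equals $g(v)\sum_{e\in\cE_v\cap\wt\cE}f_e'(v)=g(v)\,\frac{\partial f}{\partial n_{\wt\cG}}(v)$. Now I split $\wt\cV=\partial\wt\cG\,\dot\cup\,(\wt\cV\setminus\partial\wt\cG)$. For an interior vertex $v\in\wt\cV\setminus\partial\wt\cG$ we have $\deg_{\wt\cG}(v)=\deg_\cG(v)$, i.e.\ $\cE_v\cap\wt\cE=\cE_v$, so $\frac{\partial f}{\partial n_{\wt\cG}}(v)=\sum_{e\in\cE_v}f_e'(v)=0$ by the Kirchhoff condition \eqref{eq:kirchhoff} which $f\in\dom(\bH)$ satisfies. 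Hence only the terms with $v\in\partial\wt\cG$ survive, yielding \eqref{eq:IntByPart}. Finally, \eqref{eq:IntByPart2} is the special case $g\equiv 1$ (the constant function lies in $H^1(\wt\cG)$ since $\wt\cG$ is compact, and its derivative vanishes), which kills the first integral on the right-hand side.

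The statement also allows $\wt\cG$ to be non-connected; this requires no change, as one simply applies the connected case to each of the finitely many connected components and adds the results. I do not expect a genuine obstacle here: the only point demanding mild care is the sign/orientation convention in the edgewise boundary terms, ensuring it is consistent with the inward normal derivative \eqref{eq:Dervsubgraph}, and the observation that the Kirchhoff condition is exactly what is needed to annihilate the interior-vertex contributions.
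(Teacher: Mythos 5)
Your proof is correct and is exactly the paper's argument (which the authors compress to "follows immediately from integrating by parts, taking into account that $f$ satisfies \eqref{eq:kirchhoff}"), merely written out in full: edgewise integration by parts with the inward-derivative sign convention, regrouping boundary terms by vertex, cancellation at interior vertices via the Kirchhoff condition, and the choice $g\equiv 1$ for \eqref{eq:IntByPart2}. No gaps.
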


\begin{proof}
The claim follows immediately from integrating by parts, taking into account that $f$ satisfies \eqref{eq:kirchhoff}. Setting $g\equiv 1$ in \eqref{eq:IntByPart}, we arrive at \eqref{eq:IntByPart2}.
\end{proof}

In order to simplify our considerations, we need to introduce the following notion.
Let $\gamma \in \gC(\cG)$ be a (topological) end of $\cG$.  Consider a sequence $(\cG_n)$ of connected subgraphs of $\cG$ such that $\cG_{n} \supseteq \cG_{n+1}$ and $\# \partial  \cG_n < \infty$ for all $n$. We say that the sequence $(\cG_n)$ is a \emph{graph representation of the end } $\gamma \in \gC(\cG)$ if there is a sequence of open sets $\cU = (U_n)$  representing $\gamma$ such that for each $n \ge 0$ there exist $j$ and $k$ such that $ \cG_n \supseteq U_j$ and $U_n \supseteq  \cG_k$.  It is easily seen that all graphs $\cG_n$ are infinite (they have infinitely many edges). Moreover, graph representations $(\cG_n)$ of an end can be constructed with the help of compact exhaustions; in particular each graph end $\gamma \in \gC (\cG)$ has a representation by subgraphs (see Section~\ref{ss:II.02}). 

\begin{proposition} \label{prop:NMLim}
Let $\cG$ be a metric graph and let $\gamma \in \gC(\cG)$ be a free end of finite volume. Then for every function $f \in \dom(\bH)$ and any sequence $(\cG_k)$ of subgraphs representing $\gamma$, the  limit
\begin{align} \label{eq:NMLim}
	\lim_{k \to \infty} \sum_{v \in \partial \cG_k} \frac{\partial f}{\partial n_{\cG_k}} (v)
\end{align}
exists and is independent of the choice of $(\cG_k)$.
\end{proposition}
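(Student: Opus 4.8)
The plan is to combine the integration-by-parts identity of Lemma~\ref{lem:ibp} with the finite-volume property of $\gamma$. First I would fix $f\in\dom(\bH)$, so that $\bH f = -f'' \in L^2(\cG)$, and observe that since $\gamma$ is a free end of finite volume, Remark~\ref{rem:freetop} allows us to choose a connected subgraph with compact boundary which ``captures'' $\gamma$; intersecting with the graphs $\cG_k$ of any given representation, we may assume that the $\cG_k$ are nested, have finite boundary, and satisfy $\vol(\cG_{k_0})<\infty$ for some $k_0$ (hence $\vol(\cG_k)<\infty$ and $\bigcap_k\overline{\cG_k}=\varnothing$ for $k\ge k_0$, because $\cG_k$ eventually lies inside any given $U_n$ from a representing sequence $\cU=(U_n)$ of $\gamma$).

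\textbf{Existence of the limit.} For each $k$, applying \eqref{eq:IntByPart2} to the compact subgraph $\cG_{k}\setminus\cG_{m}$ (for $m>k$, this is compact since $\cG_k$ has finite volume and finite boundary), or more directly writing
\[
\sum_{v\in\partial\cG_k}\frac{\partial f}{\partial n_{\cG_k}}(v) - \sum_{v\in\partial\cG_m}\frac{\partial f}{\partial n_{\cG_m}}(v) = -\int_{\cG_k\setminus\cG_m} f''(x)\,dx = \int_{\cG_k\setminus\cG_m}(\bH f)(x)\,dx,
\]
we get the Cauchy estimate
\[
\Big| \sum_{v\in\partial\cG_k}\frac{\partial f}{\partial n_{\cG_k}}(v) - \sum_{v\in\partial\cG_m}\frac{\partial f}{\partial n_{\cG_m}}(v) \Big| \le \vol(\cG_k\setminus\cG_m)^{1/2}\,\|\bH f\|_{L^2(\cG)} \le \vol(\cG_k)^{1/2}\,\|\bH f\|_{L^2(\cG)}.
\]
Since $\vol(\cG_k)\to 0$ as $k\to\infty$ (finite total volume on a nested sequence with empty intersection of closures), the right-hand side tends to $0$, so the sequence in \eqref{eq:NMLim} is Cauchy in $\C$ and therefore converges. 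A small technical point to be careful about here: one must check that $\cG_k\setminus\cG_m$ is a genuine compact subgraph in the sense required by Lemma~\ref{lem:ibp}, i.e. its boundary $\partial(\cG_k\setminus\cG_m)\subseteq\partial\cG_k\cup\partial\cG_m$ is finite and the normal derivatives split as claimed; this is routine bookkeeping with \eqref{eq:bdsubgraph} and \eqref{eq:Dervsubgraph}, using that at a vertex $v$ the edge set decomposes over the two subgraphs.

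\textbf{Independence of the representation.} Given two representations $(\cG_k)$ and $(\cG_k')$ of $\gamma$, the defining interlacing property (through a common representing sequence $\cU=(U_n)$, or directly) yields, for each $k$, indices $j,\ell$ with $\cG_j'\subseteq\cG_k$ and $\cG_\ell\subseteq\cG_j'$ eventually; applying the same integration-by-parts identity to the compact ``annulus'' between $\cG_k$ and $\cG_j'$ gives that the difference of the two partial sums is bounded by $\vol(\cG_k)^{1/2}\|\bH f\|_{L^2(\cG)}\to 0$. Hence the two limits coincide. I expect the main obstacle to be purely organizational rather than deep: making precise the compatibility between ``graph representations $(\cG_k)$'' and ``representing sequences of open sets $\cU=(U_n)$'' so that the nesting and interlacing used above is legitimate, and verifying that all the set differences appearing are compact subgraphs to which Lemma~\ref{lem:ibp} genuinely applies. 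Once that interface is set up, the analytic content is just the Cauchy–Schwarz bound together with $\vol(\cG_k)\to 0$.
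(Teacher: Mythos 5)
Your overall strategy is exactly the paper's: write the difference of two boundary sums as $-\int_{\cG_k\setminus\cG_m}f''\,dx$ via \eqref{eq:IntByPart2}, bound it by $\sqrt{\vol(\cG_k)}\,\|\bH f\|_{L^2(\cG)}$ with Cauchy--Schwarz, and conclude that the sequence is Cauchy because $\vol(\cG_k)\to 0$; independence of the representation follows from the same annulus estimate via interlacing. So the analytic core is right and matches the paper.

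There is, however, one genuine gap, and it sits at the only place where the hypothesis that $\gamma$ is \emph{free} actually does work. You justify the compactness of the annulus $\cG_k\setminus\cG_m$ by saying it ``is compact since $\cG_k$ has finite volume and finite boundary,'' and you relegate the rest to routine bookkeeping. That implication is false: a subgraph of finite volume and finite boundary can perfectly well have infinitely many edges (think of a finite-volume neighborhood of an end), and if $\cG_k\setminus\cG_m$ had infinitely many edges it would not be a compact subgraph to which Lemma~\ref{lem:ibp} applies, and $\partial(\cG_k\setminus\cG_m)$ need not even be finite. The correct argument — and the one the paper spends most of its proof on — is topological and uses freeness: choose $\cG_0$ as in Remark~\ref{rem:freetop} so that it eventually avoids every neighborhood of every end $\gamma'\neq\gamma$; if $\cG_k\setminus\cG_m$ had infinitely many edges, picking one interior point per edge would produce a sequence with an accumulation point in the compact space $\wh\cG$, which must be an end; since these points avoid $\cG_m$ that end would have to be some $\gamma'\neq\gamma$, contradicting the fact that they all lie in $\cG_0$. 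Without this step (or an equivalent use of freeness) the proof is incomplete, since for a non-free finite volume end the construction genuinely breaks down. Everything else in your write-up is fine.
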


\begin{proof}
First of all, notice that uniqueness of the limit follows from the inclusion property in the definition of the graph representations of $\gamma$. Hence we only need to show that the limit in \eqref{eq:NMLim} indeed exists.

Let $(\cG_k)$ be a graph representation of a free finite volume end $\gamma\in \gC_0(\cG)$. Since $\gamma$ is free, we can assume that $\vol(\cG_0) < \infty$ and that $\cG_0 \cap U_k = \varnothing$ eventually for every sequence $\cU = (U_k) $ representing an end $\gamma' \neq \gamma$.
First observe that $\wt \cG= \cG_k  \setminus \cG_j$ can again be identified with a compact subgraph of $\cG$ whenever $k\le j$. Indeed, if $\wt \cG$ has infinitely many edges $\{e_n\} \subset \cE$, choose for each $n$ a point $x_n$ in the interior of the edge $e_n$. Since $\wh \cG = \cG \cup \gC(\cG)$ is compact, the set $\{x_n\}$ has an accumulation point $x \in \wh \cG$. By construction, $x \notin \cG$ and hence $x \in \wh \cG \setminus  \cG = \gC(\cG)$ is an end. However, we have that $x_n \notin { \cG_j}$ and recalling \eqref{eq:top1} and \eqref{eq:top2}, this implies that $x = \gamma'$ for a topological end $\gamma' \neq \gamma$. On the other hand, $x_n \in \cG_0 $ for all $n$ and using the properties of $\cG_0$ and \eqref{eq:top1}--\eqref{eq:top2} once again, we arrive at a contradiction.

Now, using \eqref {eq:bdsubgraph} it is straightforward to verify that
\[
	 \sum_{v \in \partial \cG_k} \frac{\partial f}{\partial n_{\cG_k}} (v) -  \sum_{v \in \partial \cG_j} \frac{\partial f}{\partial n_{\cG_j}} (v) = \sum_{v \in \partial \wt\cG} \frac{\partial f}{\partial n_{\wt\cG}} (v).
\]
Hence by \eqref{eq:IntByPart2} and the Cauchy--Schwarz inequality, we get
\begin{align} \label{eq:NMLimproof}
	\Big|\sum_{v \in \partial \cG_k} \frac{\partial f}{\partial n_{\cG_k}} (v) -  \sum_{v \in \partial \cG_j} \frac{\partial f}{\partial n_{\cG_j}} (v)\Big| = \Big|\int_{\cG_k  \setminus \cG_j} f''(x) dx\Big| \le \sqrt{\vol(\cG_k)}\, \| \bH f\|_{L^2(\cG)},
\end{align}
whenever $k \le j$. This implies the existence of the limit in \eqref{eq:NMLim} since $\vol(\cG_k) = o(1)$ as $k\to \infty$.
\end{proof}

Proposition~\ref{prop:NMLim} now enables us to introduce a normal derivative at graph ends.

\begin{definition} \label{def:NMEnd}
Let $\gamma \in \gC(\cG)$ be a free end of finite volume and let $(\cG_k)$ be a graph representation of $\gamma$. Then for every $f\in\dom(\bH)$
\begin{align} \label{eq:NMEnd}
		\partial_n f  (\gamma) := \frac{\partial f}{\partial n} (\gamma) := 
		\lim_{k \to \infty} \sum_{v \in \partial \cG_k} \frac{\partial f}{\partial n_{\cG_k}} (v)
\end{align}
is called the \emph{normal derivative} of $f$ at $\gamma$.
\end{definition}

\begin{remark}
In fact, it is not difficult to extend the definitions \eqref{eq:Dervsubgraph} and \eqref{eq:NMEnd} to general sequences $\cU = (U_n)$ of open sets representing the free end $\gamma \in \gC_0(\cG)$. However, while the idea of the proof of Proposition~\ref{prop:NMLim} naturally carries over, the analysis becomes more technical and we restrict to the case of subgraphs for the sake of a clear exposition. \end{remark}

Let us mention that the normal derivative can also be expressed in terms of compact exhaustions. 

\begin{lemma} \label{cor:coex}
Let  $\cG$ be a metric graph having finite total volume and only one end $\gamma$, $\gC(\cG)= \{ \gamma \}$. If $(\mathcal{F}_k)$ is a compact exhaustion of $\cG$ and $f \in \dom(\bH)$, then
\be \label{eq:DerExh}
	 \partial_n f (\gamma) =  - \lim_{k \to \infty}  \sum_{v \in \partial \mathcal{F}_k} \frac{\partial f}{\partial n_{\mathcal{F}_k}} (v). 
\ee
\end{lemma}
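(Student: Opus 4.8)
The plan is to relate the compact exhaustion $(\cG_k)$ of $\cG$ to a graph representation of the unique end $\gamma$, and then apply Proposition~\ref{prop:NMLim}. The key point is that a compact exhaustion $(\cG_k)$ and the complementary subgraphs $(\cG \setminus \cG_k)$ are, in a suitable sense, "dual" to each other: the boundary $\partial(\cG \setminus \cG_k)$ essentially coincides with $\partial \cG_k$, but the normal derivatives at these boundary vertices point in opposite directions. Concretely, for a vertex $v \in \partial \cG_k$ one has $\cE_v = (\cE_v \cap \cE(\cG_k)) \sqcup (\cE_v \cap \cE(\cG\setminus\cG_k))$, so by the Kirchhoff condition \eqref{eq:kirchhoff} satisfied by $f \in \dom(\bH)$,
\[
	\sum_{e \in \cE_v \cap \cE(\cG \setminus \cG_k)} f_e'(v) = - \sum_{e \in \cE_v \cap \cE(\cG_k)} f_e'(v),
\]
that is, $\frac{\partial f}{\partial n_{\cG\setminus \cG_k}}(v) = - \frac{\partial f}{\partial n_{\cG_k}}(v)$.

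First I would observe that since $\cG$ has finite total volume and exactly one end $\gamma$, this end is free (a single end is automatically free, as $\{\gamma\}$ is trivially open in $\gC(\cG) = \{\gamma\}$, cf.\ Remark~\ref{rem:freetop}) and of finite volume (Remark~\ref{rem:finvolends}). Hence Definition~\ref{def:NMEnd} applies and $\partial_n f(\gamma)$ is well defined via any graph representation of $\gamma$. Next I would set $U_k := \cG \setminus \cG_k$. Because $(\cG_k)$ is a compact exhaustion with $\bigcup_k \cG_k = \cG$, each $U_k$ is (after possibly passing to the relevant non-compact connected component, which is unique here since there is only one end) a connected subgraph with $\# \partial U_k < \infty$, $U_{k+1} \subseteq U_k$, and the associated open sets shrink down to $\gamma$. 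Thus $(\overline{U_k})$ is a graph representation of $\gamma$ in the sense defined before Proposition~\ref{prop:NMLim}; one checks the mutual-inclusion condition directly from $\bigcup_k \cG_k = \cG$ and compactness of $\wh\cG$, exactly as in the construction of ends from compact exhaustions in Section~\ref{ss:II.02}.

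With this representation in hand, Proposition~\ref{prop:NMLim} gives
\[
	\partial_n f(\gamma) = \lim_{k \to \infty} \sum_{v \in \partial U_k} \frac{\partial f}{\partial n_{U_k}}(v).
\]
Now I would use the identity $\partial U_k = \partial \cG_k$ (a vertex has a "missing" edge with respect to $\cG_k$ if and only if it has one with respect to the complement) together with the sign relation $\frac{\partial f}{\partial n_{U_k}}(v) = - \frac{\partial f}{\partial n_{\cG_k}}(v)$ derived above, to rewrite the right-hand side as $-\lim_{k\to\infty}\sum_{v \in \partial \cG_k} \frac{\partial f}{\partial n_{\cG_k}}(v)$, which is exactly \eqref{eq:DerExh}. (Equivalently, one can avoid invoking Proposition~\ref{prop:NMLim} for $U_k$ directly and instead apply \eqref{eq:IntByPart2} to the compact subgraph $\cG_k$ itself, getting $\sum_{v\in\partial\cG_k}\frac{\partial f}{\partial n_{\cG_k}}(v) = -\int_{\cG_k} f''\,dx \to -\int_\cG f''\,dx$, while the graph representation $(\overline{U_k})$ yields $\partial_n f(\gamma) = \lim_k\int_{\cG\setminus\cG_k} f''\,dx$ via \eqref{eq:NMLimproof}; since $\int_\cG f''\,dx = \int_{\cG_k}f''\,dx + \int_{\cG\setminus\cG_k}f''\,dx$ for all $k$, the two limits add up to $\int_\cG f'' \, dx$ but with opposite signs relative to the partial sums, again giving \eqref{eq:DerExh}.) The main obstacle is the bookkeeping of orientations and the verification that $U_k = \cG \setminus \cG_k$ (or its relevant component) genuinely furnishes a graph representation of $\gamma$; both are routine given the machinery already developed in Sections~\ref{ss:II.02} and~\ref{ss:VI:i}, but care is needed so that the sign in \eqref{eq:DerExh} comes out correctly.
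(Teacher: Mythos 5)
Your proof is correct and follows essentially the same route as the paper's: flip the sign of the normal derivatives at $\partial\cG_k$ via the Kirchhoff condition, recognize $\cG\setminus\cG_k$ as (essentially) a graph representation of the unique end, and invoke Proposition~\ref{prop:NMLim}. The one point the paper spells out that you only gesture at is the case where $\cG\setminus\cG_k$ is disconnected (so that passing to its non-compact component changes the boundary set): the paper absorbs the finitely many compact components into $\cG_k$ and shows the resulting change in the boundary sum is $o(1)$ via the estimate \eqref{eq:NMLimproof} --- exactly the bookkeeping you flag as routine, and which your alternative integral argument via \eqref{eq:IntByPart2} also handles.
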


The fact that we are not approximating $\gamma$ by its neighborhoods, but rather  by compact subgraphs, is responsible for the different sign in~\eqref{eq:NMEnd} and~\eqref{eq:DerExh}.

\begin{proof}
First of all, notice that $\cG \setminus \mathcal{F}_k$ can be identified with a subgraph of $\cG$ and 
\begin{align*}
- \sum_{v \in \partial \mathcal{F}_k} \frac{\partial f}{\partial n_{\mathcal{F}_k}} (v) = \sum_{v \in \partial(\cG \setminus \mathcal{F}_k)} \frac{\partial f}{\partial n_{\cG \setminus \mathcal{F}_k}} (v)
\end{align*}
for all $f \in \dom(\bH)$. If, moreover, $\cG \setminus \mathcal{F}_k$ is a connected subgraph for all $k\ge 0$, then it is clear that $(\cG_k)$ with $\cG_k:=\cG \setminus \mathcal{F}_k$ for all $k\ge0$, is a graph representation of $\gamma$ and this proves \eqref{eq:DerExh} in this case.

If $\cG \setminus \mathcal{F}_k$ is not connected, then it has only one infinite connected component $\cG_k^\gamma$ and finitely many compact components (since $\gC(\cG) = \{\gamma\}$). Adding these compact components to $\mathcal{F}_k$, we obtain a compact exhaustion $({\mathcal{F}}_k')$ with $\cG \setminus {\mathcal{F}}_k' = \cG_k^\gamma$. Arguing as in the proof of Proposition~\ref{prop:NMLim} (see \eqref{eq:NMLimproof}), we get
\begin{align*}
\Big|\sum_{v \in \partial {\mathcal{F}}_k'} \frac{\partial f}{\partial n_{{\mathcal{F}}_k'}} (v) -  \sum_{v \in \partial{\mathcal{F}}_k} \frac{\partial f}{\partial n_{{\mathcal{F}}_k}} (v)\Big| = \Big|\int_{{\mathcal{F}}_k' \setminus {\mathcal{F}}_k} f''(x) dx\Big| = o(1)
\end{align*}
as $k \to \infty$. Hence \eqref{eq:DerExh} holds true also in the general case.
\end{proof}

%%%%%%%%%%%%%%%%%%%%%%%%%%%%%%%%
\subsection{Properties of the trace and normal derivatives}\label{sec:VI.02}
%%%%%%%%%%%%%%%%%%%%%%%%%%%%%%%%
In this section, we collect some basic properties of the trace maps. We shall adopt the following notation. Since we shall always assume throughout this section that $\# \gC_0(\cG) < \infty$, we set $\cH:=\ell^2(\gC_0(\cG))$, which can be further identified with $\C^{\#\gC_0(\cG)}$. Next, we introduce the maps
$\Gamma_0\colon H^1(\cG) \to \cH$ and $\Gamma_1\colon \dom(\bH)\cap H^1(\cG) \to \cH$ by
\begin{align}\label{def:Gamma}
\Gamma_0\colon & f\mapsto  \big(f(\gamma)\big)_{\gamma \in \gC_0 (\cG)}, & 
\Gamma_1\colon & f\mapsto \big(\partial_n f (\gamma)\big)_{\gamma \in \gC_0 (\cG)},
\end{align}
where the boundary values and normal derivative of $f$ are defined by \eqref{def:context} and \eqref{eq:NMEnd}, respectively.

\begin{proposition}\label{lem:GG}
Let $\cG$ be a metric graph with $\# \gC_0(\cG) < \infty$. Then:

\begin{enumerate}[(i)]
\item For every $\wh{f}\in \cH$, there exists $f\in \dom(\bH)\cap H^1(\cG)$ such that
\begin{align*}
\Gamma_0 f & = \wh{f}, & \Gamma_1f & = 0.
\end{align*}
\item Moreover, the Gauss--Green formula
\begin{align} \label{eq:GG}
	\langle\bH f, g \rangle_{L^2(\cG)} = \langle f', g'\rangle_{L^2(\cG)} - \langle\Gamma_1f,\Gamma_0g\rangle_\cH
\end{align}
holds true for every $f \in \dom(\bH) \cap H^1(\cG)$ and $g \in H^1(\cG)$.
\end{enumerate}
\end{proposition}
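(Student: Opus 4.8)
The plan is to prove both statements by reducing everything to a local computation near each of the finitely many finite volume ends, using the integration by parts formula from Lemma \ref{lem:ibp} together with a careful limiting argument along a graph representation of each end.

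For part (i), I would first use the fact that $\#\gC_0(\cG)<\infty$ and that all finite volume ends are automatically free (as noted at the start of Section \ref{sec:VI}) to choose, for each $\gamma\in\gC_0(\cG)$, a graph representation $(\cG_k^\gamma)$ whose members are pairwise disjoint across distinct ends and each of which has finite volume. Following the construction in the proof of Theorem \ref{thm:nontr=finitevol}, for a prescribed value $c_\gamma=\wh f(\gamma)$ I would build a function supported on $\cG_0^\gamma$, equal to $c_\gamma$ on $\cG_n^\gamma$ for large $n$, lying in $\dom(\bH)\cap H^1(\cG)$, and moreover \emph{harmonic} (edgewise affine, hence $f''\equiv0$) on the tail $\cG_n^\gamma$ for all sufficiently large $n$. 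Then by \eqref{eq:IntByPart2} applied to $\wt\cG=\cG_k^\gamma$, the sum $\sum_{v\in\partial\cG_k^\gamma}\frac{\partial f}{\partial n_{\cG_k^\gamma}}(v)=-\int_{\cG_k^\gamma}f''(x)\,dx=0$ for $k$ large, so $\partial_n f(\gamma)=0$; and since the supports are disjoint across ends and the function is constant near each $\gamma'$, one also gets $\partial_n f(\gamma')=0$ for $\gamma'\neq\gamma$ and $f(\gamma')=0$. Summing these finitely many building blocks over $\gamma\in\gC_0(\cG)$ produces the desired $f$ with $\Gamma_0 f=\wh f$ and $\Gamma_1 f=0$.

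For part (ii), the idea is to start from the edgewise integration by parts identity. Take a compact exhaustion $(\cG_k)$ of $\cG$; for $f\in\dom(\bH)\cap H^1(\cG)$ and $g\in H^1(\cG)$ Lemma \ref{lem:ibp} gives
\begin{align*}
	\int_{\cG_k}(\bH f)(x)\,\overline{g(x)}\,dx &= \int_{\cG_k}f'(x)\,\overline{g'(x)}\,dx + \sum_{v\in\partial\cG_k}\overline{g(v)}\,\frac{\partial f}{\partial n_{\cG_k}}(v).
\end{align*}
(I note that the formula in Lemma \ref{lem:ibp} is stated with $g$, not $\bar g$; I would apply it to $\bar g\in H^1(\wt\cG)$ so that the left side becomes $\langle \bH f,g\rangle$ on $\cG_k$.) As $k\to\infty$, the first two integrals converge to $\langle\bH f,g\rangle_{L^2(\cG)}$ and $\langle f',g'\rangle_{L^2(\cG)}$ by dominated convergence. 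The boundary sum is the delicate term. Since $\#\gC(\cG)<\infty$ is \emph{not} assumed — only $\#\gC_0(\cG)<\infty$ — I would split $\partial\cG_k$ according to which connected component of $\cG\setminus\cG_k$ each boundary vertex feeds into, and further group the non-compact components into those converging to a finite volume end versus those converging to an infinite volume end. For a component leading to $\gamma\in\gC_0(\cG)$: by Lemma \ref{lem:uniformends} (and Proposition \ref{prop:h1trace}), $g(v)\to g(\gamma)$ uniformly over such $v$, while $\sum_v \frac{\partial f}{\partial n_{\cG_k}}(v)$ along that component converges to $-\partial_n f(\gamma)$ by Lemma \ref{cor:coex} / Proposition \ref{prop:NMLim} (note the sign: the exhaustion approaches $\gamma$ from the compact side). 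So the contribution of all finite volume ends tends to $-\sum_{\gamma\in\gC_0(\cG)}\overline{g(\gamma)}\,\partial_n f(\gamma)=-\langle\Gamma_1 f,\Gamma_0 g\rangle_\cH$. For a component leading to an infinite volume end, $g$ vanishes there ($g(\gamma)=0$ for all $H^1$ functions by Theorem \ref{thm:nontr=finitevol}), and one needs the uniform bound $\sup_v|g(v)|=o(1)$ from the second case of the proof of Lemma \ref{lem:uniformends} together with a uniform $\ell^1$-type bound on the normal derivatives over that component, coming from \eqref{eq:IntByPart2} and Cauchy–Schwarz with $\vol$ replaced by the appropriate exhaustion tail; this forces that contribution to $0$. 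Assembling the three limits yields \eqref{eq:GG}.

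The main obstacle is the boundary term in part (ii), specifically controlling the contributions coming from components of $\cG\setminus\cG_k$ that run off to \emph{infinite} volume ends: there the normal derivatives need not be individually small, so one cannot simply bound $|g(v)|$ times a convergent sum. The resolution I expect to use is that $g\to 0$ \emph{uniformly} on a neighborhood of such an end together with an $L^2$-bound $\big|\sum_{v\in\partial\cG_k\cap C}\frac{\partial f}{\partial n_{\cG_k}}(v)\big|=\big|\int_{C}f''\big|\le\sqrt{\vol(C)}\,\|\bH f\|_{L^2(\cG)}$ on each component $C$ — but since $\vol(C)$ need not be small, one must instead argue that for the \emph{specific} exhaustion these contributions are a Cauchy tail, or reduce to a subgraph of finite volume as in the proof of Proposition \ref{prop:H1InfEnds}; I would need to be careful here, possibly by first assuming $\vol(\cG)<\infty$ and then localizing. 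Once this is handled, the rest is bookkeeping with the already-established trace (Proposition \ref{prop:h1trace}) and normal derivative (Definition \ref{def:NMEnd}) machinery.
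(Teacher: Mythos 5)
Your part (i) is essentially the paper's argument: for each of the finitely many (hence free) finite volume ends $\gamma$ one builds, as in the proof of Theorem \ref{thm:nontr=finitevol}, a function $f_\gamma\in\dom(\bH)\cap H^1(\cG)$ that is locally constant outside a compact set and supported near $\gamma$ only, so that $\Gamma_1 f_\gamma=0$ trivially, and then takes $f=\sum_\gamma\wh f(\gamma)f_\gamma$. This part is fine.

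Part (ii) has a genuine gap, and it is exactly the one you flag yourself: the limit of the boundary term $\sum_{v\in\partial\cG_k}g(v)^\ast\,\frac{\partial f}{\partial n_{\cG_k}}(v)$ over those components of $\cG\setminus\cG_k$ that run off to \emph{infinite volume} ends is never identified. Your estimate $\bigl|\sum_{v\in\partial\cG_k\cap C}\frac{\partial f}{\partial n_{\cG_k}}(v)\bigr|\le\sqrt{\vol(C)}\,\|\bH f\|_{L^2(\cG)}$ is vacuous there because $\vol(C)=\infty$ for precisely these components, and there is no a priori $\ell^1$-type control on the individual normal derivatives that could be paired with $\sup_v|g(v)|=o(1)$; even that uniform smallness of $g$ requires a separate compactness argument, since there may be uncountably many infinite volume ends and Lemma \ref{lem:uniformends} is stated end by end. ``First assuming $\vol(\cG)<\infty$'' does not repair anything, because in that case there are no infinite volume ends at all and the difficulty simply disappears rather than being resolved; in the general case the infinite volume components are exactly the ones that survive any localization. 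So, as written, the frontal exhaustion argument does not close.

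The paper sidesteps this entirely by never confronting the boundary term for a general $g\in H^1(\cG)$. It proves \eqref{eq:GG} (a) for $g=f_\gamma$: since $f_\gamma$ is locally constant outside a compact set, the boundary sum collapses for large $k$ to $\sum_{v\in\partial\cG_k\cap\cV^\gamma}\frac{\partial f}{\partial n_{\cG_k}}(v)$, where $\cV^\gamma$ is the vertex set of the one-ended finite-volume subgraph on which $f_\gamma\equiv 1$, and this converges to $-\partial_n f(\gamma)$ by Lemma \ref{cor:coex}; (b) for compactly supported $g$ by plain integration by parts (no boundary term), hence for all $g\in H^1_0(\cG)$ by continuity of both sides of \eqref{eq:GG} in the $H^1$-norm; and then (c) for arbitrary $g\in H^1(\cG)$ via the decomposition $\ti g:=g-\sum_{\gamma\in\gC_0(\cG)}g(\gamma)f_\gamma$, which lies in $H^1_0(\cG)$ by Theorem \ref{th:H10} because it vanishes at every end. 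Replacing your splitting of $\partial\cG_k$ by this reduction leaves only the limit along $\cV^\gamma$ to compute, which your machinery already handles.
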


\begin{proof}
(i)  Since $\#\gC_0(\cG) < \infty$, each finite volume end $\gamma\in \gC_0(\cG)$ is free. For every $\gamma\in \gC_0(\cG)$, let $\cG^\gamma$ be a subgraph with the properties as in Remark~\ref{rem:freetop}. We can also assume that $\vol(\cG^\gamma) < \infty$. Following the proof of Theorem~\ref{thm:nontr=finitevol}, we can construct for each end $\gamma \in \gC_0(\cG)$ a function $f_\gamma \in \dom(\bH) \cap H^1(\cG)$ such that $f_\gamma$ is non-constant only on finitely many edges (since $\#\partial \cG^\gamma<\infty$), $f_\gamma (\gamma) = 1$ and $f_\gamma (\gamma') = 0$ for all other ends $\gamma' \in \gC_0(\cG)\setminus\{\gamma\}$. 
Clearly, $\Gamma_1 f_\gamma = 0$ for every $\gamma\in \gC_0(\cG)$. Thus, setting 
\[
f = \sum_{\gamma\in \gC_0(\cG)} \wh{f}(\gamma) f_\gamma
\]
for a given $\wh{f}\in\cH$, we clearly have $\Gamma_0 f  = \wh{f}$ and $\Gamma_1f = 0$.

(ii) Let us first show that \eqref{eq:GG} holds true for all $f \in \dom(\bH) \cap H^1(\cG)$ if $g = f_\gamma \in H^1(\cG)$. 
Take a compact exhaustion $(\mathcal{F}_k)$ of $\cG$. Then by Lemma~\ref{lem:ibp},
\begin{align*}
	\langle\bH f, f_\gamma\rangle_{L^2(\cG)} & - \langle f', f_\gamma'\rangle_{L^2(\cG)}  
	= \lim_{k\to \infty} \langle\bH f, f_\gamma\rangle_{L^2(\mathcal{F}_k)}  - \langle f', f_\gamma'\rangle_{L^2(\mathcal{F}_k)} \\
	& = \lim_{k \to \infty} \sum_{v \in \partial \mathcal{F}_k} \frac{\partial f}{\partial n_{\mathcal{F}_k}} (v) f_\gamma(v)^\ast = \lim_{k \to \infty} \sum_{v \in \partial\mathcal{F}_k \cap \cV^\gamma} \frac{\partial f}{\partial n_{\mathcal{F}_k}} (v),
\end{align*}
where $\cV^\gamma$ is the set of vertices of $\cG^\gamma$. 
Notice that the subgraph $\cG^\gamma$ itself is a connected infinite graph having finite total volume and exactly one end, which can be identified with $\gamma$ in an obvious way. Moreover, setting $\mathcal{F}^\gamma_k := \mathcal{F}_k \cap  \cG^\gamma$ for all $k\ge 0$ and noting that $\mathcal{F}^\gamma_k$ is connected for all sufficiently large $k$, the sequence $(\mathcal{F}^\gamma_k)$ provides a compact exhaustion of $\cG^\gamma$. Since $\partial_{\cG^\gamma} \mathcal{F}^\gamma_k = \partial \mathcal{F}_k \cap \cV^\gamma$ and
\[
	\frac{\partial f}{ \partial n_{ \mathcal{F}^\gamma_k}} (v) =  \frac{\partial f}{  \partial n_{ \mathcal{F}_k}} (v) , 
	\qquad v \in \partial_{\cG^\gamma}  \mathcal{F}^\gamma_k,
	\]
for all large enough $k\ge 0$, we get by applying Lemma~\ref{cor:coex}
\[
	\langle\bH f, f_\gamma\rangle_{L^2(\cG)}  - \langle f', f_\gamma'\rangle_{L^2(\cG)} 
	= \lim_{k \to \infty}\sum_{v \in  \mathcal{F}_k \cap \cV^\gamma } \frac{\partial f}{\partial n_{ \mathcal{F}_k^\gamma}} (v) 
	= - \frac{\partial f}{\partial n} (\gamma).
\]
Hence \eqref{eq:GG} holds true if $g = f_\gamma \in H^1(\cG)$.

Now observe that a simple integration by parts implies that \eqref{eq:GG} is valid for all compactly supported $g \in H^1(\cG)$. By continuity  and Theorem~\ref{th:H10} this extends further to all $g\in H^1_0(\cG)$. 
Finally, setting $\ti g := g - \sum_{\gamma\in \gC_0(\cG)} {g}(\gamma) f_\gamma$ for $g\in H^1(\cG)$, it is immediate to check that, by Theorem~\ref{th:H10}, $\ti{g}\in H^1_0(\cG)$. It remains to use the linearity of $\Gamma_0$.
\end{proof}

 It turns out that the domain of the Neumann extension admits a simple description.
 
\begin{corollary} \label{prop:domNM}
Let $\cG$ be a metric graph with $\# \gC_0(\cG) < \infty$. Then the Neumann extension $\bH_N$ is given as the restriction $\bH_N = \bH|_{\dom(\bH_N)}$ to the domain
\begin{align} \label{eq:HN}
	\dom(\bH_N) = \big \{f \in \dom(\bH) \cap H^1(\cG) | \; \Gamma_1 f =0 \big \}.
\end{align}
\end{corollary}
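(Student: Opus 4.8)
The plan is to invoke the first representation theorem: since $\bH_N$ is by construction the self-adjoint operator associated with the closed nonnegative form $\gt_N$ with $\dom(\gt_N)=H^1(\cG)$, a function $f$ belongs to $\dom(\bH_N)$ with $\bH_N f=h$ precisely when $f\in H^1(\cG)$ and $\langle f',g'\rangle_{L^2(\cG)}=\langle h,g\rangle_{L^2(\cG)}$ for all $g\in H^1(\cG)$. The whole statement should then drop out of the Gauss--Green formula \eqref{eq:GG} together with the surjectivity of $\Gamma_0$ established in Proposition \ref{lem:GG}(i); no new analytic input is needed.

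For the inclusion $\dom(\bH_N)\supseteq\{f\in\dom(\bH)\cap H^1(\cG):\Gamma_1 f=0\}$ I would take $f\in\dom(\bH)\cap H^1(\cG)$ with $\Gamma_1 f=0$. Then \eqref{eq:GG} collapses to $\langle\bH f,g\rangle_{L^2(\cG)}=\langle f',g'\rangle_{L^2(\cG)}$ for every $g\in H^1(\cG)$, and since $\bH f\in L^2(\cG)$ the representation criterion immediately gives $f\in\dom(\bH_N)$ and $\bH_N f=\bH f$. For the converse I would start from $f\in\dom(\bH_N)$, set $h:=\bH_N f$, so $f\in H^1(\cG)$ and $\langle f',g'\rangle=\langle h,g\rangle$ for all $g\in H^1(\cG)$. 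Testing first against $g\in\dom(\bH_0^0)$ and integrating by parts edgewise---the vertex terms cancelling because $f$ is continuous and $g$ obeys the Kirchhoff condition \eqref{eq:kirchhoff}, with no contribution at infinity since $g$ is compactly supported---yields $\langle f',g'\rangle=\langle f,\bH_0^0 g\rangle$, hence $\langle f,\bH_0^0 g\rangle=\langle h,g\rangle$ for all such $g$. By definition of the adjoint and Lemma \ref{lem:H0*=H} this means $f\in\dom(\bH_0^\ast)=\dom(\bH)$ with $\bH f=h$. Finally, now that $f\in\dom(\bH)\cap H^1(\cG)$, applying \eqref{eq:GG} once more gives $\langle f',g'\rangle-\langle\Gamma_1 f,\Gamma_0 g\rangle_\cH=\langle\bH f,g\rangle=\langle h,g\rangle=\langle f',g'\rangle$ for all $g\in H^1(\cG)$, so $\langle\Gamma_1 f,\Gamma_0 g\rangle_\cH=0$ for every $g\in H^1(\cG)$; since $\Gamma_0$ maps onto $\cH$ by Proposition \ref{lem:GG}(i), this forces $\Gamma_1 f=0$.

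I do not expect a genuine obstacle here: all the substantive work---the existence of $\partial_n f(\gamma)$ at free finite-volume ends (Proposition \ref{prop:NMLim}), the Gauss--Green identity, and the surjectivity of $\Gamma_0$---is already in place by the time this corollary is stated. The only points demanding a little care are the sign and vertex bookkeeping in the elementary integration by parts used to pass from the form identity on $\dom(\bH_0^0)$ to membership in $\dom(\bH)$, and the observation that it is exactly surjectivity of $\Gamma_0$ (not merely density of its range, though that would also suffice) which lets one conclude $\Gamma_1 f=0$ from the vanishing of the pairing $\langle\Gamma_1 f,\Gamma_0 g\rangle_\cH$.
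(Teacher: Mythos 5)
Your proof is correct and takes essentially the same route as the paper: the first representation theorem for $\gt_N$, the Gauss--Green identity \eqref{eq:GG}, and the surjectivity of $\Gamma_0$ from Proposition \ref{lem:GG}(i). The only cosmetic difference is that you re-derive the inclusion $\bH_N\subseteq\bH$ by testing against $\dom(\bH_0^0)$, whereas the paper simply invokes the fact that $\bH_N$ is a self-adjoint extension of $\bH_0$ and hence a restriction of $\bH_0^\ast=\bH$.
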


\begin{proof}
By the first representation theorem~\cite[Chapter~VI.2.1]{kato}, $\dom(\bH_N)$ consists of all functions $f \in H^1(\cG)$ such that there exists $h \in L^2(\cG)$ with
\[
	\langle f', g'\rangle_{L^2(\cG)} = \langle h, g\rangle_{L^2(\cG)}, \qquad \text{for all } g \in H^1(\cG).
\]
Moreover, in this case  $\bH_N f:= h$. Taking into account Proposition~\ref{lem:GG} and the fact that $\bH_N$ is a restriction of $\bH$, we immediately arrive at \eqref{eq:HN}.
\end{proof}

Our next goal is to prove surjectivity of the normal derivative map.

\begin{proposition} \label{prop:surjder}
If  $\cG$ is a metric graph with $\# \gC_0(\cG) < \infty$, then the mapping $\Gamma_1$
is surjective.
\end{proposition}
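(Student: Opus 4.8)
The plan is to produce, for a prescribed vector $\wh h \in \cH = \ell^2(\gC_0(\cG))$, a function $f \in \dom(\bH) \cap H^1(\cG)$ with $\Gamma_1 f = \wh h$. By linearity and the finiteness $\#\gC_0(\cG) < \infty$, it suffices to handle a single end: fix $\gamma_0 \in \gC_0(\cG)$ and construct $f_{\gamma_0}$ with $\partial_n f_{\gamma_0}(\gamma_0) = 1$ and $\partial_n f_{\gamma_0}(\gamma') = 0$ for all other $\gamma' \in \gC_0(\cG)$. Since all finite volume ends are free (as $\#\gC_0(\cG) < \infty$), I can choose a subgraph $\cG^{\gamma_0}$ as in Remark \ref{rem:freetop} with compact boundary $\partial\cG^{\gamma_0}$, finite volume $\vol(\cG^{\gamma_0}) < \infty$, and such that $\cG^{\gamma_0}$ contains (eventually) the neighborhoods of $\gamma_0$ but is eventually disjoint from neighborhoods of every other end. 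The support of the function I build will lie inside $\cG^{\gamma_0}$, which automatically kills the normal derivative at all other finite volume ends and guarantees the Kirchhoff conditions are only an issue on $\cG^{\gamma_0}$.

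The concrete construction: recall from Corollary \ref{lem:ATrad}--style reasoning (or directly from Corollary \ref{cor:finvol=ker} applied to $\cG^{\gamma_0}$) that on the connected infinite graph $\cG^{\gamma_0}$ of finite total volume, $\id_{\cG^{\gamma_0}} \in \ker(\bH_{\cG^{\gamma_0}})$ — i.e., the constant function is harmonic there. I would take a harmonic (edgewise affine, Kirchhoff) function $u$ on $\cG^{\gamma_0}$ that equals a suitable constant on the boundary $\partial\cG^{\gamma_0}$ and has the right ``flux'' into the end; more simply, I would use the integration-by-parts identity \eqref{eq:IntByPart2}: for any compact subgraph $\wt\cG \subseteq \cG^{\gamma_0}$ with $\partial\cG^{\gamma_0} \subseteq \partial\wt\cG$, one has $\sum_{v \in \partial\wt\cG} \frac{\partial f}{\partial n_{\wt\cG}}(v) = -\int_{\wt\cG} f''(x)\,dx$. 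So I want $f$ supported in $\cG^{\gamma_0}$, equal to $0$ near $\partial\cG^{\gamma_0}$, with $f'' \in L^2$ and $\int_{\cG^{\gamma_0}} f''(x)\,dx = -1$ (the sign convention matching Definition \ref{def:NMEnd} via the graph-representation limit, cf. the sign discussion after Lemma \ref{cor:coex}). Concretely, pick a ray $\cR = (w_0, w_1, w_2, \dots) \in \omega_{\gamma_0}$ with $w_0 \in \cV^{\gamma_0}$, of finite length $|\cR| < \infty$ (possible since $\gamma_0$ has finite volume, hence its rays can be taken of finite length — see Remark \ref{rem:finvolends}; in fact one can choose $\cR$ entirely inside a neighborhood of $\gamma_0$). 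Define $f$ to be zero off the edges of $\cR$, and on $\cR$ (identified with the interval $[0,|\cR|)$) let $f$ be a fixed $C^2$ profile $\psi$ with $\psi \equiv 0$ near $0$, $\psi'(0) = 0$, $\int_0^{|\cR|} \psi''(s)\,ds = -1$, and $\psi$ together with $\psi'$ tending to $0$ at $|\cR|$ so that $f$ extends by $0$ to the end with $f \in H^1$ and $f \in \dom(\bH)$. The only vertices where Kirchhoff might fail are the interior vertices of $\cR$ (degree $>2$ in $\cG$): there the continuity is automatic and $\sum_{e \in \cE_v} f_e'(v) = f'_{\text{in}}(v) - f'_{\text{out}}(v) + 0 = 0$ by $f$ being $C^1$ along $\cR$ and zero on the side edges — wait, this needs care, since $f$ is only defined along the single ray and is $0$ on all branching edges, so at a branch vertex $v$ the Kirchhoff sum is $\psi'(v^-) - \psi'(v^+) = 0$ automatically because $\psi$ is $C^1$; at $w_0$ we need $\psi'(0) = 0$ so that no flux leaks into $\cG \setminus \cG^{\gamma_0}$. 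Then $\partial_n f(\gamma_0) = 1$ by \eqref{eq:NMEnd} and the graph-representation computation using \eqref{eq:IntByPart2}, while $\partial_n f(\gamma') = 0$ for $\gamma' \neq \gamma_0$ since $f$ vanishes identically near every such end. Scaling and summing, $f = \sum_{\gamma \in \gC_0(\cG)} \wh h(\gamma)\, f_\gamma$ satisfies $\Gamma_1 f = \wh h$.

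The main obstacle I anticipate is the delicate bookkeeping of the \emph{sign and the graph-representation limit}: one must verify that the limit in Definition \ref{def:NMEnd}, taken along an arbitrary sequence $(\cG_k)$ of subgraphs representing $\gamma_0$ (not a compact exhaustion), indeed evaluates to $1$ for the constructed $f$ — this requires combining \eqref{eq:IntByPart2} with the fact that eventually $\partial\cG_k$ cuts only through the single ray $\cR$ (because $\cG_k \subseteq$ a neighborhood of $\gamma_0$ and $f$ is supported on $\cR$), so $\sum_{v \in \partial\cG_k} \frac{\partial f}{\partial n_{\cG_k}}(v) = -\int_{\cG_k} f''\,dx \to -\int_{\text{tail of }\cR} \psi'' = -(0 - \psi'(\text{cut point})) \to -(0 - (-1))$... the precise sign must be pinned down against the orientation convention in \eqref{eq:Dervsubgraph}, which measures the derivative pointing \emph{into} $\cG_k$. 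A clean way to sidestep sign anxiety is to simply invoke Proposition \ref{lem:GG}(i) together with the following observation: the construction in the proof of Proposition \ref{lem:GG}(i) produces $f_\gamma$ with $\Gamma_1 f_\gamma = 0$, but by instead choosing the profile $\phi$ there to have $\phi'(1) \neq 0$ (rather than $\phi'(1) = 0$), one obtains a nonzero normal derivative; making $\phi$ depend on a parameter and computing $\partial_n f_\gamma(\gamma)$ explicitly via Lemma \ref{cor:coex} gives a function realizing any prescribed value at $\gamma$. Thus the proof is essentially a variation on the already-established Proposition \ref{lem:GG}(i), and the only genuinely new content is checking that the normal derivative of such a modified test function is computable and nonzero, which reduces to a one-dimensional computation on a single edge or ray.
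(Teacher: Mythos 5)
Your reduction to a single end and the localization to a free-end subgraph $\cG^{\gamma_0}$ with compact boundary match the paper's strategy, but the heart of your argument --- the explicit construction of a function with nonzero normal derivative at the end --- does not work. First, your stated requirements on the profile are already inconsistent: if $\psi'(0)=0$ and $\psi'\to 0$ at $|\cR|$, then $\int_0^{|\cR|}\psi''\,ds=0$, not $-1$. More importantly, if $f$ is supported on the edges of a single ray $\cR=(w_0,w_1,\dots)$ and vanishes on all side edges, then \emph{continuity} at each branch vertex $w_n$ with $\deg(w_n)>2$ forces $f(w_n)=\psi(t_n)=0$, because the side edges carry the value $0$ at $w_n$ --- you checked the Kirchhoff sum at such vertices but not the continuity condition in \eqref{eq:kirchhoff}. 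Consequently $\int_{t_n}^{t_{n+1}}\psi'\,ds=0$ for every $n$, so $\psi'$ vanishes somewhere in each interval $(t_n,t_{n+1})$; since $|\cR|<\infty$ and $\psi''\in L^1(0,|\cR|)$, $\psi'$ extends continuously to $|\cR|$ with limit $0$, and the sum over $\partial\cG_k$ in Definition \ref{def:NMEnd} reduces to $\psi'(t_{n_k})\to 0$. So the normal derivative of any such function at $\gamma_0$ is $0$, not $1$: no function supported on a single ray passing through infinitely many branch vertices can carry flux into the end. Your fallback --- taking $\phi'(1)\neq 0$ in the construction of $f_\gamma$ from Proposition \ref{lem:GG}(i) --- fails for the same structural reason: the modified function violates the Kirchhoff condition at the inner endpoints of the boundary-crossing edges (where all remaining incident edges carry the constant $1$), and repairing this requires propagating a nonzero flux through the entire infinite subgraph $\cG^{\gamma_0}$, which is precisely the nontrivial content of the proposition and is not a one-dimensional computation.

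The missing ingredient is a global one, and the paper supplies it non-constructively (Lemma \ref{lem:surjder0}): on the one-ended, finite-volume subgraph $\cG^{\gamma_0}$, if every $g\in\dom(\bH)\cap H^1$ had $\partial_n g(\gamma_0)=0$, then by Corollary \ref{prop:domNM} one would get $\dom(\bH_F)\subseteq\dom(\bH_N)$, hence $\bH_F=\bH_N$ and $H^1_0=H^1$, contradicting Corollary \ref{cor:H=H} since the end has finite volume. Some such global input (or an honest solution of a flux problem on the infinite subgraph) is required; your proposal does not contain it.
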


In fact, Proposition~\ref{prop:surjder} will follow from the following lemma.

\begin{lemma}  \label{lem:surjder0}
Suppose $\cG$ is a metric graph with $\vol (\cG) < \infty$ and only one end, $\gC(\cG) = \{\gamma\}$.  Then there exists $f\in \dom(\bH) \cap H^1(\cG)$ such that
\begin{align*}
\partial_n f (\gamma ) \neq 0.
\end{align*}
\end{lemma}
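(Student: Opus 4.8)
The statement to prove is Lemma~\ref{lem:surjder0}: on a metric graph $\cG$ with $\vol(\cG)<\infty$ and exactly one end $\gamma$, there is a function $f\in\dom(\bH)\cap H^1(\cG)$ with $\partial_n f(\gamma)\neq 0$. By Corollary~\ref{cor:finvol=ker} the constant function $\id_\cG$ lies in $\ker(\bH)\subset\dom(\bH)\cap H^1(\cG)$, but it has vanishing normal derivative at $\gamma$ (indeed $\partial_n\id_\cG(\gamma)=\lim_k\sum_{v\in\partial\cG_k}\partial\id_\cG/\partial n_{\cG_k}(v)=0$ since the summands are all zero). So the task is to produce a less trivial element of $\dom(\bH)\cap H^1(\cG)$. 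The natural candidate is a nonconstant harmonic-type function, or a resolvent image of a suitable source term.

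**First approach (via the resolvent).** I would argue by contradiction using the Gauss--Green formula. Suppose $\partial_n f(\gamma)=0$ for every $f\in\dom(\bH)\cap H^1(\cG)$. Since $\#\gC_0(\cG)=1$, Proposition~\ref{lem:GG}(ii) then reads $\langle\bH f,g\rangle_{L^2(\cG)}=\langle f',g'\rangle_{L^2(\cG)}$ for all $f\in\dom(\bH)\cap H^1(\cG)$ and all $g\in H^1(\cG)$. Comparing with the first representation theorem characterization of $\bH_N$ (as in the proof of Corollary~\ref{prop:domNM}), this would force $\dom(\bH)\cap H^1(\cG)\subseteq\dom(\bH_N)$ with $\bH$ acting as $\bH_N$ there; combined with the reverse inclusion $\dom(\bH_N)\subset H^1(\cG)$ and $\bH_N\subset\bH$, we would get $\bH_N=\bH\!\restriction_{\dom(\bH)\cap H^1(\cG)}$. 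But by Corollary~\ref{prop:domNM} we already know $\dom(\bH_N)=\{f\in\dom(\bH)\cap H^1(\cG):\Gamma_1f=0\}$, so the vanishing of all normal derivatives says $\Gamma_1\equiv 0$ on $\dom(\bH)\cap H^1(\cG)$, i.e.\ $\dom(\bH_N)=\dom(\bH)\cap H^1(\cG)$. To reach a contradiction I need a function in $\dom(\bH)\cap H^1(\cG)$ that is \emph{not} in $\dom(\bH_N)$. Here is where I would use the structure: since $\vol(\cG)<\infty$, the Friedrichs extension $\bH_F$ has compact resolvent and $\id_\cG\notin\dom(\bH_F)$ (it does not vanish at $\gamma$, by Theorem~\ref{th:H10}), so $\bH_F\neq\bH_N$ — in particular $\dom(\bH_F)$ and $\dom(\bH_N)$ are distinct subspaces of $\dom(\bH)\cap H^1(\cG)$, and both are contained in it. If $\dom(\bH)\cap H^1(\cG)$ equalled $\dom(\bH_N)$ then $\dom(\bH_F)\subset\dom(\bH_N)$, and since both operators restrict $\bH$ and are self-adjoint this would give $\bH_F=\bH_N$, a contradiction. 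Hence some $f\in\dom(\bH)\cap H^1(\cG)$ has $\Gamma_1f=\partial_n f(\gamma)\neq 0$.

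**Second, more constructive approach.** Alternatively I would build $f$ explicitly. Take a compact exhaustion $(\cG_k)$ and let $h_k$ solve the mixed problem on $\cG_k$ that is harmonic (edgewise affine, Kirchhoff) with $h_k=0$ on the relevant interior vertices and $h_k=1$ on $\partial\cG_k$; equivalently, consider the equilibrium potential of the end. Using the finite-volume Cheeger bound $\lambda_0(\cG)\ge (4\vol(\cG)^2)^{-1}>0$ from~\eqref{eq:cheeg}, one controls these approximants in $H^1$ and extracts a weak limit $f\in H^1(\cG)$ which is harmonic (hence in $\dom(\bH)$ after checking it is $L^2$, automatic by finite volume and Lemma~\ref{lem:Harmcont=Harmdiscr} once $\f\in\ell^2(\cV;m)$) and satisfies $f(\gamma)=1$, so $f\notin H^1_0(\cG)$ by Theorem~\ref{th:H10}. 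For such $f$, apply~\eqref{eq:IntByPart2} on $\cG_k$: $-\int_{\cG_k}f''=\sum_{v\in\partial\cG_k}\partial f/\partial n_{\cG_k}(v)$, and since $f$ is harmonic the left side is $0$ for every $k$ — this shows the \emph{inward} derivative vanishes, so I must instead test against a non-harmonic function. The cleaner choice: let $f$ solve $\bH f=\id_\cG$ with $f\in\dom(\bH_F)$, i.e.\ $f=\bH_F^{-1}\id_\cG\in\dom(\bH_F)\subset H^1(\cG)$. Then by~\eqref{eq:IntByPart2} applied on a compact exhaustion, $\partial_n f(\gamma)=-\lim_k\sum_{v\in\partial\cG_k}\partial f/\partial n_{\cG_k}(v)=-\lim_k\bigl(-\int_{\cG_k}f''\bigr)=\int_\cG f''=-\int_\cG\id_\cG\,dx\cdot 0$... more carefully, $\int_\cG(-f'')=\int_\cG\id_\cG=\vol(\cG)\neq 0$, so by Lemma~\ref{cor:coex} $\partial_n f(\gamma)=-\lim_k\sum_{v\in\partial\cG_k}\partial f/\partial n_{\cG_k}(v)=-\int_\cG f''\,dx=\vol(\cG)>0$. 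This gives the claim directly. (One must note $f\in\dom(\bH_F)\subset\dom(\bH)\cap H^1(\cG)$ so the normal derivative~\eqref{eq:NMEnd} is defined, and Lemma~\ref{cor:coex} applies since $\vol(\cG)<\infty$ and $\gC(\cG)=\{\gamma\}$.)

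**Main obstacle.** The delicate point is making the constructive version airtight: ensuring the resolvent image $f=\bH_F^{-1}\id_\cG$ genuinely lies in $\dom(\bH)\cap H^1(\cG)$ (clear, since $\dom(\bH_F)\subset H^1_0(\cG)\subset H^1(\cG)$ and $\bH_F\subset\bH$), and correctly tracking the \emph{sign} convention in Lemma~\ref{cor:coex} when passing from the exhaustion derivatives to $\partial_n f(\gamma)$ — the two differ by a sign, and one wants the final nonzero quantity to come out as $\pm\vol(\cG)$ so that nonvanishing is transparent. The contradiction-based first approach sidesteps all sign bookkeeping and only needs $\bH_F\neq\bH_N$, which is immediate from finite volume; that is the version I would write up as the cleanest, keeping the explicit computation $\partial_n(\bH_F^{-1}\id_\cG)(\gamma)=\vol(\cG)$ as a remark.
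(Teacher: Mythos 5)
Your first approach is essentially the paper's proof: assume $\partial_n f(\gamma)=0$ for all $f\in\dom(\bH)\cap H^1(\cG)$, deduce via Corollary \ref{prop:domNM} that $\dom(\bH_F)\subseteq\dom(\bH_N)$, conclude $\bH_F=\bH_N$ since both are self-adjoint restrictions of $\bH$, and derive a contradiction. The paper closes the contradiction by noting that $\bH_F=\bH_N$ forces $H^1_0(\cG)=H^1(\cG)$, contradicting Corollary \ref{cor:H=H}; you instead exhibit $\id_\cG\in\dom(\bH_N)\setminus\dom(\bH_F)$ directly, which is an equivalent and equally clean way to finish. Your second, constructive approach is genuinely different from the paper and arguably more informative: since $\lambda_0(\cG)\ge(4\vol(\cG)^2)^{-1}>0$ by \eqref{eq:cheeg}, the function $f=\bH_F^{-1}\id_\cG$ lies in $\dom(\bH_F)\subset\dom(\bH)\cap H^1(\cG)$, and combining Lemma \ref{cor:coex} with \eqref{eq:IntByPart2} gives $\partial_n f(\gamma)=\int_\cG f''\,dx=-\vol(\cG)\neq 0$. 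This buys an explicit witness and an exact value for the normal derivative, at the cost of the sign bookkeeping you flag. Note that your final displayed chain has a sign slip: from $\sum_{v\in\partial\cG_k}\partial f/\partial n_{\cG_k}(v)=-\int_{\cG_k}f''$ and Lemma \ref{cor:coex} one gets $\partial_n f(\gamma)=+\int_\cG f''=-\vol(\cG)$, not $+\vol(\cG)$; this does not affect the conclusion, since only nonvanishing is needed.
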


\begin{proof}
We will proceed by contradiction. Suppose that $\partial_n g (\gamma) = 0$ for all $g \in  \dom(\bH) \cap H^1(\cG)$. Then,  by Corollary~\ref{prop:domNM}, $\dom(\bH_F) \subseteq \dom(\bH_N) = \dom(\bH) \cap H^1(\cG)$. However, both $\bH_F$ and $\bH_N$ are self-adjoint restrictions of $\bH$ and hence $\dom(\bH_F) = \dom(\bH_N)$. Therefore, $\bH_F=\bH_N$ and their quadratic forms also coincide, which implies that $H^1_0 (\cG) = H^1(\cG)$. This contradicts Corollary~\ref{cor:H=H} and hence completes the proof.
\end{proof} 

\begin{proof}[Proof of Proposition~\ref{prop:surjder}]
Let $\cG^\gamma$, $\gamma\in \gC_0 (\cG)$ be the subgraphs of $\cG$ constructed in the proof of Proposition~\ref{lem:GG}(i). Every $\cG^\gamma$ is a connected graph with $\vol(\cG^\gamma) < \infty$ and only one end, which can be identified with $\gamma$. Hence we can apply Lemma~\ref{lem:surjder0} to obtain a function $\ti{g}_\gamma \in \dom (\bH^\gamma) \cap H^1(\cG^\gamma)$ such that $\partial_n \ti{g}_\gamma (\gamma) = 1$. Here $\bH^\gamma$ denotes the Kirchhoff Laplacian on $\cG^\gamma$. 

Since $\# \partial \cG^\gamma  < \infty$, we can obviously extend $\ti g_\gamma$ to a function $g_\gamma$ on $\cG$ such that $g_\gamma \in \dom(\bH) \cap H^1(\cG)$ and $g_\gamma$ is identically zero on a neighborhood of each end $\gamma' \neq \gamma$ (see also the proof of Theorem~\ref{thm:nontr=finitevol}). In particular, this implies that $\partial_n g_\gamma (\gamma')  = 0$ for all $\gamma' \in \gC_0(\cG)\setminus\{\gamma\}$. Upon identification of $\gamma$ with the single end of $\cG^\gamma$ we also have that
\[
	\partial_n g_\gamma(\gamma ) = \partial_n \ti{g}_\gamma (\gamma ) =1.
\]
This immediately implies surjectivity.
\end{proof}

%%%%%%%%%%%%%%%%%%%%%%%%%
\subsection{Description of self-adjoint extensions}\label{sec:VI.03}
%%%%%%%%%%%%%%%%%%%%%%%%%

Our next goal is a description of all finite energy self-adjoint extensions of $\bH_0$, that is, self-adjoint extensions $\wt \bH$ satisfying the inclusion $\dom (\wt \bH) \subset H^1(\cG)$. We will be able to do this under the additional assumption that $\cG$ has finitely many finite volume ends.  Recall that in this case $\cH = \ell^2(\gC_0(\cG))$ is a finite dimensional Hilbert space. 

Let $C$, $D$ be  two linear operators on $\cH$ satisfying \emph{Rofe-Beketov conditions}~\cite{RB}:
\begin{align}\label{eq:RB}
CD^\ast & = DC^\ast, & {\rm rank}(C|D) & = \dim \cH = \#\gC_0(\cG). %\in \rho(C^\ast C + D^\ast D).
\end{align}
Consider the quadratic form $\gt_{C,D}$ defined by
\begin{align} \label{eq:extqf0}
\gt_{C,D} [f]:=\int_\cG |f'(x)|^2dx + \langle D^{-1} C\Gamma_0f, \Gamma_0f\rangle_\cH
\end{align}
on the domain
\begin{align} \label{eq:extqf1}
\dom(\gt_{C,D}) :=\{f\in H^1(\cG)|\, \Gamma_0f \in\ran(D^\ast) \}.
\end{align}
Here and in the following the mappings $\Gamma_0$ and $\Gamma_1$ are given by \eqref{def:Gamma} and $D^{-1}\colon \ran (D) \to \ran(D^\ast)$ denotes the inverse of the restriction $D|_{\ker(D)^\perp} \colon \ran(D^\ast) \to \ran(D)$. 
In particular, \eqref{eq:RB} implies that $\gt_{C,D}[f]$ is well-defined for all $f \in \dom(\gt_{C,D})$ (see also \eqref{eq:domCD}).

\begin{remark}
It is straightforward to check that $\gt_{I,0} = \gt_F$ and $\gt_{0,I} = \gt_N$ are the quadratic forms corresponding to the Friedrichs extension $\bH_F$ and, respectively, Neumann extension $\bH_N$ (see Remark~\ref{rem:Friedrichs} and \eqref{eq:NMform}). 
\end{remark}

Now we are in position to state the main result of this section. 

\begin{theorem}\label{th:ThetaCD}
Let $\cG$ be a metric graph with finitely many finite volume ends, $\# \gC_0(\cG) < \infty$. Let also $C$, $D$ be linear operators on $\cH$ satisfying Rofe-Beketov conditions \eqref{eq:RB}. Then:
\begin{enumerate}[(i)]
\item The form $\gt_{C,D}$ given by \eqref{eq:extqf0}, \eqref{eq:extqf1} is closed and lower semi-bounded in $L^2(\cG)$.
\item  The self-adjoint operator $\bH_{C,D}$ associated with the form $\gt_{C,D}$ is a self-adjoint extension of $\bH_0$ and its domain is explicitly given by 
\begin{align} \label{eq:domext}
\dom({\bH}_{C,D}) = \{f\in \dom({\bH}) \cap H^1(\cG)|\ C\Gamma_0f + D\Gamma_1 f  = 0  \}.
\end{align}
\item Conversely, if $\wt \bH$ is a self-adjoint extension of ${\bH_0}$ such that $\dom(\wt \bH) \subset H^1(\cG)$, then there are $C,D$ satisfying \eqref{eq:RB} such that $\wt\bH = \bH_{C,D}$.
\item Moreover, $\wt \bH = \bH_{C,D}$ is a Markovian extension if and only if the corresponding quadratic form $\wh{\gt}_{C,D}[y] = \langle D^{-1} C y, y\rangle_\cH$, $\dom(\wh\gt) = \ran(D^\ast)$ is a Dirichlet form on $\cH$ in the wide sense.\footnote{Here we do not assume that $\wh\gt$ is densely defined, see~\cite[p.29]{fuk10}. We stress that in order for $\wh\gt$ to be a Dirichlet form even merely in the wide sense, it is necessary that $\dom(\wh\gt)$ is a sublattice of $\cH$, hence that the orthogonal projector onto $\ran(D^\ast)$ is a positivity preserving operator.}
\end{enumerate}
\end{theorem}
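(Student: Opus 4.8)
\emph{Setup.} The plan is to treat $(\cH,\Gamma_0,\Gamma_1)$ as an (ordinary) boundary triple for the restriction $\bH\upharpoonright(\dom(\bH)\cap H^1(\cG))$ of the maximal operator, with the results of Section~\ref{sec:VI} (the Gauss--Green formula \eqref{eq:GG}, surjectivity of $\Gamma_0$ and of $\Gamma_1$, and the explicit description of $\bH_N$) as the analytic input. First I would record the linear-algebra facts encoded in \eqref{eq:RB}: $CD^\ast=DC^\ast$ forces $C\,\ran(D^\ast)=\ran(CD^\ast)=\ran(DC^\ast)\subseteq\ran(D)$, so $B:=D^{-1}C$ is well defined on $\ran(D^\ast)$, and a short computation with $CD^\ast=DC^\ast$ and $D^{-1}D=P_{\ran(D^\ast)}$ shows $B$ is selfadjoint on the finite-dimensional space $\ran(D^\ast)$; thus $\wh{\gt}_{C,D}[y]=\langle By,y\rangle_\cH$ is a real, bounded quadratic form there. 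Moreover $\cL_{C,D}:=\{(\xi_0,\xi_1)\in\cH^2: C\xi_0+D\xi_1=0\}$ is a selfadjoint linear relation whose multivalued part is $\ker D$; here $\dim\cL_{C,D}=\dim\cH$ by the rank condition and symmetry follows from $CD^\ast=DC^\ast$ (cf.\ Appendix~\ref{app:LR}). Since $\Gamma_0\colon H^1(\cG)\to\cH$ is $\|\cdot\|_{H^1}$-bounded (Proposition~\ref{prop:h1trace}) and $\ran(D^\ast)$ is closed, $\dom(\gt_{C,D})=\Gamma_0^{-1}(\ran(D^\ast))$ is a closed subspace of $(H^1(\cG),\|\cdot\|_{H^1})$ containing $\dom(\bH_0^0)$, hence dense in $L^2(\cG)$, and $\Gamma_0(\dom(\gt_{C,D}))=\ran(D^\ast)$.

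\emph{Part (i).} The key estimate is that $\Gamma_0$ is infinitesimally form-bounded with respect to $\gt_N$: for every $\varepsilon>0$ there is $C_\varepsilon$ with $\|\Gamma_0 f\|_\cH^2\le\varepsilon\|f'\|_{L^2(\cG)}^2+C_\varepsilon\|f\|_{L^2(\cG)}^2$ for all $f\in H^1(\cG)$. Since $\#\gC_0(\cG)<\infty$ this reduces to a bound for a single free finite-volume end $\gamma$: choosing a graph representation $(\cG^\gamma_k)$ with $\vol(\cG^\gamma_k)\to0$ and a ray $\cR\subseteq\cG^\gamma_k$ in $\omega_\gamma$ issuing from a boundary vertex $x_0$, one estimates $|f(\gamma)|^2\le2|f(x_0)|^2+2\vol(\cG^\gamma_k)\|f'\|^2_{L^2(\cG)}$, and $|f(x_0)|^2$ by the one-dimensional trace inequality on a short subinterval of an edge at $x_0$. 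Part (i) then follows from the KLMN theorem, applied to the closed form $\gt_N\!\restriction\!\dom(\gt_{C,D})$ perturbed by $f\mapsto\langle B\Gamma_0 f,\Gamma_0 f\rangle_\cH$, which by the estimate and $|\langle B\Gamma_0 f,\Gamma_0f\rangle|\le\|B\|\,\|\Gamma_0 f\|_\cH^2$ is relatively form bounded with bound $0$. Finally $\bH_0\subseteq\bH_{C,D}$: for $f\in\dom(\bH_0)$, $g\in H^1(\cG)$ one has $\langle\bH_0 f,g\rangle=\langle f',g'\rangle$ (integrate by parts, Lemma~\ref{lem:ibp}, on a large compact subgraph), which combined with \eqref{eq:GG} and surjectivity of $\Gamma_0$ yields $\Gamma_1 f=0$; together with $\dom(\bH_0)\subseteq H^1_0(\cG)=\ker\Gamma_0$ (Theorem~\ref{th:H10}) this gives $\dom(\bH_0)\subseteq\ker\Gamma_0\cap\ker\Gamma_1\subseteq\dom(\gt_{C,D})$ and $\gt_{C,D}[f,g]=\langle f',g'\rangle=\langle\bH_0 f,g\rangle$ for $g\in\dom(\gt_{C,D})$.

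\emph{Part (ii).} If $f\in\dom(\bH_{C,D})$ with $\bH_{C,D}f=h$, then testing $\gt_{C,D}[f,g]=\langle h,g\rangle$ against $g\in\dom(\bH_0^0)$ and integrating by parts (the boundary terms vanish by continuity of $f$ and the Kirchhoff conditions for $g$) gives $\langle f,\bH_0^0 g\rangle=\langle h,g\rangle$, so $f\in\dom(\bH_0^\ast)=\dom(\bH)$ and $\bH f=h$; hence $\bH_{C,D}=\bH\upharpoonright\dom(\bH_{C,D})$ and $\dom(\bH_{C,D})\subseteq\dom(\bH)\cap H^1(\cG)$. For arbitrary $g\in\dom(\gt_{C,D})$, subtracting \eqref{eq:GG} from $\gt_{C,D}[f,g]=\langle f',g'\rangle+\langle B\Gamma_0 f,\Gamma_0 g\rangle$ yields $\langle\Gamma_1 f+B\Gamma_0 f,\Gamma_0 g\rangle_\cH=0$; since $\Gamma_0(\dom(\gt_{C,D}))=\ran(D^\ast)$ this means $\Gamma_1 f+B\Gamma_0 f\in\ran(D^\ast)^\perp=\ker D$, and applying $D$ (using $DB\,\Gamma_0 f=DD^{-1}C\Gamma_0 f=C\Gamma_0 f$) gives $C\Gamma_0 f+D\Gamma_1 f=0$. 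Conversely, if $f\in\dom(\bH)\cap H^1(\cG)$ satisfies $C\Gamma_0 f+D\Gamma_1 f=0$, then $(\Gamma_0 f,\Gamma_1 f)\in\cL_{C,D}$; since $\mul\cL_{C,D}=(\dom\cL_{C,D})^\perp$, the first component $\Gamma_0 f$ lies in $(\ker D)^\perp=\ran(D^\ast)$, so $f\in\dom(\gt_{C,D})$, and $\Gamma_1 f=-B\Gamma_0 f+\eta$ with $\eta\in\ker D$. Plugging this into \eqref{eq:GG} and using $\langle\eta,\Gamma_0 g\rangle=0$ for $g\in\dom(\gt_{C,D})$ gives $\langle\bH f,g\rangle=\gt_{C,D}[f,g]$ for all such $g$, hence $f\in\dom(\bH_{C,D})$. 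This proves \eqref{eq:domext}.

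\emph{Part (iii).} From \eqref{eq:GG} one obtains the Green identity $\langle\bH f,g\rangle-\langle f,\bH g\rangle=\langle\Gamma_0 f,\Gamma_1 g\rangle_\cH-\langle\Gamma_1 f,\Gamma_0 g\rangle_\cH$ for $f,g\in\dom(\bH)\cap H^1(\cG)$, and $(\Gamma_0,\Gamma_1)$ maps $\dom(\bH)\cap H^1(\cG)$ onto $\cH\times\cH$: $\Gamma_0$ is onto by Theorem~\ref{thm:nontr=finitevol}, and combining Proposition~\ref{lem:GG}(i) with the functions constructed in the proof of Proposition~\ref{prop:surjder}, corrected by the $f_\gamma$'s to annihilate the $\Gamma_0$-component, produces preimages of $\{0\}\times\cH$. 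Given a selfadjoint $\wt\bH\supseteq\bH_0$ with $\dom(\wt\bH)\subseteq H^1(\cG)$, necessarily $\dom(\wt\bH)\subseteq\dom(\bH)\cap H^1(\cG)$, and $\cL_{\wt\bH}:=\{(\Gamma_0 f,\Gamma_1 f):f\in\dom(\wt\bH)\}$ is a symmetric relation by the Green identity; if $(a,b)$ pairs symmetrically with all of $\cL_{\wt\bH}$, pick $h$ with $\Gamma_0 h=a$, $\Gamma_1 h=b$, so that $\langle\bH h,g\rangle=\langle h,\wt\bH g\rangle$ for all $g\in\dom(\wt\bH)$ and hence $h\in\dom(\wt\bH^\ast)=\dom(\wt\bH)$, i.e.\ $\cL_{\wt\bH}$ is selfadjoint. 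Moreover $\dom(\wt\bH)=\{f\in\dom(\bH)\cap H^1(\cG):(\Gamma_0 f,\Gamma_1 f)\in\cL_{\wt\bH}\}$: if $f$ is in the right-hand side, pick $g\in\dom(\wt\bH)$ with the same $\Gamma$-values; then $\Gamma_0(f-g)=\Gamma_1(f-g)=0$, so by the Green identity $\langle\bH(f-g),k\rangle=\langle f-g,\wt\bH k\rangle$ for all $k\in\dom(\wt\bH)$, whence $f-g\in\dom(\wt\bH)$ and $f\in\dom(\wt\bH)$. Writing the finite-dimensional selfadjoint relation $\cL_{\wt\bH}$ as $\cL_{C,D}$ with $C,D$ satisfying \eqref{eq:RB} and invoking (ii) gives $\wt\bH=\bH\upharpoonright\dom(\bH_{C,D})=\bH_{C,D}$.

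\emph{Part (iv).} Here the plan is to transfer the Beurling--Deny condition to the boundary. For real $f\in\dom(\gt_{C,D})$ put $f^\sharp:=(0\vee f)\wedge 1$; then $f^\sharp\in H^1(\cG)$ with $|f^\sharp|\le|f|$ and $|(f^\sharp)'|\le|f'|$, $\Gamma_0 f^\sharp=(\Gamma_0 f)^\sharp$ by continuity of the truncation and of boundary values along rays, and -- the crucial point -- $(f^\sharp)'$ is supported in $\{0<f<1\}$ while $(f-f^\sharp)'$ is supported in $\{f<0\}\cup\{f>1\}$, so $\langle(f^\sharp)',(f-f^\sharp)'\rangle_{L^2(\cG)}=0$ and therefore $\gt_{C,D}[f^\sharp,f-f^\sharp]=\wh{\gt}_{C,D}[(\Gamma_0 f)^\sharp,\Gamma_0 f-(\Gamma_0 f)^\sharp]$. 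By the bilinear Beurling--Deny/Ouhabaz criterion, a closed non-negative form $\gt$ is Markovian exactly when its domain is stable under $\sharp$ and $\gt[\cdot^\sharp,\cdot-\cdot^\sharp]\ge0$ on the real part of its domain; applying this on $L^2(\cG)$ and, using $\Gamma_0(\dom(\gt_{C,D}))=\ran(D^\ast)$ and the displayed identity, on $\cH$, shows that the Markov property of $\gt_{C,D}$ is equivalent to that of $\wh{\gt}_{C,D}$ (in the wide sense, which is precisely what one gets since $\ran(D^\ast)$ need not be dense, and whose $\sharp$-stability clause matches $\sharp$-stability of $\dom(\gt_{C,D})$). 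It remains to reconcile non-negativity: if $\bH_{C,D}$ is Markovian then $\bH_N\le\bH_{C,D}$ by Theorem~\ref{th:Markovian1}, so $\wh{\gt}_{C,D}[\Gamma_0 f]=\gt_{C,D}[f]-\gt_N[f]\ge0$, i.e.\ $\wh{\gt}_{C,D}\ge0$; conversely $\wh{\gt}_{C,D}\ge0$ gives $\gt_{C,D}=\gt_N+\wh{\gt}_{C,D}\circ\Gamma_0\ge0$. Together with part (i) (closedness), this yields the asserted equivalence.

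\emph{Main obstacle.} I expect part (iv) to require the most care, the decisive (and easily overlooked) point being that the local term $\langle(f^\sharp)',(f-f^\sharp)'\rangle_{L^2(\cG)}$ vanishes identically, which is what turns the Markov property into a purely boundary condition; the non-negativity bookkeeping then goes through only because Theorem~\ref{th:Markovian1} sandwiches Markovian extensions above $\bH_N$. Among the earlier parts the two genuinely analytic inputs are the infinitesimal form-boundedness of the trace $\Gamma_0$ and the joint surjectivity of $(\Gamma_0,\Gamma_1)$, both of which depend essentially on $\#\gC_0(\cG)<\infty$ and on all finite-volume ends being free.
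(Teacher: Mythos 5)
Your proposal is correct, and for parts (i)--(iii) it follows essentially the same route as the paper: the Gauss--Green formula \eqref{eq:GG}, the surjectivity statements of Proposition \ref{lem:GG}(i) and Proposition \ref{prop:surjder}, the first representation theorem, and the identification of the boundary data of a finite energy self-adjoint extension with a self-adjoint linear relation $\Theta_{C,D}$ as in Appendix \ref{app:LR}. You actually supply several details the paper leaves implicit and which are worth having: the verification that $B=D^{-1}C$ is self-adjoint on $\ran(D^\ast)$, the preliminary step in (ii) showing that the operator associated with $\gt_{C,D}$ is a restriction of $\bH$ (needed before \eqref{eq:GG} can be applied to $f\in\dom(\bH_{C,D})$), and the infinitesimal form bound for $\Gamma_0$ behind the one-line claim in (i). Part (iv) is where you genuinely diverge. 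The paper works directly with normal contractions: given $\varphi$ and a boundary vector, it builds a continuous piecewise affine $\psi$ with $\psi(0)=0$, $|\psi'|=1$ a.e.\ and $\psi(f(\gamma))=\varphi(f(\gamma))$ for the finitely many ends, so that $|(\psi\circ\ti f)'|=|\ti f'|$ and the energy term cancels exactly in $\gt_{C,D}[\psi\circ\ti f]\le\gt_{C,D}[\ti f]$, isolating the boundary form. You instead use the unit contraction $f^\sharp=(0\vee f)\wedge 1$ and the bilinear Beurling--Deny/Ouhabaz criterion, killing the energy contribution through the a.e.\ orthogonality $\langle(f^\sharp)',(f-f^\sharp)'\rangle_{L^2(\cG)}=0$. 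Both arguments exploit the same phenomenon (the interior energy is invisible to a well-chosen contraction) and both rest on $\Gamma_0$ mapping $\dom(\gt_{C,D})$ onto $\ran(D^\ast)$; your version has the advantage of avoiding the ad hoc construction of $\psi$, but it leans on the equivalence between the unit-contraction criterion and the normal-contraction definition of a Dirichlet form in the wide sense, which is standard in finite dimensions but should be cited explicitly. Your separate treatment of non-negativity of $\wh\gt_{C,D}$ via the sandwich $\bH_N\le\bH_{C,D}$ from Theorem \ref{th:Markovian1} is a point the paper's proof of (iv) passes over in silence, and it is handled correctly.
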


\begin{proof} 
(i) Since $\cH$ is finite dimensional, it is straightforward to see that the form $\gt_{C,D}$ is closed and lower semi-bounded in $L^2(\cG)$ whenever $C$ and $D$ satisfy \eqref{eq:RB}.

(ii) By the first representation theorem~\cite[Chapter~VI.2.1]{kato}, $\dom(\bH_{C,D})$ consists of all functions $f \in \dom(\gt_{C,D}) \subseteq H^1(\cG)$ for which there exists $h \in L^2(\cG)$ such that  
\be\label{eq:Bform}
	\langle f', g'\rangle_{L^2(\cG)} + \langle D^{-1} C\Gamma_0 f, \Gamma_0 g\rangle_\cH = \langle h, g\rangle_{L^2 (\cG)}
\ee
for all $g \in \dom(\gt_{C,D})$. Moreover, in this case $\bH_{C,D} f := h$. 

The Gauss--Green identity \eqref{eq:GG} implies that for any $f \in \dom(\bH_{C,D})$ and $g \in \dom(\gt_{C,D})$,
\[
	\langle D^{-1} C\Gamma_0 f, \Gamma_0 g\rangle_\cH  = - \langle \Gamma_1 f,\Gamma_0 g\rangle_\cH.
\]
Taking into account the surjectivity property in Proposition~\ref{lem:GG}(i), the inclusion ''$\subseteq$" in \eqref{eq:domext} follows. The converse inclusion is then an immediate consequence of the Gauss--Green identity  \eqref{eq:GG}.

(iii) To prove the claim, it suffices to show that
\[
	\Theta = \{ (\Gamma_0f,\Gamma_1f) |\,  f \in \dom(\wt \bH) \} \subseteq \cH \times \cH  
\]
is a self-adjoint linear relation (for further details we refer to Appendix~\ref{app:LR}). By definition, 
 $\Theta^\ast$ is given by
\[
	\Theta^\ast = \{ (g, h) \in \cH \times \cH |\, \langle\Gamma_1 f, g\rangle_{\cH} = \langle\Gamma_0 f, h\rangle_{\cH}  \text{ for all } f \in \dom(\wt \bH) \}.
\]
The inclusion $\Theta \subseteq \Theta^\ast$ follows immediately from the Gauss--Green identity \eqref{eq:GG} and the self-adjointness of $\wt \bH$. Indeed, we clearly have
\[
	0 = \langle\wt \bH f, \ti f \rangle_{L^2(\cG)} - \langle f, \wt \bH\ti f \rangle_{L^2(\cG)} =  
	- \langle\Gamma_1 f, \Gamma_0 \ti f \rangle_{\cH} + \langle \Gamma_0 f, \Gamma_1 \ti f \rangle_{\cH}
\]
for all functions $f, \ti f \in \dom (\wt \bH)$. On the other hand, by Proposition~\ref{prop:surjder}  and Proposition~\ref{lem:GG}, for any $(g, h) \in \Theta^\ast$ there is a function $\ti{f}\in \dom (\bH) \cap H^1(\cG)$ such that $g = \Gamma_0\ti{f}$ and $h = \Gamma_1\ti{f}$. Employing the identity \eqref{eq:GG} once again, we see that 
\begin{align*}
	\langle\wt \bH f, \ti f \rangle_{L^2(\cG)} & = \langle f', \ti f '\rangle_{L^2(\cG)} - \langle\Gamma_1 f, g \rangle_{\cH} \\
	&= \langle f', \ti f '\rangle_{L^2(\cG)} - \langle \Gamma_0f, h \rangle_\cH = \langle f, \bH \ti f \rangle_{L^2(\cG)}
\end{align*}
for all $f \in \dom(\wt \bH)$. 
Hence, $\ti{f} \in \dom (\wt \bH)$ and in particular $(g, h) \in \Theta$. Since $\Theta$ is self-adjoint, there are $C$ and $D$ in $\cH$ satisfying Rofe-Beketov conditions \eqref{eq:RB} and such that $\Theta = \{(f,g)\in\cH\times\cH|\, Cf+Dg=0\}$.

(iv) 
The first direction of the equivalence is clear: since the quadratic form $\gt_N$ associated with the Neumann extension $\bH_N$ is Markovian and 
\[
\Gamma_0 ( \varphi \circ f) =  \big((\varphi \circ f)(\gamma)\big)_{\gamma\in\gC_0(\cG)} = : \varphi\circ( \Gamma_0 f)
\]
 for all functions $f \in H^1(\cG)$ and every normal contraction $\varphi$,\footnote{A \emph{normal contraction} is a function $\varphi \colon \C \to \C$ such that $\varphi(0) = 0$ and $| \varphi(x) -  \varphi(y)| \le |x-y|$ for all $x, y \in \C$.}  the extension $\bH_{C,D}$ is Markovian if $\wh\gt_{C,D}$ is a Dirichlet form on $\cH$ in the wide sense.
 
 To prove the converse direction, let, for simplicity, $f\in \dom(\wh\gt_{C,D})$ be real-valued and fix some real-valued $\ti f \in H^1(\cG)$ with $\Gamma_0 \ti f = f$ (the existence of such an $\ti f$ follows from Proposition~\ref{lem:GG}). For any (real-valued) normal contraction $\varphi \colon \R \to \R$, we can construct a continuous and piecewise affine function $\psi \colon \R \to \R$ (i.e., $\psi$ is affine on every component of $\R \setminus \{x_1,\dots, x_M\}$ for finitely many points $x_1,\dots,x_M$) such that $\psi(0)= 0$, $\psi(f(\gamma)) = \varphi(f(\gamma))$ for all $\gamma \in \gC_0(\cG)$ and $|\psi'(x)| = 1$ for almost every $x \in \R$.\footnote{For instance, for any $s,L>0$ such that $s\le L$, the function $\psi_0(x):= \frac{L+s}{2} - \Big |x - \frac{L+s}{2} \Big|$ satisfies $\psi_0(0)= 0$, $\psi_0 (L) = s$ and $|\psi_0'| \equiv 1$. The construction in the general case follows easily from this example.} Notice that every function $\psi$ with the above properties is a normal contraction. Hence, if $\gt_{C,D}$ is Markovian, it follows that $\psi \circ \ti f \in \dom(\gt_{C,D})$. However, its boundary values are precisely given by 
 \[
	\Gamma_0(\psi \circ \ti f) = \psi \circ f = \varphi \circ f
 \]
 and we conclude that $\varphi \circ  f$ belongs to $\dom (\wh\gt_{C,D})$. Finally, the Markovian property of $\gt_{C,D}$ implies that
 \[
  \gt_{C,D} [\psi \circ \ti f ] = \int_{\cG} |(\psi \circ \ti f)'|^2 dx + \wh\gt _{C,D}[\varphi \circ f]   \le \gt_{C,D} [ \ti f ]= \int_{\cG} |\ti f'|^2 dx + \wh\gt _{C,D}[ f],
  \]
and noticing that $|( \psi \circ \ti f)'| =  |\ti f'|$ almost everywhere on $\cG$, the proof is complete.
\end{proof}

Let us demonstrate Theorem~\ref{th:ThetaCD} by applying it to Cayley graphs.% of finitely generated groups.

\begin{corollary}\label{cor:group1end}
Let $\cG_d$ be a Cayley graph of a finitely generated group $\mG$ with one end. Then the Kirchhoff Laplacian $\bH_0$ admits a unique Markovian extension if and only if the underlying metric graph $\cG = (\cG_d,|\cdot|)$ has infinite total volume,  $\vol(\cG)=\infty$. Moreover, if $\cG$ has finite total volume, then the set of all Markovian extensions of $\bH_0$ forms a one-parameter family given explicitly by 
\begin{align} \label{eq:domext-2}
\dom({\bH}_{\theta}) = \{f\in \dom({\bH}) \cap H^1(\cG)|\, \cos(\theta)\Gamma_0f + \sin(\theta)\Gamma_1 f  = 0  \},
\end{align}
where $\theta\in[0,\pi/2]$.
\end{corollary}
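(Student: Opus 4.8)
The plan is to specialize the general results of this section to the case where the boundary space $\cH$ is one–dimensional. First I would record the purely graph–theoretic input: a Cayley graph $\cG_d$ of a finitely generated infinite group $\mG$ with one end satisfies Hypothesis \ref{hyp:locfin} and, by assumption, has exactly one topological end $\gamma$ (the number of ends being independent of the generating set, cf.\ Remark \ref{rem:ends}(iv)); this single end is trivially free. By Remark \ref{rem:finvolends}, $\gamma$ has finite volume if and only if $\vol(\cG)<\infty$, so $\gC_0(\cG)=\varnothing$ when $\vol(\cG)=\infty$ and $\gC_0(\cG)=\{\gamma\}$ otherwise. The first assertion is then immediate from Corollary \ref{cor:Markovian1}: $\bH_0$ has a unique Markovian extension precisely when $\gC_0(\cG)=\varnothing$, i.e.\ precisely when $\vol(\cG)=\infty$.

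Assume now $\vol(\cG)<\infty$, so that $\#\gC_0(\cG)=1$ and $\cH=\ell^2(\gC_0(\cG))$ is one–dimensional; identify $\cH$ with $\C$. By Theorem \ref{th:Markovian1} every Markovian extension of $\bH_0$ has its domain inside $H^1(\cG)$, i.e.\ is a finite energy extension, so by Theorem \ref{th:ThetaCD}(iii) it equals $\bH_{C,D}$ for some scalars $C,D$ (linear operators on $\C$) satisfying the Rofe-Beketov conditions \eqref{eq:RB}; in dimension one these amount to $C\overline D\in\R$ together with $(C,D)\neq(0,0)$. Since $\bH_{C,D}$ depends, via \eqref{eq:domext}, only on the one–dimensional self-adjoint linear relation $\{(x,y)\in\C^2:\,Cx+Dy=0\}$, equivalently only on the class $(C:D)$ modulo a common nonzero scalar, it remains to identify which classes are Markovian and to parametrize them by $\theta$.

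For this I would invoke Theorem \ref{th:ThetaCD}(iv): $\bH_{C,D}$ is Markovian exactly when $\wh\gt_{C,D}[y]=\langle D^{-1}Cy,y\rangle_\cH$ on $\dom(\wh\gt_{C,D})=\ran(D^\ast)$ is a Dirichlet form on $\C$ in the wide sense. If $D\neq 0$ this is the form $y\mapsto (C/D)|y|^2$ defined on all of $\C$, and since $C/D=C\overline D/|D|^2\in\R$, it is a (necessarily non-negative) Dirichlet form exactly when $C/D\ge 0$; if $D=0$ then $C\neq 0$, $\ran(D^\ast)=\{0\}$ and $\wh\gt_{C,D}$ is the zero form on $\{0\}$, trivially a Dirichlet form in the wide sense (this class is the Friedrichs extension $\bH_F=\bH_{1,0}$). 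Hence the Markovian extensions correspond precisely to the classes $(C:D)$ representable by real $C,D\ge 0$, not both zero; each such class has the unique normalized representative $(C,D)=(\cos\theta,\sin\theta)$ with $\theta\in[0,\pi/2]$, and distinct $\theta$ give non-proportional pairs, hence genuinely distinct extensions (with $\theta=0$ the Friedrichs and $\theta=\pi/2$ the Neumann extension). Plugging $(C,D)=(\cos\theta,\sin\theta)$ into the domain description \eqref{eq:domext} of $\bH_{C,D}$ gives exactly \eqref{eq:domext-2}. The only mildly delicate points are the correct translation of the Rofe-Beketov/linear-relation formalism into dimension one and the injectivity of $\theta\mapsto\bH_\theta$ on $[0,\pi/2]$; everything else is a direct appeal to Theorem \ref{th:ThetaCD} and Corollary \ref{cor:Markovian1}.
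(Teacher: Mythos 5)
Your proposal is correct and follows exactly the route the paper intends: the paper gives no separate proof of this corollary, presenting it as a direct application of Corollary \ref{cor:Markovian1} (for the dichotomy via $\gC_0(\cG)=\varnothing$ iff $\vol(\cG)=\infty$) and of Theorem \ref{th:Markovian1} together with Theorem \ref{th:ThetaCD} specialized to $\dim\cH=1$. Your verification that the one-dimensional Rofe-Beketov classes with $D=0$ or $C/D\ge 0$ are precisely the Markovian ones, normalized as $(\cos\theta,\sin\theta)$ with $\theta\in[0,\pi/2]$, supplies exactly the details the authors leave implicit.
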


Taking into account that amenable groups have finitely many ends, the above result applies to amenable finitely generated groups, which are not virtually infinite cyclic (see Remark~\ref{rem:ends}(iv)). In a similar way one can obtain a complete description of Markovian extensions in the case of virtually infinite cyclic groups, however, they have two ends and the corresponding description looks a little bit more cumbersome and we leave it to the reader (cf.~\cite[p.147]{fuk10}). The case of groups with infinitely many ends remains an open highly nontrivial problem.  

\begin{remark} \label{rem:thThetaCD}
A few remarks are in order.
\begin{itemize}
\item[(i)] 
Let us mention that in the case when the domain of the maximal operator $\bH$ is contained in $H^1(\cG)$ and $\cG$ has finitely many finite volume ends (notice that by Theorem~\ref{th:indicesends} in this case $\Nr_\pm(\bH_0) = \#\gC_0(\cG)<\infty$), \emph{Proposition~\ref{th:ThetaCD} provides a complete description of all self-adjoint extensions of $\bH_0$}. Let us also mention that Proposition~\ref{th:ThetaCD} provides a complete description of all self-adjoint restrictions of the Gaffney Laplacian $\bH_G$, see Remark~\ref{rem:H=H=uniq}(ii).
\item[(ii)] 
Some of the results of this section extend (to a certain extent) to the case of infinitely many ends. Let us stress that by Proposition~\ref{prop:H1InfEnds} in the case when $\cG$ has a finite volume end which is not free the above results would lead only to  some (not all!) self-adjoint extensions of  $\bH_0$. In our opinion, even in the case of radially symmetric trees having finite total volume the description of all self-adjoint extensions of $\bH_0$ is a difficult problem. 
\item[(iii)]  
Similar relations between Markovian realizations of elliptic operators on domains or finite metric graphs (with general couplings at the vertices) on one hand, and Dirichlet property of the corresponding quadratic form's boundary term on the other hand, are of course well known in the literature (see, e.g.,~\cite[Proposition~5.1]{CarMug09},~\cite[Theorem~3.5]{KanKlaVoi09},~\cite[Theorem~6.1]{KosPotSch08}). However, the setting of infinite metric graphs additionally requires much more advanced considerations of combinatorial and topological nature. In particular, it seems noteworthy to us that the results of the previous sections provide the right notion of the boundary for metric graphs, namely, the set of finite volume ends, to deal with finite energy and also with Markovian extensions of the minimal Kirchhoff Laplacian. In particular, this end space is well-behaved as concerns the introduction of traces and normal derivatives.
\item[(iv)] 
Taking into account certain close relationships between quantum graphs and discrete Laplacians (see~\cite[\S~4]{ekmn}), one can easily obtain the results analogous to Theorem~\ref{th:indicesends} and Theorem~\ref{th:ThetaCD} for a particular class of discrete Laplacians on $\cG_d$ defined by the following expression
\begin{align}\label{eq:laplacediscr}
(\tau f)(v) := \frac{1}{m(v)}\sum_{u\sim v}\frac{f(v) - f(u)}{|e_{u,v}|},\quad v\in \cV,
\end{align}
where $m$ is the star weight \eqref{def:m}. Markovian extensions of weighted discrete Laplacians were considered also in~\cite{klss}.
On the other hand,~\cite{klss} does not contain a finiteness assumption, however, the conclusion in our setting appears to be slightly stronger than in~\cite[Theorem~3.5]{klss}, where the correspondence between Markovian extensions and Markovian forms on the boundary is in general not bijective. 
\end{itemize}
\end{remark}

%%%%%%%%%%%%%%%%%%%%%%%%%%%%%%
\section{Deficiency indices of antitrees}\label{sec:antitrees}
%%%%%%%%%%%%%%%%%%%%%%%%%%%%%%
The main aim of this section is to construct for any $N\in \Z_{\ge 1}\cup \{\infty\}$ a metric antitree such that the corresponding minimal Kirchhoff Laplacian $\bH_0$ has deficiency indices $\Nr_\pm(\bH_0) = N$. Our motivation stems from the fact that every antitree has exactly one end and hence, according to considerations in the previous sections, $\bH_0$ admits at most one-parameter family of Markovian extensions. 

\subsection{Antitrees}\label{ss:AT01}
 Let $\cG_d = (\cV, \cE)$ be a connected, simple combinatorial graph. Fix a root vertex $o \in \cV$ and then order the graph with respect to the combinatorial spheres $S_n$, $n \ge 0$ (notice that $S_0=\{o\}$). 
 $\cG_d$ is called an \emph{antitree} if 
every vertex in $S_n$, $n\ge 1$, is connected to all  vertices in $S_{n-1}$ and $S_{n+1}$ and no vertices in $S_k$ for all $|k-n|\neq 1$ (see Figure~\ref{fig:antitree}). Notice that each antitree is uniquely determined by its sequence of sphere numbers $(s_n)$, $s_n := \#S_n$ for $n \ge 0$. 

While antitrees first appeared in connection with random walks~\cite{dk,klw, woj11}, they were actively studied from various different perspectives in the last years (see ~\cite{bl19, dfs, kn19} for quantum graphs and~\cite[Section 2]{clmp} for further references).

Let us enumerate the vertices in every combinatorial sphere $S_n$ by $(v^n_i)_{i=1}^{s_n}$ and denote the edge connecting $v^n_i$ with $v^{n+1}_j$ by $e_{ij}^n$, $1\le i \le s_n$, $1 \le j \le s_{n+1}$. We shall always use $\cA$ to denote (metric) antitrees. 

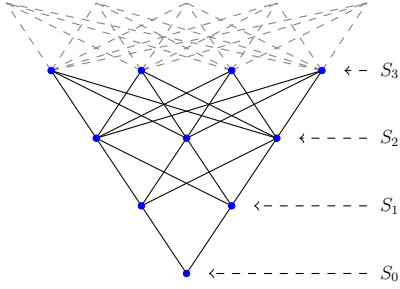
\begin{figure}
	\begin{center}
		\begin{tikzpicture}    [%
		,scale=.6
		,every node/.style={scale=.6}]
		%%%%%%%%%% EDGES %%%%%%%%%%%%%%%%%%
		%%%% FROM S0 to S1
		\draw								(0,0) -- (-1, 1.5) ;
		\draw								(0,0) -- (1, 1.5) ;
		
		%%%% FROM S1 to S2
		\draw								(-1,1.5) -- (-2, 3) ;
		\draw								(-1,1.5) -- (0, 3) ;
		\draw								(-1,1.5) -- (2, 3) ;
		\draw								(1,1.5) -- (-2, 3) ;
		\draw								(1,1.5) -- (0, 3) ;
		\draw								(1,1.5) -- (2, 3) ;
		
		%%%% FROM S2 to S3
		\draw								(-2, 3) -- (-3, 4.5);
		\draw								(-2, 3) -- (-1, 4.5);
		\draw								(-2, 3) -- (1, 4.5);
		\draw								(-2, 3) -- (3, 4.5);
		\draw								(0, 3) -- (-3, 4.5);
		\draw								(0, 3) -- (-1, 4.5);
		\draw								(0, 3) -- (1, 4.5);
		\draw								(0, 3) -- (3, 4.5);
		\draw								(2, 3) -- (-3, 4.5);
		\draw								(2, 3) -- (-1, 4.5);
		\draw								(2, 3) -- (1, 4.5);
		\draw								(2, 3) -- (3, 4.5);
		
		%%%% FROM S3 to S4
		\draw [dashed, gray]			(-3, 4.5) -- (-4, 6);
		\draw [dashed , gray]			(-3, 4.5) -- (-2, 6);
		\draw [dashed, gray]			(-3, 4.5) -- (0, 6);
		\draw [dashed , gray]			(-3, 4.5) -- (2, 6);
		\draw [dashed , gray]			(-3, 4.5) -- (4, 6);
		\draw [dashed , gray]			(-1, 4.5) -- (-4, 6);
		\draw [dashed , gray]			(-1, 4.5) -- (-2, 6);
		\draw [dashed , gray]			(-1, 4.5) -- (0, 6);
		\draw [dashed , gray]			(-1, 4.5) -- (2, 6);
		\draw [dashed , gray]			(-1, 4.5) -- (4, 6);
		\draw [dashed , gray]			(1, 4.5) -- (-4, 6);
		\draw [dashed , gray]			(1, 4.5) -- (-2, 6);
		\draw [dashed , gray]			(1, 4.5) -- (0, 6);
		\draw [dashed , gray]			(1, 4.5) -- (2, 6);
		\draw [dashed , gray]			(1, 4.5) -- (4, 6);
		\draw [dashed , gray]			(3, 4.5) -- (-4, 6);
		\draw [dashed , gray]			(3, 4.5) -- (-2, 6);
		\draw [dashed , gray]			(3, 4.5) -- (0, 6);
		\draw [dashed , gray]			(3, 4.5) -- (2, 6);
		\draw [dashed , gray]			(3, 4.5) -- (4, 6);
		
		%%%% SPHERES NUMBERING
		\draw	[thin, ->, dashed]			(4, 1.5) -- (1.5, 1.5);
		\draw	[thin, ->, dashed]			(4, 3) -- (2.5, 3);
		\draw	[thin, -> , dashed]			(4, 4.5) -- (3.5, 4.5);
		\draw	[thin, -> , dashed]			(4, 0) -- (0.5, 0);
		\node  at (4.5, 0) {\Large $S_0$};
		\node  at (4.5, 1.5) {\Large $S_1$};
		\node  at (4.5, 3) {\Large $S_2$};
		\node  at (4.5, 4.5) {\Large $S_3$};	
		%%%%%%%%%%% VERTICES %%%%%%%%%%%%%
		\filldraw[color=blue]
		(0,0) circle (2pt) 
		(-1, 1.5) circle (2pt)
		(1, 1.5) circle (2pt)
		(-2, 3) circle (2pt)
		(0, 3) circle (2pt)
		(2, 3) circle (2pt)
		(-3, 4.5) circle (2pt)
		(-1, 4.5) circle (2pt)
		(1, 4.5) circle (2pt)
		(3, 4.5) circle (2pt);
		\end{tikzpicture}
	\end{center}
	\caption{Antitree with sphere numbers $s_n = n+1$.}\label{fig:antitree}
\end{figure}

It is clear that every (infinite) antitree has exactly one end. By Theorem~\ref{th:indicesends}, the deficiency indices of the corresponding minimal Kirchhoff Laplacian are at least $1$ if $\vol(\cA)<\infty$. On the other hand, under the additional symmetry assumption that $\cA$ is radially symmetric (that is, for each $n\ge 0$, all edges connecting combinatorial spheres $S_n$ and $S_{n+1}$ have the same length), it is known that the deficiency indices are at most $1$ (see~\cite[Theorem~4.1]{kn19} and Example~\ref{ex:rsat}). It turns out that upon removing the symmetry assumption it is possible to construct antitrees such that the corresponding minimal Kirchhoff Laplacian has arbitrary finite or infinite deficiency indices. More precisely, the main aim of this section is to prove the following result.

\begin{theorem}\label{th:DefInd}
Let $\cA$ be the antitree with sphere numbers $s_n = n+1$, $n\ge 0$ (Figure~\ref{fig:antitree}). Then for each $N\in \Z_{\ge 1}\cup\{\infty\}$ there are edge lengths such that the corresponding minimal Kirchhoff Laplacian $\bH_0$ has the deficiency indices $\Nr_\pm(\bH_0) = N$. 
\end{theorem}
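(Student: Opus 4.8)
\emph{Step 1: reduction to counting $\ell^2$ harmonic functions.} The plan is to tune the edge lengths of $\cA$ so that the count of square-summable harmonic functions is exactly $N$. We shall always choose lengths with $\vol(\cA)=\sum_{n\ge 0}\sum_{i,j}|e^n_{ij}|<\infty$; then Corollary~\ref{cor:finvol=ker} gives $\Nr_\pm(\bH_0)=\dim\ker(\bH)$ and $\ker(\bH)=\HH(\cA)\cap L^2(\cA)$. A function $f\in\HH(\cA)$ is edgewise affine, hence determined by its vertex values $\f=f|_\cV$; the Kirchhoff conditions~\eqref{eq:kirchhoff} at a vertex $v$ read $\sum_{u\sim v}(f(u)-f(v))/|e_{u,v}|=0$, i.e.\ $\tau\f=0$ for the discrete Laplacian~\eqref{eq:laplacediscr}, and by Lemma~\ref{lem:Harmcont=Harmdiscr} such an $f$ lies in $L^2(\cA)$ exactly when $\f\in\ell^2(\cV;m)$. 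So it suffices to exhibit lengths with $\vol(\cA)<\infty$ for which the space of $\ell^2(\cV;m)$-solutions of $\tau\f=0$ is $N$-dimensional. The constant function always lies in this space, so its dimension is at least $1$; in particular $N=1$ is settled by any radially symmetric finite-volume $\cA$, where the elementary analysis of the recursion in Step~3 shows that a $\tau$-harmonic function must be constant on each sphere and then constant.

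\emph{Step 2: construction and the bound $\ge N$.} Write $S_n=\{v^n_1,\dots,v^n_{n+1}\}$. For all $n$ with $s_n=n+1\ge N$ mark $N-1$ distinct vertices of $S_n$ so that the marked vertices form $N-1$ pairwise disjoint rays $R_1,\dots,R_{N-1}$ tending to the unique end of $\cA$ (near the root the $R_k$ may be completed in any way). The guiding observation is that, for a function supported on $R_k$, the equation $(\tau\f)(v)=0$ at a vertex $v\in S_n$ not on $R_k$ only couples the $R_k$-vertices of $S_{n-1}$ and $S_{n+1}$; prescribing, sphere by sphere, the lengths of the two edges from $v$ to those vertices in the appropriate ratio therefore \emph{forces} the existence of a $\tau$-harmonic function $\f^{(k)}$ which, for all large $n$, is supported on $R_k$ alone. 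For $k\ne k'$ the edges involved are distinct, so these prescriptions do not interfere; once they are imposed there is still ample freedom left in the remaining lengths to satisfy the finitely many ``on-$R_k$'' and near-root Kirchhoff equations, to arrange $\sum_v|\f^{(k)}(v)|^2 m(v)<\infty$ for each $k$, and to keep $\vol(\cA)<\infty$. The functions $\id_\cA,\f^{(1)},\dots,\f^{(N-1)}$ then lie in $\ker(\bH)$ and are linearly independent (at a vertex of a large sphere lying on none of the $R_k$ each $\f^{(k)}$ vanishes while $\id_\cA$ does not, and the $\f^{(k)}$ have pairwise disjoint supports far out), whence $\Nr_\pm(\bH_0)=\dim\ker(\bH)\ge N$. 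For $N=\infty$ one instead uses infinitely many pairwise eventually disjoint rays, and this already finishes the proof.

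\emph{Step 3: the bound $\le N$, and the main obstacle.} For finite $N$ it remains to show the chosen lengths create no further square-summable harmonic functions. Setting $a^n:=\f|_{S_n}\in\C^{n+1}$, the equations $\tau\f=0$ on $S_n$ form a linear recursion $A_n a^{n+1}=B_n a^n+C_n a^{n-1}$ whose solution space grows with $n$, so the point is to rule out ``infinite-dimensional freedom at infinity''. With the lengths chosen as above, one performs a bounded and boundedly invertible change of coordinates in each sphere which turns this recursion into the direct sum of the constant mode, the $N-1$ ``ray modes'' (each of limit-circle type at infinity, contributing precisely the one $\ell^2(\cV;m)$ solution $\f^{(k)}$ of Step~2), and a complementary ``transverse'' recursion of limit-point type, whose only $\ell^2(\cV;m)$ solution is $0$. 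Equivalently, one must show that an $\ell^2(\cV;m)$ $\tau$-harmonic function which is asymptotically supported on $R_1\cup\dots\cup R_{N-1}$ and has vanishing transverse averages must vanish identically; this limit-point analysis of the explicit transverse recursion, where the quantitative choice of lengths (making the transverse solutions grow rapidly relative to $m$) is essential, is the main obstacle. Granting it, Steps~2 and~3 give $\Nr_\pm(\bH_0)=\dim\ker(\bH)=N$.
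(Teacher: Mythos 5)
Your Step 1 reduction (finite volume, $\Nr_\pm(\bH_0)=\dim\ker(\bH)=\dim(\HH(\cA)\cap L^2)$ via Corollary \ref{cor:finvol=ker} and Lemma \ref{lem:Harmcont=Harmdiscr}) is exactly the paper's starting point, and your Step 2 lower bound, while only sketched, is plausible. The genuine gap is Step 3: you explicitly defer the upper bound (``granting it\dots''), and that is precisely the hard part of the theorem. Your ray-based construction leaves the full harmonic space $\HH(\cA)$ infinite-dimensional for generic lengths --- the recursion $M_{n+1}\f_{n+1}=D_n\f_n-M_n^\ast\f_{n-1}$ has $M_{n+1}\in\R^{s_n\times s_{n+1}}$ with $s_{n+1}=s_n+1$, so each step admits at least a one-dimensional kernel of new freedom --- and you must then kill all but $N$ dimensions of $\ell^2$ solutions by a ``limit-point'' analysis of a transverse recursion whose state space grows with $n$. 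You give no argument for this, and it is not a routine Weyl-alternative computation: there is no off-the-shelf limit-point/limit-circle theory for such growing-dimensional systems, and the claim that a bounded, boundedly invertible change of coordinates decouples the ray modes from a limit-point transverse part is asserted, not established.

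The paper avoids this obstacle entirely by a different choice of lengths: in \eqref{eq:lengthN} the diagonal block perturbation makes $\ran(M_{n+1})=\ran(M_n^\ast)$ equal to the $(N+1)$-dimensional space of vectors of the form $(\f_N, c\,\id_n)$, so \eqref{eq:recurA} forces \emph{every} harmonic function to have this form on each sphere, whence $\dim\HH(\cA_N)=N+1$ (Lemma \ref{lem:harmN}). The upper bound is then automatic from $\dim\ker(\bH)\le\dim\HH$, and only the lower bound requires estimates (Proposition \ref{prop:ANindices}, via rapid decay of $\ell_n$ and transfer-matrix bounds). If you want to complete your argument along your own lines, you would either need to carry out the transverse limit-point analysis in full --- a substantial undertaking --- or, more practically, redesign the lengths so that the rank of the connecting matrices collapses and the upper bound becomes structural rather than analytic.
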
 
\subsection{Harmonic functions}
 As it was mentioned already, every harmonic function is uniquely determined by its values at the vertices. 
 On the other hand,  $\f\in C(\cV)$ defines a function $f\in\HH (\cA)$ with $f|_{\cV} = \f$ if and only if the following conditions are satisfied:
\begin{align} \label{eq:harmrecur}
	\sum_{j=1}^{s_{n+1}} \frac{f(v_j^{n+1}) - f(v_k^n)}{ |e_{kj}^n|} + \sum_{i=1}^{s_{n-1}} \frac{ f(v_i^{n-1}) - f(v_k^n)}{  |e_{ik}^{n-1} |} = 0,
\end{align}
at each $v^n_k$,  $1 \le k \le s_n$ with $n{\ge 0}$. 
We set $s_{-1}:=0$ for notational simplicity and hence the second summand in \eqref{eq:harmrecur} is absent when $n=0$. We can put the above difference equations into the more convenient matrix form. Denote $\f_n := f|_{S_n} = (f(v_i^n))_{i=1}^{s_n}$ for all $n\in\Z_{\ge 0}$ and 
introduce matrices 
\begin{align}\label{def:M}
M_{n+1} := 
\begin{pmatrix} 
\frac{1}{|e_{11}^n|} & \frac{1}{|e_{12}^n|} & \dots & \frac{1}{|e_{1s_{n+1}}^n|} \\
\frac{1}{|e_{21}^n|} & \frac{1}{|e_{22}^n|} & \dots & \frac{1}{|e_{2s_{n+1}}^n|} \\
\dots &\dots &\dots &\dots \\
\frac{1}{|e_{s_n1}^n|} & \frac{1}{|e_{s_n2}^n|} & \dots & \frac{1}{|e_{s_ns_{n+1}}^n|} 
\end{pmatrix} \in \R^{s_n\times s_{n+1}},
\end{align}
and 
\begin{align}\label{def:D}
D_n & := {\rm diag}(d_k^n)\in \R^{s_n\times s_n}, & d_k^n & := \sum_{j=1}^{s_{n+1}}\frac{1}{|e_{kj}^n|} + \sum_{i=1}^{s_{n-1}}\frac{1}{|e_{ik}^{n-1}|},
\end{align}
for all $n\in\Z_{\ge 0}$. % with $s_{-1}:=0$ for notational simplicity. 
Notice the following useful identity
\begin{align}\label{eq:D=M}
d_1^0 & = M_1\id_{s_1}, & 
\begin{pmatrix} d^n_1\\ \vdots\\ d^n_{s_n} \end{pmatrix} = D_n\id_{s_n} & = (M_{n+1}\ M_n^\ast)\begin{pmatrix}\id_{s_{n+1}}\\ \id_{s_{n-1}}\end{pmatrix},\quad n{\ge 1},
\end{align}
where $\id_{s_n} := (1,\dots,1)^\top \in \R^{s_n}$.
Hence \eqref{eq:harmrecur} can be written as follows
\begin{align}
M_{1}\f_{1} &= \sum_{j=1}^{s_1} \frac{1}{|e_{1j}^0|} \f_0 = d_1^0 \f_0, \label{eq:harmrec1}\\
M_{n+1}\f_{n+1} & = D_n\f_n - M_n^\ast \f_{n-1},\quad n\ge 1.\label{eq:harmrec2}
\end{align}
Since $D_n$ is invertible, we get
\begin{align}\label{eq:recurA}
 \f_n & = D_n^{-1}(M_{n+1}\  M_n^\ast)\begin{pmatrix}\f_{n+1}\\ \f_{n-1}\end{pmatrix}
\end{align}
for all $n\ge 1$. In particular, $\f_n\in \ran\big(D_n^{-1}(M_{n+1}\  M_n^\ast)\big)$ for all $n\ge 1$, which implies that the number of linearly independent solutions to the above difference equations (and hence the number of linearly independent harmonic functions) depends on the ranks of the matrices $(M_{n+1}\  M_n^\ast)$, $n\ge 1$. Let us demonstrate this by considering the following example.

\begin{lemma}
Let $\cA$ be a radially symmetric antitree. Then 
\be
\HH(\cA) = \Span\{\id_\cG\}.
\ee
\end{lemma}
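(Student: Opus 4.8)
The plan is to show that any harmonic function $f \in \HH(\cA)$ on a radially symmetric antitree is constant on each combinatorial sphere, and then that the sequence of these sphere values is itself constant in $n$. The key simplification is that radial symmetry makes all edge lengths between $S_n$ and $S_{n+1}$ equal to some common value $\ell_n > 0$; consequently the matrix $M_{n+1} \in \R^{s_n \times s_{n+1}}$ in \eqref{def:M} has all entries equal to $1/\ell_n$, and hence $M_{n+1} = \frac{1}{\ell_n} \id_{s_n} \id_{s_{n+1}}^\top$ is the rank-one matrix mapping every vector to a multiple of $\id_{s_n}$. In particular $\ran(M_{n+1} \, M_n^\ast) = \Span\{\id_{s_n}\}$, so \eqref{eq:recurA} forces $\f_n \in \Span\{\id_{s_n}\}$ for every $n \ge 1$. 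The same is trivially true for $n = 0$ since $S_0 = \{o\}$.

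First I would make this reduction explicit: write $\f_n = c_n \id_{s_n}$ for scalars $c_n \in \C$, $n \ge 0$. Substituting into the recurrence \eqref{eq:harmrec1}--\eqref{eq:harmrec2} (or directly into \eqref{eq:harmrecur}) and using that $M_{n+1} \id_{s_{n+1}} = \frac{s_{n+1}}{\ell_n} \id_{s_n}$, $M_n^\ast \id_{s_{n-1}} = \frac{s_{n-1}}{\ell_{n-1}} \id_{s_n}$, together with the identity \eqref{eq:D=M} which gives $D_n \id_{s_n} = \big(\frac{s_{n+1}}{\ell_n} + \frac{s_{n-1}}{\ell_{n-1}}\big) \id_{s_n}$, the vector equation \eqref{eq:harmrec2} collapses to the scalar three-term recurrence
\[
\frac{s_{n+1}}{\ell_n}(c_{n+1} - c_n) = \frac{s_{n-1}}{\ell_{n-1}}(c_n - c_{n-1}), \qquad n \ge 1,
\]
with the $n = 0$ equation $\frac{s_1}{\ell_0}(c_1 - c_0) = 0$, i.e.\ $c_1 = c_0$. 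By induction the right-hand side vanishes for every $n$, hence $c_{n+1} = c_n$ for all $n$, so $(c_n)$ is constant and $f \equiv c_0$ on $\cV$. Since a harmonic function is determined by its vertex values (it is edgewise affine), this gives $f = c_0 \id_\cG$, proving $\HH(\cA) \subseteq \Span\{\id_\cG\}$; the reverse inclusion is obvious because the constant function is affine on each edge and satisfies \eqref{eq:kirchhoff}.

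I do not anticipate a serious obstacle here — the content is entirely the observation that radial symmetry turns $M_{n+1}$ into a rank-one matrix, after which everything reduces to a scalar discrete problem. The one point requiring a little care is the bookkeeping with the boundary term at $n = 0$ (handled by the convention $s_{-1} = 0$, so the second sum in \eqref{eq:harmrecur} is absent and \eqref{eq:harmrec1} applies), and making sure the rank-one structure is invoked for $M_n^\ast$ as well as $M_{n+1}$ when reading off $\ran(M_{n+1}\, M_n^\ast)$. An alternative, perhaps cleaner, route avoids the matrix formalism altogether: by Lemma \ref{lem:Harmcont=Harmdiscr} any $f \in \HH(\cA) \cap \ker(\bH)$ corresponds to an $\ell^2(\cV;m)$ sequence, but since we are only asked to identify $\HH(\cA)$ itself (no $L^2$ restriction), the direct recurrence argument above is the most economical and I would present that.
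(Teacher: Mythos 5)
Your proof is correct and follows essentially the same route as the paper: radial symmetry makes $M_{n+1}$ rank one and $D_n$ a scalar multiple of the identity, so \eqref{eq:recurA} forces $\f_n = c_n\id_{s_n}$, and the resulting scalar three-term recurrence (your form $\frac{s_{n+1}}{\ell_n}(c_{n+1}-c_n) = \frac{s_{n-1}}{\ell_{n-1}}(c_n-c_{n-1})$ is exactly the paper's $c_{n+1} = c_n + \frac{s_{n-1}\ell_n}{s_{n+1}\ell_{n-1}}(c_n-c_{n-1})$) together with $c_1=c_0$ yields constancy by induction. No gaps.
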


\begin{proof}
Let for each $n\ge 0$, all edges connecting combinatorial spheres $S_n$ and $S_{n+1}$ have the same length, say $\ell_n >0$. Clearly, in this case 
\[
\ran(M_{n+1}) = \ran(M_{n}^\ast) =\Span\{\id_{s_n}\},
\] 
for all $n\ge 1$. 
Moreover, each $D_n$ is a scalar multiple of the identity matrix $I_{s_n}$ and hence \eqref{eq:recurA} implies that $\f_n = c_n \id_{s_n}$ with some $c_n\in\C$ for all $n\ge 0$. Plugging this into \eqref{eq:harmrec1}--\eqref{eq:harmrec2}, we get
\begin{align*}
c_1 & = c_0, & 
c_{n+1} & = c_n + \frac{s_{n-1}\ell_{n}}{s_{n+1}\ell_{n-1}}(c_n - c_{n-1}),\ \ n\ge 1.
\end{align*}
Hence $c_n=c_0 = f(o)$ for all $n\ge 0$, which proves the claim.
\end{proof}

The latter in particular implies the following statement (cf.~\cite[Theorem~4.1]{kn19}).

\begin{corollary}\label{lem:ATrad}
If $\cA$ is a radial antitree with finite total volume, then $\Nr_\pm(\bH_0) = 1$.
\end{corollary}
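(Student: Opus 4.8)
The plan is to deduce this directly from the preceding Lemma together with the finite-volume results of Section~\ref{ss:II.04}. Since $\cA$ has finite total volume, Corollary~\ref{cor:finvol=ker} applies and yields at once that $\Nr_\pm(\bH_0) = \dim\ker(\bH)$; recall that internally this rests on the Cheeger-type bound \eqref{eq:cheeg}, which gives $\lambda_0(\cA)>0$, so that $0$ is a point of regular type for $\bH_0$ (Lemma~\ref{lem:npm=ker}). Thus the whole task reduces to computing the dimension of $\ker(\bH)$.

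Next I would identify $\ker(\bH)$ geometrically: as recalled at the end of Section~\ref{ss:II.04}, $\ker(\bH) = \HH(\cA)\cap L^2(\cA)$, where $\HH(\cA)$ is the space of edgewise affine functions satisfying the Kirchhoff conditions \eqref{eq:kirchhoff} at every vertex. The preceding Lemma shows $\HH(\cA) = \Span\{\id_\cG\}$ for every radially symmetric antitree, and since $\vol(\cA) = \sum_{e\in\cE}|e| < \infty$ the constant function $\id_\cG$ belongs to $L^2(\cA)$ --- equivalently, $\id_\cG|_\cV \in \ell^2(\cV;m)$ because $\sum_{v\in\cV} m(v) = 2\vol(\cA) < \infty$, cf.\ Lemma~\ref{lem:Harmcont=Harmdiscr}. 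Hence $\ker(\bH) = \Span\{\id_\cG\}$ is one-dimensional, and combining with the first step gives $\Nr_\pm(\bH_0) = 1$.

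There is essentially no obstacle here: the statement is a corollary of the Lemma and of the general finite-volume theory. The only point needing a word of justification is the inclusion $\id_\cG \in L^2(\cA)$, which is immediate from $\vol(\cA)<\infty$; note also that the radial symmetry has already been used in proving the Lemma, so no further work with the recursion \eqref{eq:recurA} is required here.
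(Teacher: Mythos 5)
Your proposal is correct and follows essentially the same route as the paper: Corollary \ref{cor:finvol=ker} reduces the claim to computing $\dim\ker(\bH)$, and the preceding lemma gives $\HH(\cA)=\Span\{\id_\cG\}$, whence $\ker(\bH)=\HH(\cA)\cap L^2(\cA)=\Span\{\id_\cG\}$ is one-dimensional. The paper phrases this as the two inequalities $\Nr_\pm(\bH_0)\ge 1$ (from Corollary \ref{cor:finvol=ker}) and $\Nr_\pm(\bH_0)=\dim\ker(\bH)\le\dim\HH(\cA)=1$, but the content is identical.
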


\begin{proof}
By Corollary~\ref{cor:finvol=ker}, we only need to show that $\Nr_\pm (\bH_0)\le 1$. However, 
\[
\Nr_\pm (\bH_0) = \dim(\ker(\bH)) \le \dim(\HH(\cA)) = 1.\qedhere
\]
\end{proof}

\subsection{Finite deficiency indices}\label{ss:AT02}
We restrict our further considerations to a special case of polynomially growing antitrees.
Namely, for every $N\in \Z_{\ge 1}$, the antitree $\cA_N$ has sphere numbers $s_0=1$ and $s_n:=n+N$ for all $n\in\Z_{\ge1}$. 
To define its lengths, pick a sequence of positive numbers $(\ell_n)$ and set
\begin{align}\label{eq:lengthN}
|e_{ij}^n| := \begin{cases} 2\ell_n, & \hbox{if }1\le i=j \le N,\\ \ell_n, & \text{otherwise}, \end{cases}
\end{align}
for all $n\in\Z_{\ge0}$. 

\begin{lemma}\label{lem:harmN}
If a metric antitree $\cA_N$ has lengths given by \eqref{eq:lengthN}, then
\be\label{eq:harmN}
\dim \HH(\cA_N) = N+1.
\ee
\end{lemma}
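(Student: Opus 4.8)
The plan is to run the matrix recursion \eqref{eq:harmrec1}--\eqref{eq:harmrec2} and to exploit that the lengths \eqref{eq:lengthN} force every sphere‑restriction of a harmonic function into a fixed $(N+1)$‑dimensional subspace. Write $\id_{s_n}=(1,\dots,1)^\top\in\R^{s_n}$, let $e_1,\dots,e_{s_n}$ be the standard basis of $\R^{s_n}$, and set
\[
W_n:=\Span\{\id_{s_n},e_1,\dots,e_N\}\subseteq\R^{s_n}\quad(n\ge1),\qquad W_0:=\R^{s_0}=\R .
\]
Since $s_n=n+N>N$ for $n\ge1$, the spanning vectors are independent, so $\dim W_n=N+1$ for all $n\ge1$. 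First I would record, by a direct count from \eqref{eq:lengthN}, that the diagonal matrix $D_n$ of \eqref{def:D} acts on $\R^{s_n}$ as the positive scalar $d_n^{(1)}=\tfrac{s_{n+1}-1/2}{\ell_n}+\tfrac{s_{n-1}-1/2}{\ell_{n-1}}$ on $\Span\{e_1,\dots,e_N\}$ and as the positive scalar $d_n^{(2)}=\tfrac{s_{n+1}}{\ell_n}+\tfrac{s_{n-1}}{\ell_{n-1}}$ on $\Span\{e_{N+1},\dots,e_{s_n}\}$ (with $d_1^0=\tfrac{s_1-1/2}{\ell_0}$ at level $0$); and that $M_{n+1}=\tfrac1{\ell_n}\bigl(J-\tfrac12 P_n\bigr)$, where $J$ is the all‑ones $s_n\times s_{n+1}$ matrix and $(P_n)_{ij}=\delta_{ij}$ for $i\le N$ (and $0$ otherwise).

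From these formulas the following four facts are immediate, and are the heart of the argument: (a) $\ran(M_{n+1})=W_n$ for $n\ge1$, and $\ran(M_1)=W_0$; (b) $\ran(M_n^\ast)\subseteq W_n$ for $n\ge1$; (c) $D_nW_n=W_n$, hence $D_n^{-1}W_n=W_n$; (d) for every $n\ge1$ the restriction $M_{n+1}|_{W_{n+1}}\colon W_{n+1}\to W_n$ is a linear bijection. Only (d) requires a computation: in the bases $(\id_{s_{n+1}},e_1,\dots,e_N)$ of $W_{n+1}$ and $(\id_{s_n},e_1,\dots,e_N)$ of $W_n$ this restriction is represented by
\[
\frac1{\ell_n}\begin{pmatrix} s_{n+1} & \id_N^\top\\[1mm] -\tfrac12\,\id_N & -\tfrac12\,I_N\end{pmatrix},
\]
whose determinant, via the Schur complement with respect to the invertible block $-\tfrac12 I_N$, equals $\tfrac1{\ell_n^{N+1}}(-\tfrac12)^N\bigl(s_{n+1}-N\bigr)=\tfrac1{\ell_n^{N+1}}(-\tfrac12)^N(n+1)\neq0$.

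Now I would close the recursion. If $f\in\HH(\cA_N)$ and $\f_n=f|_{S_n}$, then \eqref{eq:harmrec2} gives $D_n\f_n-M_n^\ast\f_{n-1}=M_{n+1}\f_{n+1}\in\ran(M_{n+1})=W_n$ for every $n\ge1$; together with (b) and (c) this forces $\f_n\in W_n$ for all $n\ge1$ (trivially for $n=1$, where $W_1=\R^{s_1}$). Consequently, by (d), $\f_{n+1}=(M_{n+1}|_{W_{n+1}})^{-1}(D_n\f_n-M_n^\ast\f_{n-1})$ is uniquely determined by $\f_{n-1},\f_n$ for all $n\ge1$, so $f$ is determined by the pair $(\f_0,\f_1)$. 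Conversely, any $(\f_0,\f_1)\in\R\times\R^{s_1}$ satisfying the single scalar equation \eqref{eq:harmrec1}, $M_1\f_1=d_1^0\f_0$, extends by this recursion to a sequence $(\f_n)$ with $\f_n\in W_n$ for which all of \eqref{eq:harmrec1}--\eqref{eq:harmrec2} hold (using (b) and (c) to see that each right‑hand side lies in $\ran(M_{n+1})$), i.e.\ the edgewise affine function with these vertex values is harmonic. Hence $\HH(\cA_N)$ is linearly isomorphic to $\{(\f_0,\f_1)\in\R\times\R^{s_1}:M_1\f_1=d_1^0\f_0\}$, which is the kernel of a surjective linear functional on a space of dimension $1+s_1$, and therefore $\dim\HH(\cA_N)=1+s_1-1=s_1=N+1$. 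The only real difficulty is careful bookkeeping of the degenerate low‑index cases — $W_0=\R$, $W_1=\R^{s_1}$, $\ran(M_1^\ast)$ one‑dimensional — so that (a)--(d) and the recursion are invoked on the correct ranges of $n$; the $(N+1)\times(N+1)$ determinant in (d) is the sole genuine calculation.
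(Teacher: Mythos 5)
Your proof is correct and follows essentially the same route as the paper: both rest on the observation that $\ran(M_{n+1})=\ran(M_n^\ast)$ is the $(N+1)$-dimensional space $W_n$ (which the paper encodes as the ansatz \eqref{eq:f_form}), use \eqref{eq:recurA} to force $\f_n\in W_n$, and then show the recursion determines $\f_{n+1}$ uniquely from $(\f_{n-1},\f_n)$ — the paper by explicitly solving for $\f^N_{n+1},c_{n+1}$ in \eqref{eq:sys3}--\eqref{eq:sys4}, you by the Schur-complement determinant of $M_{n+1}|_{W_{n+1}}$. One cosmetic slip: for $N\ge 2$ the matrix $D_1$ does not have the stated two-scalar block form (only the diagonal entry $k=1$ picks up the $\tfrac{1}{2\ell_0}$ contribution, since $e^0_{11}$ is the only doubled edge below $S_1$), but this is harmless because $W_1=\R^{s_1}$ renders your fact (c) trivial at $n=1$.
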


\begin{proof}
Denoting 
\begin{align}
B_{n,m} & := \begin{pmatrix} 
1 & 1 & \dots & 1  \\
1 & 1 & \dots & 1  \\
\dots &\dots &\dots &\dots \\
1 & 1 & \dots & 1  
\end{pmatrix} \in \R^{n\times m}, & B_n & := B_{n,n} \in \R^{n\times n},
\end{align}
we get the following block-matrix form of the matrices $M_{n+1}$:
\begin{align}
%M_1 & = \frac{1}{2\ell_{0}}(1,1,\dots,1,2), & 
M_{n+1} = \frac{1}{\ell_{n}}\begin{pmatrix} 
B_N -\frac{1}{2}I_N & B_{N,n+1} \\
B_{n,N} & B_{n,n+1}  
\end{pmatrix} %,\quad n\ge 1.
\end{align}
for all $n\ge 1$. 
%It is immediate to see that
%\be
%{\rm rank}(M_{n+1}) = {\rm rank}(M_{n+1}^\ast)= {\rm rank}(M_{n+1}\ M_n^\ast) = N+1,
%\begin{cases} 1 , & 1\le n\le N-1,\\ N+1, & n\ge N.\end{cases}
%\ee
%for all $n\ge1$. 
Taking into account \eqref{def:D} and denoting
\[
d_{n}^1 := \frac{n+N-3/2}{\ell_{n -1}} + \frac{n+N +1/2}{\ell_{n }},
\quad d_{n }^2 := \frac{n+N-1 }{\ell_{n-1}} + \frac{n +N+1}{\ell_{n }},
\]
we get
\be
D_{n} = \begin{pmatrix} d_{n}^1I_N & 0\\  0& d_{n}^2I_{n} \end{pmatrix},
\ee
for all $n\ge 2$. 
Since $M_1 \in \C^{1\times (N+1)}$ and 
\be
\ran(M_{n+1}) = \ran(M_{n}^\ast) = \Span\left\{\begin{pmatrix} \f_N \\ \id_{n}\end{pmatrix}|\, \f_N\in\C^N\right\}, 
\ee
for all $n\ge 2$, \eqref{eq:recurA} implies that every $\f$ solving \eqref{eq:harmrec1}--\eqref{eq:harmrec2} must be of the form
\be\label{eq:f_form}
\f_{n} = \begin{pmatrix} \f^N_{n} \\ c_{n}\id_{n}\end{pmatrix}\in \C^{N+n},\qquad \f^N_{n}\in\C^N,\ \ c_{n}\in\C,
\ee 
for all $n\ge 1$.
Plugging \eqref{eq:f_form} into \eqref{eq:harmrec2} and taking into account that
\begin{align*}
B_N \f^N_{n} & = \overline{\f}^N_{n}\id_N, & \overline{\f}^N_{n} & := \langle \f^N_{n},\id_N \rangle = B_{1,N}\f^N_{n},
\end{align*}
we get after straightforward calculations
%%%%%calculations%%%%%%%%%%%
%\begin{align*}
%M_{n+1}\f_{n+1} & = \frac{1}{\ell_n}\begin{pmatrix} A_N\f_{n+1}^N + c_{n+1}(n+1)\id_N \\ \overline{\f}_{n+1}^N\id_n + c_{n+1}(n+1)\id_n\end{pmatrix},\\
%M_{n}^*\f_{n-1} & = \frac{1}{\ell_{n-1}}\begin{pmatrix} A_N\f_{n-1}^N + c_{n-1}(n-1)\id_N \\ \overline{\f}_{n-1}^N\id_n + c_{n-1}(n-1)\id_n\end{pmatrix},\\
%D_n\f_n &= \begin{pmatrix} d_{n}^1\f_n^N \\ c_nd_n^2 \id_n\end{pmatrix},
%\end{align*}
%which implies the following recursion formulas
%%%%%calculations%%%%%%%%%%%
\begin{align}
\frac{\overline{\f}_{n+1}^N  + c_{n+1}(n+1)}{\ell_{n}}\id_N - \frac{1}{2\ell_n}\f_{n+1}^N & = d_{n}^1 \f_{n}^N - \frac{\overline{\f}_{n-1}^N + c_{n-1}(n-1)}{\ell_{n-1}} \id_N +\frac{1}{2\ell_{n-1}}\f_{n-1}^N,\label{eq:sys1}\\
\frac{\overline{\f}_{n+1}^N + c_{n+1}(n+1)}{\ell_{n}} & = c_{n}d_{n}^2  - \frac{\overline{\f}_{n-1}^N + c_{n-1}(n-1)}{\ell_{n-1}},\label{eq:sys2}
\end{align}
for all $n\ge 2$.
Multiplying \eqref{eq:sys2} with $\id_N$ and then subtracting \eqref{eq:sys1}, we end up 
with
\begin{align}\label{eq:sys3}
\f_{n+1}^N = 2\ell_{n}(c_{n}d_{n}^2 \id_{N} - d_{n}^1 \f_{n}^N) - \frac{\ell_{n}}{\ell_{n-1}}\f_{n-1}^N,\quad n\ge 2.
\end{align}
Next taking the inner product in \eqref{eq:sys1} with $\id_N$ 
%%%%%calculations%%%%%%%%%%%
%\[
%\frac{N-1/2}{\ell_{n}}\overline{\f}_{n+1}^N + c_{n+1}\frac{(n+1)N}{\ell_{n}} = d_{n}^1 \overline{\f}_{n}^N - \frac{N-1/2}{\ell_{n-1}}%\overline{\f}_{n-1}^N - c_{n-1}\frac{N(n-1)}{\ell_{n-1}}
%\]
%%%%%calculations%%%%%%%%%%%
and then subtracting \eqref{eq:sys2} multiplied by $N-1/2$, we finally get
\begin{align}\label{eq:sys4}
 c_{n+1} = \frac{\ell_n}{n+1}(2d_{n}^1 \overline{\f}_{n}^N - (2N-1)d_n^2 c_n) - c_{n-1}\frac{(n-1)\ell_n}{(n+1)\ell_{n-1}},\quad n\ge 2.
\end{align}
Taking into account that the value of $f$ at the root $o$ is determined by $\f_1$ via
\be\label{eq:recur0}
f(o) = \f_0 = \frac{2\ell_0}{2N+1} M_1\f_1,
\ee
and noting that $\f_2^N$ and $c_2$ are also determined by $\f_1$, we conclude that \eqref{eq:sys3}--\eqref{eq:sys4} define $\f$ uniquely once $\f_1 \in\C^{N+1}$  is given. 
\end{proof}
 
 Lemma~\ref{lem:harmN} immediately implies that $\Nr_\pm(\bH_0) \le N+1$ if $\vol(\cA_N)<\infty$, where $\bH_0$ is the associated minimal operator. The next result shows that it can happen that $\Nr_\pm(\bH_0) = N+1$ upon choosing lengths $\ell_n$ with a sufficiently fast decay.
 
 \begin{proposition}\label{prop:ANindices}
 Let $\cA_N$ be the antitree as in Lemma~\ref{lem:harmN}. If $(\ell_n)$ is decreasing and
 \be\label{eq:ellNdecay}
 \sqrt{\ell_n} = \OO\left(\frac{1}{(6\sqrt{N})^n(n+N+3)!}\right)
 \ee
 as $n\to \infty$, then $\Nr_\pm(\bH_0) = N+1$.
 \end{proposition}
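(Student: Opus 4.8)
The plan is to establish the two bounds $\Nr_\pm(\bH_0)\le N+1$ and $\Nr_\pm(\bH_0)\ge N+1$, the first being immediate and the second containing all the work. Since \eqref{eq:ellNdecay} forces $\ell_n$ to decay faster than any power of $n$, we have $\vol(\cA_N)=\sum_{e}|e|\le 2\sum_{n\ge0}s_ns_{n+1}\ell_n\le 2\sum_{n\ge0}(n+N+1)^2\ell_n<\infty$, so Corollary~\ref{cor:finvol=ker} applies and gives $\Nr_\pm(\bH_0)=\dim\ker(\bH)$, where $\ker(\bH)=\HH(\cA_N)\cap L^2(\cA_N)$. By Lemma~\ref{lem:harmN}, $\dim\HH(\cA_N)=N+1$, hence $\Nr_\pm(\bH_0)=\dim\big(\HH(\cA_N)\cap L^2(\cA_N)\big)\le N+1$, and it remains to prove the reverse inequality, i.e. that every $f\in\HH(\cA_N)$ already lies in $L^2(\cA_N)$.

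I would first reduce this to a discrete summability statement. By Lemma~\ref{lem:Harmcont=Harmdiscr}, $f\in\HH(\cA_N)$ belongs to $L^2(\cA_N)$ if and only if $\sum_{v\in\cV}|f(v)|^2m(v)<\infty$, with $m$ the star weight \eqref{def:m}. A direct computation from \eqref{eq:lengthN}, using that $(\ell_n)$ is decreasing, shows $m(o)<\infty$ and, for $n\ge1$ and every vertex $v^n_k\in S_n$,
\[
m(v^n_k)\le (s_{n+1}+1)\ell_n+(s_{n-1}+1)\ell_{n-1}\le 2(n+N+1)\ell_{n-1},
\]
because $s_{n+1}+s_{n-1}\le 2(n+N)$ for all $n\ge1$. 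Writing $\f_n:=f|_{S_n}\in\C^{s_n}$ and $\|\f_n\|$ for its Euclidean norm, it therefore suffices to show $\sum_{n\ge1}(n+N)\,\ell_{n-1}\,\|\f_n\|^2<\infty$ for every $f\in\HH(\cA_N)$.

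The heart of the proof is a growth bound for $\|\f_n\|$. By Lemma~\ref{lem:harmN}, $\f_n=(\f^N_n,c_n\id_n)$ with $\f^N_n\in\C^N$ and $c_n\in\C$, the function $f$ is uniquely determined by $\f_1\in\C^{N+1}$, and $\f^N_n,c_n$ obey the recursions \eqref{eq:sys3}--\eqref{eq:sys4}. Set $a_n:=\|\f^N_n\|$, $b_n:=|c_n|$ and $u_n:=a_n+b_n$. Invoking $\ell_nd^1_n,\ell_nd^2_n\le 2(n+N)$ (valid since $\ell_n/\ell_{n-1}\le 1$), $|\overline{\f}^N_n|=|\langle\f^N_n,\id_N\rangle|\le\sqrt N\,a_n$, and $(n+N)/(n+1)\le N$, the recursions \eqref{eq:sys3}--\eqref{eq:sys4} yield for all $n\ge2$
\[
a_{n+1}\le 4\sqrt N\,(n+N)\,u_n+a_{n-1},\qquad b_{n+1}\le 4N^2\,u_n+b_{n-1},
\]
hence $u_{n+1}\le\big(4\sqrt N(n+N)+4N^2\big)u_n+u_{n-1}$. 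Since $4N^2\le 2\sqrt N(n+N)$ once $n\ge 2N^{3/2}$, we get $u_{n+1}\le 6\sqrt N(n+N)u_n+u_{n-1}$ for all such $n$; absorbing the finitely many smaller indices (whose contribution is controlled by $\|\f_1\|$ through \eqref{eq:recur0} and the fact that $\f_2$ is determined by $\f_1$) into the constant, an elementary induction produces
\[
\|\f_n\|\le u_n\le C\,(6\sqrt N)^n\,(n+N)!\,\|\f_1\|,\qquad n\ge1,
\]
for some $C=C(N)$. Matching the base $6\sqrt N$ precisely here, rather than some larger constant, is the only genuinely delicate point and is exactly what forces the form of \eqref{eq:ellNdecay}.

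Finally, combining this estimate with \eqref{eq:ellNdecay}, which gives $\ell_{n-1}=\OO\big((6\sqrt N)^{-2(n-1)}((n+N+2)!)^{-2}\big)$, we obtain
\[
(n+N)\,\ell_{n-1}\,\|\f_n\|^2=\OO\Big((n+N)\,(6\sqrt N)^2\,\frac{\big((n+N)!\big)^2}{\big((n+N+2)!\big)^2}\Big)=\OO\Big(\frac{n+N}{\big((n+N+1)(n+N+2)\big)^2}\Big)=\OO(n^{-3}),
\]
so $\sum_{n\ge1}(n+N)\ell_{n-1}\|\f_n\|^2<\infty$ for every $f\in\HH(\cA_N)$; thus $\HH(\cA_N)\subset L^2(\cA_N)$, $\dim\ker(\bH)=N+1$, and therefore $\Nr_\pm(\bH_0)=N+1$. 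The main obstacle throughout is the growth estimate of the third paragraph: the system \eqref{eq:sys3}--\eqref{eq:sys4} couples $\f^N_n$ and $c_n$ via coefficients of size comparable to $n$, and one must track the numerical constants carefully enough to land on a factorial bound whose exponential base does not exceed $6\sqrt N$, so that it is defeated --- with geometric room to spare --- by the super-factorial decay \eqref{eq:ellNdecay}.
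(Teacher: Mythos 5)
Your argument is correct and follows essentially the same route as the paper: finite volume plus Corollary \ref{cor:finvol=ker} reduces everything to showing $\HH(\cA_N)\subset L^2(\cA_N)$ via Lemma \ref{lem:Harmcont=Harmdiscr}, and the growth bound $(6\sqrt N)^n(n+N)!$ for solutions of \eqref{eq:sys3}--\eqref{eq:sys4} is obtained in the paper from norms of the block transfer matrices $A_n$ rather than from your scalar recursion for $u_n=a_n+b_n$, but the two computations are equivalent and land on the same constants. The only slip is the inequality $\|\f_n\|\le u_n$: since $\|\f_n\|^2=\|\f^N_n\|^2+n|c_n|^2$ one only gets $\|\f_n\|\le\sqrt n\,u_n$, but the extra factor of $n$ merely turns your $\OO(n^{-3})$ into $\OO(n^{-2})$, which is still summable, so the conclusion stands.
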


\begin{proof}
It is immediate to see that $\vol(\cA_N)<\infty$ if \eqref{eq:ellNdecay} is satisfied. Next, taking into account \eqref{eq:lengthN}, observe that 
\[
m(v) = \sum_{v\in\cE_v} |e| \le (n+N) \ell_{n-1} + (n+N+2) \ell_n \lesssim n\ell_{n-1},\quad v\in S_n,
\]
as $n\to \infty$. Suppose $f\in\HH(\cA)$ and set $\f = f|_\cV$. Then $\f$ has the form \eqref{eq:f_form} and hence
\[
\|\f_n\|^2 = \sum_{v\in S_n} |f(v)|^2  = \|\f_n^N\|^2 + n|c_n|^2, 
\]
for all $n\ge 1$. This implies the following estimate
\be\label{eq:est7}
\sum_{v\in\cV} |f(v)|^2m(v) = \sum_{n\ge 0}\sum_{v\in S_n} |f(v)|^2m(v) %\lesssim \sum_{n\ge 1} n\ell_{n-1}(\|\f_n^N\|^2 + n|c_n|^2)
\lesssim \sum_{n\ge 1} n^2\ell_{n-1}(\|\f_n^N\|^2 + |c_n|^2).
\ee
Next,  \eqref{eq:sys3}--\eqref{eq:sys4} can be written as follows
\begin{align}
\begin{pmatrix} \f^N_{n+1} \\ c_{n+1}\end{pmatrix} = A_{1,n}\begin{pmatrix} \f^N_{n} \\ c_{n}\end{pmatrix} + A_{2,n}\begin{pmatrix} \f^N_{n-1} \\ c_{n-1}\end{pmatrix},
\end{align}
where the matrices $A_{1,n},A_{2,n} \in \R^{(N+1)\times(N+1)}$ are given explicitly by
\begin{align}
A_{1,n} & := \begin{pmatrix} -2\ell_nd_n^1 I_N & 2\ell_nd_n^2 B_{N,1} \\ \frac{2\ell_n d_n^1}{n+1}B_{1,N} & -\frac{(2N-1)\ell_n d_n^2}{n+1}I_1 \end{pmatrix}, & 
A_{2,n} & := -\frac{\ell_n}{\ell_{n-1}}\begin{pmatrix} I_N & 0\\ 0& \frac{n-1}{n+1}I_1 \end{pmatrix},
\end{align}
for all $n\ge 2$. Since $\ell_{n-1} \le \ell_{n}$ and
\be\label{eq:est_d}
d_n^{1}<d_n^{2} = \frac{n+N-1 }{\ell_{n-1}} + \frac{n +N+1}{\ell_{n }} \le \frac{2(n+N)}{\ell_n}
\ee
for all $n\ge2$, it is not difficult to get the following rough bounds 
\footnote{Here and below to estimate norms, we use the equality  $\|A\| = \sqrt{\|A^\ast A\|}$ and the following simple estimate for non-negative $2\times 2$ block-matrices $A = \begin{pmatrix} A_{11} & A_{12} \\ A_{12}^\ast & A_{22}\end{pmatrix}$:\ 
$\|A\| \le \|A_{11}\| + \|A_{22}\|$.
There are other estimates (e.g.,~\cite[ineq. (2.3.8)]{GL12}), however, they do not seem to work as good as the above approach.}
%%%%%%%%%%%for calculations use
%\[
%\|A_{1,N}\|^2 \le \|A_{1,N}^\ast A_{1,N}\| 
%\]
%%%%%%%%%and then take max of its diagonal
\begin{align}\label{eq:A12est}
\|A_{1,n}\| & \le 6\sqrt{N}(n+N), &  \|A_{2,n}\| &  = \frac{\ell_n}{\ell_{n-1}}\le 1,
\end{align}
for all $n \ge 2N$. 
Denoting 
\[
F_n := \begin{pmatrix} \f_n^N \\ c_n \end{pmatrix},\quad n\ge 1,
\]
the recurrence relations \eqref{eq:sys3}--\eqref{eq:sys4} can be written in the following matrix form
\begin{align}
\begin{pmatrix} F_{n+1} \\ F_n \end{pmatrix} = \begin{pmatrix} A_{1,n} & A_{2,n} \\ I_{N+1} & 0_{N+1} \end{pmatrix} \begin{pmatrix} F_{n} \\ F_{n-1} \end{pmatrix} 
= A_n \begin{pmatrix} F_{n} \\ F_{n-1} \end{pmatrix}.
\end{align}
Taking into account \eqref{eq:A12est}, we get $\|A_n\| \le 6\sqrt{N}(n+N+1)$ for all $n\ge 2N$, which implies the estimate
\be
\sqrt{\|\f_n^N\|^2 + |c_n|^2} = \|F_n\| \le C \prod_{k=1}^{n-1} \|A_k\| \lesssim (6\sqrt{N})^n (n+N)! 
\ee
for all $n\ge 2$. Combining this bound with \eqref{eq:ellNdecay}, it is easy to see that the series on the right hand side in \eqref{eq:est7} converges and hence by Lemma~\ref{lem:Harmcont=Harmdiscr} we conclude that $\HH(\cA_N)\subset L^2(\cA)$. Thus $\ker(\bH) = \HH(\cA_N)$ and the use of  Corollary~\ref{cor:finvol=ker} finishes the proof.
\end{proof}

\subsection{Infinite deficiency indices}
Consider the antitree $\cA$ with sphere numbers $s_n := n+1$, $n\ge 0$. Next pick a sequence of positive numbers $(\ell_n)$ and define lengths as follows
\begin{align}\label{eq:lengthInf}
|e_{ij}^n| = \begin{cases} 2\ell_n, & 1\le i=j \le n+1,\\ \ell_n, & \text{otherwise}, \end{cases}
\end{align}
for all $n\in\Z_{\ge0}$. 
Thus, the corresponding matrix $M_{n+1}$ given by \eqref{def:M} has the form 
\be\label{eq:Minf}
M_{n+1} = \frac{1}{\ell_n}\begin{pmatrix} B_{n+1} - \frac{1}{2}I_{n+1} & B_{n+1,1} \end{pmatrix} \in \R^{(n+1)\times(n+2)}
\ee
for all $n\ge 0$. Let us denote this antitree by $\cA_\infty$.

\begin{lemma}\label{lem:HATinf}
$ \dim(\HH (\cA_\infty)) = \infty$.
\end{lemma}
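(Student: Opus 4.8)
The plan is to exhibit infinitely many linearly independent harmonic functions on $\cA_\infty$ by solving the recurrence \eqref{eq:harmrec1}--\eqref{eq:harmrec2} with the explicit form \eqref{eq:Minf} of $M_{n+1}$, and then showing that the ``free'' data at each level grows without bound. First I would determine the range of $M_{n+1}^\ast$: since $M_{n+1}$ has the block form $\frac{1}{\ell_n}(B_{n+1}-\frac12 I_{n+1}\mid B_{n+1,1})$, a vector $\binom{\f_n}{c_n}\in\C^{n+1}\oplus\C$ lies in $\ran\big(D_n^{-1}(M_{n+1}\ M_n^\ast)\big)$ iff the last coordinate block (coming from $S_{n-1}$, which has only $s_{n-1}=n$ entries) is in $\ran(B_{n,n+1})=\Span\{\id_n\}$; in other words, $\f_{n-1}$ must be a multiple of $\id_n$, but $\f_n$ itself is \emph{not} constrained, because $B_{n+1}-\frac12 I_{n+1}$ is invertible. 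Hence for all $n\ge 2$ the vector $\f_n\in\C^{n+1}$ is essentially free: the recurrence \eqref{eq:recurA} can be solved for $\f_{n-1}$ (which must be of the form $c_{n-1}\id_n$) in terms of $\f_n$ and $\f_{n+1}$.

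Concretely, I would rewrite \eqref{eq:harmrec2} at level $n$: the first block gives an equation determining $\f_{n+1}$ (an $(n+2)$-vector) from $\f_n,\f_{n-1}$, but the first $(n+1)$ coordinates and the last coordinate of $\f_{n+1}$ are coupled only through the single scalar $\sum_j (\f_{n+1})_j$. Inverting $B_{n+1}-\frac12 I_{n+1}$ (whose inverse is $-2 I_{n+1} + \frac{2}{2n+1}B_{n+1}$, a standard rank-one update computation) lets me express $\f_{n+1}^{(1)}\in\C^{n+1}$ — the part living over $S_{n+1}$'s first $n+1$ vertices — in terms of $\f_n$, $\f_{n-1}$ and the one remaining free scalar $(\f_{n+1})_{n+2}$. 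The upshot: at each step $n\to n+1$ there is exactly one new scalar degree of freedom, so the solution space of the recurrence on $\cV$ is infinite-dimensional. Since every element of $\HH(\cA_\infty)$ is uniquely determined by its vertex values and these are exactly the solutions of \eqref{eq:harmrec1}--\eqref{eq:harmrec2}, this already gives $\dim\HH(\cA_\infty)=\infty$. To be fully rigorous I would, for each $N\ge 1$, prescribe $\f_1\in\C^2$ and the free scalars at levels $2,\dots,N$ to be linearly independent data, extend to a global solution of the recurrence (no obstruction, by the invertibility of $B_{n+1}-\frac12 I_{n+1}$ and $D_n$), and note the resulting $N$ harmonic functions are linearly independent because they differ already on $S_N$.

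The main obstacle I anticipate is bookkeeping the block structure carefully: one must track that the ``missing'' coordinate (sphere $S_{n-1}$ has $n$ vertices while $S_n$ has $n+1$, and $S_{n+1}$ has $n+2$) is precisely what forces $\f_{n-1}\in\Span\{\id_n\}$ yet leaves $\f_n$ unconstrained, and one must check that the resulting reduced recurrence \eqref{eq:recurA} is genuinely solvable forward with a growing number of free parameters rather than secretly collapsing (as it does in the radially symmetric case, Lemma before Corollary \ref{lem:ATrad}). The invertibility of $B_{n+1}-\frac12 I_{n+1}$ — its eigenvalues are $n+1-\frac12=n+\frac12$ (on $\id_{n+1}$) and $-\frac12$ (on the complement), both nonzero — is the crucial structural input distinguishing $\cA_\infty$ from the radial case, and it is exactly the diagonal perturbation $2\ell_n$ in \eqref{eq:lengthInf} that produces it. Once this is in place, the linear independence and the dimension count are routine, and unlike Proposition \ref{prop:ANindices} no decay assumption on $(\ell_n)$ is needed here since the statement concerns only $\HH(\cA_\infty)$ and not $\ker(\bH)=\HH(\cA_\infty)\cap L^2(\cA_\infty)$.
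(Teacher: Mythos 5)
Your proposal is correct in substance and follows essentially the same route as the paper: both arguments hinge on the invertibility of $B_{n+1}-\tfrac12 I_{n+1}$ (produced by the doubled diagonal lengths in \eqref{eq:lengthInf}), which makes the forward recurrence \eqref{eq:harmrec1}--\eqref{eq:harmrec2} solvable with exactly one new free scalar --- the last coordinate of $\f_{n+1}$ --- at each sphere, whence an infinite-dimensional solution space. The paper implements this by fixing that free scalar to be $0$ and, for each $N$, launching a solution from $\f_{N+1}^N=(1,\dots,1,-N-1/2)^\top\in\ker(M_{N+1})$ with $\f_n^N=0$ for $n\le N$; your variant of prescribing linearly independent free data at the first $N$ levels works equally well.

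Two slips, neither fatal. First, the inverse of $B_{n+1}-\tfrac12 I_{n+1}$ is $\tfrac{4}{2n+1}B_{n+1}-2I_{n+1}$, not $\tfrac{2}{2n+1}B_{n+1}-2I_{n+1}$: from $B_{n+1}^2=(n+1)B_{n+1}$ one gets $(B_{n+1}-\tfrac12 I_{n+1})(aB_{n+1}-2I_{n+1})=\big(a(n+\tfrac12)-2\big)B_{n+1}+I_{n+1}$, forcing $a=\tfrac{4}{2n+1}$. Second, and more importantly, the assertion in your first paragraph that $\f_{n-1}$ ``must be a multiple of $\id_n$'' is false for $\cA_\infty$: since $M_{n+1}$ has full row rank $n+1$, $\ran(M_{n+1})=\C^{n+1}$ and \eqref{eq:recurA} imposes no such constraint --- indeed the harmonic functions constructed here are non-constant on every sphere $S_n$ with $n\ge N+1$ (that forced constancy is precisely the mechanism that collapses the radially symmetric case, and it is absent here). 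Fortunately your concrete construction in the second paragraph never uses this claim, so the proof stands once that aside is deleted.
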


\begin{proof}
Consider the difference equations \eqref{eq:harmrec1}--\eqref{eq:harmrec2}. Clearly, the matrix $M_{n+1}$ has the maximal rank $n+1$ for every $n\ge 0$. Taking into account that 
\[
\Big(B_{n+1} - \frac{1}{2}I_{n+1} \Big)^{-1} = %-\frac{4}{2N-1}A_N - 2I_N =  
\frac{4}{2n+1}B_{n+1} - 2 I_{n+1} = : C_n, \quad n\ge0,
\]
\eqref{eq:harmrec2} then reads
\begin{align}\label{eq:harmrec2inf}
\begin{pmatrix} I_{n+1} & \frac{2}{2n+1}B_{n+1,1} \end{pmatrix}\f_{n+1} & = \ell_nC_n(D_n\f_n -  M_n^\ast \f_{n-1})
\end{align}
for all $n\ge 1$. Observe that 
\[
\begin{pmatrix} I_{n+1} & \frac{2}{2n+1}B_{n+1,1} \end{pmatrix}\begin{pmatrix} f_1\\ \vdots\\ f_{n+1}\\ 0 \end{pmatrix} = \begin{pmatrix} f_1\\ \vdots\\ f_{n+1} \end{pmatrix}
\]
and hence for any $\f_n\in \C^{n+1}$ and $\f_{n-1}\in\C^n$ there always exists a unique $\f_{n+1} = (f_1,\dots,f_{n+1},0)^\top$ satisfying \eqref{eq:harmrec2inf}.
Now pick a natural number $N$ and define $\f^N\in C(\cA_\infty)$ by setting $\f_n^N := (0,\dots,0)^\top \in \C^{n+1}$ for all $n\in \{0,\dots,N\}$, 
\[
\f_{N+1}^N := (1,\dots,1,-N-1/2)^\top,
\]
and
\begin{align}\label{eq:finfrecur}
\f_{n+1}^N := \begin{pmatrix} \ell_nC_n(D_n\f_n^N -  M_n^\ast \f_{n-1}^N) \\ 0 \end{pmatrix} \in \C^{n+2}
\end{align}
for all $n\ge N+1$. Clearly, $\f^N$ satisfies  \eqref{eq:harmrec1}--\eqref{eq:harmrec2} and hence defines a harmonic function $f^N\in \HH(\cA_\infty)$. Moreover, it is easy to see that $\Span\{\f^N\}_{N\ge 1}$ is infinite dimensional, which proves the claim.
\end{proof}

 \begin{proposition}\label{prop:ANindicesInf}
 Let $\bH_0$ be the minimal Kirchhoff Laplacian associated with the antitree $\cA_\infty$. If $\ell_n$ is decreasing and
 \be\label{eq:elldecayInf}
 \sqrt{\ell_n} = \OO\left(\frac{1}{6^n(n+3)!}\right)
 \ee
 as $n\to \infty$, then $\Nr_\pm(\bH_0) = \infty$.
 \end{proposition}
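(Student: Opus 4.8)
The plan is to mirror the proof of Proposition \ref{prop:ANindices}: produce an infinite linearly independent family of $L^2$-harmonic functions on $\cA_\infty$ and then apply Corollary \ref{cor:finvol=ker}. First I would check that $\vol(\cA_\infty) = \sum_{n\ge0}\sum_{i\le s_n,\,j\le s_{n+1}}|e_{ij}^n| \le \sum_{n\ge0}(n+1)(n+2)\cdot 2\ell_n < \infty$ under \eqref{eq:elldecayInf}, so that Corollary \ref{cor:finvol=ker} gives $\Nr_\pm(\bH_0) = \dim\ker(\bH) = \dim\bigl(\HH(\cA_\infty)\cap L^2(\cA_\infty)\bigr)$. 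Next, since $\deg(v) = 2n+2$ for $v\in S_n$ and every edge at $v$ has length at most $2\ell_{n-1}$ (using that $(\ell_n)$ is decreasing), one has the star-weight bound $m(v)\lesssim n\,\ell_{n-1}$ for $v\in S_n$; hence by Lemma \ref{lem:Harmcont=Harmdiscr}, for $f\in\HH(\cA_\infty)$ the inclusion $f\in L^2(\cA_\infty)$ follows once $\sum_{n\ge1}n^2\ell_{n-1}\|\f_n\|^2<\infty$, where $\f_n = f|_{S_n}$.

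The core step is to estimate the growth of the vertex values of the harmonic functions $f^N$, $N\ge1$, built in Lemma \ref{lem:HATinf}, which satisfy $\f_n^N = 0$ for $n\le N$, $\f_{N+1}^N = (1,\dots,1,-N-\tfrac12)^\top$, and \eqref{eq:finfrecur} for $n\ge N+1$. I would rewrite \eqref{eq:finfrecur} as a two-term recurrence $\f_{n+1}^N = A_{1,n}\f_n^N + A_{2,n}\f_{n-1}^N$, with $A_{1,n}$ coming from $\ell_n C_nD_n$ and $A_{2,n}$ from $-\ell_n C_nM_n^\ast$ (each padded by a zero row). Since $C_n = \tfrac{4}{2n+1}B_{n+1}-2I_{n+1}$ has eigenvalues $\tfrac{2}{2n+1}$ and $-2$, we get $\|C_n\| = 2$; the diagonal entries of $D_n$ are bounded by $(2n+\tfrac32)/\ell_n$ after using $\ell_{n-1}\ge\ell_n$; and $\|M_n\|\le\frac{1}{\ell_{n-1}}(n+\tfrac12+\sqrt n)$. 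Hence $\|A_{1,n}\|,\|A_{2,n}\|\le 4(n+1)$, and passing to the transfer matrix $A_n = \left(\begin{smallmatrix}A_{1,n}&A_{2,n}\\ I&0\end{smallmatrix}\right)$ the same block-norm inequality used in Proposition \ref{prop:ANindices} gives $\|A_n\|\le\sqrt{\|A_{1,n}\|^2+\|A_{2,n}\|^2+1}\le 6(n+1)$. Iterating from $n = N+1$ yields $\|\f_n^N\|\le C_N\,6^n\,n!$ and therefore $\|\f_n^N\|^2\lesssim_N 36^n(n!)^2$.

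Finally, squaring \eqref{eq:elldecayInf} gives $\ell_{n-1} = \OO\bigl(36^{-(n-1)}((n+2)!)^{-2}\bigr)$, so
\[
\sum_{n\ge1}n^2\,\ell_{n-1}\,\|\f_n^N\|^2 \;\lesssim_N\; \sum_{n\ge1}\frac{n^2\,(n!)^2}{((n+2)!)^2} = \sum_{n\ge1}\frac{n^2}{(n+1)^2(n+2)^2} < \infty .
\]
By Lemma \ref{lem:Harmcont=Harmdiscr} this places each $f^N$ in $L^2(\cA_\infty)$, hence in $\HH(\cA_\infty)\cap L^2(\cA_\infty) = \ker(\bH)$; since by Lemma \ref{lem:HATinf} the $f^N$ first become nonzero on spheres of unbounded index, they are linearly independent, so $\dim\ker(\bH) = \infty$, and Corollary \ref{cor:finvol=ker} finishes the proof. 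I expect the transfer-matrix estimate to be the main obstacle: one must keep the geometric factor ($6^n$) and the factorial ($n!$) sharp — in particular avoiding an extra factor of $n$ inside the base of the exponential or an off-by-one in the factorial — since this is precisely what the hypothesis \eqref{eq:elldecayInf} is calibrated to absorb in the convergence of the series above.
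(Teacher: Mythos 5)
Your proposal is correct and follows essentially the same route as the paper: verify finite volume, bound the star weight by $m(v)\lesssim n\ell_{n-1}$ on $S_n$, recast \eqref{eq:finfrecur} as a transfer-matrix recursion with $\|A_n\|\le 6(n+1)$ (your slightly cruder bounds on $\|M_n\|$ and $D_n$ still land under this threshold), iterate to get $\|\f_n^N\|\lesssim 6^n n!$, and conclude via Lemma \ref{lem:Harmcont=Harmdiscr} and Corollary \ref{cor:finvol=ker}. The only cosmetic deviation is your use of $n^2\ell_{n-1}$ in place of the paper's $n\ell_{n-1}$ in the summability criterion, which is harmless since the resulting series still converges.
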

 
\begin{proof}
Clearly, it suffices to show that every $f^N$ constructed in the proof of Lemma~\ref{lem:HATinf} belongs to $L^2(\cG)$ if $\ell_n$ decays as in \eqref{eq:elldecayInf}. To prove this we shall proceed as in the proof of Proposition~\ref{prop:ANindices}. First, taking into account \eqref{eq:lengthInf}, observe that 
\[
m(v) \lesssim n\ell_{n-1},\quad v\in S_n,
\]
as $n\to \infty$. 
Since $\|\f^N_{n}\|^2 = \sum_{v\in S_n} |f^N(v)|^2$ 
for all $n\ge 0$, we get the estimate
\begin{align}\label{eq:est10}
\sum_{v\in\cV} |f^N(v)|^2m(v) \lesssim \sum_{n\ge N+1}\sum_{v\in S_n} |f^N(v)|^2m(v)
%\lesssim \sum_{n\ge 1} n\ell_{n-1}(\|\f_n^N\|^2 + n|c_n|^2)
\lesssim \sum_{n\ge N+1} n\ell_{n-1}\|\f^N_{n}\|^2. 
%\|\ell_nC_n(D_n\f_n -  M_n^\ast \f_{n-1})\|^2.\label{eq:est10}
\end{align}
Denoting $F_{n} := \f^N_{n}$ for all $n\ge 1$, we can put \eqref{eq:harmrec2inf} into the matrix form
\be
\begin{pmatrix} F_{n+1} \\ F_n \end{pmatrix} = \begin{pmatrix}  A_{1,n} & A_{2,n} \\ I_{n+1} & 0_{n+1,n} \end{pmatrix} \begin{pmatrix} F_{n} \\ F_{n-1} \end{pmatrix}= A_n \begin{pmatrix} F_{n} \\ F_{n-1} \end{pmatrix}
\ee
for all $n\ge N+1$, where
\begin{align}
A_{1,n} & := \begin{pmatrix} \ell_n C_nD_n \\ 0_{1,n+1} \end{pmatrix}\in \R^{(n+2)\times(n+1)}, & A_{2,n} & := \begin{pmatrix} -\ell_n C_nM_n^\ast \\ 0_{1,n} \end{pmatrix}\in \R^{(n+2)\times n}.
\end{align}
Now observe that $\|C_n\| = 2$ and $\|\ell_nD_n\| \le 2(n+1)$  for all $n\ge 1$. 
Moreover, $\|\ell_n M_n^\ast\|\le n+1$ for all $n\ge 1$, which immediately implies the following estimate
\be
\|A_n\| \le \sqrt{\|\ell_n C_nD_n\|^2 + 1 + \|\ell_n C_nM_n^\ast\|^2} \le 6(n+1),\quad n\ge N+1.
\ee
Hence we get
\[
\|\f^N_{n+1}\| \le C \prod_{k=N+1}^n \|A_k\| \le C6^{n-N} \frac{(n+1)!}{(N+1)!}\lesssim 6^n(n+1)!
\]
for all $n\ge N+1$. Combining this estimate with \eqref{eq:est10} and \eqref{eq:elldecayInf} and using Lemma~\ref{lem:Harmcont=Harmdiscr}, we conclude that $f^N\in L^2(\cA_\infty)$ for each $N\ge 1$. 
\end{proof}

\begin{remark}
It is not difficult to show that $f^N$ does not belong to $H^1(\cA_\infty)$ for the above choices of edge lengths. In fact, it follows from the maximum principle for $\HH(\cA)$ that if $\vol(\cA) < \infty$, then $\HH(\cA) \cap H^1(\cA)$ consists only of constant functions.
\end{remark}

\subsection{Proof of Theorem~\ref{th:DefInd}} 
Clearly, the case of infinite deficiency indices follows from Proposition~\ref{prop:ANindicesInf}. On the other hand, since adding and/or removing finitely many edges and vertices to a graph does not change the deficiency indices of the minimal Kirchhoff Laplacian, Proposition~\ref{prop:ANindices} completes the proof of Theorem~\ref{th:DefInd}. Indeed, every antitree $\cA_N$ can be obtained from $\cA$ by first removing all the edges between combinatorial spheres $S_0$ and $S_{N}$ and then adding $N+1$ edges connecting the root $o$ with the vertices in $S_N$. \qed 

\begin{remark}\label{rem:ATends}
Since every infinite antitree has exactly one end, Theorem~\ref{th:ThetaCD}(iv) implies that the Kirchhoff Laplacian $\bH_0$ in Theorem~\ref{th:DefInd} has a unique Markovian extension exactly when $\vol(\cA)=\infty$. If $\vol(\cA)<\infty$, then Markovian extensions of $\bH_0$ form a one-parameter family explicitly given by \eqref{eq:domext-2}. Notice that \eqref{eq:domext-2} looks similar to the description of self-adjoint extensions of the minimal Kirchhoff Laplacian on radially symmetric antitrees obtained recently in~\cite{kn19}.

Let us also emphasize that the antitree constructed in Proposition~\ref{prop:ANindicesInf} has finite total volume and $\bH_0$ has infinite deficiency indices, however, the set of  Markovian extensions of $\bH_0$ forms a one-parameter family.
\end{remark}

Let us finish this section with one more comment. As it was proved, the dimension of the space of Markovian extensions depends only on the space of graph ends and, moreover, it is equal to the number of finite volume ends. However, deficiency indices (dimension of the space of self-adjoint extensions) are in general independent of graph ends and we can only provide a lower bound. Moreover, the above example of a polynomially growing antitree shows that the space of non-constant harmonic functions heavily depends on the choice of edge lengths (in particular, its dimension may vary between zero and infinity). In this respect let us also emphasize that in the case of Cayley graphs of finitely generated groups the end space is independent of the choice of a generating set, however, simple examples show that the space of harmonic functions does depend on this choice. 

%%%%%%%%%%%%%%%%%%%%%%%
%%%%%%%%%%%%%%%%%%%%%%%
\appendix
%%%%%%%%%%%%%%%%%%%%%%%
%%%%%%%%%%%%%%%%%%%%%%%

%%%%%%%%%%%%%%%%%%%%%%%%%%%%%%%%%%%%%%%%%%%%%%%%%%%%%%%%%%%%%%%
%%%%%%%%%%%%%%%%%%%%%%%%%%%%%%%%%%%%%%%%%%%%%%%%%%%%%%%%%%%%%%%
\section{Linear relations in Hilbert spaces}\label{app:LR}
%%%%%%%%%%%%%%%%%%%%%%%%%%%%%%%%%%%%%%%%%%%%%%%%%%%%%%%%%%%%%%%
%%%%%%%%%%%%%%%%%%%%%%%%%%%%%%%%%%%%%%%%%%%%%%%%%%%%%%%%%%%%%%%

In this section we collect basic notions and facts on linear relations in Hilbert spaces, a very convenient concept of multi-valued linear operators. For simplicity, we shall assume that \emph{$\cH$ is a finite dimensional Hilbert space},  $N:= \dim(\cH) <\infty$. 

A \emph{linear relation} $\Theta$ in $\cH$ is a linear subspace  in $\cH\times \cH$. Linear operators become special linear relations (single valued) after identifying them with their graphs in $\cH\times \cH$. Consider linear relations in $\cH$ having the form
\be\label{eq:ThetaCD}
\Theta_{C,D} = \{(f,g)\in \cH\times \cH\,|\, Cf = Dg \},
\ee
where $C,D$ are linear operators on $\cH$. Notice that different $C$ and $D$ may define the same linear relation. The \emph{domain} and the \emph{multi-valued part} of $\Theta_{C,D}$ are given by
\begin{align*}
\dom(\Theta_{C,D}) &= \{f\in \cH\,|\, \exists g\in\cH, Cf=Dg \} = \{f\in \cH \,|\, Cf \in \ran(D)\},\\
\mul(\Theta_{C,D}) & = \{g\in \cH \,|\, Dg = 0\} = \ker(D).
\end{align*}
In particular, $\Theta_{C,D}$ is a graph of a linear operator only if $\ker(D) = \{0\}$. 

The adjoint relation $\Theta_{C,D}^\ast$ to $\Theta_{C,D}$ is given by 
\begin{align}
\Theta_{C,D}^\ast & = \{(f,g)\in \cH\times\cH \,| \, \langle \wt{g},f\rangle_{\cH} = \langle\wt{f},g \rangle_{\cH}\ \forall (\wt{f},\wt{g})\in\Theta_{C,D}\} \nn\\
&= \big\{ (D^\ast f, C^\ast f)\,|\, f\in \cH \big\}.\label{eq:ThetaCD*}
\end{align}
Thus, a linear relation $\Theta_{C,D}$ is self-adjoint, $\Theta_{C,D}  = \Theta_{C,D}^\ast $, if and only if $C$ and $D$ satisfy the \emph{Rofe-Beketov conditions}~\cite{RB} (see also~\cite[Exercises 14.9.3-4]{schm}):
\begin{align}\label{eq:appRB}
CD^\ast & = DC^\ast, & 0 & \in \rho(C^\ast C + D^\ast D).
\end{align}
Taking into account that every linear relation in $\cH$ admits one of the forms \eqref{eq:ThetaCD} or \eqref{eq:ThetaCD*}, this provides a description of self-adjoint linear relations in $\cH$. Notice also that the second condition in \eqref{eq:appRB} is equivalent to the fact that the matrix $(C| D)\in \C^{N\times 2N}$ has the maximal rank $N$. 

Recall also that every self-adjoint linear relation admits the representation $\Theta = \Theta_{\rm op}\oplus\Theta_{\mul}$, where $\Theta_{\mul} := \{0\}\times \mul(\Theta)$ and $\Theta_{\rm op}$, called the operator part of $\Theta$, is a graph of a linear operator. In particular, for a self-adjoint linear relation $\Theta_{C,D}$ one has
\be\label{eq:domCD}
\dom(\Theta_{C,D}) = \mul(\Theta_{C,D})^\perp = \ker(D)^\perp = \ran(D^\ast).
\ee
For further details on linear relations we refer the reader to, e.g.,~\cite[Chapter~14.1]{schm}. 

%%%%%%%%%%%%%%%%%%%%%%%%%%%%%%%%%%%%%%%%%%%%%%%%%%%%%%%%%%%%%%%%%
%%%%%%%%%%%%%%%%%%%%%%%%%%%%%%%%%%%%%%%%%%%%%%%%%%%%%%%%%%%%%%%%%
\section{A rope ladder graph}\label{sec:rope}
%%%%%%%%%%%%%%%%%%%%%%%%%%%%%%%%%%%%%%%%%%%%%%%%%%%%%%%%%%%%%%%%%
%%%%%%%%%%%%%%%%%%%%%%%%%%%%%%%%%%%%%%%%%%%%%%%%%%%%%%%%%%%%%%%%%
Let us introduce a rope ladder graph depicted on Figure~\ref{fig:ropeladder}. Let $\cG_d=(\cV, \cE)$ be a simple graph with the vertex set $\cV := \{o\} \cup \cV^+ \cup \cV^-$, where $o = v_0$ is a root, $\cV^+ = (v_n^+)_{n\ge 1}$ and $\cV^- = (v_n^-)_{n\ge 1}$ are two disjoint countably infinite sets of vertices. The edge set $\cE$ is defined as follows:
\begin{itemize}
\item  $o$ is connected to  $v^+_1$ and  $v^-_1$ by the ``diagonal" edges $e^+_0$ and $e^-_0$, respectively;
\item for each $n \ge 1$, $v_n^\pm$ is connected to $v_{n+1}^\pm$ by the vertical edge $e_n^\pm$;
\item for each $n \ge 1$, $v^+_n$ and $v^-_n$ are connected by the horizontal edge $e_n$. 
\end{itemize}
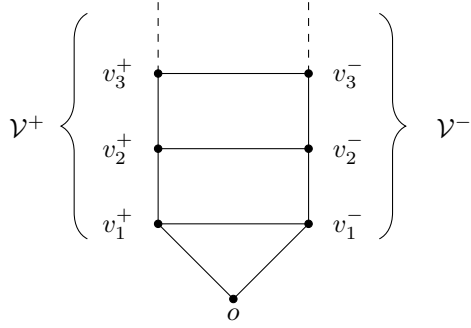
\begin{figure}[ht]
	\begin{center}
		\begin{tikzpicture}

		%VERTICES
		% ORIGIN
		\filldraw (0,0) circle (1.4 pt) node [below] {$o$}  ;

		%OTHER VERTICES
		\foreach \x in {-1,1} {
	
		%EDGE TO ORIGIN
		\draw		(0,0) -- (\x,1);
		
		%OTHER VERTICES
		
		\foreach \y in {1,...,3}{
			\filldraw 
			(\x,\y) circle (1.4pt);   }
			}

		%HORIZONTAL EDGES
		\foreach \y in {1,...,3}{
		\draw (-1, \y) -- (1, \y);}
			
		%VERTICAL EDGES
		\foreach \x in {-1, 1}{ 
		
		\draw  [thin, -, dashed] (\x, 3) -- (\x, 4); 

		\foreach \y in {1,...,2}{
			\draw (\x, \y) -- (\x, \y+1);}
		}

		%LABELLING

		\foreach \y in {1,...,3}{
			\filldraw 
			(-1.2,\y) node [left] {$v_\y^+$}  ;
			
		\filldraw 
			(1.2,\y) node [right] {$v_\y^-$}  ;
			}

   	\draw [decorate,decoration={brace,amplitude=10pt},xshift=-4pt,yshift=0pt]
	(-1.8, 0.8) -- (-1.8, 3.8) node  [black,midway, xshift = - 0.8 cm] {$\cV^+$};
	  \draw [decorate,decoration={brace,amplitude=10pt},xshift=4pt,yshift=0pt]
	(1.8, 3.8) -- (1.8, 0.8)   node  [black,midway, xshift =  1 cm] {$\cV^-$};
   		%\filldraw 
			%(-2,2.5) node [left] {$\cV^1$}  ;
			
		%\filldraw 
			%(2,2.5) node [right] {$\cV^2$}  ;
		
		\end{tikzpicture}
	\end{center}
	\caption{The rope ladder graph. }\label{fig:ropeladder}
\end{figure}
By construction, $\deg(o) = 2$ and $\deg(v_n^+) =\deg(v_n^-) = 3$ for all $n\ge 1$. Moreover, an infinite rope ladder graph has exactly one end. Notice also that a similar example was studied in~\cite[Section 7]{jp} (see also~\cite[\S~5]{g10}) in context with the construction of non-constant harmonic functions of finite energy.

Equip now $\cG_d$ with edge lengths $|\cdot|\colon \cE\to (0,\infty)$ and consider the corresponding minimal Kirchhoff Laplacian $\bH_0$ on the metric graph $\cG = (\cG_d, |\cdot|)$. 
The next result immediately follows from Theorem~\ref{th:ekmn} and Corollary~\ref{cor:finvol=ker}.

\begin{corollary}\label{cor:ladderSA}
If 
\begin{align}
\sum_{n\ge 1} |e_n^+| + |e_n|  = \infty, \quad \text{and} \quad \sum_{n\ge 1} |e_n^-| + |e_n|  = \infty,
\end{align}
then the Kirchhoff Laplacian $\bH_0$ is self-adjoint.
If 
\be
\vol(\cG) = \sum_{n\ge 1} |e_n^+| + |e_n^-|+ |e_n| < \infty,
\ee
then $\Nr_\pm(\bH_0)\ge 1$.
\end{corollary}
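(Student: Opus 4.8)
The second half is a direct appeal to what is already available: if $\vol(\cG)=\sum_{n\ge 1}(|e_n^+|+|e_n^-|+|e_n|)<\infty$, then the hypothesis of Corollary \ref{cor:finvol=ker} holds for the rope ladder $\cG$, so $\bH_0$ is not self-adjoint and $\Nr_\pm(\bH_0)=\dim\ker(\bH)\ge 1$ (with $\id_\cG\in\ker(\bH)$). So for this part the plan is simply to quote Corollary \ref{cor:finvol=ker}.

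For the first half the plan is to verify the hypothesis of Theorem \ref{th:ekmn}, i.e. that $(\cV,\varrho_m)$ is complete. First I would record the elementary reduction: for a locally finite metric graph, $(\cV,\varrho_m)$ is complete if and only if every ray $\cR=(w_k)$ has $\sum_k m(w_k)=\infty$. (A ray with finite $\varrho_m$-length is a non-convergent $\varrho_m$-Cauchy sequence; conversely, from a non-convergent Cauchy sequence one passes to a subsequence with geometrically summable consecutive distances, joins consecutive terms by geodesics, and obtains a connected locally finite subgraph of finite total $m$-weight, which contains a ray by a compactness argument.) Next I would compute, for the rope ladder, $m(v_n^{\pm})=|e_{n-1}^{\pm}|+|e_n^{\pm}|+|e_n|\ge |e_n^{\pm}|+|e_n|$ for $n\ge 2$ (and similar formulas for $o$ and $v_1^\pm$), and observe that since the combinatorial levels of the vertices of $\cR$ tend to infinity, $\cR$ meets the sphere $S_n$ for all large $n$, i.e. it visits $v_n^+$ or $v_n^-$ (possibly both). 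Then I would argue by cases on $\sum_n|e_n|$: if $\sum_n|e_n|=\infty$, the vertex of $\cR$ on $S_n$ already contributes at least $|e_n|$ to $\sum_{w\in\cR}m(w)$, so this sum diverges; if $\sum_n|e_n|<\infty$, then both $\sum_n|e_n^+|$ and $\sum_n|e_n^-|$ diverge, and one distinguishes whether $\cR$ eventually stays on one side (in which case $\sum_{w\in\cR}m(w)\ge\sum_n(|e_n^{\pm}|+|e_n|)=\infty$) or switches sides infinitely often.

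The technical heart — and the step I expect to be the main obstacle — is the last case. Switching sides forces $\cR$ to use a rung $e_n$ and hence to visit both $v_n^+$ and $v_n^-$ at the crossing level, and there $m(v_n^+)+m(v_n^-)$ still carries the lengths $|e_{n-1}^{\pm}|$ of the ``opposite'' vertical edges below the crossing, whether or not $\cR$ traverses them; this is the mechanism that should prevent a zig-zagging ray from dodging the divergent part of the two length sequences. The plan would be to split $\Z_{\ge N}$ (for $N$ beyond which $\cR$ stays above $S_N$) into the maximal one-sided runs of $\cR$ together with the crossing levels, bound the $m$-contribution of each piece from below, and sum; making this decomposition genuinely force divergence under only the two stated hypotheses is where the real care is needed, and if the bare form of Theorem \ref{th:ekmn} turns out not to suffice one would fall back on the finer essential self-adjointness criteria of \cite{ekmn}. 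Once $(\cV,\varrho_m)$ is shown to be complete, Theorem \ref{th:ekmn} yields $\overline{\bH_0^0}=\bH_0=\bH$, i.e. $\bH_0$ is self-adjoint.
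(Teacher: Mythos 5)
Your second half is exactly the paper's argument (quote Corollary \ref{cor:finvol=ker}), and your strategy for the first half --- reduce completeness of $(\cV,\varrho_m)$ to the statement that every ray has infinite $m$-weight, then check this for the rope ladder and invoke Theorem \ref{th:ekmn} --- is precisely the route the paper indicates when it omits the proof, asserting that the first condition is equivalent to completeness of $(\cV,\varrho_m)$. Your reduction to rays, the case $\sum_n|e_n|=\infty$, and the case of an eventually one-sided ray are all correct.

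The gap is exactly where you located it, and the mechanism you propose cannot close it: a ray collects $|e_m^-|$ in its $m$-weight only if it visits $v_m^-$ or $v_{m+1}^-$, so the crossings only force it to pick up the opposite-side vertical lengths within one level of each crossing, while the divergence of $\sum_m|e_m^-|$ can be hidden in the interiors of the $+$-runs (and symmetrically). Concretely, let the ray climb the $+$ side through levels $1,\dots,10$, cross, climb the $-$ side through levels $10,\dots,20$, cross, and so on in runs of length $10$; put $|e_m^+|=1$ for $m\in\bigcup_i\{20i+11,\dots,20i+18\}$ and $|e_m^+|=2^{-m}$ otherwise, $|e_m^-|=1$ for $m\in\bigcup_i\{20i+21,\dots,20i+28\}$ and $2^{-m}$ otherwise, and $|e_n|=2^{-n}$. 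Then $\sum_n(|e_n^+|+|e_n|)=\sum_n(|e_n^-|+|e_n|)=\infty$, but every unit-length edge has both endpoints at levels the ray never visits on that side, so $\sum_{w\in\cR}m(w)<\infty$ and $(\cV,\varrho_m)$ is \emph{not} complete. Hence the claimed equivalence between the first condition and completeness of $(\cV,\varrho_m)$ fails, Theorem \ref{th:ekmn} is not applicable under the stated hypotheses, and the first assertion of the corollary --- if it is true in this asymmetric generality --- needs a genuinely different proof; your undeveloped fallback to ``finer criteria of \cite{ekmn}'' is where all of the actual work would have to be done. (Note that this is equally a gap in the paper's own omitted justification; the equivalence does hold in the symmetric regime $|e_n^+|=|e_n^-|$ used in Example \ref{ex:polynomialrl}, which is the only case the paper subsequently relies on.)
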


We omit the proof since it is easy to check that the first condition is equivalent to the geodesic completeness of $(\cV,\varrho_m)$ (cf. Theorem~\ref{th:ekmn}). Due to the symmetry of the underlying combinatorial graph, the gap between the above two conditions is equivalent to the fact that the corresponding lengths satisfy
\begin{align}\label{eq:4.7}
\sum_{n\ge 1}|e_n^+| & =\infty, & \sum_{n\ge 1} |e_n^-| + |e_n| < \infty.
\end{align}

Next, let us describe the space of harmonic functions $\HH(\cG)$.

\begin{lemma}\label{lem:rlbasis}
	Let $a, b \in \C$. Then there is exactly one $f \in \HH(\cG)$ such that
	\begin{align}\label{eq:f=ab}
		f(v^+_1) & = a, & f(v^-_1) & = b.
	\end{align}
	Moreover, this function $f$ is recursively given by
	\begin{align} \label{eq:rl01}
		f(o) = \frac{b |e^+_0|  +  a|e^-_0| }{  |e^+_0|+  |e^-_0|}
	\end{align}
	and
	\begin{align}\label{eq:rl02}
		f(v^\pm_{n+1}) = \left( 1 + \frac{ |e^\pm_{n}|}{ | e^\pm_{n-1} |} +  \frac{ |e^\pm_{n}|}{ | e_{n} |}  \right) f(v^\pm_{n}) - \frac{ |e^\pm_{n}|}{ | e^\pm_{n-1} |} f(v^\pm_{n-1}) -  \frac{ |e^\pm_{n}|}{ | e_{n} |} f(v^\mp_{n}),
	\end{align}
	for all $n \in \Z_{\ge 1}$, where we use the notation $v^+_0 := v^-_0 := o$. 
\end{lemma}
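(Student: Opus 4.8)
The plan is to reduce the statement to the discrete Laplace equation on $\cG_d$. Recall from Section~\ref{ss:II.04} that a function $f\in\HH(\cG)$ is determined by its vertex values $\f=f|_\cV$ via edgewise affine interpolation, and that for such an affine $f$ on an edge $e=e_{u,v}$ one has $f_e'(v)=\frac{f(u)-f(v)}{|e_{u,v}|}$ by \eqref{eq:tr_fe}, so that the Kirchhoff condition \eqref{eq:kirchhoff} at a vertex $v$ is equivalent to $\sum_{u\sim v}\frac{f(u)-f(v)}{|e_{u,v}|}=0$. Hence it suffices to show that for any $a,b\in\C$ there is exactly one $\f\colon\cV\to\C$ with $\f(v_1^+)=a$, $\f(v_1^-)=b$ which solves this discrete equation at every vertex of the rope ladder, and that it is given by \eqref{eq:rl01}--\eqref{eq:rl02}.

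First I would write out the Kirchhoff equation at the root. Since $\deg(o)=2$ with incident edges $e_0^+$, $e_0^-$, it reads $\frac{f(v_1^+)-f(o)}{|e_0^+|}+\frac{f(v_1^-)-f(o)}{|e_0^-|}=0$, whose unique solution is \eqref{eq:rl01}. Next, at a vertex $v_n^\pm$ with $n\ge1$ (which has degree $3$, with incident edges $e_{n-1}^\pm$, $e_n^\pm$ and $e_n$, adopting the convention $v_0^\pm:=o$ so that $e_0^\pm$ is the diagonal edge), the Kirchhoff equation is
\[
\frac{f(v_{n-1}^\pm)-f(v_n^\pm)}{|e_{n-1}^\pm|}+\frac{f(v_{n+1}^\pm)-f(v_n^\pm)}{|e_n^\pm|}+\frac{f(v_n^\mp)-f(v_n^\pm)}{|e_n|}=0,
\]
and multiplying by $|e_n^\pm|$ and solving for $f(v_{n+1}^\pm)$ yields precisely \eqref{eq:rl02}.

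Then I would argue by induction on the combinatorial level $n$. The base case uses \eqref{eq:rl01} to fix $f(o)$ together with the prescribed values $f(v_1^+)=a$, $f(v_1^-)=b$. For the inductive step, observe that the pair of equations at $v_n^+$ and $v_n^-$ forms a triangular system in the two new unknowns $f(v_{n+1}^+)$, $f(v_{n+1}^-)$: the equation at $v_n^+$ contains $f(v_{n+1}^+)$ as its only level-$(n+1)$ value and hence determines it via \eqref{eq:rl02}, and symmetrically for $v_n^-$. Thus $\f$ is uniquely and consistently defined on all of $\cV$, it solves every Kirchhoff equation by construction, and conversely any admissible $\f$ must obey these same relations; defining $f$ as the edgewise affine interpolation of $\f$ completes the proof.

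I do not anticipate a genuine obstacle; the only points requiring care are the separate treatment of the degree-$2$ root (handled by \eqref{eq:rl01}) and the convention $v_0^\pm=o$ that makes \eqref{eq:rl02} valid already at $n=1$. It may be worth remarking that, although $\cG_d$ is not a tree (each rung creates one independent cycle), the recursion never becomes overdetermined: processing the vertices in order of increasing level, each Kirchhoff equation introduces exactly one new unknown.
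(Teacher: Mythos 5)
Your proof is correct and follows essentially the same route as the paper: write out the Kirchhoff condition at the root to obtain \eqref{eq:rl01} and at each $v_n^\pm$ to obtain the recursion \eqref{eq:rl02}, which simultaneously yields uniqueness and existence. Your extra remark that the level-by-level recursion is never overdetermined (each Kirchhoff equation introduces exactly one new unknown despite the cycles) is a point the paper leaves implicit, but it is not a different method.
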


\begin{proof}
	Suppose $a, b \in \C$ are given and  $f \in \HH (\cG)$ satisfies \eqref{eq:f=ab}. Since $f$ is linear on every edge and satisfies \eqref{eq:kirchhoff} at $v = o$, we get
	\[
		0 = f_{e^+_0}'(o) + f_{e^-_0}'(o)=\frac{ f(v^+_1) - f(o)}{ |e^+_0| } + \frac{ f(v^-_1) - f(o)}{ |e^-_0| } = \frac{ a - f(o)}{ |e^+_0|} + \frac{ b - f(o)}{ |e^-_0| },
	\]
which implies \eqref{eq:rl01}. Moreover, Kirchhoff conditions \eqref{eq:kirchhoff} at $v = v^\pm_n$, $n \ge 1$ read
	\begin{align*}
		\frac{  f(v^\pm_{n+1}) -    f(v^\pm_{n})     }{| e^\pm_n| } + \frac{  f(v^\pm_{n-1}) -    f(v^\pm_{n})     }{| e^\pm_{n-1}| }  + \frac{ f(v^\mp_{n}) -    f(v^\pm_{n})     }{| e_{n}| } = 0.
	\end{align*}
	This implies that $f$ is given by \eqref{eq:rl02}. Hence there is at most one $f \in \HH (\cG)$ satisfying \eqref{eq:f=ab} for given $a,b\in\C$. However, the same calculation shows that $f$ defined by \eqref{eq:rl01} and \eqref{eq:rl02} has this property. Thus, existence follows as well.
\end{proof}

From Lemma~\ref{lem:rlbasis},   it is clear that $\dim (\HH(\cG)) = 2$, and, moreover, 
\[
	\HH (\cG) = \operatorname{span} \{ \id_\cG, g_0 \},
\]
where $ \id_\cG $ denotes the constant function on $\cG$ and $g_0 \in \HH (\cG)$ is the function defined, for example, by the following normalization
\begin{align} \label{eq:defg}
	&g_0(0) = 0, &g_0(v^+_1) = |e^+_0|, &&g_0(v^-_1) = -| e^-_0|.
\end{align}
Notice that  $g_0(v^\pm_n)$, $n \ge 1$ are then given recursively by \eqref{eq:rl02}. 

\begin{lemma}\label{lem:rlg}
If $\vol(\cG) < \infty$, then
	\be\label{eq:g0notinH}
	\HH(\cG)\cap H^1(\cG) = \Span\{\id_\cG\}.
	\ee
\end{lemma}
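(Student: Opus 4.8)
The plan is to prove the two inclusions separately, the first being trivial and the second being where all the work sits. For "$\supseteq$": the constant function $\id_\cG$ is edgewise affine and trivially satisfies the Kirchhoff conditions \eqref{eq:kirchhoff}, so $\id_\cG\in\HH(\cG)$; since $\vol(\cG)<\infty$ we have $\id_\cG\in L^2(\cG)$ and $(\id_\cG)'\equiv 0$, hence $\id_\cG\in H^1(\cG)$, and so $\Span\{\id_\cG\}\subseteq\HH(\cG)\cap H^1(\cG)$. For "$\subseteq$": by Lemma \ref{lem:rlbasis} we have $\HH(\cG)=\Span\{\id_\cG,g_0\}$, so it is enough to show that every $h\in\HH(\cG)\cap H^1(\cG)$ is constant (then subtracting a suitable multiple of $g_0$ from a hypothetical non-constant element would force $\id_\cG$ out of $H^1$ only if $g_0\notin H^1$, which is exactly what this yields).

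So fix $h\in\HH(\cG)\cap H^1(\cG)$. The rope ladder graph $\cG_d$ has exactly one end $\gamma$, and by Proposition \ref{prop:h1trace} the function $h$ extends continuously to $\wh\cG=\cG\cup\{\gamma\}$ with boundary value $h(\gamma)$. Since $\vol(\cG)<\infty$, the function $h-h(\gamma)\id_\cG$ again lies in $\HH(\cG)\cap H^1(\cG)$ and vanishes at $\gamma$, so I may assume $h(\gamma)=0$ and aim to prove $h\equiv 0$. Now Theorem \ref{th:H10} applies and gives $h\in H^1_0(\cG)$; choose $h_k\in\dom(\bH_0^0)$ with $h_k\to h$ in $H^1(\cG)$. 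Each $h_k$ is continuous and supported on finitely many edges, so, integrating by parts edgewise and using that $h$ is edgewise affine (hence $h''\equiv0$ on each edge) and that the resulting endpoint terms, regrouped at each vertex, produce the Kirchhoff sum of $h$,
\[
\langle h_k',h'\rangle_{L^2(\cG)}=\sum_{e\in\cE}\int_e h_k'\,\overline{h'}\,dx
=-\sum_{v\in\cV}h_k(v)\,\overline{\sum_{e\in\cE_v}h_e'(v)}=0 ,
\]
where $h_e'(v)$ is the inward normal derivative of \eqref{eq:tr_fe} and the last equality is \eqref{eq:kirchhoff} for $h$. Letting $k\to\infty$ and using $h_k'\to h'$ in $L^2(\cG)$ gives $\|h'\|_{L^2(\cG)}^2=0$, so $h$ is constant; combined with $h(\gamma)=0$ this yields $h\equiv0$. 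Hence $\HH(\cG)\cap H^1(\cG)\subseteq\Span\{\id_\cG\}$, which together with the first inclusion proves \eqref{eq:g0notinH}.

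The only genuinely delicate point I anticipate is the sign bookkeeping in the edgewise integration by parts — keeping track of which endpoint contributes $+h'$ and which $-h'$ so that the endpoint contributions at a common vertex $v$ assemble precisely into $\sum_{e\in\cE_v}h_e'(v)$ with the inward-derivative convention of \eqref{eq:tr_fe}; everything else is routine. As an alternative to this step, one may invoke a maximum principle in the spirit of Lemma \ref{lem:maxprinciple}: a real-valued $h\in\HH(\cG)$ is edgewise affine, so it attains its supremum over $\wh\cG$ either at a vertex — in which case \eqref{eq:kirchhoff} forces all neighbouring vertices to attain the same value and connectedness of $\cG_d$ forces $h$ to be constant — or at $\gamma$; applying the same reasoning to $-h$ then forces $\sup_{\wh\cG}h=h(\gamma)=\inf_{\wh\cG}h$, i.e.\ $h$ is constant.
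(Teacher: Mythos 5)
Your proof is correct, but it takes a genuinely different route from the paper's. The paper proves the nontrivial inclusion by a direct computation on $g_0$: using the recursion \eqref{eq:rl02} it shows by induction that $(g_0(v_n^+))$ is strictly increasing and $(g_0(v_n^-))$ strictly decreasing, so the jumps across the horizontal rungs stay bounded away from zero, and then the Dirichlet integral over the rungs is bounded below by a constant times $\sum_n 1/|e_n|$, which diverges because $\vol(\cG)<\infty$; hence $g_0\notin H^1(\cG)$. You instead run the soft argument that the paper explicitly alludes to in the sentence preceding its proof (``the claim immediately follows from the fact that a rope ladder graph has exactly one end''): normalize a harmonic $H^1$-function to vanish at the unique end, invoke Theorem \ref{th:H10} to place it in $H^1_0(\cG)$, and then use the standard integration-by-parts identity $\langle h_k',h'\rangle=0$ for compactly supported Kirchhoff test functions to conclude that the energy vanishes. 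Your sign bookkeeping with the inward derivatives of \eqref{eq:tr_fe} is right, the sums are finite because $h_k$ has compact support, and the alternative maximum-principle argument you sketch is also sound (an edgewise affine function attains its supremum on $\cV\cup\gC(\cG)$, and an interior vertex maximum propagates by \eqref{eq:kirchhoff} and connectedness). What each approach buys: yours is shorter, leans on the general end machinery of Sections 3--4, and generalizes verbatim to any finite-volume metric graph with a single end; the paper's is self-contained within the appendix and exposes the explicit monotone growth of $g_0$ along the two rails, which is the structural information exploited again in Example \ref{ex:polynomialrl}.
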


The claim immediately follows from the fact that a rope ladder graph has exactly one end. However, let us present a direct proof based on the analysis of harmonic functions.

\begin{proof} 
Taking into account \eqref{eq:g0notinH}, we only need to show that $g_0\notin H^1(\cG)$. First, observe that $( g_0(v^+_n))_{n\ge 1}$ and $( g_0(v^-_n))_{n\ge 1}$ are strictly increasing positive, respectively, strictly decreasing negative sequences. Indeed, 
	\[
	-|e^-_0| = g_0(v^-_1) < 0 = g_0(o) < g_0(v^+_1)=|e^+_0|
	\] 
	by the very definition of $g_0$. Let $n \ge 1$ and assume now that we have already shown that $(g_0(v^+_k))_{k=1}^n$ is strictly increasing and $(g_0(v^-_k))_{k=1}^n$ is strictly decreasing. Since $g_0(o) = 0$, \eqref{eq:rl02} implies
	\begin{align*}
			g_0(v^+_{n+1}) &= \Big( 1 + \frac{ |e^+_{n}|}{ | e^+_{n-1} |} +  \frac{ |e^+_{n}|}{ | e_{n} |}  \Big) g_0(v^+_{n}) - \frac{ |e^+_{n}|}{ | e^+_{n-1} |} g_0(v^+_{n-1}) -  \frac{ |e^+_{n}|}{ | e_{n} |} g_0(v^-_{n}) \\
			&>  \left( 1 + \frac{ |e^+_{n}|}{ | e_{n} |} \right) g_0(v^+_{n}) + \frac{ |e^+_{n}|}{ | e^+_{n-1} |} ( g_0(v^+_{n}) - g_0(v^+_{n-1})) > g_0(v^+_{n}).
	\end{align*}
	A similar argument shows that $g_0(v^-_{n+1})< g_0(v^-_{n})$ and hence the claim follows by induction. 
Now monotonicity immediately implies
	\begin{align*}
		 \| g_{0}' \|^2_{L^2(\cG)}  &= \sum_{e \in \cE} \int_e |g_0'(x_e)|^2 \; dx_e \geq \sum_{n\ge 0} \int_{e_n} |g_{0}' (x_e)|^2 \; dx_e \\
		 &= \sum_{n=0}^\infty   \frac{|g_{0} (v^+_n) - g_{0} (v^-_n)|^2}{|e_n|}   
		 \geq  |g_{0} (v^+_1) - g_{0} (v^-_1)|^2 \sum_{n=0}^\infty   \frac{1}{|e_n|} = \infty,
	\end{align*}
	since $\vol(\cG)< \infty$. Thus $g_0\notin H^1(\cG)$.
\end{proof}

In particular, this also leads to the following result:

\begin{corollary} \label{lem:defrl}
	If $\vol(\cG) < \infty$, then $\Nr_\pm (\bH_0) \in \{1,2\}$. Moreover,  $ \Nr_\pm (\bH_0) = 1$ if and only if $g_0 \notin L^2(\cG)$.
\end{corollary}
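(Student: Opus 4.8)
The plan is to combine three facts: the dimension count $\dim(\HH(\cG)) = 2$ from Lemma \ref{lem:rlbasis}, the finite-volume criterion $\Nr_\pm(\bH_0) = \dim(\ker(\bH)) = \dim(\HH(\cG) \cap L^2(\cG))$ from Corollary \ref{cor:finvol=ker}, and the explicit description $\HH(\cG) = \Span\{\id_\cG, g_0\}$ together with Lemma \ref{lem:rlg}. First I would observe that since $\vol(\cG) < \infty$ the constant function $\id_\cG$ lies in $L^2(\cG)$, so $\ker(\bH) \supseteq \Span\{\id_\cG\}$ and hence $1 \le \Nr_\pm(\bH_0)$. On the other hand $\ker(\bH) = \HH(\cG) \cap L^2(\cG) \subseteq \HH(\cG)$, which is two-dimensional, so $\Nr_\pm(\bH_0) \le 2$. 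This already gives $\Nr_\pm(\bH_0) \in \{1,2\}$.

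For the sharp dichotomy, the point is that $\Nr_\pm(\bH_0) = 2$ holds if and only if $\HH(\cG) \cap L^2(\cG) = \HH(\cG)$, i.e.\ if and only if $g_0 \in L^2(\cG)$ (since $\id_\cG$ is always in $L^2$ and $\{\id_\cG, g_0\}$ is a basis of $\HH(\cG)$; any element of $\HH(\cG) \setminus \Span\{\id_\cG\}$ is of the form $c\,\id_\cG + d\, g_0$ with $d \neq 0$, hence lies in $L^2(\cG)$ exactly when $g_0$ does). Contrapositively, $\Nr_\pm(\bH_0) = 1$ if and only if $g_0 \notin L^2(\cG)$. I would note that this argument does not even use Lemma \ref{lem:rlg}; the latter is only needed to confirm that $g_0 \notin H^1(\cG)$ always, which is consistent with (but stronger than, and orthogonal to) the $L^2$-membership question — indeed $g_0$ can fail to be in $H^1$ yet still be in $L^2$, and that is precisely the case $\Nr_\pm(\bH_0) = 2$.

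I do not expect any genuine obstacle here: the statement is an immediate bookkeeping consequence of the structural results already established. The only point requiring a line of care is the ``moreover'' equivalence, where one must check that membership of $g_0$ in $L^2(\cG)$ is genuinely the deciding condition — that is, that no nontrivial linear combination $c\,\id_\cG + d\,g_0$ with $d\neq 0$ can be in $L^2(\cG)$ while $g_0$ is not, which is clear since $\id_\cG \in L^2(\cG)$ forces $d\, g_0 = (c\,\id_\cG + d\,g_0) - c\,\id_\cG \in L^2(\cG)$ and hence $g_0 \in L^2(\cG)$. With that observed, the proof is complete in a few lines.
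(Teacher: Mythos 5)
Your proof is correct and follows essentially the same route as the paper: Corollary \ref{cor:finvol=ker} reduces $\Nr_\pm(\bH_0)$ to $\dim(\HH(\cG)\cap L^2(\cG))$, and the basis $\{\id_\cG, g_0\}$ of $\HH(\cG)$ together with $\id_\cG\in L^2(\cG)$ yields both the bound $\Nr_\pm(\bH_0)\in\{1,2\}$ and the dichotomy. Your side remark that Lemma \ref{lem:rlg} is not actually needed for the $L^2$-equivalence is accurate: the paper's one-line proof cites that lemma for the equivalence, but only the span description $\HH(\cG)=\Span\{\id_\cG,g_0\}$ (established after Lemma \ref{lem:rlbasis}) is really used.
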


\begin{proof}
	The claim about the deficiency indices follows from Corollary~\ref{cor:finvol=ker} and the fact that $\id_{\cG} \in L^2(\cG)$. The equivalences then follow from Lemma~\ref{lem:rlg}.
\end{proof}

As the next example shows, the inclusion $g_0\in L^2(\cG)$ heavily depends on the choice of edge lengths. 

\begin{example} \label{ex:polynomialrl} 
	Fix  $s > 3$ and equip the rope ladder graph with edge lengths
	\[
		|e^+_n| = | e^-_n| := \frac{1}{ (n+1)^s}, \qquad  |e_n | := \frac{2n}{ (n+1)^s - n^s},\quad n\in\Z_{\ge 0}.
	\]
	Then $|e_n |  \sim n^{2-s}$ for large $n$ and hence $\vol(\cG) < \infty$. Moreover, for this particular choice of edge lengths we have $g_{0} (v^\pm_n) = \pm n$ for all $n \ge 1$. Indeed, $g_0(v^\pm_1) = \pm 1$ by \eqref{eq:defg}. Assuming we have already proven that $g_0(v^\pm_k) = \pm k$ for $k \le n$ with some $n \ge 1$, we have by \eqref{eq:rl02}:
	\begin{align*}
			g_0(v^+_{n+1})  &= \Big( 1 + \frac{n^s}{(n+1)^s} + \frac{1}{(n+1)^s | e_{n} |} \Big ) n - \frac{n^s(n-1)}{(n+1)^s}   + \frac{n}{(n+1)^s | e_{n} |}  \\
					 		&= n + \frac{n^s}{(n+1)^s} + \frac{2 n}{(n+1)^s |e_{n} |} 
							= n + \frac{n^s}{(n+1)^s} + \frac{(n+1)^s - n^s}{(n+1)^s} = n+1.
	\end{align*}
	Analogously, $g_0(v^-_{n+1}) = -(n+1)$ and hence the claim follows by induction. 
	
	Applying Lemma~\ref{lem:rlg} and using again that $|e_n |  \sim n^{2-s}$ as $n\to \infty$, we conclude that $g_0 \in L^2(\cG)$ exactly (see Lemma~\ref{lem:Harmcont=Harmdiscr}) when
	\[
 \sum_{n\ge 1} |g_0(v_n^\pm)|^2 (|e_{n-1}^\pm| + |e_{n}^\pm|)= \sum_{n\ge 1} n^2 ( (n+1)^{-s} + n^{-s}) < \infty 
	\]
	and
	\[
 \sum_{n\ge 1} |g_0(v_n^\pm)|^2 |e_{n-1}| = \sum_{n\ge 1} \frac{2n^3}{ (n+1)^s - n^s} < \infty. 
	\]
	Clearly, the latter holds only if $s>5$. Hence, by Lemma~\ref{lem:defrl}, $\Nr_\pm (\bH_0) = 2$ for all $s>5$. In particular, 
	 $\ker(\bH) \subset H^1(\cG) \Leftrightarrow s \leq 5$. 
%	\hfill $\lozenge$
\end{example}
%%%%%%%%%%%%%%%%%%%%%%%%%%%%%%%%%%%%%%%%%%%%%%%%%%%%%%%%%%%%%%%
%%%%%%%%%%%%%%%%%%%%%%%%%%%%%%%%%%%%%%%%%%%%%%%%%%%%%%%%%%%%%%%%%%%%%%%%%%%%%%%%%%%%%%%%%%%%%%%%%%%%%%%%%%%%%%%%%%%%%%%%%%%%%%

\noindent
\ack We thank Matthias Keller, Daniel Lenz, Primo\v{z} Moravec, Andrea Posilicano and Wolfgang Woess for useful discussions and hints with respect to the literature. We also thank the referees for their comments which have helped us to improve the manuscript. 
N.N. appreciates the hospitality at the Institute of Mathematics, University of Potsdam, during a research stay funded by the OeAD (Marietta Blau-grant, ICM-2019-13386), where a part of this work was done.

\end{document}